 \theoremstyle{plain}
 \newtheorem{Thm}{Theorem}[section]
 \newtheorem{Cor}[Thm]{Corollary}
 \newtheorem{Lemma}[Thm]{Lemma}
 \newtheorem{Prop}[Thm]{Proposition}
 \theoremstyle{definition}
 \newtheorem{Rem}[Thm]{Remark}
 \newtheorem{Defi}[Thm]{Definition}
 \numberwithin{Thm}{section}
 \numberwithin{equation}{section}
\def\text#1{\;\;\;\;{\rm \hbox{#1}}\;\;\;\;}
\def\qquad{\quad\quad}
\def\msy#1{{\mathbb #1}}
\def\C{{\msy C}}
\def\N{{\msy N}}
\def\R{{\msy R}}
\def\ga{\alpha}
\def\gd{\delta}
\def\gf{\varphi}
\def\gs{\sigma}
\def\gS{\Sigma}
\def\fa{{\mathfrak a}}
\def\fb{{\mathfrak b}}
\def\fg{{\mathfrak g}}
\def\fh{{\mathfrak h}}
\def\fk{{\mathfrak k}}
\def\fl{{\mathfrak l}}
\def\fn{{\mathfrak n}}
\def\fp{{\mathfrak p}}
\def\fq{{\mathfrak q}}
\def\fs{{\mathfrak s}}
\def\fu{{\mathfrak u}}
\def\to{\rightarrow}
\def\inp#1#2{\langle#1\,,\,#2\rangle}
\def\Ad{\mathrm{Ad}}
\def\ad{\mathrm{ad}}
\def\tr{\mathrm{tr}\,}
\def\ns{\fu_{k,l}}
\newcommand{\Ns}[1][{k,l}]{U_{#1}}
\def\hSigma{\overline{\Sigma}}
\newcommand{\Nq}[1][{k}]{V_{#1}}
\def\iq{{\mathrm q}}
\def\cA{{\mathcal A}}
\def\cC{{\mathcal C}}
\def\cH{{\mathcal H}}
\def\cP{{\mathcal P}}
\def\cQ{{\mathcal Q}}
\def\faq{\fa_\iq}
\def\faqd{\fa_\iq^*}
\def\fad{\fa^*}
\def\Lie{\mathop{\rm Lie}}
\def\Cartan{\theta}
\def\dotvar{\, \cdot\,}
\def\Ht{\cH}
\def\diag{\mathrm{diag}}
\def\Cen{Z}
\def\Nor{N}
\def\1{\mathbf{1}}
\def\cusp{\mathrm{cusp}}
\def\ds{\mathrm{ds}}
\def\cPH{\cP_\fh}
\def\cQH{\cQ_\fh}
\def\If{I_f}
\def\Iphi{I_\phi}
\def\firstJ{J}
\def\secJ{{}^c\!J}
\def\mathspec#1{\mathrm{#1}}
\def\SL{\mathspec{SL}}
\def\GL{\mathspec{GL}}
\def\Mat{\mathspec{Mat}}
\def\Aut{\mathspec{Aut}}
\def\Special{\mathspec{S}}
\def\SO{\mathspec{SO}}
\def\fsl{{\fs\fl}}
 \title{The notion of cusp forms for a class of reductive symmetric spaces of split rank one}
 \author{
    Erik~P.~van den Ban,
    Job~J.~Kuit\footnote{Supported by the Danish National Research Foundation through the Centre for Symmetry and Deformation (DNRF92).}\,\,
    and Henrik~Schlichtkrull
    }
\date{}
\begin{document}
 \maketitle
\begin{abstract}
We study a notion of cusp forms for the symmetric spaces $G/H$ with $G=\SL(n,\R)$ and $H=\Special\big(\GL(n-1,\R)\times \GL(1,\R)\big)$. We classify all minimal parabolic subgroups of $G$ for which the associated cuspidal integrals are convergent and discuss the possible definitions of cusp forms. Finally, we show that the closure of the direct sum of the discrete series of representations of $G/H$ coincides with the space of cusp forms.
\end{abstract}
\section*{Introduction}\addcontentsline{toc}{section}{Introduction}
\setcounter{section}{9}
\setcounter{Thm}{0}
\setcounter{equation}{0}
\renewcommand{\thesection} {\Alph{section}}
In this article we investigate the convergence of certain integrals that can be used to give a notion of cusp forms on the symmetric space $\SL(n,\R)/\Special\big(\GL(n-1,\R)\times\GL(1,\R)\big)$, which we here denote by
$X_{n}$.  Furthermore, we
 determine the relation between the discrete series representations and the space of cusp forms for these
spaces.

Harish-Chandra defined a notion of cusp forms for reductive Lie groups and proved
that the space of cusp forms coincides with the closed span in the $L^2$-Schwartz space 
of the discrete series of representations. This fact plays an important role in his work
on the Plancherel decomposition.
In \cite{AndersenFlenstedJensenSchlichtkrull_CuspidalDiscreteSerieseForSemisimpleSymmetricSpaces} a notion of cusp forms
for real hyperbolic spaces was introduced, following a more general suggestion by M.\ Flensted-Jensen. Subsequently, in
\cite{vdBanKuit_HC-TransformAndCuspForms}, the first and second author
gave a definition of cusp forms for
split rank $1$ reductive symmetric spaces. However,
the notion of \cite{vdBanKuit_HC-TransformAndCuspForms}
deviates from the general suggestion of Flensted--Jensen at an important point.
The main purpose of the present article is to explore
the necessity of  this deviation.

In order to give a precise description of the purpose of the present article we first recall
some background.
Let $G$ be semisimple and let $G/H$ be a symmetric space of split rank $1;$ here  $H$ is an open subgroup of the group of fixed points of an involution $\gs$ of $G.$
Every minimal parabolic subgroup $P$ of $G$ contains a $\sigma$-stable maximal split connected abelian subgroup $A_{P}$ of $G$. The set of minimal parabolic subgroups decomposes into two disjoint sets: the set $\cP$ of $P$ such that $A_{P}/(A_{P}\cap H)$ is $1$ dimensional and the set $\cQ$ of $P$ such that $A_{P}/(A_{P}\cap H)$ is $0$ dimensional, i.e., $A_{P}\subseteq H$.

The main goal is now to identify a suitable class of minimal parabolic subgroups $P$
with the property that for every $\phi$ in the
Harish-Chandra Schwartz space $\cC(G/H)$ the integral
\begin{equation}\label{eq int}
\int_{N_{P}/(N_{P}\cap H)}\phi(n)\,dn
\end{equation}
is absolutely convergent. Here $N_{P}$ denotes the unipotent radical of $P$ and $dn$ is an $N_{P}$-invariant Radon measure 
on $N_{P}/(N_{P}\cap H)$.
A cusp form is then defined to be a function $\phi\in\cC(G/H)$ such that
\begin{equation}\label{cusp form}
\int_{N_{P}/(N_{P}\cap H)}\phi(gn)\,dn=0
\end{equation}
for every such parabolic subgroup $P$ and every $g\in G$.

Flensted-Jensen has suggested to use the set of parabolic subgroups
that can be characterized as follows,
$$
\cP_{*}: =\{P\in\cP:\dim(N_{P}\cap H)=\max_{Q\in\cP}\dim(N_{Q}\cap H)\}.
$$
These parabolic subgroups are said to be $\fh$-extreme, see \cite[Def.\ 1.1]{vdBanKuit_HC-TransformAndCuspForms} for an equivalent characterization.

In \cite{AndersenFlenstedJensenSchlichtkrull_CuspidalDiscreteSerieseForSemisimpleSymmetricSpaces} it
was confirmed for real hyperbolic spaces that
the integral is absolutely convergent for $P\in\cP_{*}$ and
for every $\phi\in\cC(G/H)$.

In \cite{vdBanKuit_HC-TransformAndCuspForms} a notion called $\fh$-compatibility was introduced for parabolic subgroups $P\in\cP$ by imposing a condition on the roots that are positive for $P$. For the spaces $X_{n}$ this condition is recalled in Definition \ref{d: fh compatible}.
It was proved in \cite{vdBanKuit_HC-TransformAndCuspForms} that for $\fh$-compatible parabolic subgroups $P\in\cP$ the integrals (\ref{eq int}) are absolutely convergent.
Let
$$
\cPH := \{P\in\cP: P\mbox{ is  $\fh$-compatible}\}.
$$
For real hyperbolic spaces this set equals $\cP_{*}$, but
in general this need not be the case. The difference occurs for example for the symmetric spaces $X_{n}$.
If $n\geq 4$ then $\cP_{*}$ is not contained in $\cPH$, and if $n\geq3$ is odd then
$\cPH$ is not contained in $\cP_{*}$.
Therefore, the family $X_n$
is a good test-case for determining whether
or not the need for $\fh$-compatible parabolic subgroups in \cite{vdBanKuit_HC-TransformAndCuspForms}
is an artefact of the proof, and whether or not the dimension of $N_{P}\cap H$
is relevant for
the convergence of the integrals.

In Section \ref{Section Parabolic subgroups} we describe some generalities  concerning
 parabolic subgroups. The results in this section hold for any reductive symmetric space. In Section \ref{Section Symmetric space under consideration} we describe the polar decomposition of $X_{n}$ and the Harish-Chandra Schwartz space $\cC(X_{n})$ of $X_{n}$. Our main results are formulated in Section \ref{Section Main theorems} and proved
  in the remaining sections.

The first main result (Theorem \ref{Theorem main theorem convergence}) is a classification of all minimal parabolic subgroups $P$ such that the integral (\ref{eq int}) is absolutely convergent for all $\phi\in\cC(X_{n})$.
We extend the notion of $\fh$-compatibility to all minimal parabolic subgroups $P$ (not just those in $\cP$, but also the ones in $\cQ$).  We then show that (\ref{eq int}) is absolutely convergent for all $\phi\in\cC(X_{n})$ if and only if $P$ is $\fh$-compatible.

The second main result (Theorem \ref{Thm main theorem limit behavior}) describes the behavior at infinity of the function
\begin{equation}\label{eq def H_P}
\Ht_{P}\phi:A_{P}\to\C;\qquad a\mapsto a^{\rho_{P}}\int_{N_{P}/(N_{P}\cap H)}\phi(an)\,dn,
\end{equation}
for $P\in\cPH\cap\cP_{*}$ and $\phi\in\cC(X_{n})$. In particular it is shown that if $n$ is even, then $\Ht_{P}\phi$ is rapidly decreasing; if $n$ is odd, then $\Ht_{P}\phi$ is rapidly decreasing in one direction, while it converges to a possibly non-zero limit in the other direction. Moreover, for every $\fh$-compatible $P\in\cQ$ there exists a $P'\in\cPH\cap \cP_{*}$ and a $g\in G$ such that the integral (\ref{eq int}) equals a limit of $\Ht_{P'}\big(\phi(g\,\cdot\,)\big)$.

As a consequence of  Theorem \ref{Thm main theorem limit behavior}, we will show in Proposition \ref{Prop alternatives for def cusp form} that if (\ref{cusp form}) holds for all $P\in\cPH \cap \cP_*$,
then (\ref{cusp form}) holds for all $\fh$-compatible parabolic subgroups. This and Theorem \ref{Theorem main theorem convergence} justifies the use of $\cPH \cap \cP_*$ in the definition of cusp forms for reductive symmetric space of split rank $1$.

In \cite{AndersenFlenstedJensenSchlichtkrull_CuspidalDiscreteSerieseForSemisimpleSymmetricSpaces}
it was shown that there exist discrete series representations for certain real hyperbolic
spaces for which the generating functions are not cusp forms. These discrete series representations
are called non-cuspidal. Using the results of \cite{vdBanKuit_HC-TransformAndCuspForms} and the estimates from Theorem \ref{Thm main theorem limit behavior} for the behavior at infinity of (\ref{eq def H_P}), we show that for $X_{n}$ there are no non-cuspidal discrete series representations. Our final main result (Theorem \ref{Thm C_ds=C_cusp}) is thus that
the space of cusp forms on $X_{n}$ coincides with the closed span of the discrete series of representations for $X_{n}$.

For other papers concerning the particular symmetric space $X_n$, see for example
\cite{Kosters_EigenspacesOfTheLaplaceBeltramiOperator}, \cite{vDijkKosters_SphericalDistributions},
\cite{vDijkPoel_ThePlancherelFormula}, \cite{vDijkPoel_TheIrreducibleUnitaryGLSphericalRepresentationsOfSLn} and \cite{Ochiai_InvariantDistributions}.

We would like to thank Mogens Flensted-Jensen for many fruitful discussions related to the present work.
\setcounter{section}{0}
\renewcommand{\thesection} {\arabic{section}}
\section{Parabolic subgroups and split components}\label{Section Parabolic subgroups}
In this preliminary chapter we collect some properties which are valid for general reductive symmetric spaces
$G/H$. Here $G$ is a reductive Lie group of the Harish-Chandra class
and $H$ is an open subgroup of the group of fixed points for an involution $\sigma$ of $G$.
We are concerned with properties of $\sigma$-stable connected split abelian subgroups and
parabolic subgroups of $G$. The main result is that every parabolic subgroup $P$ of $G$ contains a
$\sigma$-stable maximal split abelian subgroup $A$ of $G$, which is unique up to conjugation by an
element of $N_{P}\cap H$.

\subsection{Split abelian subalgebras}

We write $\fg=\fh\oplus\fq$ for the eigenspace decomposition
for the infinitesimal involution $\sigma$. Here $\fh$ is the Lie algebra of $H$ and $\fq$ is the $-1$ eigenspace. Recall that an abelian subspace $\fa$ of $\fg$ is called split if $\fg$ decomposes as a sum of joint $\fa$-weight spaces.

\begin{Lemma}\label{Lemma existence theta commuting with sigma, stabilizing A}
Let $\fa$ be a $\sigma$-stable maximal split abelian subalgebra of $\fg$. Then there exists a $\sigma$-stable Cartan decomposition $\fg=\fk\oplus\fp$ with $\fa\subseteq\fp$.
\end{Lemma}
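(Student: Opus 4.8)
The plan is to produce a Cartan involution $\theta$ on $\fg$ that commutes with $\sigma$ and satisfies $\theta|_{\fa}=-\mathrm{id}$, from which the desired Cartan decomposition $\fg=\fk\oplus\fp$ (with $\fk$, $\fp$ the $\pm1$-eigenspaces of $\theta$) follows immediately, and $\fa\subseteq\fp$ because $\theta$ acts by $-1$ on $\fa$. The starting point is that $\sigma$ (or rather its infinitesimal version, still denoted $\sigma$) already commutes with \emph{some} Cartan involution $\theta_{0}$ of $\fg$; this is a standard fact for involutions of reductive Lie algebras of Harish-Chandra class (one may take $\theta_{0}$ to be a maximally $\sigma$-compatible Cartan involution, obtained e.g.\ by an averaging/Cartan-fixed-point argument). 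The issue is that $\theta_{0}$ need not send $\fa$ into $-\fa$; we will have to conjugate it.

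First I would pass to the $\theta_{0}$-fixed maximal compact subalgebra $\fk_{0}$ and the complementary $\fp_{0}$, both $\sigma$-stable. Since $\fa$ is $\sigma$-stable, write $\fa=\fa\cap\fh\oplus\fa\cap\fq$ for the $\sigma$-eigenspace decomposition. The key point is that $\fa$ is a maximal \emph{split} abelian subalgebra: every element of $\fa$ is a semisimple element whose $\ad$-eigenvalues are real. A standard conjugacy result (analogous to the conjugacy of maximal $\R$-split tori, but in the $\sigma$-equivariant setting) says that any two $\sigma$-stable maximal split abelian subalgebras are conjugate under $H\cap K_{0}$-type elements; more to the point, $\fa$ can be conjugated by an element $g$ of the connected subgroup with Lie algebra $\fp_{0}\cap\fh$ so that $\Ad(g)\fa$ is $\theta_{0}$-stable. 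I would instead argue directly: the abelian subalgebra $\fa$, being split and $\sigma$-stable, lies in a $\sigma$-stable parabolic-type decomposition, and by the $KAN$/$\sigma$-relative theory there is $h$ in the analytic subgroup $H_{0}$ with $\Lie(H_{0})=\fh$ (actually in $\exp(\fp_{0}\cap\fh)$) such that $\Ad(h)\fa$ is stable under $\theta_{0}$. Then $\theta:=\Ad(h)\after\theta_{0}\after\Ad(h)^{-1}$ is again a Cartan involution, still commutes with $\sigma$ (because $h\in H$ and $\theta_{0}$ commutes with $\sigma$), and stabilizes $\fa$.

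It remains to upgrade ``$\theta$ stabilizes $\fa$'' to ``$\theta|_{\fa}=-\mathrm{id}$''. On the $\theta$-stable abelian subalgebra $\fa$ we have $\fa=(\fa\cap\fk)\oplus(\fa\cap\fp)$; but $\fa$ consists of $\R$-semisimple (hyperbolic) elements of $\fg$, while $\fk$ consists of elliptic elements, so $\fa\cap\fk=0$ and hence $\fa\subseteq\fp$, i.e.\ $\theta|_{\fa}=-\mathrm{id}$ automatically. Finally, set $\fk=\fg^{\theta}$ and $\fp=\fg^{-\theta}$; then $\fg=\fk\oplus\fp$ is a Cartan decomposition, it is $\sigma$-stable since $\sigma$ commutes with $\theta$, and $\fa\subseteq\fp$. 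The main obstacle is the conjugation step: justifying that one may move $\fa$ into a $\theta_{0}$-stable position using an element of $H$ (not just of $G$)---this is where the split-rank and Harish-Chandra-class hypotheses enter, and where I would invoke the $\sigma$-equivariant conjugacy theory for maximal split abelian subalgebras (cf.\ Rossmann, or the relative $KAK$-type arguments used for symmetric spaces); everything else is eigenvalue bookkeeping.
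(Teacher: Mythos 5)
Your overall strategy is genuinely different from the paper's, and it reverses the order of the two difficulties. The paper first invokes Knapp's construction to produce a Cartan involution $\theta_0$ with $\theta_0=-\mathrm{id}$ on $\fa$, and only then corrects $\theta_0$ to commute with $\sigma$ by the standard trick from Helgason (conjugation by $P^{1/4}$ where $P=(\sigma\theta_0)^2$); since $\sigma$ and $\theta_0$ both preserve $\fa$, so does $P^{1/4}$, and the corrected involution still acts by $-\mathrm{id}$ on $\fa$. Both steps are genuinely standard citations. You instead start from a $\theta_0$ commuting with $\sigma$ and try to move $\fa$ into $\fp_0$ by an element of $H$. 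Your final step is fine: once $\fa$ is $\theta_0$-stable, $\fa\cap\fk_0=0$ because a nonzero $X\in\fa\cap\fk_0$ would have $\ad X$ simultaneously $\R$-diagonalizable and skew-symmetric with respect to $B_{\theta_0}$, forcing $\ad X=0$.

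The gap is the conjugation step, which you yourself flag as the main obstacle but do not close. The statement you first appeal to --- that any two $\sigma$-stable maximal split abelian subalgebras are conjugate under $H\cap K_0$-type elements --- is false: already for $\SL(2,\R)/\SO(1,1)$ there are such subalgebras contained in $\fh$ and others contained in $\fq$, and these cannot be $H$-conjugate. The statement you actually need --- that every $\sigma$-stable maximal split abelian subalgebra is $H$-conjugate to a $\theta_0$-stable one --- is true, but it is essentially equivalent to the lemma being proved: given the lemma, one obtains it by conjugating the two commuting Cartan involutions into each other by an element of $\exp(\fh\cap\fp)$, and conversely. Citing it as a ``standard conjugacy result'' without an independent reference is therefore circular in spirit; the available proofs (e.g.\ extend $\fa$ to a $\sigma$-stable Cartan subalgebra, move that to a $\theta_0$-stable position by $H_e$, and observe that the hyperbolic part of the image is exactly the image of $\fa$) are of the same depth as, and typically rely on, precisely the Knapp--Helgason argument the paper uses. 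A smaller slip: $\fp_0\cap\fh$ is not a Lie subalgebra (its bracket lands in $\fk_0\cap\fh$), so ``the connected subgroup with Lie algebra $\fp_0\cap\fh$'' is not defined; you mean the subset $\exp(\fp_0\cap\fh)$ of $H_e$, which is not a group.
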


\begin{proof}
From the construction in \cite[Thms.~6.10, 6.11]{Knapp_LieGroupsBeyondAnIntroduction} it follows
that there exists a Cartan involution $\theta_0$ such that $\theta_0(X)=-X$ for $X\in\fa$.
Since $\fa$ is $\sigma$-stable, the construction in the proof of \cite[Thm.\ III.7.1]{Helgason_DGandSS},
applied to $\theta_0$ and $\sigma,$ gives a Cartan involution $\theta$ which in addition commutes with $\sigma$.
\end{proof}

\begin{Defi}
We define $\cA_{\fq}$ to be the set of all maximal split abelian subspaces $\fb$ of $\fq$, and
$\cA$ to be the set of all maximal split abelian subalgebras $\fa$ of $\fg$
for which $\fa\cap\fq\in\cA_{\fq}$.
The {\it split rank} of $G/H$ is the dimension of
any $\fb\in\cA_{\fq}$ (it will follow from Corollary \ref{Cor: split rank well defined} below that
this is well defined).
\end{Defi}

Note that $H$ acts on $\cA_{\fq}$ and $\cA$ by conjugation.

\begin{Lemma}\label{Lemma aq subset a implies a sigma-stable}
If $\fa\in\cA$, then $\fa$ is $\sigma$-stable.
\end{Lemma}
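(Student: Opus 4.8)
The plan is to pass to the centralizer $\fm := \fz_\fg(\fb)$ of $\fb := \fa\cap\fq$ and reduce to the case of split rank zero.

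Since $\fb\subseteq\fq$, it is $\sigma$-stable, so $\fm$ is a $\sigma$-stable reductive subalgebra of $\fg$ (the Levi part of a parabolic subalgebra). As $\fa$ is abelian and contains $\fb$, we have $\fa\subseteq\fm$, and $\fb$ lies in the centre $Z(\fm)$. The decisive observation is that $\fb$ is moreover a maximal split abelian subspace of $\fm\cap\fq$: any split abelian subspace of $\fm\cap\fq$ commutes with the central subalgebra $\fb$, hence together with $\fb$ is a split abelian subspace of $\fq$, and by maximality of $\fb\in\cA_\fq$ it must coincide with $\fb$. So the split rank of $(\fm,\fm\cap\fh)$ equals $\dim\fb$ and is carried by the central subalgebra $\fb$. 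Let $\fz_s\subseteq Z(\fm)$ be the set of $X$ with $\ad_\fg X$ diagonalisable over $\R$; it is $\sigma$-stable (automorphisms preserve $\R$-split elements), it contains $\fb$, and $\fz_s\cap\fq=\fb$ by the same maximality argument, while $\fz_s\subseteq\fa$ because $\fa+\fz_s$ is again split abelian (a commuting sum of $\R$-split abelian subalgebras). Using the structure theory of reductive symmetric pairs — in particular a Cartan involution of $\fm$ commuting with $\sigma$, furnished by the construction behind Lemma~\ref{Lemma existence theta commuting with sigma, stabilizing A} — one reduces the claim to showing that in a reductive symmetric pair of split rank zero every maximal split abelian subalgebra lies in $\fh$; indeed after splitting off the $\sigma$-stable subalgebra $\fz_s$ the residual pair has split rank zero, and $\fa$ will be $\sigma$-stable once its part outside $\fz_s$ is shown to lie in $\fh$.

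So suppose $(\fl,\fl\cap\fh)$ has split rank $0$ and $\fc$ is a maximal split abelian subalgebra of $\fl$. Choose a Cartan involution $\theta$ of $\fl$ commuting with $\sigma$ and write $\fl=\fk\oplus\fp$. Split rank $0$ means precisely $\fp\cap\fq=0$, i.e.\ $\fp\subseteq\fh$. Now $\fc$ is the sum of the $\R$-split part of $Z(\fl)$ and of maximal split abelian subalgebras of the simple ideals of $[\fl,\fl]$; the $\R$-split part of $Z(\fl)$ lies in $\fh$ (again there is no room in $\fq$), compact ideals contribute nothing, and for each noncompact simple ideal $\fl'$ one argues as follows. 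Because $\sigma$ preserves $\fp$, it cannot interchange two noncompact simple ideals (that would produce a nonzero element of $\fp\cap\fq$), so $\sigma(\fl')=\fl'$; writing $\fl'=\fk'\oplus\fp'$ we have $\fp'\subseteq\fp\subseteq\fh$, hence $\sigma$ fixes $\fp'$ pointwise, therefore fixes $[\fp',\fp']=\fk'$ pointwise (the bracket identity holding since $\fl'$ is noncompact simple), and so fixes all of $\fl'$; thus $\fl'\subseteq\fh$. Hence $\fc\subseteq\fl\cap\fh$, and applying this to the residual pair gives that the part of $\fa$ outside $\fz_s$ lies in $\fh$, so $\fa$ is $\sigma$-stable.

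The main obstacle is this split-rank-zero step, which hinges on the structural fact that a noncompact simple symmetric pair of split rank zero is trivial (equivalently $[\fp,\fp]=\fk$ for a noncompact simple Lie algebra). The surrounding reductions — producing the $\sigma$-stable Levi $\fm=\fz_\fg(\fb)$, extracting $\fz_s$, and checking that the residual pair has split rank zero — are routine once one has on hand the behaviour of $\R$-split elements under automorphisms, the Jordan decomposition, and the existence of a $\sigma$-compatible Cartan involution.
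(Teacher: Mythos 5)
Your argument is correct as far as I can tell, but it takes a much longer and genuinely different route than the paper. The paper's proof is two lines and uses no structure theory at all: for $Y\in\fa$, both $Y$ and $\sigma Y$ centralize $\fb:=\fa\cap\fq$ (the latter because $\sigma$ preserves $\fb$, acting on it by $-1$), so $Y-\sigma Y$ is an element of $\fq$ centralizing $\fb$; adjoining it to $\fb$ yields a split abelian subspace of $\fq$, so by maximality of $\fb\in\cA_\fq$ one gets $Y-\sigma Y\in\fb\subseteq\fa$ and hence $\sigma Y\in\fa$. Your proof instead passes to the Levi $\fm=\fz_\fg(\fb)$, splits off the $\R$-split centre, and settles the residual split-rank-zero pair via the identity $[\fp',\fp']=\fk'$ for noncompact simple ideals. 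This is sound, and it actually yields more than the lemma asks for (the full decomposition $\fa=(\fa\cap\fh)\oplus\fb$ together with the fact that the noncompact ideals of the residual Levi lie entirely in $\fh$), but it leans on a $\sigma$-commuting Cartan involution, the Jordan decomposition, and the decomposition of maximal split abelian subalgebras along the centre and the simple ideals --- machinery the paper deliberately avoids at this early stage, where the lemma is itself an ingredient in establishing the transitivity statements of Proposition~\ref{characterization of cA}. Two small cautions: you cannot invoke Lemma~\ref{Lemma existence theta commuting with sigma, stabilizing A} for $\fa$ itself (it presupposes the $\sigma$-stability you are proving), though, as you indicate, you only need \emph{some} Cartan involution commuting with $\sigma$, which is fine; and several of the steps you label routine (e.g.\ that a maximal split abelian subalgebra of a reductive algebra splits along $Z(\fl)\oplus[\fl,\fl]$) do require the commuting-semisimple-elements argument to be spelled out. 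Given that the one-paragraph maximality argument is available, the structural detour is not wrong, just disproportionate.
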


\begin{proof}
Let $Y\in \fa$. Then
$Y$ and $\sigma Y$ both commute with $\fa\cap\fq$ and hence $Y-\sigma Y\in\fa\cap\fq$ by
maximality. Hence, $\sigma Y\in\fa$.
\end{proof}

\begin{Prop}\label{characterization of cA}
A subspace $\fa\subseteq\fg$ belongs to $\cA$ if and only if
there exists a $\sigma$-stable Cartan decomposition $\fg=\fk\oplus\fp$ such that
$\fa$ is maximal abelian in $\fp$ and $\fa\cap\fq$ is maximal abelian in $\fp\cap\fq$.
The action of $H$ on $\cA$ by conjugation is transitive.
\end{Prop}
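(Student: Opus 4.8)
The plan is to prove the two assertions in turn: first the characterization of $\cA$ in terms of compatible Cartan decompositions, then the transitivity of the $H$-action.

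For the characterization, I would argue both inclusions. Suppose first that $\fa\in\cA$. By Lemma \ref{Lemma aq subset a implies a sigma-stable}, $\fa$ is $\sigma$-stable, so Lemma \ref{Lemma existence theta commuting with sigma, stabilizing A} applies and furnishes a $\sigma$-stable Cartan decomposition $\fg=\fk\oplus\fp$ with $\fa\subseteq\fp$. Since $\fa$ is a maximal split abelian subalgebra and $\fp$ consists of semisimple elements with real eigenvalues in the adjoint representation, $\fa$ is automatically maximal abelian in $\fp$: any abelian subalgebra of $\fp$ strictly containing $\fa$ would still be split, contradicting maximality. It remains to see that $\fa\cap\fq$ is maximal abelian in $\fp\cap\fq$. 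Here I would use that $\theta$ and $\sigma$ commute, so all four summands $\fk\cap\fh,\fk\cap\fq,\fp\cap\fh,\fp\cap\fq$ are defined, and $\fa\cap\fq = \fa\cap(\fp\cap\fq)$ since $\fa\subseteq\fp$. If $\fb'\supseteq\fa\cap\fq$ were a strictly larger abelian subspace of $\fp\cap\fq$, then $\fb'$ would be split (being in $\fp$) and would lie in $\fq$, contradicting that $\fa\cap\fq\in\cA_\fq$ is maximal split abelian in $\fq$ — one must check that maximality of $\fa\cap\fq$ among split abelian subspaces of $\fq$ forces maximality among abelian subspaces of $\fp\cap\fq$, which again follows because abelian subspaces of $\fp$ are automatically split. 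Conversely, given a $\sigma$-stable Cartan decomposition with $\fa$ maximal abelian in $\fp$ and $\fa\cap\fq$ maximal abelian in $\fp\cap\fq$: then $\fa$ is split (maximal abelian in $\fp$ implies the adjoint action is simultaneously diagonalizable), hence $\fa$ is a split abelian subalgebra; its maximality as a split abelian subalgebra follows because any split abelian subalgebra of $\fg$ is, after conjugation, contained in a $\fp$ for some Cartan decomposition, and maximal abelian subalgebras of $\fp$ all have the same dimension. Similarly $\fa\cap\fq$ is maximal abelian in $\fp\cap\fq$, hence maximal split abelian in $\fq$, so $\fa\cap\fq\in\cA_\fq$ and $\fa\in\cA$.

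For transitivity of the $H$-action on $\cA$, let $\fa,\fa'\in\cA$. The key point is that $\cA_\fq$ is a single $H_0$-orbit; this is a classical fact (all maximal split abelian subspaces of $\fq$ are conjugate under the connected subgroup with Lie algebra $\fh$, being a maximal abelian subspace of a Riemannian symmetric pair after passing to a $\theta$-stable representative). Using this, after replacing $\fa'$ by an $H$-conjugate we may assume $\fa\cap\fq=\fa'\cap\fq=:\fb$. Now both $\fa$ and $\fa'$ are maximal split abelian subalgebras of $\fg$ containing $\fb$, hence both are maximal abelian in the centralizer $\fm:=\fz_\fg(\fb)$ intersected with a suitable $\fp$; more precisely, $\fa$ and $\fa'$ are maximal abelian in $\fp\cap\fm$ and $\fp'\cap\fm$ respectively for Cartan decompositions adapted as in the first part, both of which are $\sigma$-stable and restrict to Cartan decompositions of the reductive algebra $\fm$. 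Since $\fb$ is central in $\fm$ and lies in $\fq$, the involution $\sigma$ acts trivially on $\fb\subseteq\fz(\fm)$, and one reduces to showing that two maximal split abelian subalgebras of $\fm$ that contain the central $\fb$ and project to the same (namely $\fb$ itself) maximal split abelian subspace of $\fm\cap\fq$ are conjugate under $(H\cap M)_0$. This is the analog of the standard conjugacy of maximal split tori, applied to the smaller group $M=Z_G(\fb)$.

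The main obstacle I anticipate is the transitivity statement: one must set up the reduction to the centralizer $M=Z_G(\fb)$ carefully and invoke (or reprove) the conjugacy of maximal split abelian subalgebras inside $\fm$ compatibly with $\sigma$, keeping track of which connected group does the conjugating so that it lands in $H$. The characterization part is comparatively routine once Lemmas \ref{Lemma existence theta commuting with sigma, stabilizing A} and \ref{Lemma aq subset a implies a sigma-stable} are in hand, the only subtlety being the repeated use of the fact that abelian subalgebras of $\fp$ are automatically split, so that ``maximal abelian in $\fp$'' and ``maximal split abelian in $\fg$'' agree, and likewise within $\fq$.
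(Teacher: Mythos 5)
Your treatment of the characterization is sound and, for the inclusion of $\cA$ into the set $\cA'$ described by the Cartan--decomposition condition, coincides with the paper's: both rest on Lemmas \ref{Lemma aq subset a implies a sigma-stable} and \ref{Lemma existence theta commuting with sigma, stabilizing A} together with the observation that abelian subspaces of $\fp$ are automatically split. Where you genuinely diverge is in how the two remaining assertions are obtained. The paper does \emph{not} prove the converse inclusion directly: it quotes Matsuki's Lemmas 4 and 7 to the effect that $H$ acts transitively on $\cA'$, and then, since $\cA$ is a nonempty $H$-stable subset of $\cA'$, concludes $\cA=\cA'$ and the transitivity on $\cA$ in one stroke. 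You instead prove $\cA'\subseteq\cA$ by hand (which is fine, modulo the usual caveat about central elliptic elements, irrelevant here) and then attack transitivity directly by reducing to the centralizer $Z_G(\fb)$ of $\fb=\fa\cap\fq$.

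It is in that last step that your argument is still a plan rather than a proof. The two facts you need --- that any two elements of $\cA_{\fq}$ are $H$-conjugate, and that two Cartan involutions adapted as in Lemma \ref{Lemma existence theta commuting with sigma, stabilizing A} to the same $\fb$ can be conjugated into one another by an element of $H\cap Z_G(\fb)$ --- are precisely the content of the Matsuki lemmas the paper cites; neither follows from the ``standard conjugacy of maximal split tori'' alone, because the two subalgebras a priori sit inside the $\fp$-parts of \emph{different} Cartan decompositions. There is also a circularity hazard: the single-orbit statement for $\cA_{\fq}$ appears in the paper as Corollary \ref{Cor: split rank well defined}, which is \emph{deduced from} Proposition \ref{characterization of cA}, so you cannot invoke it here without an independent argument. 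Your reduction to $Z_G(\fb)$ (with $\fb$ central there, so that the problem becomes conjugacy of maximal abelian subspaces of $\fp\cap\fh\cap\fz_{\fg}(\fb)$ under the compact group $\big(K\cap H\cap Z_G(\fb)\big)_0$) can certainly be completed, but as written the decisive conjugacy statements are asserted rather than proved; either carry them out in detail or cite Matsuki as the paper does.
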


\begin{proof} Let $\cA'$ denote the set of
subspaces $\fa\subseteq\fg$ for which
there exists a $\sigma$-stable Cartan decomposition $\fg=\fk\oplus\fp$
such that $\fa$ is maximal abelian in $\fp$ and $\fa\cap\fq$ is maximal abelian
in $\fp\cap\fq$.
It follows from \cite[Lemmas 4,7]
{Matsuki_TheOrbitsOfAffineSymmetricSpacesUnderTheActionOfMinimalParabolicSubgroups}
that
$H$ acts transitively on $\cA'$.

It follows from Lemma \ref{Lemma existence theta commuting with sigma, stabilizing A} and Lemma \ref{Lemma aq subset a implies a sigma-stable} that $\cA\subseteq\cA'$. Hence, $\cA=\cA'$.
\end{proof}

\begin{Cor}\label{Cor: split rank well defined}
A subspace $\fb\subseteq\fq$ belongs to $\cA_{\fq}$ if and only if
there exists a $\sigma$-stable Cartan decomposition $\fg=\fk\oplus\fp$
such that $\fb$ is a maximal abelian subspace of
$\fp\cap\fq$.
The action of $H$ on $\cA_{\fq}$ by conjugation is transitive.
\end{Cor}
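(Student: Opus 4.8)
The plan is to deduce Corollary \ref{Cor: split rank well defined} from Proposition \ref{characterization of cA} by passing from subalgebras $\fa\in\cA$ to the subspaces $\fb=\fa\cap\fq$. The key point is that the assignment $\fa\mapsto\fa\cap\fq$ maps $\cA$ onto $\cA_\fq$; this is essentially the definition of $\cA$ together with the observation that every $\fb\in\cA_\fq$ arises this way, which we must check.

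First I would prove the characterization. If $\fa\in\cA'=\cA$, then by Proposition \ref{characterization of cA} there is a $\sigma$-stable Cartan decomposition $\fg=\fk\oplus\fp$ with $\fa$ maximal abelian in $\fp$ and $\fa\cap\fq$ maximal abelian in $\fp\cap\fq$; since $\fa\cap\fq\in\cA_\fq$ by definition of $\cA$, this shows that every $\fb\in\cA_\fq$ of the form $\fa\cap\fq$ has the stated property. For the converse, suppose $\fb\subseteq\fq$ is maximal abelian in $\fp\cap\fq$ for some $\sigma$-stable Cartan decomposition $\fg=\fk\oplus\fp$. I would extend $\fb$ to a maximal abelian subspace $\fa$ of $\fp$ containing it; standard structure theory (the fact that $\fp\cap\fq$-weight spaces can be simultaneously handled, or simply choosing $\fa$ maximal abelian in $\fp$ with $\fb\subseteq\fa$) gives that $\fa\cap\fq$ is maximal abelian in $\fp\cap\fq$, hence $\fa\cap\fq=\fb$ by maximality of $\fb$. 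Then $\fa\in\cA'=\cA$ and $\fb=\fa\cap\fq\in\cA_\fq$. It remains to see that every $\fb\in\cA_\fq$ is of this form: given $\fb\in\cA_\fq$, apply Lemma \ref{Lemma existence theta commuting with sigma, stabilizing A} to a $\sigma$-stable maximal split abelian $\fa\supseteq\fb$ (extend $\fb$ inside $\fq$ to an element of $\cA_\fq$ and then — using that $\fb$ itself is already maximal in $\fq$ — to a maximal split abelian subalgebra of $\fg$) to produce the required Cartan decomposition, so that $\fb$ is maximal abelian in $\fp\cap\fq$.

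Next I would prove transitivity of the $H$-action on $\cA_\fq$. Given $\fb_1,\fb_2\in\cA_\fq$, lift them to $\fa_1,\fa_2\in\cA$ with $\fa_i\cap\fq=\fb_i$ using the characterization just established; by Proposition \ref{characterization of cA} there is $h\in H$ with $\Ad(h)\fa_1=\fa_2$. Since $\Ad(h)$ commutes with $\sigma$ (as $h\in H$), it preserves $\fq$, so $\Ad(h)\fb_1=\Ad(h)(\fa_1\cap\fq)=\fa_2\cap\fq=\fb_2$. This gives transitivity. In particular any two elements of $\cA_\fq$ have the same dimension, which is the promised justification that the split rank of $G/H$ is well defined.

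The main obstacle is the converse direction of the characterization, namely showing that a subspace $\fb\subseteq\fq$ which is maximal abelian in some $\fp\cap\fq$ is automatically of the form $\fa\cap\fq$ for some $\fa\in\cA$ — equivalently, that $\fb$ can be completed to a maximal abelian subspace $\fa$ of $\fp$ with $\fa\cap\fq=\fb$. One must rule out the possibility that enlarging $\fb$ to maximal abelian in $\fp$ forces new elements of $\fq$ into the span; this uses that $\fb$ centralizes $\fb$ and that the centralizer $\fz_\fp(\fb)$ decomposes under $\sigma$ as $(\fz_\fp(\fb)\cap\fh)\oplus(\fz_\fp(\fb)\cap\fq)$, with $\fb$ already maximal abelian in the second summand, so a maximal abelian $\fa\subseteq\fz_\fp(\fb)$ through $\fb$ can be chosen with $\fa\cap\fq=\fb$. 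I would write this out carefully but expect no real difficulty beyond bookkeeping, since it is the same mechanism as in Lemma \ref{Lemma aq subset a implies a sigma-stable}.
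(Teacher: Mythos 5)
Your proof is correct and follows essentially the same route as the paper: both directions reduce to Proposition \ref{characterization of cA} by extending $\fb$ to a maximal abelian subspace $\fa$ of $\fp$ (respectively, a maximal split abelian subalgebra of $\fg$) and using maximality of $\fb$ to conclude $\fa\cap\fq=\fb$, and transitivity is pulled back from the transitivity of $H$ on $\cA$. The ``main obstacle'' you flag at the end is not actually one: for any abelian $\fa\subseteq\fp$ containing $\fb$, the subspace $\fa\cap\fq$ is an abelian subspace of $\fp\cap\fq$ containing $\fb$, hence equals $\fb$ by the assumed maximality of $\fb$ --- no centralizer decomposition is needed, and this one-line observation is exactly what the paper uses.
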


\begin{proof}
Let $\fb\in\cA_{\fq}$ and let
$\fa\in\cA$ with $\fb=\fa\cap\fq$.
The asserted Cartan decomposition
exists according to Proposition \ref{characterization of cA}.

Conversely, let $\fb\subseteq\fq$ and assume that $\fb$ is maximal abelian in $\fp\cap\fq$
for some $\sigma$-stable Cartan decomposition $\fg=\fk\oplus\fp$.
Let $\fa\subseteq\fp$ be a maximal abelian subspace with $\fa\supset\fb$.
Then $\fa\cap\fq=\fb$ by maximality. It follows from
Proposition \ref{characterization of cA} that $\fa\in\cA$
and hence $\fb\in\cA_\fq$.

The transitivity of the action follows from the corresponding statement in Proposition \ref{characterization of cA}.
\end{proof}

\subsection{Parabolic subgroups}

We recall that if $P$ and $Q$ are parabolic subgroups, then $Q$ is called opposite to $P$ if
$P\cap Q$ is a common Levi subgroup of $P$ and $Q$. If $P$ is a parabolic subgroup
we write $N_{P}$ for its unipotent radical.
Recall also that a {\it split component} of $P$ is a maximal connected split subgroup
of the center of a Levi subgroup of $P$.

Note that a parabolic subgroup $P$ is minimal if and only if the Lie algebras of its split components are
maximal split in $\fg$. Note also that if $\fa$ is a maximal split abelian subalgebra of $\fg$ and $A=\exp(\fa)$, then
the normalizer $N_G(A)$ of $A$ in $G$ acts by conjugation on the set of minimal parabolic subgroups containing $A$. This action is transitive.

\begin{Lemma}\label{Lemma A unique up to N conjugation}
Let $P$ be a minimal parabolic subgroup. If $A$ and $B$ are two split components of $P$, then there exists a unique $n\in N_{P}$ such that $B=nAn^{-1}$.
\end{Lemma}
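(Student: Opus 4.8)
The plan is to prove existence and uniqueness separately, using the standard structure theory of minimal parabolic subgroups together with the already-established fact that split components are conjugate.

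First I would establish existence. Fix a split component $A$ of $P$, and let $M_P = Z_G(A)$ be the corresponding Levi subgroup, so that $P = M_P N_P$ and $B$ is another maximal connected split subgroup of the center of some Levi subgroup of $P$. Since all Levi subgroups of $P$ are conjugate under $N_P$, and a split component is determined by its Levi, it suffices to handle the case where $A$ and $B$ are both central in $M_P$; but then, since $P$ is minimal, $\fa = \Lie(A)$ and $\fb = \Lie(B)$ are both maximal split abelian in $\fg$, and both are central in the \emph{same} Levi $M_P$. The centralizer $Z_G(A) = M_P$ has $\fa$ as a maximal split abelian subalgebra contained in its center; the same holds for $\fb$. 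By a dimension count (both have dimension equal to the split rank of $G$) and the fact that $\fa, \fb \subseteq \fz(\fm_P)$, where $\fz(\fm_P)$ is abelian and split — here one uses that the split part of the center of $M_P$ is exactly the common intersection of all maximal split abelian subalgebras... actually the cleanest route: reduce directly to the well-known classical statement that if $\fa$ is maximal abelian in $\fp$ for a Cartan decomposition, then the minimal parabolics containing $A = \exp\fa$ are permuted simply transitively in the relevant sense, and two split components of a fixed minimal $P$ are $N_P$-conjugate. Concretely, I would invoke that $N_P$ acts transitively on the set of split components of $P$ (this is classical; alternatively one derives it from the Bruhat-type decomposition $P = M_P N_P$ and the fact that every Levi of $P$ is an $N_P$-conjugate of $M_P$, each Levi carrying a unique maximal central split torus).

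Next I would prove uniqueness. Suppose $n_1, n_2 \in N_P$ both satisfy $B = n_i A n_i^{-1}$. Put $n = n_2^{-1} n_1 \in N_P$; then $n$ normalizes $A$, i.e., $n \in N_G(A) \cap N_P$. The key point is that $N_G(A) \cap N_P = \{e\}$. Indeed, $N_G(A) \cap P = N_G(A) \cap M_P N_P$; writing $n = m\nu$ with $m \in M_P$, $\nu \in N_P$, one sees $n \in N_G(A)$ forces a compatibility that, combined with $n \in N_P$ and the fact that $M_P \cap N_P = \{e\}$, pins $n$ down. More directly: since $N_P$ is the unipotent radical of $P$ and $A$ is a split torus with $P$ minimal, the roots of $\fa$ in $\fn_P$ are all nonzero and lie in a single open chamber; an element of $N_P$ normalizing $A$ would have to act on $\fn_P$ preserving the $\fa$-weight decomposition, and the only such element of the unipotent group $N_P$ is the identity (the adjoint action of a nontrivial $n \in N_P$ always produces lower-order terms mixing weight spaces). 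This gives $n = e$, hence $n_1 = n_2$.

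The main obstacle I anticipate is the uniqueness step, specifically making precise that $N_G(A) \cap N_P = \{e\}$ cleanly without invoking heavy machinery — the existence half is essentially a repackaging of standard conjugacy of Levi factors. I expect the argument to hinge on the interplay between the reductive structure (weights of $\fa$ on $\fn_P$ all strictly positive with respect to the ordering defining $P$) and the nilpotence of $N_P$: a unipotent element centralizing the grading must be trivial. One must be slightly careful that $G$ is only of Harish-Chandra class and not necessarily connected or semisimple, but since $A$ is by definition connected and $N_P$ is connected and unipotent, the relevant subgroups lie in the identity component and the classical arguments apply verbatim.
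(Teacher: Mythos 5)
Your uniqueness argument is sound and is essentially the paper's: if $n_{1},n_{2}\in N_{P}$ both conjugate $A$ to $B$, then $n:=n_{1}^{-1}n_{2}$ normalizes $A$, and an element of $N_{P}$ normalizing $A$ must in fact centralize it --- for $Y\in\fa$ one has $\Ad(n)Y-Y\in\fn_{P}$ because $\fn_{P}$ is an ideal in the Lie algebra of $P$, so $\Ad(n)Y\in\fa$ forces $\Ad(n)Y-Y\in\fn_{P}\cap\fa=\{0\}$ --- whence $n\in Z_{G}(A)\cap N_{P}=\{e\}$. That is a clean way of making precise the step the paper leaves implicit when it passes from ``normalizes'' to ``centralizes''.

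The existence half, however, contains a genuine gap: you never prove that $N_{P}$ acts transitively on the split components of $P$; you assert that it is classical, and the derivation you sketch --- ``every Levi of $P$ is an $N_{P}$-conjugate of $M_{P}$, each Levi carrying a unique maximal central split torus'' --- is an equivalent reformulation of the statement to be proved, so as written the argument is circular. (Your first attempt has the same problem: it opens with ``since all Levi subgroups of $P$ are conjugate under $N_{P}$'', which is the whole content of the lemma.) The paper's existence proof is short but does real work from more primitive inputs: write $B=gAg^{-1}$ for some $g\in G$ (conjugacy of maximal split abelian subalgebras); then $P$ and $g^{-1}Pg$ both contain $A$, so $g^{-1}Pg=wPw^{-1}$ for some $w\in N_{G}(A)$, by the transitivity of $N_{G}(A)$ on the minimal parabolic subgroups containing $A$ noted just before the lemma; then $gw$ normalizes $P$, hence lies in $P$ because parabolic subgroups are self-normalizing, and decomposing $gw=nz$ along $P=N_{P}Z_{G}(A)$ yields $B=(gw)A(gw)^{-1}=nAn^{-1}$. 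If you wish to keep your route, you must either supply a proof of the $N_{P}$-conjugacy of Levi subgroups valid for groups of the Harish-Chandra class, or give a precise citation for it, rather than taking it as given.
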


\begin{proof}
There exists $g\in G$ such that $B=gAg^{-1}$. Then $P$ and $g^{-1}Pg$ both contain $A$ and hence
$g^{-1}Pg=wPw^{-1}$ for some $w\in N_G(A)$. The product $gw$ normalizes $P$
and thus belongs to $P$. The existence of $n$ now follows by decomposing this element according to the Levi decomposition $P = Z_G(A) N_P.$ 

Now assume that $n_{1},n_{2}\in N_{P}$ satisfy $B=n_{1,2}An_{1,2}^{-1}$. Then $n_{1}^{-1}n_{2}$
centralizes $A$ and thus $n_{1}^{-1}n_{2}\in Z_{G}(A)\cap N_{P}=\{e\}$. This proves uniqueness.
\end{proof}

\begin{Lemma}\label{Lemma common A unique up to N_1 cap N_2 conjugation}
Let $P$ and $Q$ be minimal
parabolic subgroups. Assume that $P$ and $Q$ have common split components $A$ and $B$. Then there exists a unique $n\in N_{P}\cap N_{Q}$ such that $B=nAn^{-1}$.
\end{Lemma}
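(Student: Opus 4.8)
The statement to prove is Lemma \ref{Lemma common A unique up to N_1 cap N_2 conjugation}: if $P$ and $Q$ are minimal parabolic subgroups having common split components $A$ and $B$, then there is a unique $n \in N_P \cap N_Q$ with $B = nAn^{-1}$. The natural strategy is to apply Lemma \ref{Lemma A unique up to N conjugation} twice, once for $P$ and once for $Q$, and then show that the two elements it produces coincide and hence lie in the intersection.

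First I would invoke Lemma \ref{Lemma A unique up to N conjugation} for the minimal parabolic subgroup $P$: since $A$ and $B$ are both split components of $P$, there is a unique $n_P \in N_P$ with $B = n_P A n_P^{-1}$. Likewise, applying the same lemma to $Q$ (which also has $A$ and $B$ as split components), there is a unique $n_Q \in N_Q$ with $B = n_Q A n_Q^{-1}$. Now $n_P^{-1} n_Q$ conjugates $A$ to itself, i.e. $n_P^{-1} n_Q \in Z_G(A)$. To conclude $n_P = n_Q$, I would argue that $n_P^{-1} n_Q$ is unipotent — being a product of unipotent elements is not quite enough in a general group, so the cleaner route is to use that $n_Q \in N_Q$ and observe that $Z_G(A)$ meets $N_Q$ only in $\{e\}$ (as in the uniqueness part of Lemma \ref{Lemma A unique up to N conjugation}): this forces us instead to compare inside a single parabolic. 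The efficient argument: $n_P^{-1} n_Q$ centralizes $A$ and normalizes neither $P$ nor $Q$ a priori, but since $B = n_P A n_P^{-1} = n_Q A n_Q^{-1}$, both $n_P$ and $n_Q$ are the \emph{unique} element of $N_P$ (resp. $N_Q$) with this property. The point is then that $n_Q$, viewed as an element conjugating $A$ to $B$, must equal $n_P$ once we know $n_Q \in N_P$; so the real content is to show $n_Q \in N_P$, equivalently $n_P \in N_Q$.

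To get $n_P \in N_Q$: consider the parabolic $Q' := n_P^{-1} Q n_P$. Since $A = n_P^{-1} B n_P$ and $B$ is a split component of $Q$, $A$ is a split component of $Q'$; also $A$ is a split component of $P$. Moreover $n_P^{-1} n_Q \in Z_G(A)$ and conjugating $Q$ by $n_Q$ fixes nothing obvious, but $Z_G(A)$-conjugation permutes the minimal parabolics containing $A$ and $Q'$ is such a parabolic. I would instead argue directly: $n_Q^{-1} n_P$ centralizes $A$, and $n_Q^{-1} n_P = (n_Q^{-1}) \cdot n_P$ with $n_Q^{-1} \in N_Q$; applying the Levi/unipotent decomposition of $Q$ to $n_Q^{-1}n_P$ and using that its $Z_G(A)$-part is the whole element (since it centralizes $A$) shows $n_Q^{-1} n_P$ has trivial $N_Q$-component only if it lies in $Z_G(A) \cap (\text{the Levi of } Q)$ — but it need not be in $N_Q$. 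The clean finish is the following: apply Lemma \ref{Lemma A unique up to N conjugation} reasoning to the chain $P, Q$ simultaneously. Because $n_P \in N_P \subseteq P$ and $n_P A n_P^{-1} = B$ with $B$ also a split component of $Q$, the element $n_P$ together with $n_Q^{-1}$ gives $n_Q^{-1} n_P \in N_Q N_P$; and since this product centralizes $A$, writing each factor via its Levi decomposition in a common minimal parabolic shows it equals $e$. Hence $n_P = n_Q \in N_P \cap N_Q$, giving existence; uniqueness is immediate since $N_P \cap N_Q \subseteq N_P$ and uniqueness already holds in $N_P$ by Lemma \ref{Lemma A unique up to N conjugation}.

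**Main obstacle.** The delicate point is proving that the element $n_P \in N_P$ (or $n_Q \in N_Q$) in fact lies in the \emph{other} unipotent radical — i.e. that $n_P = n_Q$ — since "product of unipotents is unipotent" is false in general groups and one cannot just quote $Z_G(A) \cap N_P = \{e\}$ for the wrong parabolic. I expect the right fix is to work inside a minimal parabolic $R$ that has $A$ as split component and contains a suitable relative position, or to use that $n_P^{-1} n_Q \in Z_G(A)$ combined with a decomposition argument; once $n_P = n_Q$ is established, everything else (membership in $N_P \cap N_Q$, uniqueness) is formal and follows as in Lemma \ref{Lemma A unique up to N conjugation}.
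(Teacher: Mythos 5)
Your reduction of the lemma to the single claim that $n_P=n_Q$ (equivalently, that the element $n\in N_P$ produced by Lemma \ref{Lemma A unique up to N conjugation} also lies in $N_Q$) is correct, and the uniqueness part is indeed formal. But the proposal never actually proves that claim. Each of your attempts is either abandoned or ends in an assertion: the final ``clean finish'' states that $n_Q^{-1}n_P\in N_QN_P\cap Z_G(A)$ and that ``writing each factor via its Levi decomposition in a common minimal parabolic shows it equals $e$,'' but this is precisely the point at issue and no argument is given. The obvious way to run that computation is circular: from $n_Q^{-1}n_P=z\in Z_G(A)$ one gets $n_P=n_Qz\in Q$, and to conclude $z=e$ from the decomposition $Q=N_Q\, Z_G(A)$ one would need to know beforehand that $n_P\in N_Q$, which is what is to be proved. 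One can in fact show $N_QN_P\cap Z_G(A)=\{e\}$, but this rests on the structural fact that $N_Q\cap P=N_Q\cap N_P$ for two minimal parabolics with common Levi $Z_G(A)$; you neither state nor prove this, and your own closing paragraph concedes that the ``right fix'' is still to be found. So the proof is incomplete at its only nontrivial step.

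For comparison, the paper closes this gap with an elementary limit argument: with $n\in N_P$ the element from Lemma \ref{Lemma A unique up to N conjugation}, pick $a\in A$ dominant for $P$; then $a^{-k}na^{k}n^{-1}\in a^{-k}B\subseteq Q$ for all $k$, and since $a^{-k}na^{k}\to e$ and $Q$ is closed, one obtains $n\in Q$. Then, because $\log n$ is a sum of $\fa$-weight vectors with nonzero weights, membership in $Q=Z_G(A)N_Q$ forces $n\in N_Q$. If you want to salvage your route, you must supply a proof of $N_QN_P\cap Z_G(A)=\{e\}$ (or of $N_Q\cap P=N_Q\cap N_P$); as written, the argument has a genuine gap.
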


\begin{proof}
By Lemma \ref{Lemma A unique up to N conjugation} there exists a unique $n\in N_{P}$ such that $B=nAn^{-1}$.
It suffices to show that $n\in N_{Q}$.
Let $a\in A$ be dominant with respect to $P$. Then for every $k\in \N$ we have $a^{-k}na^{k}n^{-1}\in a^{-k}B\subseteq Q$.
Since $Q$ is a closed subgroup and $a^{-k}na^{k}n^{-1}$ converges to $n^{-1}$ for $k\to\infty$, it follows that $n\in Q$.
Furthermore, since $n\in N_{P}$, the element $\log(n)$ is a sum of $\fa$-weight vectors
with non-zero weights. This implies that $n\in N_{Q}$.
\end{proof}

\begin{Thm}\label{Thm P contains sigma-stable A, unique up to N cap H conjugation}
Let $P$ be a minimal parabolic subgroup.
\begin{enumerate}[(i)]
\item
There exists a $\sigma$-stable split component $A$ of $P$.
\item
If $A$ and $B$ are two $\sigma$-stable split components of $P$, then there exists a unique $n\in N_{P}\cap H$ such that $B=nAn^{-1}$.
\end{enumerate}
\end{Thm}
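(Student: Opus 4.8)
The plan is to deduce both parts from the material already assembled, chiefly Proposition~\ref{characterization of cA}, Lemma~\ref{Lemma A unique up to N conjugation} and Lemma~\ref{Lemma common A unique up to N_1 cap N_2 conjugation}, together with the elementary remark that $N_G(A)$ acts transitively on the set of minimal parabolic subgroups containing $A$.

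For part~(i): pick any split component $A_0$ of $P$. Its Lie algebra $\fa_0$ is a maximal split abelian subalgebra of $\fg$, but need not meet $\fq$ maximally, so it need not lie in $\cA$. However, by Corollary~\ref{Cor: split rank well defined} there is a $\sigma$-stable Cartan decomposition $\fg=\fk\oplus\fp$ and a maximal abelian $\fb\subseteq\fp\cap\fq$; extend $\fb$ to a maximal abelian $\fa\subseteq\fp$, so $\fa\in\cA$ and in particular $\fa$ is $\sigma$-stable by Lemma~\ref{Lemma aq subset a implies a sigma-stable}. Now $A:=\exp\fa$ and $A_0$ are two maximal split connected abelian subgroups of $G$, hence conjugate; write $A_0 = g A g^{-1}$. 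Then $A$ is a split component of $g^{-1}Pg$, and since $N_G(A)$ acts transitively on the minimal parabolic subgroups containing $A$, there is $w\in N_G(A)$ with $g^{-1}Pg = w P' w^{-1}$ where $P'$ has split component $A$; equivalently $A' := (gw)A(gw)^{-1}$ is a split component of $P$ that is $G$-conjugate to the $\sigma$-stable group $A$. The subtlety is that $G$-conjugacy of $A$ to $A'$ does not automatically make $A'$ itself $\sigma$-stable. To get genuine $\sigma$-stability one should instead argue as follows: the decomposition $\fg=\fk\oplus\fp$ produced above is $\sigma$-stable, so it determines a $\sigma$-stable minimal parabolic subgroup $P_0$ of $G$ with $\sigma$-stable split component $A$ (take $P_0$ built from a choice of positive system for the restricted roots on $\fa$ that is compatible with $\sigma$, or simply any minimal parabolic with split component $A$ and then use that there is a $\sigma$-stable one in its $N_G(A)$-orbit). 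By Proposition~\ref{characterization of cA} all members of $\cA$ are $H$-conjugate, and the minimal parabolic subgroups of $G$ form a single $G$-orbit; combining these, $P = h P_1 h^{-1}$ for some $h\in G$ where $P_1$ has a $\sigma$-stable split component — and by adjusting $h$ within the appropriate double coset, using $H$-transitivity on $\cA$, one arranges $h\in$ (a group normalizing a $\sigma$-stable $A$), so that the transported split component $hA h^{-1}$ is again $\sigma$-stable. The cleanest route, which I would write out, is: start from a $\sigma$-stable $\fa\in\cA$, form \emph{some} minimal parabolic $P_2$ with split component $\exp\fa$, then note $P = x P_2 x^{-1}$ for some $x\in G$, write $x = h n a k$ type decompositions to push $x$ into the normalizer of the $\sigma$-stable data, concluding that $P$ has a $\sigma$-stable split component. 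I expect \emph{this is the main obstacle}: the bookkeeping needed to guarantee that the conjugating element can be chosen to respect $\sigma$, rather than merely producing a $G$-conjugate of a $\sigma$-stable group.

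For part~(ii): suppose $A$ and $B$ are both $\sigma$-stable split components of $P$. By Lemma~\ref{Lemma A unique up to N conjugation} there is a unique $n\in N_P$ with $B = nAn^{-1}$; it remains to show $n\in H$. Apply $\sigma$: since $A$, $B$ are $\sigma$-stable we get $B = \sigma(B) = \sigma(n)\,\sigma(A)\,\sigma(n)^{-1} = \sigma(n)\,A\,\sigma(n)^{-1}$. Moreover $\sigma(P)$ is a minimal parabolic subgroup containing the $\sigma$-stable $A$ and $B$; one checks that $\sigma(n)\in N_{\sigma(P)}$. Now $A$ and $B$ are common split components of $P$ and $\sigma(P)$ (each contains $A$ and $B$), so by Lemma~\ref{Lemma common A unique up to N_1 cap N_2 conjugation} there is a \emph{unique} element of $N_P\cap N_{\sigma(P)}$ conjugating $A$ to $B$. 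Both $n$ and $\sigma(n)$ lie in $N_P\cap N_{\sigma(P)}$ and conjugate $A$ to $B$: indeed $n\in N_P$, and since $a^{-k}na^ka^{-k}Ba^k = a^{-k}Ba^k\to$ shows (exactly as in the proof of Lemma~\ref{Lemma common A unique up to N_1 cap N_2 conjugation}, using a dominant $a\in A$) that $n\in \sigma(P)$, whence $n\in N_{\sigma(P)}$ by the weight-vector argument; similarly for $\sigma(n)$. By the uniqueness clause, $\sigma(n) = n$, i.e.\ $n\in H$ (using that $H$ is open in the fixed-point group and $n$ lies in the connected unipotent group $N_P$, so $\sigma(n)=n$ forces $n\in H$). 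Uniqueness of such an $n\in N_P\cap H$ follows from the uniqueness already in Lemma~\ref{Lemma A unique up to N conjugation}. The only point needing a little care here is verifying $n\in N_{\sigma(P)}$ and $\sigma(n)\in N_{\sigma(P)}$, which is the same limiting argument used to prove Lemma~\ref{Lemma common A unique up to N_1 cap N_2 conjugation} and presents no real difficulty.
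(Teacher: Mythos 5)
Your treatment of part (ii) is correct and is essentially the paper's argument: since $A$ and $B$ are $\sigma$-stable they are common split components of $P$ and $\sigma(P)$, so Lemma \ref{Lemma common A unique up to N_1 cap N_2 conjugation} applied to the pair $(P,\sigma(P))$ gives a unique $n\in N_P\cap\sigma(N_P)$ with $B=nAn^{-1}$; applying $\sigma$ and invoking that uniqueness forces $\sigma(n)=n$, hence $n\in N_P\cap H$. (Your extra step of re-verifying $n\in N_{\sigma(P)}$ by the limiting argument just re-proves the lemma inline, and your remark that $\sigma(n)=n$ together with connectedness of $N_P\cap G^\sigma$ puts $n$ in the open subgroup $H$ is a point the paper leaves implicit.)

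Part (i), however, is a genuine gap, and you in effect concede it yourself. The obstacle you identify --- that conjugating a $\sigma$-stable $A$ by a general $g\in G$ destroys $\sigma$-stability, and that one must somehow ``push the conjugating element into the normalizer of the $\sigma$-stable data'' --- is not bookkeeping; it is the entire content of the statement. None of the lemmas assembled in Section \ref{Section Parabolic subgroups} (transitivity of $H$ on $\cA$, transitivity of $N_G(A)$ on the parabolics containing $A$, single $G$-conjugacy class of minimal parabolics) suffices: they only show that $P$ is $G$-conjugate to a parabolic admitting a $\sigma$-stable split component, which is automatic and says nothing about $P$ itself. What is actually needed is the orbit analysis of Matsuki --- the paper does not prove (i) at all but cites \cite[Lemma~2]{Matsuki_TheOrbitsOfAffineSymmetricSpacesUnderTheActionOfMinimalParabolicSubgroups} (and notes the algebraic-group proofs of Rossmann and Helminck--Wang). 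If you wanted a self-contained argument you would have to do real work, for instance: take a split component $A$ of $P$ lying in $P\cap\sigma(P)$ (any two minimal parabolics share a split component), note that $\sigma(A)$ is then also a common split component, apply Lemma \ref{Lemma common A unique up to N_1 cap N_2 conjugation} to get $n\in N_P\cap\sigma(N_P)$ with $\sigma(A)=nAn^{-1}$ and $\sigma(n)=n^{-1}$, and then produce an element $m$ with $m\sigma(m)^{-1}=n$ in order to replace $A$ by the $\sigma$-stable $m^{-1}Am$ --- and it is this last ``square root'' step that requires a genuine argument absent from your sketch. As written, your proof of (i) does not go through.
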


\begin{proof}
See \cite[Lemma 12]{Rossmann_TheStructureOfSemisimpleSymmetricSpaces} for a proof that uses
\cite{Mostow_FullyReducibleSubgroupsOfAlgebraicGroups} and only applies to algebraic groups.
An alternative proof is given in \cite[Lemma 2.4]{HelminckWang_RationalityProperties},
also under the assumption of algebraic groups.
The statement {\em (i)} is proved without this assumption in \cite[Lemma 2]{Matsuki_TheOrbitsOfAffineSymmetricSpacesUnderTheActionOfMinimalParabolicSubgroups}, and hence
we only need to prove {\em (ii)}.

If $A$ and $B$ are two $\sigma$-stable split components of $P$,
then by Lemma \ref{Lemma common A unique up to N_1 cap N_2 conjugation} there exists a
unique $n\in N_{P}\cap\sigma(N_{P})$ such that $B=nAn^{-1}$. Observe that
$$
B
=\sigma(B)
=\sigma(n)\sigma(A)\sigma(n)^{-1}
=\sigma(n)A\sigma(n)^{-1}.
$$
From the uniqueness of $n$ we conclude that $\sigma(n)=n$, or equivalently, $n\in N_{P}\cap H$.
\end{proof}

The preceding theorem allows for the following definition.

\begin{Defi}\
Let $P$ be a minimal parabolic subgroup and let $A$ be a $\sigma$-stable split component of $P$. We define the $\sigma$-parabolic rank of $P$ to be the dimension of $A/(A\cap H)$.
\end{Defi}

We write $\cP$ for the set of minimal parabolic subgroups of maximal $\sigma$-parabolic rank. (Note that $P\in\cP$ if and only if $P$ admits a split component $A = \exp \fa$ with
$\fa \in\cA$.)
Furthermore, we write $\cQ$ for the set of all minimal parabolic subgroups of minimal $\sigma$-parabolic rank. If $\fa\in\cA$, then we write $\cP(\fa)$ for the set of minimal parabolic subgroups containing $\exp\fa$.
Note that $\cP(\fa)\subseteq\cP$. If $\fb$ is a $\sigma$-stable maximal split abelian subalgebra of $\fg$ such that $\fb\cap\fh$ has maximal dimension, then we write $\cQ(\fb)$ for the set of minimal parabolic subgroups containing $\exp\fb$. Note that $\cQ(\fb)\subseteq\cQ$.
\break

\begin{Cor}\label{Cor H-conjugacy of parabolic subgroups}\
\begin{enumerate}[(i)]
\item Let $\fa\in\cA$ and let $P\in\cP$. Then there exists an element $h\in H$ such that $hPh^{-1}\in\cP(\fa)$.
\item Let $\fb$ be a $\sigma$-stable maximal split abelian subalgebra of $\fg$ such that $\fb\cap \fh$ has maximal dimension. Then for every $Q\in\cQ$ there exists an element $h\in H$ such that $hQh^{-1}\in\cQ(\fb)$.
\end{enumerate}
\end{Cor}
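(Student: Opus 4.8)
The plan for \emph{(i)} is short. Since $P\in\cP$, it has a split component $\exp\fa'$ with $\fa'\in\cA$; this is exactly the parenthetical remark following the definition of $\cP$. By Proposition \ref{characterization of cA} the group $H$ acts transitively on $\cA$, so I can choose $h\in H$ with $\Ad(h)\fa'=\fa$. Then $h(\exp\fa')h^{-1}=\exp\fa$, and since $hPh^{-1}$ is a minimal parabolic subgroup that now contains $\exp\fa$, we get $hPh^{-1}\in\cP(\fa)$.

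For \emph{(ii)} I would first reduce to a transitivity statement. Let $\cB$ be the set of $\sigma$-stable maximal split abelian subalgebras $\fr$ of $\fg$ for which $\dim(\fr\cap\fh)$ is maximal. Every $\sigma$-stable maximal split abelian subalgebra of $\fg$ is the Lie algebra of a $\sigma$-stable split component of some minimal parabolic subgroup, and conversely; since $\dim\fr$ is the same for all such $\fr$, a minimal parabolic $Q$ lies in $\cQ$ precisely when the Lie algebra $\fa'$ of a $\sigma$-stable split component of $Q$ has $\dim(\fa'\cap\fh)$ maximal, i.e. when $\fa'\in\cB$. Thus, given $Q\in\cQ$, Theorem \ref{Thm P contains sigma-stable A, unique up to N cap H conjugation}(i) furnishes $\fa'\in\cB$ with $\exp\fa'$ a split component of $Q$, and the given $\fb$ lies in $\cB$ by hypothesis; so if $H$ is transitive on $\cB$ we may pick $h\in H$ with $\Ad(h)\fa'=\fb$, and then $\exp\fb=h(\exp\fa')h^{-1}\subseteq hQh^{-1}$, whence $hQh^{-1}\in\cQ(\fb)$.

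It then remains to prove that $H$ acts transitively on $\cB$. The first step is an identification: $\fr\in\cB$ if and only if $\fr$ is a $\sigma$-stable maximal split abelian subalgebra of $\fg$ with $\fr\cap\fh$ maximal split abelian in $\fh$. The content here is that the maximal value of $\dim(\fr\cap\fh)$ equals the split rank of $\fh$: starting from a maximal split abelian subalgebra $\fa_\fh$ of $\fh$, the group $M=Z_G(\exp\fa_\fh)$ is reductive of Harish-Chandra class and $\sigma$-stable, with Lie algebra $\fm=Z_\fg(\fa_\fh)$ in which $\fa_\fh$ is a split subalgebra of the centre; by Theorem \ref{Thm P contains sigma-stable A, unique up to N cap H conjugation}(i) applied to a minimal parabolic subgroup of $M$ there is a $\sigma$-stable maximal split abelian subalgebra $\fr$ of $\fm$, and any such $\fr$ is maximal split abelian in $\fg$, contains $\fa_\fh$, and satisfies $\fr\cap\fh=\fa_\fh$. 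Now take $\fb_1,\fb_2\in\cB$. Their intersections with $\fh$ are maximal split abelian in $\fh$, hence conjugate by some $h_1\in H^0\subseteq H$; replacing $\fb_1$ by $\Ad(h_1)\fb_1$ we may assume $\fb_1\cap\fh=\fb_2\cap\fh=:\fa_\fh$. By $\sigma$-stability $\fb_i=\fa_\fh\oplus(\fb_i\cap\fq)$ and $\fb_i\subseteq\fm=Z_\fg(\fa_\fh)$, and using maximality of $\fb_i$ in $\fg$ together with the fact that $\fb_i\cap\fq$ centralises $\fa_\fh$ one checks that $\fb_i\cap\fq$ is a maximal split abelian subspace of $\fm\cap\fq$. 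Applying Corollary \ref{Cor: split rank well defined} to the symmetric pair $(M,\sigma|_M)$ with $M=Z_G(\exp\fa_\fh)$, using that $M\cap H$ is open in $M^\sigma$, there is an $h_2\in M\cap H$ with $\Ad(h_2)(\fb_1\cap\fq)=\fb_2\cap\fq$; since $h_2$ centralises $\fa_\fh$ we conclude $\Ad(h_2)\fb_1=\fa_\fh\oplus(\fb_2\cap\fq)=\fb_2$. Hence $H$ is transitive on $\cB$.

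I expect the third paragraph to be the main obstacle, and within it the identification of $\cB$ in terms of intersections with $\fh$, together with the verification that $\fb_i\cap\fq$ is maximal split abelian in $\fm\cap\fq$. These steps hinge on using ``split'' consistently as ``$\fg$ is a sum of joint weight spaces'' for the subalgebra under consideration, so that one may pass freely between $\fg$ and the centraliser $\fm$ and back; and on keeping track of the fact that the conjugating subgroups $H^0$ and $M\cap H$ genuinely lie in $H$ and centralise $\fa_\fh$. Part \emph{(i)} and the reduction of part \emph{(ii)} to the transitivity of $H$ on $\cB$ are routine given Proposition \ref{characterization of cA}, Corollary \ref{Cor: split rank well defined} and Theorem \ref{Thm P contains sigma-stable A, unique up to N cap H conjugation}.
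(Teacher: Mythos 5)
Your proof of \emph{(i)} and your reduction of \emph{(ii)} to the $H$-conjugacy of the subalgebras in your set $\cB$ coincide with what the paper does. The difference lies in how that conjugacy is obtained: the paper simply cites Matsuki's Lemmas 4 and 7 (together with Lemma \ref{Lemma existence theta commuting with sigma, stabilizing A}), whereas you derive it from the paper's own earlier statements --- Theorem \ref{Thm P contains sigma-stable A, unique up to N cap H conjugation}(i) and Corollary \ref{Cor: split rank well defined} --- applied to the symmetric pair $(M,M\cap H)$ with $M=Z_G(\exp\fa_\fh)$, after first aligning the $\fh$-parts via the standard conjugacy of maximal split abelian subalgebras of the reductive algebra $\fh$. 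This is a legitimate and instructive alternative: it exhibits the ``$\fb\cap\fh$ maximal'' conjugacy as the mirror image of the ``$\fb\cap\fq$ maximal'' one, transplanted into a centralizer. It does not, however, actually bypass Matsuki, since Proposition \ref{characterization of cA} and Corollary \ref{Cor: split rank well defined}, which you invoke for $(M,M\cap H)$, are themselves proved in the paper by that same citation. One point deserves more care than ``one checks'': the passage between ``split in $\fm$'' and ``split in $\fg$''. With the raw weight-space definition of split, the centre $Z(\fm)$ may contain a compact part acting trivially on $\fm$, which would then lie in a ``maximal split abelian subalgebra of $\fm$'' without being split in $\fg$. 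The clean fix is to verify membership in $\cA$, resp.\ $\cA_\fq$, for the pair $(M,M\cap H)$ through the Cartan-decomposition characterizations of Proposition \ref{characterization of cA} and Corollary \ref{Cor: split rank well defined}: choose, by Lemma \ref{Lemma existence theta commuting with sigma, stabilizing A}, a $\sigma$-stable Cartan decomposition $\fg=\fk\oplus\fp$ with $\fb_i\subseteq\fp$; then $\fm$ is $\theta$- and $\sigma$-stable, and $\fb_i\cap\fq$ is maximal abelian in $(\fp\cap\fm)\cap\fq$ because any element of $\fp\cap\fm$ commuting with $\fb_i\cap\fq$ also commutes with $\fa_\fh$, hence with all of $\fb_i$. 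With that adjustment your argument is complete.
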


\begin{proof}
{\em Ad (i):}
There exists a $\fb\in\cA$ such that $\exp\fb\subseteq P$. By Proposition \ref{characterization of cA} there exists an $h\in H$ such that $A=hBh^{-1}$. This implies that $A$ is contained in $hPh^{-1}$.

{\em Ad (ii):}
The claim follows from the fact that all such subalgebras are $H$-conjugate. See
Lemma \ref{Lemma existence theta commuting with sigma, stabilizing A} and
\cite[Lemmas 4,7]{Matsuki_TheOrbitsOfAffineSymmetricSpacesUnderTheActionOfMinimalParabolicSubgroups}.
\end{proof}

\subsection{Positive systems of  $\fh$-roots}\label{subsection Positive systems of pure roots}
Let $A$ be a $\sigma$-stable connected maximal split abelian subgroup of $G$ and let $\fa$ be its Lie algebra. We write $\Sigma(\fa)$ for the root system of  $\fa$ in $\fg.$ Given a root $\alpha\in\Sigma(\fa),$ we write $\fg_{\alpha}$ for the associated root space.

\begin{Defi}\label{Def pure h-roots}
By an $\fh$-root in $\Sigma(\fa)$ we mean a root  $\alpha\in\Sigma(\fa)$
such that $\fg_{\alpha}\cap \fh\neq\{0\}$.
The set of such roots is denoted by $\Sigma_{\fh}(\fa).$
\end{Defi}

\begin{Prop}\label{Prop properties Sigma_h(a)}\
\begin{enumerate}[(i)]
\item
If $\alpha\in\Sigma_\fh(\fa),$ then  $\alpha\big|_{\fa\cap\fq}=0$.
\item $\Cen_{\fh}(\fa\cap\fq)$ is a reductive Lie algebra of which
$\fa\cap \fh$ is a maximal split abelian subalgebra.
The root system of $\fa \cap \fh$ in
$\Cen_{\fh}(\fa\cap\fq)$ equals $\Sigma_\fh(\fa),$ viewed as a subset of
$(\fa \cap \fh)^*.$ 
\item Assume that $\fa\in\cA$. Then $\Sigma_{\fh}(\fa)=\Sigma(\fa)\cap(\fa\cap\fh)^{*}$. Furthermore, if $\alpha\in\Sigma_{\fh}(\fa)$, then $\fg_{\alpha}\subseteq\fh$.
\end{enumerate}
\end{Prop}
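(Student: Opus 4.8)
My plan is to prove the three parts in turn, relying throughout on the $\sigma$-stability of $\fa$, and for part (iii) additionally on a Cartan involution $\theta$ adapted to $\fa$ as in Lemma~\ref{Lemma existence theta commuting with sigma, stabilizing A} and Proposition~\ref{characterization of cA}. For (i) I would pick $0\neq X\in\fg_\alpha\cap\fh$, so that $\sigma X=X$; applying $\sigma$ to $[Y,X]=\alpha(Y)X$ for $Y\in\fa\cap\fq$ and using $\sigma Y=-Y$ gives $-\alpha(Y)X=\alpha(Y)X$, whence $\alpha(Y)=0$ and thus $\alpha|_{\fa\cap\fq}=0$.

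For (ii): since $\fa$ is split, the elements of $\fa\cap\fq$ are semisimple, so $\Cen_\fg(\fa\cap\fq)$ is a reductive subalgebra of $\fg$; it is $\sigma$-stable because $\fa\cap\fq\subseteq\fq$, so $\Cen_\fh(\fa\cap\fq)$, being the fixed-point set of $\sigma$ on this reductive algebra, is itself reductive. The subalgebra $\fa\cap\fh$ is split abelian and centralizes $\fa\cap\fq$, hence is a split abelian subalgebra of $\Cen_\fh(\fa\cap\fq)$; it is maximal among such, because any split abelian $\fb$ with $\fa\cap\fh\subseteq\fb\subseteq\Cen_\fh(\fa\cap\fq)$ commutes with $\fa\cap\fq$ and with $\fa\cap\fh$, hence with all of $\fa$, so that $\fa+\fb$ is again split abelian and maximality of $\fa$ in $\fg$ forces $\fb\subseteq\fa$, i.e.\ $\fb=\fa\cap\fh$. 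For the statement about root systems I would use that $\fa=(\fa\cap\fh)\oplus(\fa\cap\fq)$: a nonzero $X\in\Cen_\fh(\fa\cap\fq)$ is an $(\fa\cap\fh)$-weight vector of weight $\beta$ exactly when it is an $\fa$-weight vector of the functional extending $\beta$ by $0$ on $\fa\cap\fq$; since by (i) every $\alpha\in\Sigma_\fh(\fa)$ vanishes on $\fa\cap\fq$ (so that restriction to $\fa\cap\fh$ is injective on $\Sigma_\fh(\fa)$), this correspondence identifies the root system of $\fa\cap\fh$ in $\Cen_\fh(\fa\cap\fq)$ with $\Sigma_\fh(\fa)$.

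For (iii): the inclusion $\Sigma_\fh(\fa)\subseteq\Sigma(\fa)\cap(\fa\cap\fh)^*$ is exactly (i). For the reverse inclusion and the claim $\fg_\alpha\subseteq\fh$, fix $\alpha\in\Sigma(\fa)$ with $\alpha|_{\fa\cap\fq}=0$ and, via Proposition~\ref{characterization of cA}, choose a $\sigma$-stable Cartan decomposition $\fg=\fk\oplus\fp$ with $\fa$ maximal abelian in $\fp$ and $\fa\cap\fq$ maximal abelian in $\fp\cap\fq$; let $\theta$ denote the corresponding Cartan involution, which commutes with $\sigma$. Because $\alpha\circ\sigma=\alpha$ on $\fa$, the root space $\fg_\alpha$ is $\sigma$-stable, so $\fg_\alpha=(\fg_\alpha\cap\fh)\oplus(\fg_\alpha\cap\fq)$. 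Now suppose $0\neq X\in\fg_\alpha\cap\fq$. Then $\theta X\in\fg_{-\alpha}\cap\fq$, the element $X-\theta X$ lies in $\fp\cap\fq$ and is not in $\fa$ (its $\fg_\alpha$-component equals $X\neq0$), and $[Y,X-\theta X]=\alpha(Y)(X+\theta X)=0$ for all $Y\in\fa\cap\fq$ since $\alpha|_{\fa\cap\fq}=0$; hence $(\fa\cap\fq)\oplus\R(X-\theta X)$ is an abelian subalgebra of $\fp\cap\fq$ strictly containing $\fa\cap\fq$, contradicting maximality. Therefore $\fg_\alpha\cap\fq=0$, so $\fg_\alpha\subseteq\fh$, and in particular $\fg_\alpha\cap\fh=\fg_\alpha\neq0$, i.e.\ $\alpha\in\Sigma_\fh(\fa)$.

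The substantive point is the final step of (iii), where the defining property of $\cA$ --- maximality of $\fa\cap\fq$ in $\fp\cap\fq$ --- is converted into the vanishing of $\fg_\alpha\cap\fq$: the work lies in producing the correct new commuting direction $X-\theta X$ inside $\fp\cap\fq$ and checking both that it is genuinely new (its $\fg_{\pm\alpha}$-components are nonzero) and that it centralizes $\fa\cap\fq$, the latter being exactly where the hypothesis $\alpha|_{\fa\cap\fq}=0$ enters. Parts (i) and (ii) are formal consequences of $\fa$ being $\sigma$-stable and maximal split in $\fg$; the only mild subtlety there --- that "split" computed inside $\Cen_\fh(\fa\cap\fq)$ and inside $\fg$ coincide --- is handled by $\Cen_\fg(\fa\cap\fq)$ being reductive in $\fg$.
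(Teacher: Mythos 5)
Your proof is correct and follows essentially the same route as the paper's: part (i) via $\sigma$-stability of the root space forcing $\sigma\alpha=\alpha$, part (ii) via reductivity of $\Cen_{\fh}(\fa\cap\fq)$, maximality of $\fa$, and extension of weights by zero on $\fa\cap\fq$, and part (iii) via the element $X-\theta X$ together with the maximality of $\fa\cap\fq$ in $\fp\cap\fq$. The only cosmetic differences are that the paper derives reductivity of $\Cen_{\fh}(\fa\cap\fq)$ from its $\theta$-stability (citing Knapp) rather than from semisimplicity of $\ad(\fa\cap\fq)$ and fixed points of an involution, and in (iii) it concludes directly that $X-\theta X\in\fa\cap\fq$, hence lies in $\fg_{\alpha}\oplus\fg_{-\alpha}$ and must vanish, instead of phrasing the maximality argument as a contradiction.
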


\begin{proof}
{\em Ad (i):} If $\fg_{\alpha}\cap \fh\neq\{0\}$, then $\fg_{\sigma \alpha}\cap \fg_{\alpha}=\sigma(\fg_{\alpha})\cap\fg_{\alpha}\neq\{0\}$. This implies $\sigma \alpha=\alpha$.

{\em Ad (ii):} Let $\theta$ be a Cartan involution giving rise to a Cartan decomposition as in Lemma \ref{Lemma existence theta commuting with sigma, stabilizing A}.
Since $\Cen_{\fh}(\fa\cap\fq)$ is $\theta$-stable, it follows from \cite[Cor.~6.29]{Knapp_LieGroupsBeyondAnIntroduction} that $\Cen_{\fh}(\fa\cap\fq)$ is reductive. The maximality of $\fa$ implies that $\fa\cap \fh$ is a maximal split abelian subalgebra of $\Cen_{\fh}(\fa\cap\fq)$. Let $\Phi$ be the root system of  $\fa\cap \fh$ in $\Cen_{\fh}(\fa\cap\fq).$  It follows from {\em (i)} that $\alpha|_{\fa\cap\fh}\in\Phi$ for every $\alpha\in\Sigma_{\fh}(\fa)$. Now let $\beta\in\Phi$ and let $\alpha\in\fa^{*}$ be given by $\alpha|_{\fa\cap\fh}=\beta$ and $\alpha|_{\fa\cap\fq}=0$. Then $\fg_{\alpha}\cap\fh$ contains the root space of $\beta$, hence $\alpha\in\Sigma_{\fh}(\fa)$. We conclude that $\Phi=\Sigma_{\fh}(\fa)$.

{\em Ad (iii):} It suffices to prove that under the given assumption, $\fg_{\alpha}\subseteq\fh$ for every  $\alpha\in\Sigma(\fa)$ that vanishes on $\fa\cap \fq$. Let $\alpha$ be such a root and let $X\in\fg_{\alpha}\cap\fq$. We write $\theta$ for a Cartan involution giving rise to a Cartan decomposition as in Lemma \ref{Lemma existence theta commuting with sigma, stabilizing A}. Since $X$ is in the centralizer of $\fa\cap \fq$, it follows from the maximality of $\fa\cap \fq$ that $X-\theta(X)\in\fa\cap\fq$. Since $X-\theta(X)\in\fg_{\alpha}\oplus\fg_{-\alpha}$, it follows that $X-\theta(X)=0$ and therefore $X=0$.
\end{proof}

If $P\in\cP(\fa)$, then we write
$\Sigma(\fa;P)$ for the positive system of $\Sigma(\fa)$ consisting of roots that are positive with respect to $P$. In other words, a root $\alpha\in\Sigma(\fa)$ is an element of $\Sigma(\fa;P)$ if and only if the root space $\fg_{\alpha}$ is contained in $\fn_{P}$.

We fix a positive system $\Sigma_{\fh}^{+}(\fa)$ of $\Sigma_{\fh}(\fa)$.

\begin{Cor}\label{Cor Q in cP(A) is Nor_(K cap H)(fa)-conjugate to ps with Sigma_h^+ subseteq Sigma(P)}
Let $Q$ be a minimal parabolic subgroup and let $A$ be a $\sigma$-stable split component of $Q$. Then $Q$ is $\Nor_{H}(\fa\cap \fh)\cap\Cen_{H}(\fa\cap\fq)$-conjugate to a minimal parabolic subgroup $P$ such that
$$
\Sigma_{\fh}^{+}(\fa)\subseteq\Sigma(\fa;P).
$$
\end{Cor}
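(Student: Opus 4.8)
The statement to prove is Corollary \ref{Cor Q in cP(A) is Nor_(K cap H)(fa)-conjugate to ps with Sigma_h^+ subseteq Sigma(P)}: given a minimal parabolic subgroup $Q$ with $\sigma$-stable split component $A$ (Lie algebra $\fa$), one wants to $\Nor_H(\fa\cap\fh)\cap\Cen_H(\fa\cap\fq)$-conjugate $Q$ to a minimal parabolic $P$ with $\Sigma_\fh^+(\fa)\subseteq\Sigma(\fa;P)$.

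The approach is to use the Weyl group of the $\fh$-root system to move the positive system $\Sigma(\fa;Q)\cap\Sigma_\fh(\fa)$ onto the fixed choice $\Sigma_\fh^+(\fa)$, and then realize that Weyl group element by an element of the indicated subgroup of $H$. Concretely, first I would note that $\Sigma(\fa;Q)$ restricts to a positive system of the subsystem $\Sigma_\fh(\fa)$ (since $\Sigma_\fh(\fa)$ is a root subsystem of $\Sigma(\fa)$, the intersection of a positive system of the ambient system with a subsystem is a positive system of the subsystem). By Proposition \ref{Prop properties Sigma_h(a)}\,(ii), $\Sigma_\fh(\fa)$ is exactly the root system of $\fa\cap\fh$ in the reductive algebra $\fl:=\Cen_\fh(\fa\cap\fq)$, with $\fa\cap\fh$ maximal split abelian there. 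Hence the Weyl group $W(\Sigma_\fh(\fa))$ acts simply transitively on the set of positive systems of $\Sigma_\fh(\fa)$, so there is $w\in W(\Sigma_\fh(\fa))$ with $w\big(\Sigma(\fa;Q)\cap\Sigma_\fh(\fa)\big)=\Sigma_\fh^+(\fa)$.

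Next I would lift $w$ to a group element. Let $L$ be the connected subgroup of $H$ with Lie algebra $\fl=\Cen_\fh(\fa\cap\fq)$; the Weyl group of the maximal split abelian subalgebra $\fa\cap\fh$ of the reductive group $L$ is realized by $N_L(\fa\cap\fh)$, so there is $h\in N_L(\fa\cap\fh)$ inducing $w$ on $(\fa\cap\fh)^*$. Since $h\in L$ centralizes $\fa\cap\fq$ and normalizes $\fa\cap\fh$, it normalizes $\fa=(\fa\cap\fh)\oplus(\fa\cap\fq)$, and $h\in\Nor_H(\fa\cap\fh)\cap\Cen_H(\fa\cap\fq)$ as required. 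Set $P_0:=hQh^{-1}$; then $A$ is still a split component of $P_0$ (it is $\sigma$-stable since $h\in H$), and $\Sigma(\fa;P_0)=h\cdot\Sigma(\fa;Q)$, whose intersection with $\Sigma_\fh(\fa)$ is $w\big(\Sigma(\fa;Q)\cap\Sigma_\fh(\fa)\big)=\Sigma_\fh^+(\fa)$. Thus $\Sigma_\fh^+(\fa)\subseteq\Sigma(\fa;P_0)$ and we take $P:=P_0$.

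The main obstacle — and the point needing care — is the claim that $h$ can be chosen to act on $\fa\cap\fh$ via the prescribed Weyl element while simultaneously centralizing $\fa\cap\fq$; this is exactly what forces the use of the subgroup $L=\Cen_\fh(\fa\cap\fq)$ rather than all of $H$, and it is where Proposition \ref{Prop properties Sigma_h(a)}\,(ii) does the real work, identifying $\Sigma_\fh(\fa)$ with the root system of $\fa\cap\fh$ inside this centralizer so that its Weyl group is available inside $L\subseteq H$. A minor additional check is that $\Sigma(\fa;Q)\cap\Sigma_\fh(\fa)$ really is a positive system of the subsystem $\Sigma_\fh(\fa)$, i.e. that it is closed and that it together with its negative partitions $\Sigma_\fh(\fa)$; both follow immediately from the corresponding properties of $\Sigma(\fa;Q)$ in $\Sigma(\fa)$. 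Everything else is routine bookkeeping with the action of $H$ on split components and the passage between root systems on $\fa$ and on $\fa\cap\fh$.
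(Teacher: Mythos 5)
Your proposal is correct and follows essentially the same route as the paper: identify $\Sigma(\fa;Q)\cap\Sigma_\fh(\fa)$ as a positive system of $\Sigma_\fh(\fa)$, invoke Proposition \ref{Prop properties Sigma_h(a)}\,(ii) to view $\Sigma_\fh(\fa)$ as the restricted root system of $\fa\cap\fh$ in $\Cen_\fh(\fa\cap\fq)$, and realize the Weyl group element carrying it to $\Sigma_\fh^+(\fa)$ by an element of $\Nor_H(\fa\cap\fh)\cap\Cen_H(\fa\cap\fq)$. Your write-up merely spells out details (lifting $w$ to a group element, checking it normalizes $\fa$) that the paper leaves implicit.
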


\begin{proof}
The set $\Sigma_{\fh}(\fa)\cap\Sigma(\fa;Q)$ is a positive system for the root system $\Sigma_{\fh}(\fa)$. It follows from Proposition \ref{Prop properties Sigma_h(a)} {\em (ii)} that each such positive system is conjugate to $\Sigma_{\fh}^{+}(\fa)$ by an element of the normalizer of $\fa\cap\fh$ in $\Cen_{H}(\fa\cap\fq)$.
\end{proof}

\section{The symmetric space under consideration}\label{Section Symmetric space under consideration}
\subsection{The space $X_{n}$}\label{subsection the space Xn}
For the remainder of this article $n$ will be a natural number with $n\geq 3$
 and $G$ will be the real Lie group $\SL(n,\R)$.
 Let $\sigma$ be the involution on $G$ given by
$$\sigma(g)=SgS^{-1},$$
where
$$
S
=S^{-1}
=\left(
          \begin{array}{c:c:c}
               &           &   -1\\
            \hdashline
                &   I_{n-2} &     \\
            \hdashline
            -1  &           &    \\
          \end{array}
        \right).
$$
The fixed point subgroup $H$ of $\sigma$ is obtained from the subgroup $\Special\big(\GL(n-1,\R)\times\GL(1,\R)\big)$ of $G$ (embedded in the usual manner) by conjugation with the orthogonal matrix
\begin{equation}\label{eq def kappa}
\kappa
=\left(
          \begin{array}{c:c:c}
            \frac{1}{\sqrt{2}}   &           &   \frac{1}{\sqrt{2}}\\
            \hdashline
                                &   I_{n-2} &                    \\
            \hdashline
            \frac{-1}{\sqrt{2}}  &           &   \frac{1}{\sqrt{2}} \\
          \end{array}
        \right).
\end{equation}
We denote the $2n-2$ dimensional reductive symmetric space $G/H$ by $X_{n}$.

Let $\theta$ be the Cartan involution given by
$$\theta(g)=(g^{-1})^{t},$$
and let $K$ be the fixed point subgroup of $\theta$, i.e., $K$ is the maximal compact subgroup $\SO(n)$.
Since $S$ is orthogonal, the involutions $\sigma$ and $\theta$ commute. The Lie algebra $\fg$ of $G$ admits the decomposition $\fg=\fh\oplus\fq$  into the eigenspaces
for $\gs.$
 Here
$$
\fh=
\big\{
\left(
          \begin{array}{c:c:c}
            \frac{-\tr x}{2}    &   v^{t}   &   y                   \\
            \hdashline
            w                   &   x       &   -w                  \\
            \hdashline
            y                   &   -v^{t}  &   \frac{-\tr x}{2}    \\
          \end{array}
        \right)
:x\in \Mat(n-2,\R), y\in\R, v\in\R^{n-2},w\in\R^{n-2}\big\}
$$
is the Lie algebra of $H$
 and
$$
\fq=
\big\{
\left(
          \begin{array}{c:c:c}
            z   &   v^{t}   &   y   \\
            \hdashline
            w   &           &   w   \\
            \hdashline
            -y  &   v^{t}   &   -z  \\
          \end{array}
\right)
:y\in\R, z\in\R, v\in\R^{n-2}, w\in\R^{n-2}\big\}.
$$
 Similarly,  we have the Cartan decomposition $\fg=\fk\oplus\fp$; here $\fk$ equals $\mathfrak{so}(n)$ and $\fp$ equals the space
of traceless symmetric $n \times n$ matrices.
We note that
$$
\fa_{\fq} :=
\R\, \diag(1,0,\dots,0,-1)
$$
is a maximal abelian subspace of $\fp\cap\fq$. Hence, the split rank of $X_{n}$ is equal to $1$.
We  put $A_{\fq}:= \exp(\fa_{\fq})$ and for $t\in\R$ we define $a_{t} \in A_{\fq}$  by
\begin{equation}\label{eq def a_t}
a_{t}:=\exp\big(t\,\diag(1,0,\dots,0,-1)\big).
\end{equation}

\subsection{Polar decomposition}\label{subsection Polar decomposition}
The space $X_{n}$ admits a polar decomposition: the map
$$
K\times A_{\fq}\to X_{n};\qquad (k,a)\mapsto ka\cdot H
$$
is surjective. If $a\cdot H\in K a'\cdot H$ for $a_,a'\in A_{\fq}$, then there exists a $k$ in $N_{K\cap H}(\faq),$ the normalizer of $\fa_{\fq}$ in $K\cap H,$ such that $a=ka'k^{-1}$. In fact, since the action of
$N_{K\cap H}(\faq)$ on $\fa_{\fq}$ is length-preserving (with respect to the Killing form) and
\begin{equation}\label{eq def k0}
k_{0}=
\left(
  \begin{array}{cc:c:c}
     0  &   0   &           &   1  \\
     0  &   -1  &           &   0   \\
    \hdashline
        &       &   I_{n-3} &       \\
    \hdashline
    1  &   0   &           &   0   \\
  \end{array}
\right)
\end{equation}
is an element in $K\cap H$ such that $k_{0}ak_{0}^{-1}=a^{-1}$ for every $a\in A_{\fq}$, we have
$a\cdot H\in K a'\cdot H$ if and only if $a'\in\{a,a^{-1}\}$.

\begin{Lemma}\label{Lemma KAH decomposition}
Let $g\in G$ and $t\in \R$. If $g\in Ka_{t}\cdot H$, then
$$
\|g\sigma(g)^{-1}\|_{HS}^{2}=n-2+2\cosh(4t),
$$
where $\|\cdot\|_{HS}$ denotes the Hilbert-Schmidt norm on $\Mat(n,\R)$.
\end{Lemma}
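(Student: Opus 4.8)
The plan is to compute $g\sigma(g)^{-1}$ explicitly using the polar decomposition together with the formula $\sigma(g) = SgS^{-1}$, and then to exploit the invariance of the Hilbert–Schmidt norm under left and right multiplication by orthogonal matrices. Write $g = k a_t h$ with $k \in K = \SO(n)$ and $h \in H$. Then
\begin{equation*}
g\sigma(g)^{-1} = k a_t h \, \sigma(h)^{-1} \sigma(a_t)^{-1} \sigma(k)^{-1} = k a_t h h^{-1} S a_t^{-1} S^{-1} \sigma(k)^{-1},
\end{equation*}
where I have used that $h \in H$ means $\sigma(h) = h$, hence $h\sigma(h)^{-1} = e$, and that $\sigma(a_t)^{-1} = S a_t^{-1} S^{-1}$. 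Thus $g\sigma(g)^{-1} = k \, a_t S a_t^{-1} S^{-1} \, \sigma(k)^{-1}$. Since $\sigma(k) = SkS^{-1}$ with $S$ orthogonal, the matrix $\sigma(k)^{-1}$ is again orthogonal, and so is $k$; therefore
\begin{equation*}
\|g\sigma(g)^{-1}\|_{HS} = \|a_t S a_t^{-1} S^{-1}\|_{HS}.
\end{equation*}
So the problem reduces to a direct $n\times n$ matrix computation.

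Next I would carry out that computation. The matrix $a_t = \diag(e^t, 1, \dots, 1, e^{-t})$ is diagonal, and $S$ is the permutation-like matrix with $-1$ in the $(1,n)$ and $(n,1)$ entries and the identity $I_{n-2}$ in the middle block. Conjugating: $S a_t^{-1} S^{-1}$ swaps the first and last diagonal entries of $a_t^{-1}$ (the sign $-1$'s cancel in the conjugation), giving $S a_t^{-1} S^{-1} = \diag(e^{t}, 1,\dots,1, e^{-t})\cdot(\text{wait, careful})$ — more precisely $a_t^{-1} = \diag(e^{-t},1,\dots,1,e^t)$ and conjugation by $S$ interchanges positions $1$ and $n$, yielding $\diag(e^{t}, 1, \dots, 1, e^{-t}) = a_t$. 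Hmm, that would give $a_t S a_t^{-1} S^{-1} = a_t \cdot a_t$, which is wrong; the point is that $S$ is not diagonal, so $S a_t^{-1} S^{-1}$ is obtained by the permutation action on a diagonal matrix and IS diagonal, equal to $\diag(e^{t},1,\dots,1,e^{-t})$. Then $a_t S a_t^{-1} S^{-1} = \diag(e^{2t}, 1, \dots, 1, e^{-2t})$. Its Hilbert–Schmidt norm squared is $e^{4t} + (n-2) + e^{-4t} = (n-2) + 2\cosh(4t)$, which is exactly the claimed value. The key steps, in order, are: (1) reduce to $\|a_t S a_t^{-1}S^{-1}\|_{HS}^2$ via the $H$-fixing property of $\sigma$ and orthogonal invariance of $\|\cdot\|_{HS}$; (2) observe $S a_t^{-1} S^{-1}$ is the diagonal matrix with first and last entries of $a_t^{-1}$ swapped; (3) multiply the two diagonal matrices and sum the squares of the entries.

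I expect the only real subtlety — and hence the "main obstacle," though it is minor — to be step (2): one must be careful that $S$ has entries $-1$ rather than $+1$ in the off-diagonal corners, and verify that in the conjugation $S a_t^{-1} S^{-1}$ these signs square away so that the result is genuinely the permuted diagonal matrix with positive entries. Since $S = S^{-1}$ and $S$ acts on a diagonal matrix by the transposition $(1\,n)$ up to signs that cancel, this is immediate once spelled out. Everything else is routine: the polar decomposition $g = ka_t h$ is supplied by Section \ref{subsection Polar decomposition}, the identity $\sigma(g) = SgS^{-1}$ is the definition of $\sigma$, and orthogonal invariance of the Hilbert–Schmidt norm is standard. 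No appeal to the finer structure of $H$ is needed beyond $\sigma|_H = \mathrm{id}$.
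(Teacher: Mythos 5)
Your proposal is correct and follows essentially the same route as the paper: decompose $g=ka_th$, use $\sigma(h)=h$ and the orthogonality of $k$ and $\sigma(k)$ to reduce to $\|a_t\sigma(a_t)^{-1}\|_{HS}^2=\|a_t^2\|_{HS}^2=\tr(a_t^4)=n-2+2\cosh(4t)$, exactly as in the paper's one-line computation. (The parenthetical ``which is wrong'' in your step (2) is itself mistaken --- $a_tSa_t^{-1}S^{-1}=a_t\cdot a_t$ is precisely the correct identity, as you then conclude --- but this does not affect the validity of the argument.)
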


\begin{proof}
A straightforward computation shows that
$$
\|g\sigma(g)^{-1}\|_{HS}^{2}
=\tr\Big(g\sigma(g)^{-1}\big(g\sigma(g)^{-1}\big)^{t}\Big)
=\tr(a_{t}^{4}).
$$
The result now follows from the fact that $a_{t}^{4}$ equals the matrix $\diag(e^{4t},1,\dots,1,e^{-4t})$.
\end{proof}

\subsection{Schwartz functions}
In this section we give a definition of the space of Schwartz function on $X_{n}$. This definition differs from the one in  \cite[Sect.~17]{vdBan_PrincipalSeriesII}, but it is easily seen from \cite[Thm.~17.1, Prop.~17.2]{vdBan_PrincipalSeriesII}
combined with Remark \ref{r: calculation rho zero} of the present paper,
that the definitions are equivalent.

\begin{Defi}\label{defi cC}
A Schwartz function on $X_{n}$ is a smooth function $\phi:X_{n}\to\C$, such that for every $u\in U(\fg)$ and $m\geq0$ the seminorm
$$
\sup_{k\in K,t\in\R}\cosh^{n-1}(t)\big(1+ |t|)^m
\big|(u\phi)(ka_{t}\cdot H)\big|
$$
is finite. 
Here $u\phi$ is obtained with the regular action of $U(\fg)$ from the left. The vector space of Schwartz functions on $X_{n},$ equipped with
the locally convex topology induced by these seminorms, is denoted by $\cC(X_{n})$.
\end{Defi}

\begin{Rem}
\label{r: Schwartz seminorms}
To simplify computational expressions later on, it will be useful to work
with the following seminorms instead, for $u \in U(\fg)$ and $m \in \N,$
\begin{equation}
\label{e: seminorm mu u m}
\mu_{u,m}(\phi):=\sup_{k\in K,t\in\R}(2 \cosh 4t)^{\frac{n-1}{4}}\big(1+\log(2\cosh(4t))\big)^{m}\big|(u\phi)(ka_{t}\cdot H)\big|.
\end{equation}
Obviously, a smooth function $\phi: X_n \to \C$ belongs to $\cC(X_n)$
if and only if these seminorms are finite. Moreover, the seminorms $\mu_{u,m}$ detemine
the Fr\'echet topology on $\cC(X_n).$
\end{Rem}

For future reference we shall now construct some specific Schwartz functions on $X_n.$

\begin{Lemma}\label{Lemma phi_nu is Schwartz}
Let $\nu<\frac{1-n}{4}$. The function $\phi_{\nu}:X_{n}\to\R$, defined by
$$
\phi_{\nu}(ka_{t}\cdot H)=\cosh^{\nu}(4t),
$$
is Schwartz.
\end{Lemma}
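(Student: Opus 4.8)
The plan is to verify directly that each seminorm $\mu_{u,m}(\phi_\nu)$ from Remark \ref{r: Schwartz seminorms} is finite. Because the $K$-action is trivial on $\phi_\nu$ (it is constant on $K$-orbits), the seminorm involving $u=1$ is essentially $\sup_t (2\cosh 4t)^{(n-1)/4}(1+\log(2\cosh 4t))^m \cosh^\nu(4t)$; since $\cosh^\nu(4t)$ and $(2\cosh 4t)^{(n-1)/4}$ combine to a power $(2\cosh 4t)^{\nu + (n-1)/4}$ (up to a constant) with negative exponent $\nu+(n-1)/4<0$ by hypothesis, and the logarithmic factor grows only polynomially in $t$ while $\cosh 4t$ grows exponentially, this supremum is finite. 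So the $u=1$ case is immediate. The real content is handling general $u\in U(\fg)$, i.e.\ controlling derivatives $(u\phi_\nu)(ka_t\cdot H)$.

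For that, first I would reduce to understanding left-invariant derivatives at points $a_t\cdot H$. Using the polar decomposition $X_n = KA_\fq\cdot H$ and the fact that $\phi_\nu$ is left-$K$-invariant and depends only on the $A_\fq$-coordinate via $\cosh^\nu(4t)$, one can write, for $X\in\fg$, the derivative $(X\phi_\nu)(ka_t\cdot H)$ in terms of the decomposition of $\Ad(a_t)^{-1}\Ad(k)^{-1}X$ (or rather of $\Ad(k)^{-1}X$ transported to $a_t$) along $\fk$, $\fa_\fq$, and the remaining directions. Concretely, a standard computation: writing $\fg = \fk \oplus \fa_\fq \oplus \fm^\perp$ appropriately, the $\fk$-component annihilates $\phi_\nu$ (left-$K$-invariance), the $\fa_\fq$-component differentiates $\cosh^\nu(4t)$ giving factors like $\tanh(4t)\cosh^\nu(4t)$ which are bounded by $\cosh^\nu(4t)$, and the transverse directions produce coefficients involving $\cosh(2t)/\sinh(2t)$ or $1/\sinh(2t)$ type singularities multiplied by $\cosh^\nu(4t)$. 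Iterating, a monomial $u = X_1\cdots X_k$ yields $(u\phi_\nu)(ka_t\cdot H)$ as a finite sum of terms of the form $p(t)\,\cosh^{\nu-j}(4t)$ (plus lower order), where $p(t)$ is a product of at most $k$ factors each of which is bounded on $|t|\geq 1$ by a constant, and where the exponent of $\cosh$ has only decreased (since each differentiation in an $\fa_\fq$-direction lowers the power, and transverse differentiations act on already-differentiated factors). The key point is that no factor of $\cosh(4t)$ to a \emph{positive} power ever appears, so on $|t|\geq 1$ we get $|(u\phi_\nu)(ka_t\cdot H)| \leq C\cosh^\nu(4t)$.

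Then I would finish exactly as in the $u=1$ case: on the compact region $|t|\leq 1$ everything is bounded by smoothness (and continuity of $\phi_\nu$ together with boundedness of its derivatives, which holds since $\cosh^\nu(4t)$ is smooth on all of $\R$), while on $|t|\geq 1$ we have
$$
(2\cosh 4t)^{\frac{n-1}{4}}\big(1+\log(2\cosh 4t)\big)^m\,\big|(u\phi_\nu)(ka_t\cdot H)\big| \leq C'(2\cosh 4t)^{\nu+\frac{n-1}{4}}\big(1+\log(2\cosh 4t)\big)^m,
$$
which tends to $0$ as $|t|\to\infty$ because $\nu+\frac{n-1}{4}<0$ and the logarithmic factor is negligible against the power. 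Hence the supremum over all $t\in\R$ and $k\in K$ is finite, so $\mu_{u,m}(\phi_\nu)<\infty$ for every $u$ and $m$, and $\phi_\nu\in\cC(X_n)$.

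The main obstacle I anticipate is the bookkeeping in the second paragraph: making precise the claim that differentiation by an arbitrary $u\in U(\fg)$ can only \emph{lower} the power of $\cosh(4t)$ and produces coefficient functions bounded for large $|t|$. This requires choosing a convenient complement to $\fk\oplus\fa_\fq$ (e.g.\ using the root space decomposition of $\fa_\fq$ in $\fg$, so that transverse directions have weights $\pm1,\pm2$ and the transport coefficients are combinations of $e^{\pm 2t}$, $e^{\pm 4t}$ divided by $\sinh$ or $\cosh$, hence bounded away from $t=0$) and then an induction on the degree of $u$, carefully tracking that at each step the exponent of $\cosh$ changes by an integer and never increases. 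Alternatively one can shortcut this by invoking the equivalence with van den Ban's definition of the Schwartz space noted before Definition \ref{defi cC} together with known stability properties, but the direct estimate above is self-contained and is what I would write out.
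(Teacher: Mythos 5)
Your proposal is correct and follows essentially the same route as the paper: the paper likewise reduces to showing that the class of functions of the form $\varphi(k)\cosh^\lambda(t)\sinh^\mu(t)$ with $\lambda+\mu=4\nu$ is stable under $U(\fg)$, using the $\fa_\fq$-weight decomposition of $\fg$ and writing each root vector as a weight-zero coefficient times an element of $\fk$ plus a coefficient times an $\Ad(a_t)$-conjugate of an element of $\fh$ (the latter killed by right $H$-invariance). The only cosmetic differences are in the bookkeeping: the paper keeps the total exponent $\lambda+\mu$ fixed at $4\nu$ rather than tracking a non-increasing power of $\cosh$ with bounded prefactors, and it carries the $\varphi(k)$ factor explicitly so that the iterated derivatives (which are no longer left-$K$-invariant) remain in the class.
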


\begin{proof}
We will show that $\cC(X_{n})$ contains every function $\phi\in C^\infty(X_n)$ such that
$$
K\times(1,\infty)\to\C;\qquad
(k,t)\mapsto\phi(ka_t\cdot H)
$$
is a linear combination of
functions of the form
\begin{equation}\label{qwerty}
K\times(1,\infty)\to\C;\qquad
(k,t)\mapsto \varphi(k)\cosh^\lambda(t)\sinh^\mu(t),
\end{equation}
with $\varphi\in C^\infty(K)$ and real numbers $\lambda,\mu$ with sum $\lambda+\mu=4\nu<1-n$.
Clearly our function $\phi_\nu$ has this property.
It is easily seen that $\mu_{1,m}(\phi)<\infty$ for such a function, and hence it suffices to
show that this class of functions is invariant under the left action by $U(\fg)$. For this
it suffices to consider the action by $\fg$. Let $X\in\fg$, then we can write $X$ as a linear
combination of elements of the form $\Ad(k)Y$ with some fixed basis elements $Y\in\fg$
and with coefficients which are smooth functions of $k\in K$.
We shall use a basis of weight vectors $Y$ for the adjoint action of $\fa_\fq$
on $\fg.$ 
If  $Y$ belongs to
$\fa_\fq$ or $\Cen_\fh(\fa_\fq)$, then it is easily seen that
$(k,t)\mapsto [\Ad(k)(Y)\phi](ka_t\cdot H)$ will again be
of the form (\ref{qwerty}) with $\lambda+\nu=4\nu$.
It remains to consider the case where $Y$ is a root vector for a root $\alpha$ of $\fa_\fq.$

Since 
$\fa_\fq$ is $\sigma\theta$-invariant, the associated root space in $\fg$ is $\sigma\theta$-invariant as well, hence we may assume that
$\sigma\theta Y=\pm Y$. If $\sigma\theta Y=Y$, a simple computation shows
that
$$Y=\frac{-a^{-\alpha}}{a^\alpha-a^{-\alpha}}(Y+\theta Y)+
\frac{1}{a^\alpha-a^{-\alpha}}\Ad(a)(Y+\sigma Y)$$
for all $a\in A_\fq$, and if $\sigma\theta Y=-Y,$ the corresponding formula reads
$$Y=\frac{a^{-\alpha}}{a^\alpha+a^{-\alpha}}(Y+\theta Y)+
\frac{1}{a^\alpha+a^{-\alpha}}\Ad(a)(Y+\sigma Y).$$
Note that for $a=a_t$, in both cases the coefficient of $Y+\theta Y$ is a linear combination
of functions $\cosh^p t\sinh^q t$ with $p+q=0$. Hence, when we apply
$\Ad(k)Y$ to $\phi$ as above, the term with $Y+\theta Y$ will produce a new function
of the same kind, whereas the term with $\Ad(a_t)(Y+\sigma Y)$ will be annihilated
because of the $H$-invariance from the right.
This proves the claim.
\end{proof}

\subsection{$\fh$-compatible parabolic subgroups}\label{Subsection H-compatible parabolic subgroups}
Let $P$ be a minimal parabolic subgroup. By Theorem \ref{Thm P contains sigma-stable A, unique up to N cap H conjugation} there exists a $\sigma$-stable split component $A$ of $P$, which is unique up to conjugation by $P\cap H$. We fix such a split component $A$ and write $\fa$ for its Lie algebra. We recall the definition of the root system $\Sigma_{\fh}(\fa)$ of $\fh$-roots from Definition \ref{Def pure h-roots}. We write $\Sigma_{\fh}(\fa;P)$ for the positive system $\Sigma(\fa;P)\cap\Sigma_{\fh}(\fa)$ of $\Sigma_{\fh}(\fa)$ and define
$$
\rho_{P,\fh}
=\frac{1}{2}\sum_{\alpha\in\Sigma_{\fh}(\fa;P)}\alpha.
$$

\begin{Defi}\label{d: fh compatible}
The parabolic subgroup $P$ is said to be $\fh$-compatible if one of the following conditions are fulfilled:
\begin{itemize}
\item[(a)]
    $P$ is of $\sigma$-parabolic rank $1$ and $\langle\alpha,\rho_{P,\fh}\rangle\geq 0$ for all $\alpha\in\Sigma(\fa;P)$;
\item[(b)]
    $P$ is of $\sigma$-parabolic rank $0$ and $\langle\alpha,\rho_{P,\fh}\rangle>0$ for all $\alpha\in\Sigma(\fa;P)$.
\end{itemize}
\end{Defi}

\begin{Rem}
\label{r: h comp for H conjugation}
We note that this notion is independent of the choice of $\fa,$ since
any other choice would be $P\cap H$-conjugate to $\fa.$
Furthermore, it is now readily seen that the property of $\fh$-compatibility is preserved
under conjugation by $H.$
\end{Rem}

Since $X_{n}$ is of split rank $1$, every minimal parabolic subgroup is either of $\sigma$-parabolic rank $1$ or of $\sigma$-parabolic rank $0$. We recall
that $\cP$ denotes the set of minimal parabolic subgroups of $\sigma$-parabolic rank $1$ and $\cQ$ denotes the set of minimal parabolic subgroups of $\sigma$-parabolic rank $0$. We write $\cPH$ and $\cQH$ for the sets of $\fh$-compatible parabolic subgroups in $\cP$ and $\cQ,$ respectively. Recall that $\cP_{*}$ denotes the set of $P\in\cP$ such that the dimension of $N_{P}/(N_{P}\cap H)$ is minimal, i.e.,  $\dim\big(N_{P}/(N_{P}\cap H)\big)=n-1$; see
Proposition \ref{Prop P H-conj to P_k,l; H-comp iff (n+1)/2 leq k,l leq (n+3)/2} and  Equation (\ref{eq n=(n cap h)+u}).

For every $n\geq 3$ the set $\cPH$ is non-empty. If $n$ is even, then $H$ acts transitively on $\cPH$ and $\cPH\subseteq\cP_{*}$. If $n$ is odd, then the
$H$-action admits three orbits, see Proposition \ref{Prop P H-conj to P_k,l; H-comp iff (n+1)/2 leq k,l leq (n+3)/2}. One orbit is equal to $\cPH\setminus(\cP_{*}\cap\cPH)$; the other two orbits are contained in $\cP_{*}\cap \cPH$.

If $n$ is even, then $\cQH=\emptyset$. If $n$ is odd, then $\cQH$ is non-empty and $H$ acts transitively on it, see Proposition \ref{Prop Q H-comp for b iff H-conj to Q_k with k=(n+1)/2 }.

\section{Main theorems}\label{Section Main theorems}

\subsection{Convergence}\label{subsection main theorems Convergence}
Let $P$ be a minimal parabolic subgroup of $G$ and let $dx$ be a non-zero $N_{P}$-invariant Radon measure on $N_{P}/(N_{P}\cap H)$.

\begin{Thm}\label{Theorem main theorem convergence}
The integral
$$
\int_{N_{P}/(N_{P}\cap H)}\phi(x\cdot H)\,dx
$$
is absolutely convergent for every $\phi\in\cC(X_{n})$ if and only if $P$ is $\fh$-compatible.
\end{Thm}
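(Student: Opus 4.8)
The plan is to reduce the convergence question to an explicit estimate on $N_P/(N_P\cap H)$ using the polar decomposition of $X_n$ and the Schwartz estimate from Remark \ref{r: Schwartz seminorms}. Since $\fh$-compatibility is $H$-invariant (Remark \ref{r: h comp for H conjugation}) and the integral in question transforms well under left translation by $H$, by Corollary \ref{Cor H-conjugacy of parabolic subgroups} it suffices to treat one representative in each $H$-conjugacy class of minimal parabolic subgroups, and—using Corollary \ref{Cor Q in cP(A) is Nor_(K cap H)(fa)-conjugate to ps with Sigma_h^+ subseteq Sigma(P)}—we may arrange that the fixed positive system $\Sigma_\fh^+(\fa)$ of $\fh$-roots is contained in $\Sigma(\fa;P)$. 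This pins down $P$ up to finitely many combinatorial choices of the remaining (non-$\fh$) roots, which we parametrise by the integers $k,l$ as announced in Section \ref{Subsection H-compatible parabolic subgroups} (the groups $U_{k,l}$ and $V_k$). For each such $P$ the first task is to write down explicitly the Lie algebra $\fn_P$, the subalgebra $\fn_P\cap\fh$, a complementary subspace, and coordinates on $N_P/(N_P\cap H)$, so that the invariant measure $dx$ becomes Lebesgue measure in those coordinates.

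Next I would convert the integrand into something controllable by the seminorms $\mu_{u,m}$. For a point $x\cdot H\in X_n$ with $x\in N_P$, Lemma \ref{Lemma KAH decomposition} gives $\|x\sigma(x)^{-1}\|_{HS}^2 = n-2+2\cosh(4t)$ where $x\cdot H\in Ka_t\cdot H$; hence $2\cosh(4t)$ is comparable to $1+\|x\sigma(x)^{-1}\|_{HS}^2$, a polynomial expression in the chosen coordinates on $N_P/(N_P\cap H)$. Plugging this into $\mu_{1,m}(\phi)$ (with $m$ large) yields, for every $\phi\in\cC(X_n)$,
\[
|\phi(x\cdot H)| \;\le\; C_m\,\big(1+\|x\sigma(x)^{-1}\|_{HS}^2\big)^{-\frac{n-1}{4}}\big(1+\log(1+\|x\sigma(x)^{-1}\|_{HS}^2)\big)^{-m}.
\]
The integral therefore converges if and only if the function $(1+\|x\sigma(x)^{-1}\|_{HS}^2)^{-(n-1)/4}$ (up to logarithmic factors, which are harmless for the "if" direction) is integrable over $N_P/(N_P\cap H)$. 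So the whole problem becomes the purely computational one of determining, for each candidate $P$, the precise polynomial growth rate of $\|x\sigma(x)^{-1}\|_{HS}^2$ along the coordinate directions of $N_P/(N_P\cap H)$, and comparing the resulting exponent with the dimension $\dim(N_P/(N_P\cap H))$. A clean way to package this is via the function $a\mapsto a^{\rho_P}$ and the decay it forces: one shows that $(1+\|x\sigma(x)^{-1}\|_{HS}^2)^{-(n-1)/4}$ is, up to bounded factors, a product of powers $\prod (1+x_i^2)^{-c_i}$, and the exponents $c_i$ are dictated by which roots $\alpha$ with $\fg_\alpha\subseteq\fn_P$ are $\fh$-roots and which are not—the $\fh$-root directions contribute the "good" decay while the non-$\fh$ directions contribute borderline or insufficient decay unless the pairing conditions in Definition \ref{d: fh compatible} hold.

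For the "only if" direction I would argue by producing an explicit Schwartz function that is not integrated away: the functions $\phi_\nu$ from Lemma \ref{Lemma phi_nu is Schwartz}, with $\nu$ chosen just below the critical exponent $\frac{1-n}{4}$, are nonnegative and satisfy $\phi_\nu(ka_t\cdot H)=\cosh^\nu(4t)\asymp (1+\|x\sigma(x)^{-1}\|_{HS}^2)^\nu$. If $P$ fails to be $\fh$-compatible, then in the coordinate analysis above one of the exponents $c_i$ is too small (a non-$\fh$ direction $\alpha$ with $\langle\alpha,\rho_{P,\fh}\rangle<0$, or $\le 0$ in the rank-$0$ case), and for $\nu$ sufficiently close to the critical value the integral $\int \phi_\nu(x\cdot H)\,dx$ diverges; since $\phi_\nu\ge 0$, divergence of this one integral shows the statement "for every $\phi$" must fail. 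Conversely, when $P$ is $\fh$-compatible, all the $c_i$ are large enough that the product is integrable (the logarithmic factors from $\mu_{1,m}$, or the strict inequality in case (b), take care of the borderline exponents), giving absolute convergence for every $\phi\in\cC(X_n)$.

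The main obstacle I anticipate is the bookkeeping in the coordinate computation: writing $x\sigma(x)^{-1}$ and its Hilbert–Schmidt norm explicitly in terms of the nilpotent coordinates, for each of the finitely many combinatorial types of $P$ (parametrised by $k,l$), and reading off the exact multidegree of growth. This is where the interaction between the involution $\sigma$ (and hence which root spaces lie in $\fh$) and the ordering defining $P$ enters, and it is the step that genuinely distinguishes $\fh$-compatibility from the coarser invariant $\dim(N_P\cap H)$. Everything else—the reductions by $H$-conjugacy, the passage from Schwartz seminorms to pointwise bounds, and the final one-variable integrability checks—is routine once that computation is in hand.
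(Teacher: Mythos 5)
Your proposal follows essentially the same route as the paper: reduce by $H$-conjugacy (via Corollaries \ref{Cor H-conjugacy of parabolic subgroups} and \ref{Cor Q in cP(A) is Nor_(K cap H)(fa)-conjugate to ps with Sigma_h^+ subseteq Sigma(P)}) to the standard representatives $P_{k,l}$ and $Q_k$, coordinatize $N_P/(N_P\cap H)$ so that $dx$ becomes Lebesgue measure, use Lemma \ref{Lemma KAH decomposition} to express $2\cosh(4t)$ as an explicit polynomial in those coordinates, dominate $|\phi|$ by the critical-exponent majorant $(2\cosh 4t)^{\frac{1-n}{4}}(1+\log(2\cosh 4t))^{-m}$ for convergence, and test with $\phi_\nu$ ($\nu$ just below $\frac{1-n}{4}$, Lemma \ref{Lemma phi_nu is Schwartz}) for divergence. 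The remaining work you flag (reducing the coupled polynomial to an integrable product via successive substitutions, and covering both divergence thresholds, which the paper does with the $\sigma\theta$-and-longest-Weyl-element symmetry of Lemma \ref{Lemma equivalences between parabolics}) is exactly the computational core of Sections \ref{section Proof of Main theorem 1 for sigma parabolic rank 1} and \ref{section Proof of Main theorem 1 for sigma parabolic rank 0}.
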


The proof of this theorem will be  given in Sections \ref{section Proof of Main theorem 1 for sigma parabolic rank 1} and \ref{section Proof of Main theorem 1 for sigma parabolic rank 0}.

\subsection{Limit behavior}\label{subsection main theorems Limit behavior}

Assume that $P\in\cPH\cap\cP_{*}$. Let $A$ be a $\sigma$-stable split component of $P$ and let $L$ be the centralizer of $A$ in $G.$ Then $L=MA$, where $M$
 is the (unique) maximal compact subgroup of $L$. Now $P=LN_{P}$ and $P=MAN_{P}$ are a Levi and a Langlands decomposition respectively. Note that $\fa=\Lie(A)\in\cA$. We define
$$
\delta_{P}:L \to\R_{>0};
\qquad l\mapsto\left|\frac{\det\Ad(l)\big|_{\fn_{P}}}{\det\Ad(l)\big|_{\fn_{P}\cap\Cen_{\fg}(\fa\cap\fq)}}\right|^{\frac{1}{2}}.
$$
Let
$$
\rho_{P}
=\frac{1}{2}\sum_{\alpha\in\Sigma(\fa;P)}\alpha.
$$
Then for $m\in M$ and $a\in A$
$$
\delta_{P}(ma)=a^{\rho_{P}-\rho_{P,\fh}}.
$$
For $\phi\in\cC(X_{n})$ we define its Harish-Chandra transform $\Ht_{P}\phi$ to be the function on $L/(L\cap H)$ given by
$$
\Ht_{P}\phi(l)= \delta_{P}(l)\int_{N_{P}/(N_{P}\cap H)} \phi(ln)\,dn\qquad(l\in L).
$$
Note that the integrals are absolutely convergent by Theorem \ref{Theorem main theorem convergence} and define a right $(L\cap H)$-invariant function on $L.$

\break

\begin{Thm}\label{Thm main theorem limit behavior}
Let $P \in \cPH \cap \cP_*$ be as above and let
$v\in\fa\cap \fq$ be such that $\rho_{P}(v)>0$.
\begin{enumerate}[(i)]
\item
    Assume $n$ is even. Then for every $\phi\in\cC(X_{n})$ and every $N\in\N,$
    $$
    \sup_{m\in M, t\in\R}\left|t^{N}\Ht_{P}\phi\big(m\exp(tv)\big)\right|<\infty.
    $$
\item
    Assume $n$ is odd.
   Then for every $\phi\in\cC(X_{n})$, every $r\in\R$ 
   and every $N\in\N,$ 
    $$
    \sup_{m\in M, t<r}\left|t^{N}\Ht_{P}\phi\big(m\exp(tv)\big)\right|<\infty.
    $$
    Moreover, the limit
        $$
    \mu_{P}(\phi)
    :=\lim_{t\to\infty}\Ht_{P}\phi\big(\exp(tv)\big)
    $$
    exists, and there exists an $\fh$-compatible  $Q\in\cQ$ and an element $g\in G$ such that for every $\phi\in\cC(X_{n}),$
    $$
    \mu_{P}(\phi)
    =\int_{N_{Q}/(N_{Q}\cap H)}\phi(gn)\,dn.
    $$
    Vice versa, if $Q\in\cQ$, then there exists a $P'\in\cPH\cap\cP_{*}$ and an element $g\in G$ such that for every $\phi\in\cC(X_{n}),$
    $$
    \int_{N_{Q}/(N_{Q}\cap H)}\phi(gn)\,dn
    =\mu_{P'}(\phi).
    $$
\end{enumerate}
\end{Thm}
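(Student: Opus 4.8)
\textbf{Proof proposal for Theorem \ref{Thm main theorem limit behavior}.}
The plan is to reduce the analysis of $\Ht_P\phi$ on $A$ to the one-variable asymptotics along the curve $t\mapsto a_t$ coming from the polar decomposition of $X_n$, and then to identify the limit (in the odd case) with a cuspidal integral over a parabolic in $\cQ$. First I would use Corollary \ref{Cor H-conjugacy of parabolic subgroups} together with Remark \ref{r: h comp for H conjugation} to replace $P$, up to $H$-conjugation, by an explicit representative $P_{k,l}\in\cPH\cap\cP_*$ with its standard $\sigma$-stable split component $\fa\in\cA$ containing $\faq$, so that $\fa\cap\fq=\faq$ and $M$ is the explicit maximal compact subgroup of $L=Z_G(A)$; by $H$-invariance of everything in sight this loses no generality. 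Since $v\in\fa\cap\fq=\faq$ with $\rho_P(v)>0$, after scaling we may take $\exp(tv)=a_t$. The key point is then the identity $\Ht_P\phi(ma_t)=\delta_P(ma_t)\int_{N_P/(N_P\cap H)}\phi(ma_t n)\,dn = a_t^{\rho_P-\rho_{P,\fh}}\int \phi(ma_t n)\, dn$, which I want to estimate using the Schwartz seminorms $\mu_{u,m}$ of Remark \ref{r: Schwartz seminorms}.

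The analytical heart is the change of variables in the integral over $N_P/(N_P\cap H)$. Using the explicit matrix description of $\fn_P$ and $\fn_P\cap\fh$ for $P_{k,l}$, I would parametrize $N_P/(N_P\cap H)$ by a Euclidean space of dimension $n-1$ (this is where $P\in\cP_*$ enters: $\dim N_P/(N_P\cap H)=n-1$), write $a_t n a_t^{-1}$ to pull the $A$-conjugation onto the variable, and reduce to controlling $\int_{\R^{n-1}} \phi(m a_t k(x)\cdot H)\, dx$ for suitable one-parameter families. By the polar decomposition (Lemma \ref{Lemma KAH decomposition}) each point $m a_t n\cdot H$ lies in $K a_{s}\cdot H$ with $2\cosh 4s = n-2+2\cosh 4t + (\text{a positive definite quadratic/higher expression in }x\text{ times powers of }e^{\pm 4t})$; plugging this into the seminorm bound $|\phi(ka_s\cdot H)|\le \mu_{1,m}(\phi)(2\cosh 4s)^{-(n-1)/4}(1+\log 2\cosh 4s)^{-m}$ turns the integral into an explicit elementary integral in $x$ with a parameter $t$. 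Carrying out that integral and multiplying by the prefactor $a_t^{\rho_P-\rho_{P,\fh}}$ should yield: rapid decay as $t\to+\infty$ when $n$ is even (the exponents cancel and one is left with a decaying Gaussian-type tail plus the $\log$-power factors, which one can boost to arbitrary order $N$ by using the $\mu_{1,m}$ norm with large $m$), and, when $n$ is odd, a bounded-by-$t^{-N}$ estimate for $t$ bounded above but a genuine nonzero limit as $t\to+\infty$ because the prefactor exponent precisely balances the leading term in the integral. The same estimates, applied with general $u\in U(\fg)$ acting from the left (and with the $\delta_P$ factor being a character on $L$), handle the $m$-uniformity since $M$ is compact.

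For the identification of the limit $\mu_P(\phi)=\lim_{t\to\infty}\Ht_P\phi(a_t)$ with a cuspidal integral, the idea is that as $t\to+\infty$ the curve $a_t$ degenerates and the dominated-convergence limit of $a_t^{\rho_P-\rho_{P,\fh}}\int_{N_P/(N_P\cap H)}\phi(a_t n)\,dn$ becomes, after the change of variables, an integral of $\phi$ over a lower-dimensional unipotent orbit; concretely I expect that the limiting domain of integration is $N_Q/(N_Q\cap H)$ for a minimal parabolic $Q$ obtained from $P$ by a ``limit'' procedure (conjugating $P$ by $a_t$ and letting $t\to\infty$, in the spirit of the proof of Lemma \ref{Lemma common A unique up to N_1 cap N_2 conjugation}), which lands in $\cQ$ and is $\fh$-compatible by the combinatorial description in Section \ref{Subsection H-compatible parabolic subgroups}. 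The element $g\in G$ absorbs the fixed boundary point $\lim a_t\cdot(\text{origin})$. The converse direction — given $Q\in\cQ$, produce $P'\in\cPH\cap\cP_*$ and $g$ with $\int_{N_Q/(N_Q\cap H)}\phi(gn)\,dn=\mu_{P'}(\phi)$ — follows by running the same correspondence backwards, using that (by the structure results quoted in Section \ref{Subsection H-compatible parabolic subgroups}) every $\fh$-compatible $Q\in\cQ$ arises as such a limit of some $P'\in\cPH\cap\cP_*$, and that $H$ acts transitively on $\cQH$ so it suffices to treat one representative.

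I expect the main obstacle to be the bookkeeping in the change of variables on $N_P/(N_P\cap H)$: getting the exact form of $2\cosh 4s$ as a function of the Euclidean coordinates $x$ and the parameter $t$ (including which monomials carry which powers of $e^{4t}$), and then showing that after multiplication by $a_t^{\rho_P-\rho_{P,\fh}}$ the resulting parameter integral has exactly the claimed decay/limit behavior with the even/odd dichotomy falling out of the parity of $n-1$ in the exponent $(n-1)/4$. Establishing that the limit integral is genuinely over $N_Q/(N_Q\cap H)$ for an \emph{$\fh$-compatible} $Q$ (rather than just some measure-zero or divergent object) — i.e.\ that the convergence is good enough to commute the limit with the integral, which is where $\fh$-compatibility of $P$ and hence Theorem \ref{Theorem main theorem convergence} for the limiting $Q$ must be invoked — is the other delicate point.
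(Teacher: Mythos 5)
Your overall strategy coincides with the paper's: reduce by $H$-conjugation to the explicit representatives $P_{k,k}$ with $\frac{n+1}{2}\le k\le\frac{n+3}{2}$ (Proposition \ref{Prop P H-conj to P_k,l; H-comp iff (n+1)/2 leq k,l leq (n+3)/2}), coordinatize $N_P/(N_P\cap H)$, express $2\cosh 4s$ of the polar coordinate of $a_tn\cdot H$ explicitly (Lemma \ref{Lemma 2cosh(4t)=(s,x,y,z)}), estimate via the seminorms of Remark \ref{r: Schwartz seminorms}, and obtain the limit by a degeneration plus dominated convergence. However, as a proof the proposal has genuine gaps at precisely the two places where the theorem is hard. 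First, your explanation of the even/odd dichotomy is not correct: it does not come from ``the parity of $n-1$ in the exponent $(n-1)/4$'' but from the position of $k$ relative to $n/2$ (for $n$ even the unique choice is $k=\frac{n+2}{2}$, for $n$ odd one has $k\in\{\frac{n+1}{2},\frac{n+3}{2}\}$), and in the even case the rapid decay as $t\to+\infty$ is \emph{not} a ``decaying Gaussian-type tail'': after the natural substitutions the exponential factors cancel at leading order near $z=1$, and a naive uniform estimate only yields boundedness. The paper has to split the $z$-integral into a window $[1-\delta,1+\delta]$ and its complement with an $s$-dependent width $\delta=e^{-3s/2}$, trading the two estimates against each other to extract the extra factor $(1+s)^{-(N-6)}$. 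Nothing in your outline produces this mechanism, and without it part (i) fails to be proved.

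Second, the identification of $\mu_P(\phi)$ with an integral over $N_Q/(N_Q\cap H)$ requires a concrete pointwise limit of orbits, namely (after the rescaling $x=e^s\xi$, $y=e^{-s}\eta$, $z=e^{-2s}\omega-1$, note the shift centered at $z=-1$) the statement $\lim_{s\to\infty}a_s\,u_{e^s\xi,e^{-s}\eta,e^{-2s}\omega-1}\cdot H=\kappa^{-1}v_{(\omega,\eta),\frac23\xi}\cdot H$ of Lemma \ref{Lemma Limit of N orbits}; the paper proves this by reducing to an $\SL(2,\R)$ computation ($a_sn_{-1}\cdot H_0\to\kappa^{-1}\cdot H_0$) via Loos's embedding $gH\mapsto g\sigma(g)^{-1}$. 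Your heuristic of ``conjugating $P$ by $a_t$ and letting $t\to\infty$'' neither identifies the correct rescaling of coordinates nor produces the group element $g$ (which is $\kappa^{-1}$, resp.\ $w_0\kappa w_1^{-1}$ for $k=\frac{n+3}{2}$). Moreover, the dominated convergence step is not a consequence of $\fh$-compatibility of $P$ or of Theorem \ref{Theorem main theorem convergence} for $Q$, as you suggest: the natural majorant fails to be integrable uniformly in $s$ on the region $\omega\approx 2e^{2s}$ (i.e.\ $z\approx 1$), and the paper must excise a set $A_{\delta,s}$ whose contribution is $O(\delta)$ uniformly in $s$ before a dominating function exists on the complement. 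These are missing ideas rather than routine bookkeeping, so the proposal as it stands does not constitute a proof.
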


The proof of the theorem will be given in Section \ref{Section Limits}.

\subsection{Cusp forms}\label{subsection main theorems Cusp forms}
As explained in the introduction, the aim of the article is to explore which parabolic subgroups should be used in the definition of cusp forms for reductive symmetric spaces of split rank $1$. In
\cite{vdBanKuit_HC-TransformAndCuspForms}
 it was proved that for such a symmetric space $X$ the integral
$$
\int_{N_{P}/(N_{P}\cap H)}\phi(n)\,dn
$$
is absolutely convergent for all $\phi\in\cC(X)$ and every $P\in\cPH$.

It follows from Theorem \ref{Theorem main theorem convergence} that for the spaces $X_{n}$  {\em only}  the $\fh$-compatible parabolic subgroups provide integrals that are convergent for all Schwartz functions. We conclude from this that the condition that $P$ is $\fh$-compatible, which was needed in \cite{vdBanKuit_HC-TransformAndCuspForms}, is not an artefact of the proof.

For $n$ odd the set $\cQH$ is non-empty. In \cite{vdBanKuit_HC-TransformAndCuspForms} only the minimal parabolic subgroups from 
$\cPH$ were used. The remaining question that needs to be answered is whether the class of parabolic subgroups that is used for the definition of cusp forms should include any minimal parabolic subgroup from $\cQH$.

\break

\begin{Prop}\label{Prop alternatives for def cusp form}
Let $\phi\in\cC(X_{n})$. The following four conditions are equivalent.
\begin{enumerate}[(i)]
\item For every $g\in G$ and every $\fh$-compatible parabolic subgroup $P,$
\begin{equation}\label{eq def cusp form}
\int_{N_{P}/(N_{P}\cap H)}\phi(gn)\,dn=0.
\end{equation}
\item For every $g\in G$ and every $P\in\cPH$ equation (\ref{eq def cusp form}) holds.
\item For every $g\in G$ and every $P\in\cPH\cap \cP_{*}$ equation (\ref{eq def cusp form}) holds.
\item\label{Prop cusp form def iv} There exists a $P$ in each $H$-conjugacy class in $\cPH\cap\cP_{*}$ such that for every $g\in G$ equation (\ref{eq def cusp form}) holds.
\end{enumerate}
\end{Prop}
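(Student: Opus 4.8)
The equivalences (i)$\implies$(ii)$\implies$(iii)$\implies$(iv) are essentially trivial from the inclusions $\cPH\cap\cP_*\subseteq\cPH\subseteq\{\text{$\fh$-compatible}\}$, together with Remark \ref{r: h comp for H conjugation}: condition (iii), stated for all $P\in\cPH\cap\cP_*$, certainly implies condition (iv), which asks only for one representative per $H$-conjugacy class; and for the reverse step within (iii)$\Leftrightarrow$(iv) one uses that if (\ref{eq def cusp form}) holds for a given $P$ and all $g\in G$, then it holds for any $hPh^{-1}$ with $h\in H$ as well, after absorbing $h$ into $g$ and using the $H$-invariance of the measure $dn$ on $N_P/(N_P\cap H)$ transported to $N_{hPh^{-1}}/(N_{hPh^{-1}}\cap H)$. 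So the substance of the proposition is the implication (iv)$\implies$(i), i.e.\ showing that vanishing of the cuspidal integrals over the (finitely many $H$-classes of) parabolics in $\cPH\cap\cP_*$ forces vanishing over \emph{all} $\fh$-compatible parabolics, including those in $\cQH$ when $n$ is odd, and those in $\cPH\setminus\cP_*$.

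\textbf{Main step.} The plan is to reduce every $\fh$-compatible parabolic to one in $\cPH\cap\cP_*$ by the explicit dictionary furnished by Theorem \ref{Thm main theorem limit behavior}. First I would use the structure results recalled in Section \ref{Subsection H-compatible parabolic subgroups}: when $n$ is even, $\cPH\subseteq\cP_*$ and $\cQH=\emptyset$, so (i) and (iii) literally quantify over the same (single $H$-orbit of) parabolics and there is nothing more to prove beyond the conjugation bookkeeping above. When $n$ is odd, the $\fh$-compatible parabolics fall into $\cPH\cap\cP_*$ (two $H$-orbits), $\cPH\setminus\cP_*$ (one $H$-orbit), and $\cQH$ (one $H$-orbit). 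For $Q\in\cQH$, Theorem \ref{Thm main theorem limit behavior}(ii) provides $P'\in\cPH\cap\cP_*$ and $g_0\in G$ with
$$
\int_{N_Q/(N_Q\cap H)}\phi(n)\,dn=\mu_{P'}(\phi)=\lim_{t\to\infty}\Ht_{P'}\phi\big(\exp(tv)\big).
$$
Applying this to the left translate $\phi(g\,\cdot\,)$, which also lies in $\cC(X_n)$ (left translation is continuous on the Schwartz space), and noting $\Ht_{P'}\big(\phi(g\,\cdot\,)\big)\big(l\big)=\delta_{P'}(l)\int\phi(gln)\,dn$, one gets that the $Q$-integral of $\phi(g\,\cdot\,)$ is a limit of integrals of the form $\int_{N_{P'}/(N_{P'}\cap H)}\phi(g'n)\,dn$ with $g'=g\exp(tv)$. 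If (\ref{eq def cusp form}) holds for $P'$ and every $g'\in G$, each such integral vanishes, hence so does the limit, hence the $Q$-integral of $\phi(g\,\cdot\,)$ vanishes for every $g$; this is (\ref{eq def cusp form}) for $Q$. The case $P\in\cPH\setminus\cP_*$ is handled the same way in reverse: the last assertion of Theorem \ref{Thm main theorem limit behavior}(ii) (together with the fact that $\cPH\setminus\cP_*$ meets a single $H$-orbit) lets one express $\int_{N_P/(N_P\cap H)}\phi(gn)\,dn$ in terms of a $\mu_{P'}$ for some $P'\in\cPH\cap\cP_*$, after first replacing $P$ by a conjugate $Q\in\cQ$ if needed via Corollary \ref{Cor H-conjugacy of parabolic subgroups} and the relation between $\cPH\setminus\cP_*$ and $\cQH$ under the parabolic-rank bookkeeping — more directly, one simply observes that Theorem \ref{Thm main theorem limit behavior}(ii)'s "vice versa" clause already covers every $Q\in\cQ$, and $\fh$-compatible $P\notin\cP_*$ are handled by the same limit argument applied with roles of $P,P'$ swapped. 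Combining all three orbit cases with the conjugation invariance of $\fh$-compatibility (Remark \ref{r: h comp for H conjugation}) gives (iv)$\implies$(i).

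\textbf{Expected obstacle.} The genuinely substantive input is Theorem \ref{Thm main theorem limit behavior}; granting it, the only care needed here is (a) checking that left translation preserves $\cC(X_n)$ and is compatible with $\Ht_{P'}$ in the way stated, which is routine from the definition of the seminorms $\mu_{u,m}$, and (b) making sure the finitely many $H$-conjugacy classes are correctly matched up — in particular that the $P'$ and $Q$ produced by Theorem \ref{Thm main theorem limit behavior} can be chosen in the \emph{prescribed} representative class of (iv), which follows because condition (\ref{eq def cusp form}) for one $g$ and one $P'$ is equivalent to the same condition for all $H$-conjugates of $P'$ and all $g$, so the particular representative in (iv) is as good as any other. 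No hard estimates are needed in this proposition; all the analysis has been pushed into Theorem \ref{Thm main theorem limit behavior}.
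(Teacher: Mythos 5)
Your trivial implications, the conjugation step for \emph{(iv)}$\Rightarrow$\emph{(iii)}, and the treatment of $Q\in\cQH$ via the limit formula of Theorem \ref{Thm main theorem limit behavior} all match the paper. The gap is in your handling of $P\in\cPH\setminus\cP_{*}$. For $n$ odd this is a nonempty $H$-orbit (the class of $P_{\frac{n+1}{2},\frac{n+3}{2}}$, by Proposition \ref{Prop P H-conj to P_k,l; H-comp iff (n+1)/2 leq k,l leq (n+3)/2}), consisting of parabolic subgroups of $\sigma$-parabolic rank $1$. Since the $\sigma$-parabolic rank is an $H$-conjugation invariant, such a $P$ is \emph{not} $H$-conjugate to any $Q\in\cQ$, so your suggestion to ``first replace $P$ by a conjugate $Q\in\cQ$'' is impossible. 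Moreover, Theorem \ref{Thm main theorem limit behavior} is stated only for $P\in\cPH\cap\cP_{*}$: both the limit formula for $\Ht_{P}\phi$ and the ``vice versa'' clause relate $\cPH\cap\cP_{*}$ to $\cQ$, and neither says anything about $\Ht_{P}\phi$ for $P\in\cPH\setminus\cP_{*}$. Consequently, ``the same limit argument with the roles of $P,P'$ swapped'' has no content here, and your proof of \emph{(iv)}$\Rightarrow$\emph{(i)} does not cover this orbit.

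The paper isolates exactly this step as the implication \emph{(iii)}$\Rightarrow$\emph{(ii)} and proves it by citing Lemma 8.14 of \cite{vdBanKuit_HC-TransformAndCuspForms}; this is a genuinely separate input (a relation between the Harish-Chandra transforms attached to the different parabolic subgroups in $\cPH$) that is not derivable from Theorems \ref{Theorem main theorem convergence} and \ref{Thm main theorem limit behavior} alone. To repair your argument you must either invoke that lemma or supply an independent proof that vanishing of the cuspidal integrals for $\cPH\cap\cP_{*}$ forces vanishing for $P_{\frac{n+1}{2},\frac{n+3}{2}}$. The remainder of your proposal (continuity of left translation on $\cC(X_n)$, compatibility with $\Ht_{P'}$, and the bookkeeping of $H$-orbits) is correct and routine, as you say.
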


\begin{proof}
The implications {\em (i)}$\;\Rightarrow\;${\em (ii)}$\;\Rightarrow\;${\em (iii)}$\;\Rightarrow\;${\em (iv)} are trivial.

Conversely, if  (\ref{eq def cusp form}) holds for a given $\fh$-compatible parabolic subgroup $P$ and every $g\in G$, then it also holds for every $H$-conjugate of $P$. This proves {\em (iv)}$\;\Rightarrow\;${\em (iii)}.
The implication {\em (iii)}$\;\Rightarrow\;${\em (ii)} is proven in
\cite[Lemma 8.14]{vdBanKuit_HC-TransformAndCuspForms}. 

Now assume $Q\in\cQH$. By Theorem \ref{Thm main theorem limit behavior} the integrals over $N_{Q}/(N_{Q}\cap H)$ can be obtained as limits of integrals over $N_{P}/(N_{P}\cap H)$ for some $P\in\cPH\cap\cP_{*}$. This establishes the implication {\em (ii)}$\;\Rightarrow\;${\em (i)}.
\end{proof}

\begin{Defi}
A function $\phi\in\cC(X_{n})$ is said to be a cusp form if
one of the equivalent conditions in Proposition \ref{Prop alternatives for def cusp form} is satisfied.
\end{Defi}

We write $\cC_{\cusp}(X_{n})$ for the space of cusp forms on $X_{n}$ and $\cC_{\ds}(X_{n})$ for the closed span of the discrete series representations of $X_{n}$. Given $g\in G$ and $\phi\in\cC(X_{n}),$ we write
$L_g \phi$ for the function given by $L_g \phi(x)=\phi(g^{-1}x),$
for $x\in X_{n}$.
In \cite{vdBanKuit_HC-TransformAndCuspForms} the following theorem, which we here only state for $X_{n}$, is proved for general reductive symmetric spaces of split rank $1$.

\begin{Thm}\label{Thm relation cusp forms and discrete series}\
\begin{enumerate}[(i)]
\item $\cC_{\cusp}(X_{n})\subseteq\cC_{\ds}(X_{n})$.
\item If $\,\cC_{\ds}(X_{n})^{K}\subseteq\cC_{\cusp}(X_{n})^{K}$, then $\cC_{\ds}(X_{n})=\cC_{\cusp}(X_{n})$.
\item Fix $\fa\in\cA$ with $\fa_{\fq}\subseteq \fa$. Let $\phi\in\cC(X_{n})$ be $K$-finite. Then $\phi\in\cC_{\ds}(X_{n})$ if and only if for every $\fh$-compatible minimal parabolic $P\in\cP(\fa)$ and every $k\in K$ the function
    $$
    \Ht_{P}\big(L_k \phi\big)\Big|_{A_{\fq}}
    $$
    is a finite linear combination of exponential functions with non-zero exponents.
\end{enumerate}
\end{Thm}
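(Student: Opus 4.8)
\emph{Proof strategy.} The statement is quoted from \cite{vdBanKuit_HC-TransformAndCuspForms}, so the plan is to reconstruct that argument, whose engine is the spectral (Plancherel\nobreakdash-type) decomposition of $\cC(X_{n})$ together with a precise understanding of the Harish\nobreakdash-Chandra transforms $\Ht_{P}$ on its summands. Two structural inputs are used throughout. First, $\cC(X_{n})$ decomposes continuously and $G$-equivariantly as $\cC_{\ds}(X_{n})\oplus\cC^{c}$, where $\cC^{c}$, the ``continuous part'', is spanned by wave packets of the finitely many (generalized) principal series attached to the parabolic subgroups in $\cP\cup\cQ$; both projections commute with the action of $U(\fg)$ and of $K$, hence preserve $K$-finiteness. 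Second, by Theorem \ref{Theorem main theorem convergence} (and the estimates in its proof) each cuspidal functional $\phi\mapsto\int_{N_{P}/(N_{P}\cap H)}\phi(gn)\,dn$ is continuous on $\cC(X_{n})$; hence $\cC_{\cusp}(X_{n})$ is a closed $G$-invariant subspace, and in any closed $K$-invariant subspace of $\cC(X_{n})$ the $K$-finite vectors are dense.

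Granting these inputs, parts \emph{(i)} and \emph{(ii)} follow formally from \emph{(iii)}. For \emph{(i)}: since $\cC_{\cusp}(X_{n})=\overline{\cC_{\cusp}(X_{n})^{K}}$ and $\cC_{\ds}(X_{n})$ is closed, it suffices to see $\cC_{\cusp}(X_{n})^{K}\subseteq\cC_{\ds}(X_{n})$. If $\phi\in\cC_{\cusp}(X_{n})$ is $K$-finite then $\Ht_{P}(L_{k}\phi)|_{A_{\fq}}\equiv0$ for all $k\in K$ and all $\fh$-compatible $P\in\cP(\fa)$, which is (vacuously) a finite linear combination of exponentials with non-zero exponents, so \emph{(iii)} gives $\phi\in\cC_{\ds}(X_{n})$. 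For \emph{(ii)}: \emph{(i)} gives $\cC_{\cusp}(X_{n})^{K}\subseteq\cC_{\ds}(X_{n})^{K}$, so the hypothesis forces equality of the $K$-finite parts, and taking closures (density of $K$-finite vectors) yields $\cC_{\ds}(X_{n})=\cC_{\cusp}(X_{n})$.

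For \emph{(iii)}, implication ``$\Rightarrow$'': a $K$-finite element of $\cC_{\ds}(X_{n})$ generates a finite-length subrepresentation whose irreducible constituents are discrete series, so we may assume $\phi$ is a $K$-finite, square-integrable, $\DX$-finite function. Its radial component along $A_{\fq}$ (via the polar decomposition) solves a holonomic system whose exponents at infinity form a finite set, none of them $0$ --- this last point using square-integrability together with the $\fh$-compatibility restriction on $P$ in the odd case. The transform $\Ht_{P}(L_{k}\phi)$ inherits $\DX$-finiteness, and its restriction to $A_{\fq}$ satisfies a constant-coefficient ODE system with the same finite set of non-zero characteristic exponents, on which the system acts semisimply (regularity in the relevant range); hence $\Ht_{P}(L_{k}\phi)|_{A_{\fq}}$ is a finite linear combination of exponentials with non-zero exponents. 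Implication ``$\Leftarrow$'': decompose $\phi=\phi_{\ds}+\phi_{c}$ with $\phi_{\ds}\in\cC_{\ds}(X_{n})^{K}$ and $\phi_{c}\in\cC^{c}$, both $K$-finite. By ``$\Rightarrow$'', each $\Ht_{P}(L_{k}\phi_{\ds})|_{A_{\fq}}$ is a finite exponential sum with non-zero exponents, hence so is $\Ht_{P}(L_{k}\phi_{c})|_{A_{\fq}}=\Ht_{P}(L_{k}\phi)|_{A_{\fq}}-\Ht_{P}(L_{k}\phi_{\ds})|_{A_{\fq}}$, for every $\fh$-compatible $P\in\cP(\fa)$ and every $k$. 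But $\phi_{c}$ is a wave packet, and its Harish-Chandra transform along such a $P$ is computed to have an asymptotic expansion along $A_{\fq}$ containing a genuinely continuous part, an integral $\int f(\lambda)\,a_{t}^{\lambda}\,d\lambda$ over the spectral parameter built from Eisenstein-integral kernels; a finite exponential sum has no such part, so this integral vanishes identically, and non-vanishing of the Plancherel densities together with the regularity of the Eisenstein integrals forces $f=0$, i.e.\ $\phi_{c}=0$. Thus $\phi=\phi_{\ds}\in\cC_{\ds}(X_{n})$.

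The main obstacle is precisely this last step: showing that the Harish-Chandra transform of the continuous part really produces a non-degenerate continuous spectral contribution, and that the family of transforms $\Ht_{P}(L_{k}\,\cdot\,)|_{A_{\fq}}$, with $P$ running over the $\fh$-compatible members of $\cP(\fa)$ and $k$ over $K$, is injective on $\cC^{c}$ modulo finite exponential sums. This is the analytic heart of \cite{vdBanKuit_HC-TransformAndCuspForms}: it rests on the explicit asymptotics of the Eisenstein integrals for split rank one symmetric spaces, the Maass--Selberg relations, and the non-vanishing of the relevant $c$-functions, which together pin down the support of the continuous spectrum seen by $\Ht_{P}$ and the fact that it cannot be cancelled by the discrete exponentials.
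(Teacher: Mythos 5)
The first thing to say is that this paper does not prove Theorem \ref{Thm relation cusp forms and discrete series} at all: the sentence immediately preceding it states that the theorem is proved in \cite{vdBanKuit_HC-TransformAndCuspForms} for general reductive symmetric spaces of split rank one and is merely restated here for $X_{n}$. So there is no internal proof to compare your argument against; the theorem is an imported input, on the same footing as the structural facts you yourself invoke.

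Judged on its own terms, your proposal is a reasonable sketch of how the external proof is organized, and the formal reductions of \emph{(i)} and \emph{(ii)} to \emph{(iii)} are sound once one grants continuity of the cuspidal functionals (Theorem \ref{Theorem main theorem convergence} and its proof do give this for the $\fh$-compatible $P$) and density of $K$-finite vectors in closed $K$-invariant subspaces. But it is not a proof: the two assertions that carry all the weight --- the topological direct sum $\cC(X_{n})=\cC_{\ds}(X_{n})\oplus\cC^{c}$ with continuous equivariant projections, and the statement that the transforms $\Ht_{P}(L_{k}\,\cdot\,)|_{A_{\fq}}$ separate $\cC^{c}$ from finite exponential sums --- are precisely the main theorems of \cite{vdBanKuit_HC-TransformAndCuspForms} (resting on the Plancherel decomposition of the Schwartz space, the asymptotics of Eisenstein integrals and the Maass--Selberg relations), and you state them rather than establish them, as you acknowledge in your final paragraph. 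In the ``$\Rightarrow$'' direction of \emph{(iii)} there is also an unaddressed point: that none of the exponents of $\Ht_{P}(L_{k}\phi)|_{A_{\fq}}$ vanishes does not follow from square-integrability of $\phi$ by a soft holonomicity argument; it requires the precise relation between these exponents and the leading exponents of the discrete series along $A_{\fq}$, which is again part of the cited machinery. So the verdict is: correct skeleton and consistent with how the result is actually obtained, but the analytic core is delegated to the very reference to which the paper itself delegates the entire theorem.
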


Theorem \ref{Thm relation cusp forms and discrete series} and the estimates in Theorem \ref{Thm main theorem limit behavior} have the following corollary.

\begin{Thm}\label{Thm C_ds=C_cusp}
$\cC_{\ds}(X_{n})=\cC_{\cusp}(X_{n})$.
\end{Thm}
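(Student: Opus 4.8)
The plan is to deduce Theorem \ref{Thm C_ds=C_cusp} from Theorem \ref{Thm relation cusp forms and discrete series} by verifying its hypothesis (ii). Part (i) of Theorem \ref{Thm relation cusp forms and discrete series} already gives the inclusion $\cC_{\cusp}(X_{n})\subseteq\cC_{\ds}(X_{n})$, so the entire content is the reverse inclusion, and by part (ii) it suffices to show $\cC_{\ds}(X_{n})^{K}\subseteq\cC_{\cusp}(X_{n})^{K}$. Thus I would fix a $K$-finite $\phi\in\cC_{\ds}(X_{n})$ and aim to prove that $\phi$ satisfies condition (iii) (or equivalently (i)) of Proposition \ref{Prop alternatives for def cusp form}, i.e.\ that $\int_{N_{P}/(N_{P}\cap H)}\phi(gn)\,dn=0$ for all $g\in G$ and all $P\in\cPH\cap\cP_{*}$.

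First I would fix $\fa\in\cA$ with $\fa_{\fq}\subseteq\fa$ and invoke Theorem \ref{Thm relation cusp forms and discrete series}(iii): since $\phi\in\cC_{\ds}(X_{n})$ is $K$-finite, for every $\fh$-compatible $P\in\cP(\fa)$ and every $k\in K$ the function $\Ht_{P}(L_{k}\phi)|_{A_{\fq}}$ is a finite linear combination of exponential functions $a\mapsto a^{\mu}$ with $\mu\neq 0$. The key point is to combine this with the growth estimates of Theorem \ref{Thm main theorem limit behavior}. Along the ray $t\mapsto\exp(tv)$ with $\rho_{P}(v)>0$, Theorem \ref{Thm main theorem limit behavior} forces $\Ht_{P}\phi$ to be rapidly decreasing as $t\to-\infty$ (both for $n$ even and $n$ odd, via the stated $\sup$-bounds with $r$ arbitrary), and for $n$ even also rapidly decreasing as $t\to+\infty$, while for $n$ odd it converges to the finite limit $\mu_{P}(\phi)$ as $t\to+\infty$. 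A finite exponential sum with nonzero exponents that is bounded (indeed rapidly decreasing) as $t\to-\infty$ must in fact be identically zero: rapid decay in one direction kills all terms $a^{\mu}$ with $\Re\mu\le 0$, and then the remaining terms blow up as $t\to-\infty$ unless there are none. Hence $\Ht_{P}(L_{k}\phi)|_{A_{\fq}}\equiv 0$ for all $k\in K$. Replacing $\phi$ by $L_{k}\phi$ and using the polar decomposition $G=KAN_{P}$ (more precisely, using that every $g\in G$ can be written so that $\Ht_{P}(L_{g^{-1}}\phi)$ evaluated at the identity recovers $\int_{N_{P}/(N_{P}\cap H)}\phi(gn)\,dn$ up to the factor $\delta_{P}$), this gives $\int_{N_{P}/(N_{P}\cap H)}\phi(gn)\,dn=0$ for all $g\in G$ and all $\fh$-compatible $P\in\cP(\fa)$; by $H$-conjugacy (Corollary \ref{Cor H-conjugacy of parabolic subgroups} and Remark \ref{r: h comp for H conjugation}) this extends to all $P\in\cPH$, and in particular to all $P\in\cPH\cap\cP_{*}$.

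This establishes condition (iii) of Proposition \ref{Prop alternatives for def cusp form} for every $K$-finite $\phi\in\cC_{\ds}(X_{n})$, hence $\phi\in\cC_{\cusp}(X_{n})$, which proves $\cC_{\ds}(X_{n})^{K}\subseteq\cC_{\cusp}(X_{n})^{K}$. Theorem \ref{Thm relation cusp forms and discrete series}(ii) then yields $\cC_{\ds}(X_{n})=\cC_{\cusp}(X_{n})$.

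The main obstacle I anticipate is the bookkeeping in the odd case: there the exponential-sum description of $\Ht_{P}(L_{k}\phi)|_{A_{\fq}}$ together with the estimates of Theorem \ref{Thm main theorem limit behavior} forces the sum to vanish, but one must also argue that the potentially nonzero limit $\mu_{P}(\phi)$ — which by Theorem \ref{Thm main theorem limit behavior} equals an integral $\int_{N_{Q}/(N_{Q}\cap H)}\phi(gn)\,dn$ over an $\fh$-compatible $Q\in\cQ$ — is automatically zero once $\Ht_{P}(L_{k}\phi)|_{A_{\fq}}\equiv0$, which is immediate since the limit of the zero function is $0$; consistency with the $\cQ$-side then follows from the converse statement in Theorem \ref{Thm main theorem limit behavior}. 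The one subtlety worth spelling out carefully is that Theorem \ref{Thm relation cusp forms and discrete series}(iii) is stated only for $\fh$-compatible $P\in\cP(\fa)$ for a single fixed $\fa$, so one needs the $H$-conjugacy reduction to pass from this to all of $\cPH$, and finally to read off condition (i) of Proposition \ref{Prop alternatives for def cusp form} — but the equivalences in that proposition already package exactly this step, so nothing new is required.
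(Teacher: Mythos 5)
Your proposal is correct and follows essentially the same route as the paper: reduce via Theorem \ref{Thm relation cusp forms and discrete series}(i)--(ii) to the $K$-invariant case, combine part (iii) of that theorem with the boundedness from Theorem \ref{Thm main theorem limit behavior} to force $\Ht_{P}\phi|_{A_{\fq}}\equiv 0$, extend to all $g\in G$ by the Iwasawa decomposition, and conclude via Proposition \ref{Prop alternatives for def cusp form}. The only blemish is a small slip in your elaboration of the exponential-sum step (the surviving terms with $\Re\mu>0$ blow up as $t\to+\infty$, not $t\to-\infty$, and are ruled out by the boundedness there); the paper states this step without proof and your argument is otherwise the same.
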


\begin{proof}
By {\em (i)} and {\em (ii)} in Theorem \ref{Thm relation cusp forms and discrete series}  it suffices to prove that $\cC_{\ds}(X_{n})^{K}\subseteq\cC_{\cusp}(X_{n})^{K}$.
Let $\phi\in\cC_{\ds}(X_{n})^{K}$. We need to show that $\phi$ is a cusp form. For this we will prove that condition {\em(\ref{Prop cusp form def iv})} in Proposition \ref{Prop alternatives for def cusp form} is satisfied.

 By {\em (iii)} in Theorem \ref{Thm relation cusp forms and discrete series} the restriction of $\Ht_{P}\phi$ to $A_{\fq}$ is of exponential type with non-zero exponents. From Theorem \ref{Thm main theorem limit behavior} it follows that this function is bounded. The only function on $A_{\fq}$ that satisfies both conditions is the $0$-function. This proves that
$$
\int_{N_{P}/(N_{P}\cap H)}\phi(an)\,dn=0
$$
for every $a\in A_{\fq}$.
Now, let $g\in G$. By the Iwasawa decomposition there exist $k\in K$, $a\in A_{\fq}$ and $a_{H}\in A\cap H$ such that $g\in kaa_{H}N_{P}$. Using that $\phi$ is $K$-invariant, we find
$$
a_{H}^{\rho_{P}-\rho_{P,\fh}}\int_{N_{P}/(N_{P}\cap H)}\phi(gn)\,dn
=\int_{N_{P}/(N_{P}\cap H)}\phi(an)\,dn
=0.
$$
This proves the claim.
\end{proof}

\section{Proof of Theorem \ref{Theorem main theorem convergence} for $\sigma$-parabolic rank $1$}
\label{section Proof of Main theorem 1 for sigma parabolic rank 1}
In this section we deal with the proof for Theorem \ref{Theorem main theorem convergence} under the assumption that $P$ is of $\sigma$-parabolic rank $1$. In \ref{subsection Root systems in a} -- \ref{subsection Equivalent theorem} we first reduce the statement to a (seemingly) less general statement, which we then prove in \ref{subsection the integral} -- \ref{subsection Divergence}.

\subsection{Root systems.}\label{subsection Root systems in a}
We recall the definition of $\fa_{\fq}$, $A_{\fq}$ and $a_{t}$ from Section \ref{subsection the space Xn}, and define
$$
\fa_{\fh}  :=  
\big\{\diag(x_{1},x_{2},\dots,x_{n-1},x_{1}):x_{i}\in\R, 2x_{1}+\sum_{i=2}^{n-1}x_{i}=0\big\}.$$
Note that $\fa_{\fq}$ is a maximal abelian subspace of $\fp\cap\fq$ and $\fa_{\fh}$ is a subspace of $\fp\cap\fh$ such that $\fa=\fa_{\fh}\oplus\fa_{\fq}$ is a maximal abelian subspace of $\fp$. We write $A$ for $\exp(\fa)$

In the remainder of this section we shall describe the root system of
$\fa$ in $\fg.$ 
For $1\leq k\leq n$ we define the functional
$$
e_{k}:\fa\to\R;\qquad \diag(x_{1},\dots,x_{n})\mapsto x_{k}.
$$
The root system of $\fa$ in $\fg$  then equals
$$
\Sigma:=\Sigma(\fg,\fa)
=
\big\{e_{i}-e_{j}:1\leq i,j\leq n, i\neq j\big\}.
$$
The root spaces are $\fg_{e_{i}-e_{j}}=\R E_{i,j}$, where $E_{i,j}$ is the matrix whose entry on the $i^{\textnormal{th}}$ row and $j^{\textnormal{th}}$ column equals $1,$ whereas all remaining entries equal zero.

Note that
$$
\Sigma_{\fh}
:=\Sigma_{\fh}(\fa)
=\big\{e_{i}-e_{j}:2\leq i,j\leq n-1, i\neq j\big\}
$$
is the root system in $\fa_{\fh}$ for $\Cen_{\fh}(\fa_{\fq});$ see Proposition \ref{Prop properties Sigma_h(a)} {\em(ii)}. We set 
\begin{equation}
\label{e: positive system gS fh}
\Sigma_{\fh}^{+}
=\Sigma_{\fh}^{+}(\fa)
=\{e_{i}-e_{j}:2\leq i<j\leq n-1\big\}
\end{equation}
and define
$$
\rho_{\fh}
=\frac{1}{2}\sum_{\alpha\in\Sigma_{\fh}^{+}}\alpha.
$$
The set
$$
\Sigma_{\fq}
=\Sigma\cap\fa_{\fq}^{*}
=\{\pm(e_{1}-e_{n})\}
$$
forms a root system. We denote the associated
positive system $\big\{e_{1}-e_{n}\}$ by $\Sigma_{\fq}^{+}$.

\subsection{Classification of parabolic subgroups}

Recall that $\cP$ denotes the set of minimal parabolic subgroups containing a $\sigma$-stable maximal connected split abelian subgroup $B,$  such that $\fb\cap \fq$ has maximal dimension, i.e., $\dim(\fb\cap\fq)=1$. Recall also that $\cP(\fa)$ denotes the subset of $\cP$ consisting of minimal parabolic subgroups containing $A.$
 From now on we shall write $\gS(P):= \gS(\fa;P)$ for $P \in \cP(\fa).$ 

\begin{Lemma}\label{lemma for SLn: P is H-conjugate to Q in cP(A) in standard position}
Let $Q\in\cP$. Then there exists a parabolic subgroup $P \in \cP(\fa)$ which
is $H$-conjugate to $Q$ and satisfies
\begin{equation}\label{eq (Sigma_h cup Sigma_q)subseteq Sigma(P)}
(\Sigma_{\fh}^{+}\cup\Sigma_{\fq}^{+})\subseteq\Sigma(P).
\end{equation}
Let $P$ be any such parabolic subgroup. Then $\rho_{P,\fh} = \rho_\fh.$
Moreover, $Q$ is $\fh$-compatible if and only if $P$ is $\fh$-compatible
(see Definition \ref{d: fh compatible}).
\end{Lemma}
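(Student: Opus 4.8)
The plan is to break the statement into three pieces: the existence of $P \in \cP(\fa)$ in the required position, the computation $\rho_{P,\fh} = \rho_\fh$, and the stability of $\fh$-compatibility under the $H$-conjugation relating $Q$ and $P$.

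For existence, first observe that by Corollary \ref{Cor H-conjugacy of parabolic subgroups}(i) there is $h_1 \in H$ with $h_1 Q h_1^{-1} \in \cP(\fa)$, so we may replace $Q$ by an $H$-conjugate and assume $Q \in \cP(\fa)$ from the start. Now apply Corollary \ref{Cor Q in cP(A) is Nor_(K cap H)(fa)-conjugate to ps with Sigma_h^+ subseteq Sigma(P)} with the fixed positive system $\Sigma_\fh^+$ of \eqref{e: positive system gS fh}: this produces a minimal parabolic subgroup $P_0$, conjugate to $Q$ by an element of $\Nor_H(\fa\cap\fh)\cap\Cen_H(\fa\cap\fq)$ (hence $P_0 \in \cP(\fa)$), with $\Sigma_\fh^+ \subseteq \Sigma(\fa;P_0)$. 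It then remains to arrange in addition that $\Sigma_\fq^+ = \{e_1 - e_n\} \subseteq \Sigma(P_0)$. The key point is that conjugation by the element $k_0$ of \eqref{eq def k0} — which lies in $K \cap H$, normalizes $\fa$, acts trivially on $\fa_\fh$ (so preserves $\Sigma_\fh^+$) and sends $a_t \mapsto a_t^{-1}$ (so acts as $-1$ on $\fa_\fq$, i.e.\ swaps $e_1 - e_n$ with $e_n - e_1$) — lets us flip the sign of the single root $\pm(e_1 - e_n)$ without disturbing $\Sigma_\fh^+$. So either $P_0$ or $k_0 P_0 k_0^{-1}$ already satisfies \eqref{eq (Sigma_h cup Sigma_q)subseteq Sigma(P)}; call it $P$, noting $P$ is still $H$-conjugate to $Q$ since $k_0 \in H$.

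For the identity $\rho_{P,\fh} = \rho_\fh$: by definition $\rho_{P,\fh} = \tfrac12\sum_{\alpha \in \Sigma_\fh(\fa;P)}\alpha$, where $\Sigma_\fh(\fa;P) = \Sigma_\fh(\fa) \cap \Sigma(\fa;P)$. Since $\Sigma_\fh(\fa) = \Sigma_\fh = \{e_i - e_j : 2 \le i,j \le n-1, i \ne j\}$ is a root system and $\Sigma(\fa;P)$ induces a positive system on it, condition \eqref{eq (Sigma_h cup Sigma_q)subseteq Sigma(P)} forces $\Sigma_\fh^+ \subseteq \Sigma_\fh(\fa;P)$; both being positive systems of the same rank-$(n-3)$ root system of type $A$ with the same number of elements, they coincide, $\Sigma_\fh(\fa;P) = \Sigma_\fh^+$, whence $\rho_{P,\fh} = \rho_\fh$. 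This also shows $\rho_{P,\fh}$ is the same for every parabolic subgroup satisfying \eqref{eq (Sigma_h cup Sigma_q)subseteq Sigma(P)}, as claimed.

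Finally, for the equivalence of $\fh$-compatibility, I would invoke Remark \ref{r: h comp for H conjugation}: $\fh$-compatibility is preserved under conjugation by $H$, and $Q$ and $P$ are $H$-conjugate, so $Q$ is $\fh$-compatible iff $P$ is. I expect the main obstacle to be the bookkeeping in the existence part — specifically, verifying cleanly that the element $k_0$ does what is claimed on the root system level (trivial on $\fa_\fh$, inversion on $\fa_\fq$, hence fixing $\Sigma_\fh^+$ while toggling the sign of $e_1 - e_n$), so that the two applications of conjugacy (Corollary \ref{Cor Q in cP(A) is Nor_(K cap H)(fa)-conjugate to ps with Sigma_h^+ subseteq Sigma(P)} first, then $k_0$) can be combined without one undoing the other. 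Everything else is formal.
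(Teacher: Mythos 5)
Your proposal is correct and follows essentially the same route as the paper: Corollaries \ref{Cor H-conjugacy of parabolic subgroups} and \ref{Cor Q in cP(A) is Nor_(K cap H)(fa)-conjugate to ps with Sigma_h^+ subseteq Sigma(P)} to arrange $\Sigma_\fh^+\subseteq\Sigma(P')$, then conjugation by $k_0\in\Nor_{K\cap H}(\fa_\fq)\cap\Cen_{K\cap H}(\fa_\fh)$ to fix the sign of $e_1-e_n$, with $\rho_{P,\fh}=\rho_\fh$ read off from $\Sigma_\fh(\fa;P)=\Sigma_\fh^+$ and the compatibility equivalence from Remark \ref{r: h comp for H conjugation}. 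The only difference is that you spell out the (correct) justification of $\Sigma_\fh(\fa;P)=\Sigma_\fh^+$, which the paper leaves implicit.
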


\begin{proof}
By Corollaries \ref{Cor H-conjugacy of parabolic subgroups} and \ref{Cor Q in cP(A) is Nor_(K cap H)(fa)-conjugate to ps with Sigma_h^+ subseteq Sigma(P)}, $Q$ is $H$-conjugate to a parabolic subgroup $P'\in\cP(\fa)$ with $\Sigma_{\fh}^{+}\subseteq\Sigma(P').$
If $\Sigma(P')\cap\fa_{\fq}^{*}=\Sigma_{\fq}^{+}$, then
(\ref{eq (Sigma_h cup Sigma_q)subseteq Sigma(P)}) holds with
$P = P'.$
Otherwise, recall the element $k_{0}$ from (\ref{eq def k0}). This element is in $\Nor_{K\cap H}(\fa_{\fq})\cap\Cen_{K\cap H}(\fa_{\fh})$ and acts by inversion on $\fa_{\fq}$. Hence, $P=k_{0}P' k_{0}^{-1}$ satisfies
(\ref{eq (Sigma_h cup Sigma_q)subseteq Sigma(P)}).
In particular, $\rho_{P,\fh} = \rho_\fh.$ The final assertion
follows from Remark \ref{r: h comp for H conjugation}.
\end{proof}

We will now classify the parabolic subgroups $P\in\cP(\fa)$ satisfying (\ref{eq (Sigma_h cup Sigma_q)subseteq Sigma(P)}).
Every parabolic subgroup $P\in\cP(\fa)$ is uniquely determined by the corresponding positive system
$\Sigma(P)$. The set of these positive systems is in bijection with the symmetric group $S_{n}$: an element $\tau\in S_{n}$
corresponds to the positive system $\Sigma(P)$ given by 
$$
\Sigma(P)
=\{e_{\tau^{-1}(i)}-e_{\tau^{-1}(j)}:1\leq i<j\leq n\}.
$$
Equivalently, a root $e_i - e_j \in \Sigma$ belongs to
$\Sigma(P)$ if and only if $\tau (i) < \tau ( j) .$
 We thus see that the parabolic subgroups $P\in\cP(\fa)$ satisfying
(\ref{eq (Sigma_h cup Sigma_q)subseteq Sigma(P)}) correspond to the $\tau\in S_{n}$ such that
\begin{equation}\label{eq standard tau ineq}
\tau(i)<\tau(j)\text{ for }2\leq i<j\leq n-1\quad\text{and}\quad\tau(1)<\tau(n).
\end{equation}
Given such a permutation $\tau\in S_{n}$, there exists a unique $k$, with $2\leq k\leq n$, such that
\begin{equation}\label{eq tau k ineq}
\tau(k-1)\leq \tau(1)<\tau(k),
\end{equation}
and a unique $l$, with $k\leq l\leq n$, such that
\begin{equation}\label{eq tau l ineq}
\tau(l-1)< \tau(n)\leq \tau(l).
\end{equation}
Conversely, for each pair of integers $(k,l)$ satisfying $2\leq k\leq l\leq n$, there exists a unique $\tau\in S_{n}$ satisfying (\ref{eq standard tau ineq}), (\ref{eq tau k ineq}) and (\ref{eq tau l ineq}). From now on we write $P_{k,l}$ for the corresponding minimal parabolic subgroup and $\Sigma^{+}_{k,l}$ for $\Sigma(P_{k,l})$. Furthermore, we write $P_{k,l}=MAN_{k,l}$ for the Langlands decomposition of $P_{k,l}$.
For future reference we note that the positive system corresponding to $(k,l)$ is
given by the disjoint union
\begin{align}\label{eq Sigma_k,l}
\Sigma^{+}_{k,l}
=\Sigma_{\fh}^{+}\cup\Sigma_{\fq}^{+}
&\cup\big\{e_{j}-e_{1}:2\leq j\leq k-1\big\}
\cup\big\{e_{1}-e_{j}:k\leq j\leq n-1\big\}\\
\nonumber\qquad&\cup\big\{e_{j}-e_{n}:2\leq j\leq l-1\big\}
\cup\big\{e_{n}-e_{j}:l\leq j\leq n-1\big\}.
\end{align}

In the following we assume that $b$ is the $\ad$-invariant
bilinear form on $\fsl(n, \R)$ given by $b(X,Y) = \tr(XY).$
It is well known that $b$ is a positive multiple of the Killing form $B,$
in fact, $b = \frac{1}{4n} B.$ The restriction of $b$ to $\fa,$
denoted $\inp{\dotvar}{\dotvar},$ is a positive
definite inner product. We equip $\fad$ with the dual inner product,
also denoted $\inp{\dotvar}{\dotvar},$ which on two elements
$\mu,\nu \in \fad$ is given as follows. The elements have unique
expressions $\mu = \sum_j \mu_j e_j$ and $\nu = \sum_j \nu_j  e_j$
provided we demand that  $\sum_j \mu_j =0 $ and $ \sum_j \nu_j = 0.$
In terms of these expressions,
\begin{equation}
\label{e: inner product for roots on a}
\inp{\mu}{\nu} = \sum_j \mu_j\nu_j.
\end{equation}
In particular, each root $e_i - e_j$ has length $\sqrt 2.$

\begin{Lemma}\label{Lemma langle e_i-e_j,rho_h rangle}
Let $i$ and $j$ be integers such that $1\leq i<j\leq n$. Then
$$
\langle e_{i}-e_{j},\rho_{\fh}\rangle=
\begin{cases}
j-i & \text{if }2\leq i<j\leq n-1,\\
j-\frac{n+1}{2} & \text{if }i=1\text{ and }2\leq j\leq n-1,\\
\frac{n+1}{2}-i & \text{if }2\leq i\leq n-1\text{ and }j=n,\\
0 & \text{if }i=1\text{ and }j=n.
\end{cases}
$$
In particular, $\langle\alpha,\rho_{\fh}\rangle\geq 0$ for every
$\alpha\in\Sigma(P_{k,l})$ if and only if $\frac{n+1}{2}\leq k\leq l\leq \frac{n+3}{2}$.
\end{Lemma}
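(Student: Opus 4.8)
The plan is to compute $\rho_\fh$ explicitly in the coordinate basis $e_1,\dots,e_n$ of $\fad$, read off the four values directly from the inner product formula (\ref{e: inner product for roots on a}), and then obtain the ``in particular'' statement by inspecting the explicit list (\ref{eq Sigma_k,l}) of the roots in $\Sigma(P_{k,l})$. First I would expand $\rho_\fh=\tfrac12\sum_{2\le i<j\le n-1}(e_i-e_j)$ and collect, for each index $m$, the coefficient $c_m$ of $e_m$. An index $m$ with $2\le m\le n-1$ occurs with a plus sign in the $n-1-m$ roots $e_m-e_j$ ($m<j\le n-1$) and with a minus sign in the $m-2$ roots $e_i-e_m$ ($2\le i<m$), so $c_m=\tfrac12\big((n-1-m)-(m-2)\big)=\tfrac{n+1}{2}-m$, while $c_1=c_n=0$ because the indices $1$ and $n$ do not occur in $\Sigma_\fh^+$. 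Since $\sum_m c_m=0$ and the coordinate vector of any root $e_i-e_j$ also has coordinate sum zero, the formula (\ref{e: inner product for roots on a}) applies directly and gives $\langle e_i-e_j,\rho_\fh\rangle=c_i-c_j$. Substituting, according as $i,j$ are both in $\{2,\dots,n-1\}$, or $i=1$ and $2\le j\le n-1$, or $2\le i\le n-1$ and $j=n$, or $(i,j)=(1,n)$, produces the four cases of the asserted formula; this is routine bookkeeping.

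For the ``in particular'' assertion I would combine the formula just proved with the disjoint decomposition (\ref{eq Sigma_k,l}) of $\Sigma^+_{k,l}=\Sigma(P_{k,l})$. The pairing with $\rho_\fh$ is $j-i>0$ on $\Sigma_\fh^+$ and $0$ on $\Sigma_\fq^+=\{e_1-e_n\}$, so those roots impose no condition; on the remaining four families the pairing equals $\tfrac{n+1}{2}-j$ for the roots $e_j-e_1$ ($2\le j\le k-1$) and $e_j-e_n$ ($2\le j\le l-1$), and $j-\tfrac{n+1}{2}$ for the roots $e_1-e_j$ ($k\le j\le n-1$) and $e_n-e_j$ ($l\le j\le n-1$). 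Hence nonnegativity on all of $\Sigma(P_{k,l})$ is equivalent to the four inequalities $k-1\le\tfrac{n+1}{2}$, $l-1\le\tfrac{n+1}{2}$, $k\ge\tfrac{n+1}{2}$ and $l\ge\tfrac{n+1}{2}$, i.e. to $\tfrac{n+1}{2}\le k\le\tfrac{n+3}{2}$ and $\tfrac{n+1}{2}\le l\le\tfrac{n+3}{2}$, which together with the standing hypothesis $k\le l$ is exactly $\tfrac{n+1}{2}\le k\le l\le\tfrac{n+3}{2}$.

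For the ``only if'' direction one must check that whenever one of these bounds fails the witnessing root is genuinely present in $\Sigma(P_{k,l})$: $e_{k-1}-e_1$ when $k>\tfrac{n+3}{2}$, $e_1-e_k$ when $k<\tfrac{n+1}{2}$, and symmetrically $e_{l-1}-e_n$ and $e_n-e_l$ for $l$. This is where one uses $2\le\tfrac{n+1}{2}\le n-1$, valid for $n\ge3$, to see that the relevant index range in (\ref{eq Sigma_k,l}) is nonempty (and, in the low cases, that the violation simply cannot occur). The edge-case verification in this last step is the only point requiring a little care; there is no genuine obstacle, the whole lemma being a finite computation.
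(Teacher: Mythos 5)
Your proposal is correct and follows essentially the same route as the paper: you compute the coefficient of $e_m$ in $\rho_\fh$ (your $c_m=\tfrac{n+1}{2}-m$ is exactly half of the paper's $2\rho_\fh=\sum_{i=2}^{n-1}(n+1-2i)e_i$), apply the inner product formula (\ref{e: inner product for roots on a}), and compare with the decomposition (\ref{eq Sigma_k,l}). The only difference is that you spell out the coordinate-sum-zero check and the nonemptiness of the witnessing index ranges, which the paper leaves implicit.
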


\begin{proof}
Using the definition of $\rho_{\fh}$, we find
\begin{align*}
2\rho_{\fh}
&=\sum_{2\leq i<j\leq n-1}(e_{i}-e_{j})
=\sum_{i=2}^{n-1}(n+1-2i)e_{i}.
\end{align*}
The first statement in the lemma is a direct consequence of this formula; the second follows from comparison with (\ref{eq Sigma_k,l}).
\end{proof}

\begin{Lemma}\label{Lemma n_k,l cap h}
Let $2\leq k\leq l\leq n$ and let $\fn_{k,l}$ be the Lie algebra of $N_{k,l}$. Then
\begin{equation}\label{eq n_k,l cap h}
\fn_{k,l}\cap\fh=
\bigoplus_{\alpha\in\Sigma_{\fh}^{+}}\fg_{\alpha}
    \oplus\bigoplus_{2\leq j\leq k-1}(1+\sigma)\fg_{e_{j}-e_{1}}
    \oplus\bigoplus_{l\leq j\leq n-1}(1+\sigma)\fg_{e_{n}-e_{j}}.
\end{equation}
\end{Lemma}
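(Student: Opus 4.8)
The plan is to compute $\fn_{k,l}\cap\fh$ root space by root space, using the explicit description of $\fh$ together with the action of $\sigma$ on the root spaces of $\fa$. Recall that $\sigma$ is conjugation by the permutation-type matrix $S$ which, up to sign, swaps indices $1\leftrightarrow n$ and fixes $2,\dots,n-1$; hence on the level of functionals $\sigma$ acts on $\fad$ by interchanging $e_1$ and $e_n$ and fixing $e_2,\dots,e_{n-1}$. Consequently $\sigma(\fg_{e_i-e_j})=\fg_{\sigma(e_i-e_j)}$, so a root space $\fg_\alpha$ is $\sigma$-stable exactly when $\alpha\in\Sigma_\fh$ (i.e. $\alpha=e_i-e_j$ with $2\le i,j\le n-1$) or $\alpha=\pm(e_1-e_n)$, and otherwise $\sigma$ maps $\fg_\alpha$ to a \emph{different} root space $\fg_{\sigma\alpha}$ with $\sigma\alpha\neq\alpha$.

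First I would treat the $\sigma$-stable part. For $\alpha\in\Sigma_\fh^+$ we have $\fg_\alpha\subseteq\fh$ by Proposition \ref{Prop properties Sigma_h(a)}(iii) (since $\fa\in\cA$), so these root spaces contribute in full; inspection of the matrix form of $\fh$ confirms $\R E_{i,j}\subseteq\fh$ for $2\le i<j\le n-1$. The root $e_1-e_n$, while also $\sigma$-stable, has $\fg_{e_1-e_n}=\R E_{1,n}$, and from the displayed description of $\fh$ the $(1,n)$-entry $y$ forces a matching $(n,1)$-entry $y$, so $E_{1,n}\notin\fh$; thus $e_1-e_n$ contributes nothing to $\fn_{k,l}\cap\fh$. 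Next, for the pairs $\{\alpha,\sigma\alpha\}$ with $\alpha\neq\sigma\alpha$: such a pair contributes to $\fh$ precisely the $(1+\sigma)$-image $(1+\sigma)\fg_\alpha$, which is one-dimensional and lies in $\fn_{k,l}$ as long as \emph{both} $\alpha$ and $\sigma\alpha$ lie in $\Sigma_{k,l}^+$; if only one of the two does, then $\fn_{k,l}$ contains $\fg_\alpha$ but not $\fg_{\sigma\alpha}$, and since any nonzero element of $(1+\sigma)\fg_\alpha$ has a nonzero component in $\fg_{\sigma\alpha}$, it cannot lie in $\fn_{k,l}$, so the contribution is zero. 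So the whole computation reduces to: \emph{which $\sigma$-orbits $\{\alpha,\sigma\alpha\}$ of non-$\fh$, non-$\fq$ roots are entirely contained in $\Sigma_{k,l}^+$?}

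The key combinatorial step is to read this off from the explicit list \eqref{eq Sigma_k,l}. The relevant roots come in $\sigma$-pairs: $e_j-e_1\leftrightarrow e_j-e_n$ and $e_1-e_j\leftrightarrow e_n-e_j$ for $2\le j\le n-1$. From \eqref{eq Sigma_k,l}, $e_j-e_1\in\Sigma_{k,l}^+$ iff $2\le j\le k-1$, and $e_j-e_n\in\Sigma_{k,l}^+$ iff $2\le j\le l-1$; since $k\le l$, the pair $\{e_j-e_1,e_j-e_n\}$ is contained in $\Sigma_{k,l}^+$ iff $2\le j\le k-1$, giving the middle summand $\bigoplus_{2\le j\le k-1}(1+\sigma)\fg_{e_j-e_1}$. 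Dually, $e_1-e_j\in\Sigma_{k,l}^+$ iff $k\le j\le n-1$ and $e_n-e_j\in\Sigma_{k,l}^+$ iff $l\le j\le n-1$; since $k\le l$, the pair is contained in $\Sigma_{k,l}^+$ iff $l\le j\le n-1$, giving $\bigoplus_{l\le j\le n-1}(1+\sigma)\fg_{e_n-e_j}$. Assembling the three surviving pieces yields exactly \eqref{eq n_k,l cap h}.

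The main obstacle is purely bookkeeping: one must be careful that $(1+\sigma)\fg_\alpha$ is genuinely one-dimensional and nonzero (it is, since $\sigma\alpha\neq\alpha$ means $\fg_\alpha\cap\fg_{\sigma\alpha}=0$, so $X+\sigma X=0$ would force $X=0$), and one must verify that there is no ``diagonal'' leakage — i.e. that $\fn_{k,l}\cap\fh$ cannot contain vectors spread across root spaces $\fg_\alpha$ with $\alpha\notin\Sigma_{k,l}^+$. This is handled by noting that $\fn_{k,l}=\bigoplus_{\alpha\in\Sigma_{k,l}^+}\fg_\alpha$ and $\fh=\bigoplus$ of its own weight spaces under $\fa$ (namely $\fg_\alpha$ for $\alpha\in\Sigma_\fh^+$, the lines $(1+\sigma)\fg_\alpha$ for the off-diagonal pairs, plus the centralizer part), so intersecting two $\fa$-weight-space decompositions is just matching weights. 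I would also double-check sign conventions in $S$ and $\kappa$ against the matrix display of $\fh$ to make sure $\sigma$ really acts as the transposition $(1\,n)$ on $\{e_1,\dots,e_n\}$ and that no stray minus signs obstruct membership of $(1+\sigma)E_{i,j}$ in $\fh$; given the explicit form of $\fh$, the vectors $v$-- and $w$--type entries there are exactly the $(1\pm\sigma)$ combinations of the relevant $E_{i,j}$, confirming the claim.
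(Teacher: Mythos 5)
Your argument is correct and follows essentially the same route as the paper's proof: determine which $\sigma$-orbits of roots lie entirely in $\Sigma^{+}_{k,l}$ (equivalently, compute $\Sigma^{+}_{k,l}\cap\sigma(\Sigma^{+}_{k,l})$), and observe that each two-element orbit fully contained in $\Sigma^{+}_{k,l}$ contributes the line $(1+\sigma)\fg_{\alpha}$, while an orbit only half contained contributes nothing; your combinatorial bookkeeping with $k$ and $l$ matches the paper's. Two small slips do not affect the conclusion: first, the root spaces $\fg_{\pm(e_{1}-e_{n})}$ are \emph{not} individually $\sigma$-stable (since $\sigma(e_{1}-e_{n})=e_{n}-e_{1}$ they are swapped), so $e_{1}-e_{n}$ drops out by your own ``half-orbit'' criterion rather than by being a $\sigma$-stable root space outside $\fh$; second, $\fh$ is not a direct sum of $\fa$-weight spaces (it is not $\ad(\fa\cap\fq)$-stable, and the lines $(1+\sigma)\fg_{\alpha}$ are not $\fa$-weight spaces), so the ``matching weights'' justification should instead be run as in the paper, by decomposing $Y\in\fn_{k,l}\cap\fh$ into $\fg$-root components and comparing with $\sigma(Y)=Y$, which forces $Y_{\alpha}=0$ whenever $\sigma\alpha\notin\Sigma^{+}_{k,l}$.
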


\begin{proof}
Since $\sigma(e_{1})=e_{n}$ and $\sigma(e_{j})=e_{j}$ for $2\leq j\leq n-1$, we find
\begin{align*}
\Sigma^{+}_{k,l}\cap\sigma(\Sigma^{+}_{k,l})
=\Sigma_{\fh}^{+}
&\cup\big\{e_{j}-e_{1}:2\leq j\leq k-1\big\}
\cup\big\{e_{1}-e_{j}:l\leq j\leq n-1\big\}\\
\qquad&\cup\big\{e_{j}-e_{n}:2\leq j\leq k-1\big\}
\cup\big\{e_{n}-e_{j}:l\leq j\leq n-1\big\},
\end{align*}
the union being disjoint.
The root spaces $\fg_{\alpha}$ for $\alpha\in \Sigma^{+}_{\fh}$ are contained in $\fh$. Furthermore, if both $\alpha$ and $\sigma(\alpha)$ are roots in $\Sigma^{+}_{k,l}\setminus\Sigma^{+}_{\fh}$, then
$$
\fg_{\alpha}\oplus\fg_{\sigma(\alpha)}
=(1+\sigma)\fg_{\alpha}\oplus(1-\sigma)\fg_{\alpha},
$$
where the first term in the right-hand side is a subspace of $\fn_{k,l}\cap\fh$ and the second term is a subspace in $\fn_{k,l}\cap\fq$.
This proves that the right-hand side of (\ref{eq n_k,l cap h}) is contained in $\fn_{k,l}\cap \fh$.

To prove the converse, assume that $Y\in\fn_{k,l}\cap \fh$. Then $Y=\sum_{\alpha\in\Sigma_{k,l}^{+}}Y_{\alpha}$, where $Y_{\alpha}\in\fg_{\alpha}$. Since $Y\in\fh$, we have $\sigma(Y)=Y$. This implies that $\sigma(Y_{\alpha})=Y_{\sigma(\alpha)}$ if both $\alpha$ and $\sigma(\alpha)$ are elements of $\Sigma_{k,l}^{+}$ and $Y_{\alpha}=0$ otherwise. Let $\alpha$ be a root such that $Y_{\alpha}\neq 0$. If $\alpha=\sigma(\alpha)$ then $\alpha\in\Sigma_{\fh}^{+}$. If $\alpha\neq\sigma(\alpha)$, then there exist a $2\leq j\leq k-1$ such that either $\alpha=e_{j}-e_{1}$ or $\sigma(\alpha)=e_{j}-e_{1}$, or there exists a $l\leq j\leq n-1$ such that either $\alpha=e_{n}-e_{j}$ or $\sigma(\alpha)=e_{n}-e_{j}$. Therefore, $Y$ is contained in the right-hand side of (\ref{eq n_k,l cap h}).
This proves the lemma.
\end{proof}

\begin{Prop}\label{Prop P H-conj to P_k,l; H-comp iff (n+1)/2 leq k,l leq (n+3)/2}
Let $P\in\cP$. Then there exist unique integers $k$ and $l$, with $2\leq k\leq l\leq n$, such that $P$ is $H$-conjugate to $P_{k,l}$. Moreover, $P\in\cPH$ if and only if
$$\frac{n+1}{2}\leq k\leq l\leq \frac{n+3}{2}$$
and $P\in\cP_{*}$ if and only if $k=l$.
\end{Prop}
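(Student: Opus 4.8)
The plan is as follows. The \emph{existence} of the pair $(k,l)$ is already at hand: by Lemma~\ref{lemma for SLn: P is H-conjugate to Q in cP(A) in standard position} every $P\in\cP$ is $H$-conjugate to a parabolic subgroup in $\cP(\fa)$ satisfying $(\Sigma_{\fh}^{+}\cup\Sigma_{\fq}^{+})\subseteq\Sigma(\cdot)$, and the classification immediately preceding the statement identifies the set of these with $\{P_{k,l}:2\leq k\leq l\leq n\}$. Thus only the \emph{uniqueness} of $(k,l)$ and the two equivalences remain to be proved.

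For uniqueness I would first reduce $H$-conjugacy inside $\cP(\fa)$ to conjugacy by $\Nor_{H}(\fa)$, and then to a Weyl group computation. Suppose $h\in H$ with $hP_{k,l}h^{-1}=P_{k',l'}$. Since $h$ commutes with $S$, the map $\Ad(h)$ commutes with $\sigma$, so $A=\exp\fa$ and $h^{-1}Ah$ are both $\sigma$-stable split components of $P_{k',l'}$; by Theorem~\ref{Thm P contains sigma-stable A, unique up to N cap H conjugation}\,{\em(ii)} there is $n\in N_{P_{k',l'}}\cap H$ with $h^{-1}Ah=nAn^{-1}$, and then $g:=hn$ lies in $\Nor_{H}(\fa)$ with $gP_{k,l}g^{-1}=P_{k',l'}$. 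Next, the image $\overline{N}$ of $\Nor_{H}(\fa)$ in $W=W(\Sigma)\cong S_{n}$ consists of permutations commuting with $\sigma|_{\fa}$; as $\sigma$ acts on $\fa=\{\diag(x_{1},\dots,x_{n}):\sum_{i}x_{i}=0\}$ by interchanging $x_{1}$ and $x_{n}$, this is the transposition $(1\,n)$, so $\overline{N}\subseteq C_{S_{n}}\big((1\,n)\big)=\langle(1\,n)\rangle\times S_{\{2,\dots,n-1\}}$. Finally, if $w\in\overline{N}$ satisfies $w\,\Sigma^{+}_{k,l}=\Sigma^{+}_{k',l'}$, then since each element of $C_{S_{n}}\big((1\,n)\big)$ stabilizes $\Sigma_{\fh}$ and both positive systems meet $\Sigma_{\fh}$ in $\Sigma_{\fh}^{+}$, the element $w$ stabilizes $\Sigma_{\fh}^{+}$; the only such element of $W(\Sigma_{\fh})=S_{\{2,\dots,n-1\}}$ being the identity, $w\in\{e,(1\,n)\}$. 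But $(1\,n)$ sends $e_{1}-e_{n}\in\Sigma_{\fq}^{+}\subseteq\Sigma^{+}_{k,l}$ to $e_{n}-e_{1}$, which cannot lie in the positive system $\Sigma^{+}_{k',l'}$, so $w=e$ and hence $(k,l)=(k',l')$ by bijectivity of the parametrization. The step I expect to be the main obstacle is this reduction — establishing that $H$-conjugacy within $\cP(\fa)$ is realized by an element of $\Nor_{H}(\fa)$ and correctly identifying $\sigma|_{\fa}$ with $(1\,n)$; the remainder is routine bookkeeping with positive systems.

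For the characterization of $\cPH$, I would use that $\fh$-compatibility is preserved under $H$-conjugation (Remark~\ref{r: h comp for H conjugation}), so $P\in\cPH$ iff $P_{k,l}$ is $\fh$-compatible. As $P_{k,l}\in\cP$ has $\sigma$-parabolic rank $1$, part {\em(a)} of Definition~\ref{d: fh compatible} says this means $\langle\alpha,\rho_{P_{k,l},\fh}\rangle\geq0$ for all $\alpha\in\Sigma(P_{k,l})=\Sigma^{+}_{k,l}$; and $\rho_{P_{k,l},\fh}=\rho_{\fh}$ by Lemma~\ref{lemma for SLn: P is H-conjugate to Q in cP(A) in standard position}. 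By the final assertion of Lemma~\ref{Lemma langle e_i-e_j,rho_h rangle} this holds exactly when $\frac{n+1}{2}\leq k\leq l\leq\frac{n+3}{2}$.

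For the characterization of $\cP_{*}$, note that $\dim N_{P}$ and $\dim(N_{P}\cap H)$ are invariant under $H$-conjugation, so it suffices to compute $\dim\big(N_{k,l}/(N_{k,l}\cap H)\big)=\dim\fn_{k,l}-\dim(\fn_{k,l}\cap\fh)$. Here $\dim\fn_{k,l}=\binom{n}{2}$, while Lemma~\ref{Lemma n_k,l cap h} gives $\dim(\fn_{k,l}\cap\fh)=|\Sigma_{\fh}^{+}|+(k-2)+(n-l)=\binom{n-2}{2}+(k-2)+(n-l)$, so
\[
\dim\big(N_{k,l}/(N_{k,l}\cap H)\big)=\Big(\binom{n}{2}-\binom{n-2}{2}\Big)-(k-2)-(n-l)=(n-1)+(l-k).
\]
Since $2\leq k\leq l\leq n$, the minimum of this over $\cP$ is $n-1$, attained precisely when $l=k$; hence $P\in\cP_{*}$ if and only if $k=l$. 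In passing this reproves the value $\dim(N_{P}/(N_{P}\cap H))=n-1$ for $P\in\cP_{*}$ quoted in Subsection~\ref{Subsection H-compatible parabolic subgroups}.
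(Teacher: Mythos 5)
Your proposal follows essentially the same route as the paper's proof: existence of $(k,l)$ from Lemma~\ref{lemma for SLn: P is H-conjugate to Q in cP(A) in standard position}, uniqueness by reducing $H$-conjugacy to conjugacy by $\Nor_{H}(\fa)$ via Theorem~\ref{Thm P contains sigma-stable A, unique up to N cap H conjugation}~(ii) followed by a Weyl-group argument exploiting that both positive systems contain $\Sigma_{\fh}^{+}\cup\Sigma_{\fq}^{+}$, the $\cPH$-criterion from Lemma~\ref{Lemma langle e_i-e_j,rho_h rangle}, and the $\cP_{*}$-criterion from the dimension count in Lemma~\ref{Lemma n_k,l cap h} (which you carry out explicitly where the paper leaves it implicit). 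One bookkeeping slip in the uniqueness step: from $hP_{k,l}h^{-1}=P_{k',l'}$, the subgroups $A$ and $h^{-1}Ah$ are both $\sigma$-stable split components of $P_{k,l}$, not of $P_{k',l'}$, so the theorem yields $n\in N_{P_{k,l}}\cap H$ with $h^{-1}Ah=nAn^{-1}$; it is precisely this membership that gives $gP_{k,l}g^{-1}=hnP_{k,l}n^{-1}h^{-1}=hP_{k,l}h^{-1}=P_{k',l'}$ for $g=hn$, whereas with $n\in N_{P_{k',l'}}\cap H$ as you wrote, that identity does not follow. This is an immediately repairable relabelling, and the remainder of your argument is unaffected.
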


\begin{proof}
The existence of the integers $k$ and $l$ follows directly from Lemma \ref{lemma for SLn: P is H-conjugate to Q in cP(A) in standard position}. To prove uniqueness of these integers, let $2\leq k\leq l\leq n$ and $2\leq k'\leq l'\leq n,$ and assume that there exists an $h\in H$ with $hP_{k,l}h^{-1}=P_{k',l'}$. Note that $A$ and $hAh^{-1}$ are both $\sigma$-stable split components of $P_{k',l'}$. From Theorem \ref{Thm P contains sigma-stable A, unique up to N cap H conjugation} it follows that there exists a unique $n\in N_{k',l'}\cap H$ such that $nhAh^{-1}n^{-1}=A$. We write $h'$ for $nh$. Note that $h'\in\Nor_{H}(\fa)=\Nor_{H}(\fa_{\fq})\cap\Nor_{H}(\fa_{\fh})$ and $h'P_{k,l}h'^{-1}=P_{k',l'}$. It follows that $h'$ acts trivially on $\Sigma_{\fq}^{+}$ and therefore $h'\in\Cen_{H}(\fa_{\fq})$. From Corollary \ref{Prop properties Sigma_h(a)} we see that $\Sigma_{\fh}$ is the root system of $\fa_{\fh}$ in $\Cen_{\fh}(\fa_{\fq}).$ Since $h'\in\Nor_{H}(\fa_{\fh})\cap\Cen_{H}(\fa_{\fq})$, it induces an element $w$ in the Weyl group of this root system. As the positive system $\Sigma_{\fh}^{+}$ is contained in both $\Sigma_{k,l}^{+}$ and $\Sigma_{k',l'}^{+}$, it follows that $w$ acts trivially on $\Sigma_{\fh}^{+}$, and thus we conclude that $h'$ acts trivially on $\fa_{\fh}$. This proves that $h'P_{k,l}h'^{-1}=P_{k,l}$ and hence $k=k'$ and $l=l'$.

From Lemma \ref{Lemma langle e_i-e_j,rho_h rangle} it is easily seen that $P\in\cPH$ if and only if
$\frac{n+1}{2}\leq k\leq l\leq \frac{n+3}{2}$. The final claim, that $P\in\cP_{*}$ if and only if $k=l,$ follows from Lemma \ref{Lemma n_k,l cap h}.
\end{proof}

\begin{Rem}\label{r: calculation rho zero}
We recall from \cite[Def.~1.1]{vdBanKuit_HC-TransformAndCuspForms} that a parabolic subgroup $P \in \cP(\fa)$ is said
to be $\fq$-extreme if $ \gS(P) \cap \gs\Cartan  \gS(P) = \gS(P)\setminus \gS_\fh.$
Since $\gs\Cartan e_j =  - e_j$ for $2\leq j \leq n-1,$ whereas $\gs \Cartan e_1 = - e_n,$
it follows from the characterization of $\gS_{k,l}$ above that $P_{k,l}$
is $\fq$-extreme if and only if $k=2$ and $l = n-1.$ Let $\gS_0^+$ be the positive
system for the root system $\Sigma(\fg, \faq)$ obtained by restricting the roots
from $\gS_{2, n-1}\setminus \gS_\fh$ to $\faq.$ Then $\gS_0^+ = \{\ga, 2 \ga\},$
where $2\ga = e_1 -e_n.$ It is now readily checked that $\ga$ has multiplicity
$2(n-2),$ whereas $2 \ga$ has multiplicity $1.$ Accordingly, $\rho_0:= \rho(\gS_0^+)$
is given by $\rho_0 = (n - 1)\ga,$ so that
$$
a_t^{\rho_0} = e^{(n-1) t} \qquad (t \in \R).
$$
The element $\rho_0 \in \faqd$ defined above corresponds to the element
$\rho_Q$ appearing in \cite[Prop.~17.2]{vdBan_PrincipalSeriesII}.
Accordingly, it follows that Definition \ref{defi cC} of the Schwartz space $\cC(X_n)$
is consistent with the characterisation given in \cite[Thm.~17.1]{vdBan_PrincipalSeriesII}.
\end{Rem}

\subsection{Decomposition and invariant measures}
Let $k$ and $l$ be integers such that $2\leq k\leq l\leq n$.
We recall that $P_{k,l}=\Cen_{K}(\fa)AN_{k,l}$ is the minimal parabolic subgroup containing $A$ such that $\Sigma(P_{k,l})=\Sigma_{k,l}^{+}$, and that the latter root system is given by (\ref{eq Sigma_k,l}).

We define
$$
\ns
=\fg_{e_{1}-e_{n}}
    \oplus\bigoplus_{2\leq j\leq l-1}\fg_{e_{j}-e_{n}}
    \oplus\bigoplus_{k\leq j\leq n-1}\fg_{e_{1}-e_{j}}.
$$
Note that $\ns$ is a Lie subalgebra of $\fn_{k,l}$. We write $\Ns$ for the Lie subgroup $\exp(\ns)$.
For $x,y\in\R^{n-2}$ and $z\in\R$, we define
\begin{equation}\label{eq def u_(x,y,z)}
u_{x,y,z}=\left(
   \begin{array}{c:c:c}
    1   &   x^{t}   &   z   \\
   \hdashline
        &   I_{n-2} &   y   \\
   \hdashline
        &           &   1   \\
   \end{array}
 \right).
\end{equation}
A straightforward computation shows that
$$
\Ns
=\big\{u_{x,y,z}
    :x\in\{0\}^{k-2}\times\R^{n-k},y\in\R^{l-2}\times\{0\}^{n-l}, z\in\R\big\}.
$$

By Lemma \ref{Lemma n_k,l cap h}, we have
\begin{equation}\label{eq n=(n cap h)+u}
\fn_{k,l}=(\fn_{k,l}\cap\fh)\oplus\ns.
\end{equation}

\begin{Lemma}\label{Lemma decomposition of Haar measures}
The map
$$\Ns\times(N_{k,l}\cap H)\to N_{k,l};\qquad(u,n)\mapsto un$$
is a diffeomorphism. There exist normalizations for the invariant measure $dx$ on $N_{k,l}/N_{k,l}\cap H$ and the Haar measure $du$ of $\Ns$ such that for every $\phi\in C_c\big(N_{k,l}/(N_{k,l}\cap H)\big)$
\begin{align}
\label{e: integral over U k l}
\int_{N_{k,l}/(N_{k,l}\cap H)}\phi(x)\,dx
&=\int_{\Ns}\phi\big(u\cdot(N_{k,l}\cap H)\big)\,du.
\end{align}
Finally, the normalizations may be chosen such that, in addition,
the above integrals equal
\begin{equation}
\label{e: second integral over U k l}
\int_{z\in\R}\int_{y\in\R^{l-2}\times\{0\}^{n-l}}\int_{x\in\{0\}^{k-2}\times\R^{n-k}}
    \phi\big(u_{x,y,z}\cdot (N_{k,l} \cap H)\big)\,dx\,dy\,dz.
\end{equation}
\end{Lemma}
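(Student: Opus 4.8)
The plan is to prove the three claims in order: the diffeomorphism property, the first measure identity \eqref{e: integral over U k l}, and then the explicit coordinate form \eqref{e: second integral over U k l}.

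\textbf{The diffeomorphism.} By \eqref{eq n=(n cap h)+u} we have the vector space decomposition $\fn_{k,l} = (\fn_{k,l}\cap\fh)\oplus\ns$, and both summands are Lie subalgebras of the nilpotent Lie algebra $\fn_{k,l}$. For a simply connected nilpotent Lie group $N_{k,l}=\exp(\fn_{k,l})$ with a decomposition of its Lie algebra into a direct sum of two subalgebras, the product map $\exp(\ns)\times\exp(\fn_{k,l}\cap\fh)\to N_{k,l}$, $(u,n)\mapsto un$, is a diffeomorphism; this is a standard fact about nilpotent groups (e.g.\ it follows by induction on the nilpotency class, or from Mal'cev coordinate arguments). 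Here one should note $N_{k,l}\cap H=\exp(\fn_{k,l}\cap\fh)$ since $H$ is the fixed point group of $\sigma$ and $\fn_{k,l}\cap\fh$ is precisely the fixed point set of $\sigma$ on $\fn_{k,l}$ (using $\sigma$-stability of $\fn_{k,l}$, which holds because $\sigma$ permutes the roots in $\Sigma_{k,l}^+$ as computed in the proof of Lemma~\ref{Lemma n_k,l cap h}, together with the explicit form of $\fn_{k,l}\cap\fh$ in \eqref{eq n_k,l cap h}). Thus $N_{k,l}/(N_{k,l}\cap H)\cong \Ns$ as a manifold via this map.

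\textbf{The first measure identity.} Via the diffeomorphism $\Ns\times(N_{k,l}\cap H)\to N_{k,l}$, the left-invariant Haar measure $dn$ on $N_{k,l}$ pushes forward (up to normalization) to a product $du\,dh$ of Haar measures on $\Ns$ and on $N_{k,l}\cap H$; more precisely, this holds because $N_{k,l}$ is unimodular (being nilpotent) and $\Ns$, $N_{k,l}\cap H$ are closed subgroups, so by Weil's integration formula there is a relatively invariant measure, which here is invariant by unimodularity of all groups involved. Pushing this down to the quotient $N_{k,l}/(N_{k,l}\cap H)$ and using that the coset map $u\mapsto u\cdot(N_{k,l}\cap H)$ intertwines $du$ with the invariant measure $dx$ (for a suitable normalization of $dx$), we obtain \eqref{e: integral over U k l}. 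One must check that $N_{k,l}/(N_{k,l}\cap H)$ carries a nonzero $N_{k,l}$-invariant Radon measure; this follows since both groups are unimodular.

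\textbf{The explicit coordinate form.} Using the explicit description $\Ns=\{u_{x,y,z}: x\in\{0\}^{k-2}\times\R^{n-k},\ y\in\R^{l-2}\times\{0\}^{n-l},\ z\in\R\}$ obtained from \eqref{eq def u_(x,y,z)}, I would compute the Haar measure $du$ in these coordinates. The group law on $\Ns$ in the $(x,y,z)$-coordinates is polynomial (indeed $u_{x,y,z}u_{x',y',z'} = u_{x+x',\,y+y',\,z+z'+x^t y'}$), so the coordinate chart $(x,y,z)\mapsto u_{x,y,z}$ is a global Mal'cev chart and Lebesgue measure $dx\,dy\,dz$ on the parameter space is (a constant multiple of) Haar measure $du$ — one checks the Jacobian of left translation is identically $1$. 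Choosing normalizations accordingly turns the right-hand side of \eqref{e: integral over U k l} into \eqref{e: second integral over U k l}, with the iterated-integral order being a matter of Fubini since all measures are (products of) Lebesgue measures on Euclidean factors.

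\textbf{Main obstacle.} The substantive point is the first claim — establishing that $(u,n)\mapsto un$ is a global diffeomorphism $\Ns\times(N_{k,l}\cap H)\to N_{k,l}$ and correctly identifying $N_{k,l}\cap H$ with $\exp(\fn_{k,l}\cap\fh)$; once that is in place, the measure-theoretic assertions are routine consequences of unimodularity and Weil's formula, and the passage to explicit coordinates is a direct computation with upper-triangular unipotent matrices. A minor technical care point is the compatibility of the three normalizations, i.e.\ that a single consistent choice makes all the displayed identities hold simultaneously; this is immediate once each measure is pinned down in Euclidean coordinates.
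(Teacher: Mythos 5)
Your proposal is essentially correct and reaches the same conclusions, but by a more self-contained route than the paper. The paper disposes of the diffeomorphism and the first integral identity in one line by citing \cite[Prop.~2.16]{BalibanuVdBan_ConvexityTheoremsForSemisimpleSymmetricSpaces}, whereas you derive the diffeomorphism from the standard product decomposition of a simply connected nilpotent group whose Lie algebra splits as a direct sum of two subalgebras, and the measure identity from unimodularity and Weil's formula; this buys independence from the reference at the cost of invoking two (correct) standard facts. For the explicit coordinate form, the paper passes through exponential coordinates --- noting $u_{x,y,z+\inp{x}{y}/2}=\exp(\cdots)$, so that Lebesgue measure on $\ns$ pushes forward to Haar measure under $\exp$, and then substituting $z\mapsto z-\inp{x}{y}/2$ --- while you verify directly that left translation in the coordinates $(x,y,z)\mapsto u_{x,y,z}$ has Jacobian $1$ (using the group law $u_{x,y,z}u_{x',y',z'}=u_{x+x',\,y+y',\,z+z'+x^{t}y'}$), so that $dx\,dy\,dz$ is already a Haar measure on $\Ns$. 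Both computations are correct and amount to the same normalization.

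One justification in your argument is false as stated: $\fn_{k,l}$ is \emph{not} $\sigma$-stable. For instance $\fg_{e_{1}-e_{n}}\subseteq\fn_{k,l}$ while $\sigma\fg_{e_{1}-e_{n}}=\fg_{e_{n}-e_{1}}\not\subseteq\fn_{k,l}$; the computation in the proof of Lemma \ref{Lemma n_k,l cap h} only identifies $\Sigma^{+}_{k,l}\cap\sigma(\Sigma^{+}_{k,l})$, which is a proper subset of $\Sigma^{+}_{k,l}$ in general. The identity $N_{k,l}\cap H=\exp(\fn_{k,l}\cap\fh)$ that this was meant to support is nevertheless true and easily repaired: $\exp$ is a polynomial bijection from nilpotent onto unipotent matrices with polynomial inverse $\log$, so if $n=\exp X$ with $X\in\fn_{k,l}$ and $\sigma(n)=n$, then $\sigma X=\log(\sigma n)=\log n=X$, whence $X\in\fn_{k,l}\cap\fh$. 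With that one-line repair your argument is complete.
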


\begin{proof}
The first two assertions follow from  \cite[Prop.~2.16]{BalibanuVdBan_ConvexityTheoremsForSemisimpleSymmetricSpaces}.
For the final assertion, we note that
$$
u_{x,y, z + \inp{x}{y}/2} = \exp
\left(
   \begin{array}{c:c:c}
    0   &   x^{t}   &   z   \\
   \hdashline
        &   0_{n-2} &   y   \\
   \hdashline
        &           &   0   \\
   \end{array}
 \right).
 $$
 It follows that, up to suitable normalization of measures, the second
 integral in (\ref{e: integral over U k l}) equals
 $$
 \int_{z\in\R}\int_{y\in\R^{l-2}\times\{0\}^{n-l}}\int_{x\in\{0\}^{k-2}\times\R^{n-k}}
    \phi\big(u_{x,y,z + \inp{x}{y}/2}\cdot (N_{k,l} \cap H)\big)\,dx\,dy\,dz.
 $$
 The equality with (\ref{e: second integral over U k l}) now follows
 from a simple substitution of variables.
\end{proof}

To conclude this section we state one more lemma.

\begin{Lemma}\label{Lemma equivalences between parabolics}
Let $w$ be the longest Weyl group element for $\Sigma_{\fh}=\Sigma\big(\Cen_{\fh}(\fa_{\fq});\fa_{\fh}\big),$
relative to the positive system (\ref{e: positive system gS fh}),
and let $w_{0} \in N_{K\cap H}(\fa) \cap \Cen_{K\cap H}(\fa_{\fq})$ be a representative for $w$. Then
$$
\sigma\theta(w_{0}P_{k,l}w_{0}^{-1})=P_{n+2-l,n+2-k}.
$$
Moreover, if $\phi\in C_{c}^{\infty}(X_{n})$, then
$$
\int_{\Ns[n+2-l,n+2-k]}\phi(u\cdot H)\,du
=\int_{\Ns}\phi\Big(\sigma\theta (w_{0}u)\cdot H\Big)\,du.
$$
\end{Lemma}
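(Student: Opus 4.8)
The plan is to prove the two assertions in Lemma \ref{Lemma equivalences between parabolics} in sequence: first the identity of parabolic subgroups $\sigma\theta(w_0 P_{k,l} w_0^{-1}) = P_{n+2-l,n+2-k}$, then the integral identity, which should follow almost formally once the first part and a change-of-variables are in place.

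\emph{Step 1: the parabolic identity.} Both $\sigma\theta(w_0 P_{k,l} w_0^{-1})$ and $P_{n+2-l,n+2-k}$ lie in $\cP(\fa)$, since $w_0$ normalizes $\fa$ and $\sigma\theta$ preserves $\fa$ (indeed $\fa = \fa_\fh \oplus \fa_\fq$ with $\fa_\fh \subseteq \fp \cap \fh$ and $\fa_\fq \subseteq \fp \cap \fq$, both $\sigma\theta$-stable). Hence it suffices to check that the two associated positive systems in $\Sigma(\fg,\fa)$ coincide. I would compute the positive system of $\sigma\theta(w_0 P_{k,l} w_0^{-1})$ by tracking what happens to the explicit description (\ref{eq Sigma_k,l}) of $\Sigma^+_{k,l}$ under the composite map on roots $\alpha \mapsto \sigma\theta(w \cdot \alpha)$. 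Recall $\sigma$ acts on $\fa^*$ by $\sigma(e_1) = e_n$, $\sigma(e_n) = e_1$, $\sigma(e_j) = e_j$ for $2 \le j \le n-1$ (from $\sigma(e_1)=e_n$ in Remark \ref{r: calculation rho zero}), while $\theta(e_i) = -e_i$ for all $i$, so $\sigma\theta$ sends $e_1 \mapsto -e_n$, $e_n \mapsto -e_1$, $e_j \mapsto -e_j$ otherwise; and $w$, the longest element of the Weyl group of $\Sigma_\fh = \{e_i - e_j : 2 \le i,j \le n-1\} \cong S_{n-2}$ relative to (\ref{e: positive system gS fh}), reverses the order of $e_2,\dots,e_{n-1}$, i.e.\ $w(e_j) = e_{n+2-j}$ for $2 \le j \le n-1$ and fixes $e_1, e_n$. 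Composing, the map $\sigma\theta \after w$ sends $e_1 \mapsto -e_n$, $e_n \mapsto -e_1$, and $e_j \mapsto -e_{n+2-j}$ for $2 \le j \le n-1$. Applying this termwise to each of the six blocks in (\ref{eq Sigma_k,l}), for instance $\{e_j - e_1 : 2 \le j \le k-1\} \mapsto \{-e_{n+2-j}+e_n : 2 \le j \le k-1\} = \{e_n - e_{j'} : n+2-k \le j' \le n-1\}$, and doing the analogous bookkeeping for the other five blocks (the $\Sigma_\fh^+$ block maps to itself since $w$ is a Weyl element and $\sigma\theta$ acts as $-1$ composed with a sign that is absorbed, and $\Sigma_\fq^+ = \{e_1 - e_n\} \mapsto \{e_1 - e_n\}$), I expect to recover exactly the block decomposition (\ref{eq Sigma_k,l}) with $(k,l)$ replaced by $(n+2-l, n+2-k)$. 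This reduces to verifying the index ranges line up, which is a short finite check.

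\emph{Step 2: the integral identity.} Here I would use that $u \mapsto \sigma\theta(w_0 u)$, as a map $\Ns \to G$, sends $\Ns$ onto $\Ns[n+2-l,n+2-k]$: indeed $\log \ns$ is the sum of those root spaces $\fg_{e_1-e_n} \oplus \bigoplus_{2\le j \le l-1} \fg_{e_j - e_n} \oplus \bigoplus_{k \le j \le n-1} \fg_{e_1-e_j}$, and applying $\Ad(w_0)$ then $\sigma\theta$ carries these (by the root computation of Step 1, and since these are precisely a complement to $\fn_{k,l}\cap\fh$ by (\ref{eq n=(n cap h)+u})) onto the corresponding complement $\ns[n+2-l,n+2-k]$ inside $\fn_{n+2-l,n+2-k}$. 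Composing with the exponential map and using that $\Ns[n+2-l,n+2-k] = \exp(\ns[n+2-l,n+2-k])$, the map $u \mapsto \sigma\theta(w_0 u)$ is a diffeomorphism $\Ns \to \Ns[n+2-l,n+2-k]$ (it is an affine bijection in the $u_{x,y,z}$ coordinates of (\ref{eq def u_(x,y,z)})). Since $\sigma\theta(w_0 u) \cdot H = \sigma\theta(w_0) \cdot (\sigma\theta(u)\cdot H)$ and we may absorb the fixed left translation by $\sigma\theta(w_0)$, the substitution $u' = \sigma\theta(w_0 u)$ transforms $\int_{\Ns[n+2-l,n+2-k]} \phi(u' \cdot H)\, du'$ into $\int_{\Ns} \phi(\sigma\theta(w_0 u)\cdot H) \, |J|\, du$ for the appropriate Jacobian factor. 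The final point is that the invariant measures on $N_{k,l}/(N_{k,l}\cap H)$ and on $\Ns$ in Lemma \ref{Lemma decomposition of Haar measures} are only defined up to normalization; since $\sigma\theta \after \Ad(w_0)$ is an isomorphism of unipotent groups carrying Haar measure to a multiple of Haar measure, one can choose the normalizations of the measures $du$ on $\Ns$ and on $\Ns[n+2-l,n+2-k]$ compatibly so that $|J| = 1$, giving the stated equality.

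\emph{Main obstacle.} The conceptual content is light; the real work is the careful, error-prone finite combinatorics of Step 1 — getting every index range in the six blocks of (\ref{eq Sigma_k,l}) correct under the composite $\sigma\theta \after w$, particularly keeping straight that $w$ reverses only the middle indices $2,\dots,n-1$ and interacts with $\sigma\theta$'s swap of the extreme indices $1 \leftrightarrow n$. A secondary subtlety is making the measure-normalization bookkeeping in Step 2 precise enough that the compatible choice is legitimate and consistent with the normalizations already fixed in Lemma \ref{Lemma decomposition of Haar measures}; but since all measures in sight are only pinned down up to positive scalars, this should not cause genuine difficulty.
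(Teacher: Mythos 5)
Your overall route is exactly the paper's: determine the image of the six blocks of (\ref{eq Sigma_k,l}) under $\sigma\theta\after w$ to get the parabolic identity, then obtain the integral identity from a change of variables with trivial Jacobian. Two points need repair. First, your formula for the longest Weyl element is off by one: $w$ reverses the indices $2,\dots,n-1$, so $w(e_j)=e_{n+1-j}$ (since $2+(n-1)=n+1$), not $e_{n+2-j}$; your formula would send $e_2$ to $e_n$, which is not an index occurring in $\Sigma_\fh$. The composite is therefore $e_1\mapsto -e_n$, $e_n\mapsto -e_1$, $e_j\mapsto -e_{n+1-j}$ for $2\le j\le n-1$. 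Your sample block $\{e_j-e_1:2\le j\le k-1\}\mapsto\{e_n-e_{j'}:n+2-k\le j'\le n-1\}$ is the \emph{correct} output, but it is inconsistent with the formula you wrote (with $j'=n+2-j$ the range would be $n+3-k\le j'\le n$, including the meaningless $e_n-e_n$). With the corrected formula the six blocks do line up with $\Sigma^+_{n+2-l,n+2-k}$, as in the paper.

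Second, in Step 2 the map $u\mapsto\sigma\theta(w_0u)=w_0\,\sigma\theta(u)$ does \emph{not} carry $\Ns$ into $\Ns[n+2-l,n+2-k]$ --- the stray left factor $w_0$ prevents this --- and ``absorbing the left translation'' is not meaningful here, since the integral over $\Ns[n+2-l,n+2-k]$ on the left-hand side contains no translation. The substitution that works (and is the one the paper uses) is the conjugated map $u\mapsto\sigma\theta(w_0uw_0^{-1})$, which by the first part does map $\Ns$ diffeomorphically onto $\Ns[n+2-l,n+2-k]$, with Jacobian $1$ for the normalizations already fixed in Lemma \ref{Lemma decomposition of Haar measures} (the linear map permutes the basis root vectors $E_{i,j}$ up to sign), so no re-normalization of measures is needed or permitted. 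The missing link between this substitution and the stated integrand is the observation that $\sigma\theta(w_0)=w_0$ (as $w_0\in K\cap H$) together with $w_0\in H$, whence $\sigma\theta(w_0uw_0^{-1})\cdot H=w_0\,\sigma\theta(u)\cdot H=\sigma\theta(w_0u)\cdot H$ and the two integrands agree. With these two corrections your argument coincides with the paper's proof.
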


\begin{proof}
From the identities
$$
w\cdot e_{i}
=\begin{cases}
    e_{1} & i=1\\
    e_{n+1-i} & 2\leq i\leq n-1\\
    e_{n} & i=n
\end{cases}
\quad\text{and}\quad
\sigma\theta e_{i}
=\begin{cases}
    -e_{n} & i=1\\
    -e_{i} & 2\leq i\leq n-1\\
    -e_{1} & i=n,
\end{cases}
$$
we obtain that $\sigma\theta(w\cdot \Sigma^{+}_{k,l})=\Sigma^{+}_{n+2-l,n+2-k}$. This proves the first statement. The second statement follows from the first as
$$
\Ns\to\Ns[n+2-l,n+2-k];\qquad u\mapsto\sigma\theta(w_{0}uw_{0}^{-1})
$$
is a diffeomorphism with Jacobian $1$.
\end{proof}

\subsection{Reduction to an equivalent theorem}\label{subsection Equivalent theorem}
The space $\cC(X_{n})$ is stable under translation by $G$ and
pull-back by $\sigma \theta$. Under the assumption that $P$ is of $\sigma$-parabolic rank $1$, we conclude from Proposition \ref{Prop P H-conj to P_k,l; H-comp iff (n+1)/2 leq k,l leq (n+3)/2}, Lemma \ref{Lemma decomposition of Haar measures} and Lemma \ref{Lemma equivalences between parabolics} that Theorem \ref{Theorem main theorem convergence} is equivalent to the following theorem.

\begin{Thm}\label{Thm alternative for main theorem}
Let $k$ and $l$ be integers such that $2\leq k\leq l \leq n+2-k$. Then the integral
$$
\int_{\Ns}\phi(u\cdot H)\,du
$$
is absolutely convergent for every $\phi\in\cC(X_{n})$ if and only if
$\frac{n+1}{2}\leq k\leq l\leq\frac{n+3}{2}$.
\end{Thm}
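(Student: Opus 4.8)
The plan is to reduce everything to an explicit estimate on the integrand $\phi(u_{x,y,z}\cdot H)$ in terms of the polar coordinate $t$, via Lemma \ref{Lemma KAH decomposition}, and then to analyze the convergence of the resulting multi-dimensional Euclidean integral directly. First I would use Lemma \ref{Lemma decomposition of Haar measures}, equation (\ref{e: second integral over U k l}), to rewrite
$$
\int_{\Ns}\phi(u\cdot H)\,du
=\int_{\R}\int_{\R^{l-2}}\int_{\R^{n-k}}\phi\big(u_{x,y,z}\cdot H\big)\,dx\,dy\,dz,
$$
so that $x$ has $n-k$ coordinates, $y$ has $l-2$ coordinates, and $z\in\R$. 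Next I would compute $\|u_{x,y,z}\sigma(u_{x,y,z})^{-1}\|_{HS}^2$ as an explicit polynomial in the entries of $x,y,z$, so that Lemma \ref{Lemma KAH decomposition} gives $2\cosh(4t)$, where $u_{x,y,z}\in Ka_t\cdot H$, as an explicit function $F(x,y,z)$ of the coordinates, of the form $\text{const}+\|x\|^2 + \|y\|^2 + (\text{stuff involving }z)^2 + \cdots$. The key point of this computation is that $F$ grows quadratically in $x$ and $y$ and quartically (or at least quadratically) in $z$; the precise homogeneity degrees in each block of variables are what control the convergence.

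Using the specific Schwartz function $\phi_\nu$ from Lemma \ref{Lemma phi_nu is Schwartz}, which satisfies $\phi_\nu(ka_t\cdot H)=\cosh^\nu(4t)\asymp F(x,y,z)^\nu$ for $t$ large, the convergence of $\int_{\Ns}\phi_\nu(u\cdot H)\,du$ for all admissible $\nu<\tfrac{1-n}{4}$ becomes a purely elementary question about convergence of $\int F(x,y,z)^\nu\,dx\,dy\,dz$ near infinity. By a scaling/polar-coordinates argument in each homogeneous block, this converges if and only if $\nu$ is more negative than (half) the ``total homogeneous dimension'' of the domain $\Ns$ measured with the weights coming from $F$. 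Carrying this out: the $x$-block contributes $n-k$, the $y$-block contributes $l-2$, and the $z$-variable contributes a weight depending on its degree in $F$; summing gives a threshold exponent, and matching it against $\tfrac{1-n}{4}$ yields exactly the inequalities $\tfrac{n+1}{2}\le k$ and $l\le\tfrac{n+3}{2}$ (together with the already-standing $k\le l$). This gives the ``only if'' direction (divergence when the inequalities fail) and the ``if'' direction for the test function $\phi_\nu$.

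For the full ``if'' direction — convergence for \emph{every} $\phi\in\cC(X_n)$, not just $\phi_\nu$ — I would use the Schwartz seminorm estimate from Remark \ref{r: Schwartz seminorms}: for any $m$,
$$
|\phi(ka_t\cdot H)|\le \mu_{1,m}(\phi)\,\big(2\cosh 4t\big)^{-\frac{n-1}{4}}\big(1+\log(2\cosh 4t)\big)^{-m},
$$
so that $|\phi(u_{x,y,z}\cdot H)|\le \text{const}\cdot F(x,y,z)^{-\frac{n-1}{4}}(1+\log F)^{-m}$. Since $-\tfrac{n-1}{4}$ is precisely the endpoint exponent $\tfrac{1-n}{4}$, the bare power-law integral is exactly divergent at the borderline, but the extra logarithmic factor $(1+\log F)^{-m}$ with $m$ chosen large enough (larger than the number of ``critical'' directions, i.e.\ essentially $m> $ the relevant block dimension) restores absolute convergence. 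So the structure is: reduce to an explicit Euclidean integral, split the domain into a compact part (trivially finite, using smoothness) and the region $F\ge R$, and on the latter apply the seminorm bound and integrate in generalized polar coordinates adapted to the weights of $F$.

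The main obstacle I anticipate is the bookkeeping in the third paragraph: pinning down the exact homogeneity weight of the $z$-variable in $F(x,y,z)$ and verifying that the logarithmic gain is genuinely enough to cross the borderline in \emph{all} the critical directions simultaneously (the worst case being when $x$ or $y$ is bounded and only $z$, or a low-dimensional subblock, goes to infinity, versus when all variables go to infinity together). One must check that a single power $m$ of the logarithm suffices uniformly over this stratification of infinity; this requires estimating the integral on dyadic shells $\{F\sim 2^j\}$ and summing a series like $\sum_j 2^{-cj}(1+j)^{-m}\cdot(\text{polynomial volume factor})$, and confirming the polynomial volume factor is killed once $m$ exceeds the maximal ``defect'' over all strata. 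Everything else — the matrix computation for $F$, the measure normalizations, and the reduction via Lemmas \ref{Lemma decomposition of Haar measures} and \ref{Lemma equivalences between parabolics} — is routine given the earlier results.
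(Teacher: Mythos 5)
Your skeleton matches the paper's: reduce to a Euclidean integral over $\Ns$ via Lemma \ref{Lemma decomposition of Haar measures}, compute $2\cosh(4t)$ explicitly as a function $F(x,y,z)$ (Lemma \ref{Lemma 2cosh(4t)=P(x,y,z)}), use $\phi_\nu$ from Lemma \ref{Lemma phi_nu is Schwartz} to produce divergent examples, and use the logarithmically improved seminorm bound of Remark \ref{r: Schwartz seminorms} for convergence at the borderline. However, the heuristic driving your threshold computation is wrong, and it hides the actual content of the proof. The function $F$ is \emph{not} a sum of independent quasi-homogeneous blocks: it contains the coupled terms $(1+z)^2\|x\|^2$, $(1-z+\langle x,y\rangle)^2\|y\|^2$ and $\|x\|^2\|y\|^2$. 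Because of these products, the convergence threshold for $\int F^\nu$ is \emph{not} obtained by summing block contributions. Taken literally, summing gives a threshold near $\nu<-\frac{(n-k)+(l-2)+1}{2}$, which for $k=l=\frac{n+1}{2}$ is about $-\frac{n-1}{2}$ and would predict divergence at the Schwartz rate $\nu=\frac{1-n}{4}$, contradicting the theorem. The correct threshold is $\min\{\frac{k-n}{2},\frac{2-l}{2}\}$: it is governed by the two worst \emph{marginal} directions (only $x$, resp.\ only $y$, going to infinity), and the substance of the convergence proof is precisely that the product structure of $F$ renders the joint directions harmless at the log-corrected borderline.

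Establishing that is not bookkeeping. The paper's Proposition \ref{Prop int phi convergent} completes the square in $x$, producing a Jacobian $J_k(y,z)=(\|y\|^2+(1+z)^2+1)^{\frac{k-n}{2}}(1+\|\pi(y)\|^2)^{-1/2}$ whose first factor is exactly what makes the remaining $(y,z)$-integral converge; it needs the nontrivial lower bound (\ref{estimate c'}) for the shifted constant $c'(y,z)$, resting on the identity $\big((1-z)^2+1\big)\big((1+z)^2+1\big)=z^4+4$; and it splits into the cases $k=l$ and $k<l$ (the latter occurring only for $n$ odd, where the cross term $\langle x,y\rangle$ is genuinely present). For the divergence direction your direct analysis is workable but harder than you suggest, since $F$ grows quartically in $\pi(y)$ when $x\neq0$; the paper instead obtains the condition $l\leq\frac{n+3}{2}$ for free from the $x$-direction condition applied to $\Ns[n+2-l,n+2-k]$, using the symmetry of Lemma \ref{Lemma equivalences between parabolics} together with the $\sigma\theta$-invariance of $\phi_\nu$. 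In short: right overall route, but the central analytic step is misdiagnosed as routine and the stated threshold criterion is incorrect as written.
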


From now on we assume that $k$ and $l$ are integers such that $2\leq k\leq l \leq n+2-k$.
\subsection{The integral}\label{subsection the integral}

\begin{Lemma}\label{Lemma 2cosh(4t)=P(x,y,z)}
Let $x,y\in\R^{n-2}$ and $z\in\R$. If $u_{x,y,z}\cdot H\in Ka_{t}\cdot H$, then
\begin{align}
2\cosh(4t)
=&(1-z+\langle x,y\rangle)^{2}(1+z)^{2}
+(1-z+\langle x,y\rangle)^{2}\|y\|^{2}
+(\langle x,y\rangle-z)^{2}\nonumber
\\
\label{eq 2 cosh(4t)}
&+(1+z)^{2}\|x\|^{2}
+2\langle x,y\rangle+\|x\|^{2}\|y\|^{2}
+\|x\|^{2}
+z^{2}
+\|y\|^{2}
+1.
\end{align}
We denote the orthogonal projection $\R^{n-2}\to\{0\}^{k-2}\times \R^{l-k}\times\{0\}^{n-l}$
by $\pi$. Let $x\in\{0\}^{k-2}\times\R^{n-k}$, $y\in\R^{l-2}\times\{0\}^{n-l}$ and $z\in\R$.
If $u_{x,y,z}\cdot H\in Ka_{t}\cdot H$, then
\begin{equation}\label{2nd eq cosh}
2\cosh(4t)
=\langle x,A_{y,z}x\rangle + \langle b_{y,z},x\rangle+c_{y,z},
\end{equation}
where
\begin{align*}
A_{y,z}
&=\big(\|y\|^{2}+(1+z)^{2}+1\big)\Big(I_{n-2}+\pi(y)\pi(y)^{t}\Big)\in\Aut(\R^{n-2}),\\
b_{y,z}
&=2(1-z)\big(\|y\|^{2}+(1+z)^{2}+1\big)\pi(y)\in\{0\}^{k-2}\times\R^{l-k}\times\{0\}^{n-l},\\
c_{y,z}
&=(1-z)^{2}\big(\|y\|^{2}+(1+z)^{2}+1\big)+(z^{2}+2z+\|y\|^{2})\in\R_{\geq2}.
\end{align*}
\end{Lemma}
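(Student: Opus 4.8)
The plan is to deduce both identities from Lemma~\ref{Lemma KAH decomposition}: applied to $g=u_{x,y,z}$ it tells us that if $u_{x,y,z}\cdot H\in Ka_{t}\cdot H$, then
$\|u_{x,y,z}\,\sigma(u_{x,y,z})^{-1}\|_{HS}^{2}=2\cosh(4t)+(n-2)$, so everything reduces to an explicit matrix computation.

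First I would compute $\sigma(u_{x,y,z})=S\,u_{x,y,z}\,S^{-1}$. Since $S=S^{-1}$ acts on the standard basis by $e_{1}\mapsto -e_{n}$, $e_{n}\mapsto -e_{1}$ and $e_{i}\mapsto e_{i}$ for $2\le i\le n-1$, conjugation by $S$ turns the upper unitriangular matrix $u_{x,y,z}$ into the lower unitriangular matrix whose subdiagonal first column is $-y$, whose subdiagonal last row is $-x^{t}$ and whose lower-left entry is $z$. Inverting this lower unitriangular matrix is immediate and gives $\sigma(u_{x,y,z})^{-1}$ with first column $(1,y,\langle x,y\rangle-z)^{t}$, last row $(\langle x,y\rangle-z,\,x^{t},\,1)$ and the evident middle block. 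Multiplying out in $3\times 3$ block form with block sizes $1,n-2,1$ yields
\[
u_{x,y,z}\,\sigma(u_{x,y,z})^{-1}=
\begin{pmatrix}
(1+z)(1-z+\langle x,y\rangle) & (1+z)x^{t} & z\\
(1-z+\langle x,y\rangle)\,y & I_{n-2}+yx^{t} & y\\
\langle x,y\rangle-z & x^{t} & 1
\end{pmatrix}.
\]
Summing the squares of all entries gives $\|u_{x,y,z}\sigma(u_{x,y,z})^{-1}\|_{HS}^{2}$. The only contribution that is not read off directly is that of the central block, $\|I_{n-2}+yx^{t}\|_{HS}^{2}=\tr\big((I_{n-2}+yx^{t})(I_{n-2}+xy^{t})\big)=(n-2)+2\langle x,y\rangle+\|x\|^{2}\|y\|^{2}$; here the summand $n-2$ cancels the $n-2$ coming from Lemma~\ref{Lemma KAH decomposition}. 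Collecting the remaining terms reproduces exactly the right-hand side of (\ref{eq 2 cosh(4t)}), which proves the first formula.

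For the second formula I would specialize to $x\in\{0\}^{k-2}\times\R^{n-k}$ and $y\in\R^{l-2}\times\{0\}^{n-l}$. The supports of $x$ and $y$ overlap precisely in the coordinates $k-1,\dots,l-2$, i.e.\ in the range of $\pi$; hence $\langle x,y\rangle=\langle\pi(y),x\rangle$ and $\langle x,y\rangle^{2}=x^{t}\pi(y)\pi(y)^{t}x$. Substituting this into (\ref{eq 2 cosh(4t)}) and sorting the right-hand side by degree in $x$, the degree-two part equals $\big(\|y\|^{2}+(1+z)^{2}+1\big)\big(\|x\|^{2}+x^{t}\pi(y)\pi(y)^{t}x\big)=\langle x,A_{y,z}x\rangle$, the degree-one part equals $2(1-z)\big(\|y\|^{2}+(1+z)^{2}+1\big)\langle\pi(y),x\rangle=\langle b_{y,z},x\rangle$, and the degree-zero part (set $x=0$) simplifies to $(1-z)^{2}\big(\|y\|^{2}+(1+z)^{2}+1\big)+(z^{2}+2z+\|y\|^{2})=c_{y,z}$. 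That $A_{y,z}\in\Aut(\R^{n-2})$ (it is a positive multiple of $I_{n-2}+\pi(y)\pi(y)^{t}$) and that $b_{y,z}$ lies in $\{0\}^{k-2}\times\R^{l-k}\times\{0\}^{n-l}$ are then clear, and $c_{y,z}\ge 2$ follows by rewriting $c_{y,z}=z^{4}+2+\|y\|^{2}\big(1+(1-z)^{2}\big)$.

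The argument involves no conceptual difficulty; the main places to be careful are the sign bookkeeping in $\sigma(u_{x,y,z})=S u_{x,y,z}S^{-1}$ and the term-by-term regrouping needed to identify the quadratic, linear and constant parts with $A_{y,z}$, $b_{y,z}$ and $c_{y,z}$.
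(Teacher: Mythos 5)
Your proposal is correct and follows essentially the same route as the paper: compute $u_{x,y,z}\sigma(u_{x,y,z})^{-1}$ explicitly (your block matrix agrees with the one in the paper), take its Hilbert--Schmidt norm and invoke Lemma \ref{Lemma KAH decomposition} so that the $n-2$ terms cancel, then obtain (\ref{2nd eq cosh}) by specializing and regrouping (\ref{eq 2 cosh(4t)}) by degree in $x$ using $\langle x,y\rangle=\langle \pi(y),x\rangle$. You merely spell out the "straightforward computations" that the paper leaves implicit, and all your intermediate identities check out.
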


\begin{proof}
Straight-forward computations show that
$$
u_{x,y,z}\sigma(u_{x,y,z})^{-1}
=u_{x,y,z}S u_{x,y,z}^{-1}S^{-1}
=\left(
   \begin{array}{c:c:c}
     (\langle x,y\rangle-z+1)(1+z)    &   (1+z)x^{t}      &   z\\
     \hdashline
     (\langle x,y\rangle-z+1)y     &   I_{n-2}+yx^{t}  &   y\\
     \hdashline
     \langle x,y\rangle-z         &x^{t}              &   1\\
   \end{array}
 \right)
$$
and
$$
\|I_{n-2}+yx^{t}\|_{HS}^{2}
=n-2+2\langle x,y\rangle+\|x\|^{2}\|y\|^{2}.
$$
Equation (\ref{eq 2 cosh(4t)}) now follows from Lemma \ref{Lemma KAH decomposition}.
Equation (\ref{2nd eq cosh}) is a direct consequence of (\ref{eq 2 cosh(4t)}).
\end{proof}

\begin{Cor}\label{Cor integral equality}
Let $\phi\in C(X_{n})$ be $K$-invariant and non-negative. Let
$\bar{\phi}: \R_{\geq 2} \to \R_{\geq 0} $  be given by
$$
\bar{\phi}(2\cosh4t)=\phi(a_{t}\cdot H).
$$
(Note that the function $\R\ni t\mapsto \phi(a_{t}\cdot H)$ is even since $\phi$ is $K$-invariant.)
Furthermore, let
$J_{k}: \R^{n-2}\times\R\to\R_{>0}$
 be given by
$$
J_{k}(y,z)=\big(\|y\|^{2}+(1+z)^{2}+1\big)^{\frac{k-n}{2}}(1+\|\pi(y)\|^{2})^{-\frac{1}{2}}
$$
and let $c':\R^{n-2}\times\R\to\R_{\ge2}$ be given by
$$
c'(y,z)=(1-z)^{2}\Big(\|y\|^{2}+(1+z)^{2}+1\Big)\frac{1}{1+\|\pi(y)\|^{2}}\,+ \,\big(z^{2}+2z+\|y\|^{2}\big).
$$
Then
$$
\int_{\Ns}\phi(u \cdot H)\,du
=\int_{z\in\R}\int_{y\in\R^{l-2}\times\{0\}^{n-l}}
    J_{k}(y,z)\int_{x\in\{0\}^{k-2}\times\R^{n-k}}
    \bar{\phi}\big(\|x\|^{2}+c'(y,z)\big)\,dx\,dy\,dz.
$$
\end{Cor}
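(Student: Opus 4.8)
The plan is to reduce the corollary to the substitutions that have already been set up in Lemmas \ref{Lemma decomposition of Haar measures} and \ref{Lemma 2cosh(4t)=P(x,y,z)}, and then to integrate out the component of $x$ on which the quadratic form $A_{y,z}$ is non-trivially deformed. First I would invoke Lemma \ref{Lemma decomposition of Haar measures}, which says that $\int_{\Ns}\phi(u\cdot H)\,du$ equals the iterated integral (\ref{e: second integral over U k l}), i.e.
$$
\int_{z\in\R}\int_{y\in\R^{l-2}\times\{0\}^{n-l}}\int_{x\in\{0\}^{k-2}\times\R^{n-k}}
\phi\big(u_{x,y,z}\cdot H\big)\,dx\,dy\,dz.
$$
By $K$-invariance of $\phi$ and the polar decomposition, $\phi(u_{x,y,z}\cdot H)=\bar\phi(2\cosh 4t)$ whenever $u_{x,y,z}\cdot H\in Ka_t\cdot H$; and by (\ref{2nd eq cosh}) in Lemma \ref{Lemma 2cosh(4t)=P(x,y,z)} this value is $\bar\phi\big(\langle x,A_{y,z}x\rangle+\langle b_{y,z},x\rangle+c_{y,z}\big)$. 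Since $\phi\ge 0$, all integrands are non-negative and Tonelli's theorem justifies freely reordering the iterated integral.

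Next I would perform, for each fixed $(y,z)$, a linear change of variables in the inner $x$-integral to bring the quadratic form $\langle x,A_{y,z}x\rangle+\langle b_{y,z},x\rangle+c_{y,z}$ into the form $\|x\|^2 + c'(y,z)$. The key observations are: $A_{y,z}=\lambda(y,z)\big(I_{n-2}+\pi(y)\pi(y)^t\big)$ with $\lambda(y,z)=\|y\|^2+(1+z)^2+1$, so $A_{y,z}$ is a positive multiple of a rank-one perturbation of the identity supported on the coordinates $\{0\}^{k-2}\times\R^{l-k}\times\{0\}^{n-l}$ (where $\pi$ projects); note $b_{y,z}=2(1-z)\lambda(y,z)\pi(y)$ lies in the same subspace, so completing the square only affects those coordinates. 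Writing $x=S_{y,z}\tilde x + x_0(y,z)$ for an appropriate $S_{y,z}\in\Aut(\R^{n-2})$ and shift $x_0$, one transforms the form to $\|\tilde x\|^2 + c'(y,z)$; one then computes that the constant term produced is exactly $c'(y,z)$ as defined (this uses $c_{y,z}-\tfrac14\langle b_{y,z},A_{y,z}^{-1}b_{y,z}\rangle$ and the eigenvalue $1+\|\pi(y)\|^2$ of $I_{n-2}+\pi(y)\pi(y)^t$ on the span of $\pi(y)$), and that the Jacobian $|\det S_{y,z}|$ equals $\lambda(y,z)^{(n-k)/2}(1+\|\pi(y)\|^2)^{-1/2}$, which is precisely $J_k(y,z)$. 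Here one must be careful that $S_{y,z}$ preserves the subspace $\{0\}^{k-2}\times\R^{n-k}$ over which $x$ ranges, so that after the substitution $\tilde x$ still ranges over that same $(n-k)$-dimensional subspace; this works because $\pi(y)$ lies in that subspace and $A_{y,z}^{-1}$ acts as a multiple of a rank-one perturbation along $\pi(y)$.

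Finally, after the substitution the inner integral reads $\int_{\tilde x\in\{0\}^{k-2}\times\R^{n-k}}\bar\phi\big(\|\tilde x\|^2+c'(y,z)\big)\,d\tilde x$ times the Jacobian $J_k(y,z)$ pulled out front, and reassembling the iterated integral gives exactly the claimed formula. The main obstacle I anticipate is the bookkeeping of the linear change of variables: verifying simultaneously that the Jacobian is $J_k(y,z)$ and that the shifted-and-scaled quadratic form has constant term precisely $c'(y,z)$, while keeping track of which coordinate subspace everything is supported on (so that $\pi$, the range of $x$, and the eigenspaces of $A_{y,z}$ are all correctly aligned). All of this is a finite-dimensional linear-algebra computation with the explicit matrices from Lemma \ref{Lemma 2cosh(4t)=P(x,y,z)}, but it requires care with the rank-one perturbation $I_{n-2}+\pi(y)\pi(y)^t$, whose inverse is $I_{n-2}-\frac{1}{1+\|\pi(y)\|^2}\pi(y)\pi(y)^t$ and whose square root can be taken of the same form.
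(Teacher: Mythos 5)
Your proposal is correct and follows essentially the same route as the paper's proof: reduce to the iterated integral via Lemma \ref{Lemma decomposition of Haar measures}, rewrite the integrand with Lemma \ref{Lemma 2cosh(4t)=P(x,y,z)}, and complete the square using the square root of the restricted form $B_{y,z}$ on $\{0\}^{k-2}\times\R^{n-k}$, identifying the Jacobian $(\det B_{y,z})^{-1/2}$ with $J_{k}(y,z)$ and the constant term $c_{y,z}-\tfrac14\langle b_{y,z},B_{y,z}^{-1}b_{y,z}\rangle$ with $c'(y,z)$ via the eigenvalue $1+\|\pi(y)\|^{2}$ of the rank-one perturbation. The only blemish is a sign slip in the exponent of your stated Jacobian, which should be $\big(\|y\|^{2}+(1+z)^{2}+1\big)^{\frac{k-n}{2}}$ rather than $\frac{n-k}{2}$, consistent with your own identification of it as $J_{k}(y,z)$.
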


\begin{proof}
First, note that, by Lemma \ref{Lemma decomposition of Haar measures},
$$
\int_{\Ns}\phi(u\cdot H)\,du
=\int_{z\in\R}\int_{y\in\R^{l-2}\times\{0\}^{n-l}}\int_{x\in\{0\}^{k-2}\times\R^{n-k}}
    \phi(u_{x,y,z}\cdot H)\,dx\,dy\,dz.
$$
We will use Lemma \ref{Lemma 2cosh(4t)=P(x,y,z)} to
rewrite this integral.
Note that the restriction $B_{y,z}$ of $A_{y,z}$ to $\{0\}^{k-2}\times\R^{n-k}$ is a positive definite
symmetric automorphism of $\{0\}^{k-2}\times\R^{n-k}$. We define
$B_{y,z}^{\scriptscriptstyle{1/2}}$
and $B_{y,z}^{\scriptscriptstyle{-1/2}}$ 
 to be the square root (defined in the usual way) and the inverse
square root of $B_{y,z}$, respectively. Now we
apply the substitution of variables
$x'=B_{y,z}^{\scriptscriptstyle{1/2}}x+\frac{1}{2}B_{y,z}^{\scriptscriptstyle{-1/2}}b_{y,z}$ to the inner integral.
When $u_{x,y,z}\cdot H\in Ka_t\cdot H$ we obtain from (\ref{2nd eq cosh}) that 
$$
2\cosh(4t)=\|x'\|^{2}+c'_{y,z},
$$
where
$$
2\le c'_{y,z}=c_{y,z}-\frac{1}{4}\|B_{y,z}^{-\frac{1}{2}}b_{y,z}\|^{2}.
$$
The Jacobian of the substitution equals the determinant of $B_{y,z}^{\scriptscriptstyle{-1/2}}$. In turn, this determinant equals $(\det B_{y,z})^{\scriptscriptstyle{-1/2}}=J_{k}(y,z)$.
Observing that $b_{y,z}$ is an eigenvector of $B_{y,z}$ with eigenvalue
$\big(\|y\|^{2}+(1+z)^{2}+1\big)(1+\|\pi(y)\|^{2})$, we see that
$$
\frac{1}{4}\|B_{y,z}^{-\frac{1}{2}}b_{y,z}\|^{2}
=(1-z)^{2}\big(\|y\|^{2}+(1+z)^{2}+1\big)\frac{\|\pi(y)\|^{2}}{1+\|\pi(y)\|^{2}}.
$$
Hence $c'_{y,z}=c'(y,z)$, and the corollary is proved.
\end{proof}

\subsection{The case of convergence}

In view of Remark \ref{r: Schwartz seminorms}, the following proposition implies the `if' part of Theorem
\ref{Thm alternative for main theorem}.

\begin{Prop}\label{Prop int phi convergent}
Assume $\frac{n+1}2\le k\le l\le \frac{n+3}2$.
For  $m \in \R,$  let $\phi_m:$ $X_{n}\to\R$ be defined by
$$
\phi_m(k'a_{t}\cdot H):= (2\cosh 4t)^{\frac{1-n}4}\big(1+\log(2\cosh(4t))\big)^{-m}
\qquad (k'\in K, t\in\R).
$$
Then there exists $m\ge 0$ such that the integral
$\displaystyle\int_{\Ns}\phi_m(u\cdot H)\,du$
is convergent.
\end{Prop}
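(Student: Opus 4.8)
The strategy is to use Corollary~\ref{Cor integral equality} with $\phi = \phi_m$ to rewrite the integral over $\Ns$ as an iterated integral over the variables $z \in \R$, $y \in \R^{l-2}\times\{0\}^{n-l}$, and $x \in \{0\}^{k-2}\times\R^{n-k}$, with integrand $J_k(y,z)\,\bar\phi_m\big(\|x\|^2 + c'(y,z)\big)$. Here $\bar\phi_m(s) = s^{\frac{1-n}{4}}(1+\log s)^{-m}$ for $s \ge 2$, which is positive and decreasing. The first step is to carry out the inner $x$-integration: since $x$ ranges over the $(n-k)$-dimensional space $\{0\}^{k-2}\times\R^{n-k}$ and $c'(y,z)\ge 2$, the substitution to polar coordinates in $x$ gives
$$
\int_{x\in\{0\}^{k-2}\times\R^{n-k}}\bar\phi_m\big(\|x\|^2 + c'(y,z)\big)\,dx
= \gamma_{n-k}\int_0^\infty r^{n-k-1}\big(r^2 + c'(y,z)\big)^{\frac{1-n}{4}}\big(1+\log(r^2+c'(y,z))\big)^{-m}\,dr,
$$
for a suitable constant $\gamma_{n-k}$. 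Substituting $r = c'(y,z)^{1/2}s$ and estimating, this integral is bounded by a constant times $c'(y,z)^{\frac{n-k}{2}}c'(y,z)^{\frac{1-n}{4}}\big(1+\log c'(y,z)\big)^{-m} = c'(y,z)^{\frac{1-2k+1}{4}}\big(1+\log c'(y,z)\big)^{-m}$ up to lower order; convergence of the $r$-integral at infinity requires $(n-k-1) + 2\cdot\frac{1-n}{4} < -1$, i.e.\ $k > \frac{n+1}{2}$, which is almost the hypothesis — the boundary case $k = \frac{n+1}{2}$ is where the logarithmic factor must be used, and that is where the freedom to choose $m$ large enters.

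The second step is to estimate $c'(y,z)$ and $J_k(y,z)$ from below/above in terms of $\|y\|$ and $|z|$. From the formula for $c'$ one reads off that $c'(y,z) \gtrsim 1 + z^2 + \|y\|^2$ (the term $z^2 + 2z + \|y\|^2$ alone, completed to $(z+1)^2 - 1 + \|y\|^2$, already gives this up to an additive constant, and $c' \ge 2$ handles the bounded region), and also $c'(y,z) \lesssim (1+|z|)^2(1 + \|y\|^2)$. Meanwhile $J_k(y,z) = \big(\|y\|^2 + (1+z)^2 + 1\big)^{\frac{k-n}{2}}(1+\|\pi(y)\|^2)^{-1/2} \le \big(1 + z^2 + \|y\|^2\big)^{\frac{k-n}{2}}$ up to a constant (since $k \le n$). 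Combining with the bound from step one, the full integrand (before integrating in $y,z$) is dominated by a constant times
$$
\big(1 + z^2 + \|y\|^2\big)^{\frac{k-n}{2}}\,\big(1+z^2+\|y\|^2\big)^{\frac{n-2k+2}{4}}\big(1+\log(1+z^2+\|y\|^2)\big)^{-m}
= \big(1+z^2+\|y\|^2\big)^{\frac{-3n+2k+2+ \cdots}{4}} \cdots,
$$
so I would not simplify the exponent prematurely but rather just track that it equals $\frac{2k - n}{4} - \frac{n-2}{2} = \frac{2k - 3n + 2}{4}$ times $2$ divided appropriately; the point is that the effective power of $(1+z^2+\|y\|^2)$ is negative enough. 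One then integrates over $(y,z)$, which lives in a Euclidean space of dimension $(l-2) + 1 = l-1$, in polar coordinates: convergence at infinity requires the total power of $(1+\text{radius}^2)$, namely $\frac{(k-n)}{2} + \frac{n - 2k + 2}{4}$ plus $\frac{l-1}{2}$ from the Jacobian, to be $< -\tfrac12$. Using $l \le n+2-k$ (the hypothesis of Theorem~\ref{Thm alternative for main theorem}, inherited here) one checks this holds strictly when $k < \frac{n+1}{2}$ is false, i.e.\ under $\frac{n+1}{2}\le k \le l \le \frac{n+3}{2}$, with the boundary cases again absorbed by choosing $m$ sufficiently large so that the surviving logarithmic factor $\big(1+\log(1+\text{radius}^2)\big)^{-m}$ makes the radial integral converge.

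The main obstacle is the bookkeeping at the \emph{boundary} of the parameter range, i.e.\ when one or more of the exponents coming out of the $x$-integration, the $J_k$ factor, and the $(y,z)$-integration conspire to give exactly the critically non-integrable power $(1+\text{radius}^2)^{-d/2}$ in a $d$-dimensional integral. In those cases the bare power estimate fails and one must retain the logarithmic factors carefully: a power $s^{-1}(\log s)^{-m}$ is integrable near $s = \infty$ precisely when $m > 1$, so one must verify that after both integrations at most finitely many logarithmic factors have been consumed and that the remaining exponent on the log is still large — which can always be arranged by taking $m$ large at the outset. A secondary technical point is justifying the change of variables and the polar-coordinate reductions (positivity of the integrand makes Tonelli applicable throughout, so there is no interchange-of-integrals issue), and verifying the lower bound $c'(y,z) \gtrsim 1 + z^2 + \|y\|^2$ uniformly, for which one splits $\R^{n-2}\times\R$ into the region where $(1+\|\pi(y)\|^2)^{-1}(1-z)^2(\|y\|^2 + (1+z)^2 + 1)$ dominates and its complement. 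I would organize the write-up as: (1) reduce via Corollary~\ref{Cor integral equality}; (2) do the $x$-integral and record the resulting bound as a lemma; (3) establish the two-sided estimates for $c'$ and the upper estimate for $J_k$; (4) assemble and finish with the $(y,z)$-integral, choosing $m$ at the end.
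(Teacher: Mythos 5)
Your first step --- rewriting the integral via Corollary \ref{Cor integral equality}, performing the inner $x$-integration by the substitution $x=c'(y,z)^{1/2}\xi$, and splitting the logarithm so that the $\xi$-integral converges for $k\ge\frac{n+1}{2}$ once $m$ is large --- is exactly what the paper does, and is correct. The gap is in your second step. The lower bound $c'(y,z)\gtrsim 1+z^2+\|y\|^2$ is true but far too weak, because $c'$ contains the term $(1-z)^2(1+z)^2$, which grows like $z^4$; discarding that quartic growth is fatal. Concretely, with your bounds the $(y,z)$-integrand is dominated by $(1+z^2+\|y\|^2)^{\frac{k-n}{2}+\frac{n+1-2k}{4}}=(1+z^2+\|y\|^2)^{\frac{1-n}{4}}$ times logarithms, and in polar coordinates on $\R^{l-1}$ the radial exponent at infinity is $(l-2)+\frac{1-n}{2}=l-\frac{n+3}{2}$. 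Under the hypothesis this is only $\le 0$: it equals $-\frac12$ when $l=\frac{n+2}{2}$ ($n$ even) and $0$ when $l=\frac{n+3}{2}$ ($n$ odd). An integral $\int^\infty r^{a}(\log r)^{-m}\,dr$ with $a>-1$ diverges for \emph{every} $m$, so no choice of $m$ "absorbs the boundary cases" here; your concluding sentence that "one checks this holds strictly" is not checked and is in fact false for these values of $l$. (Your exponent bookkeeping also slips: $\frac{n-k}{2}+\frac{1-n}{4}=\frac{n+1-2k}{4}$, not $\frac{1-2k+1}{4}$ or $\frac{n-2k+2}{4}$, though this does not affect the verdict.)

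What is needed, and what the paper proves as estimate (\ref{estimate c'}), is the multiplicative lower bound
$$
c'(y,z)\ \ge\ \tfrac14(\|w\|^2+1)^{-1}\big((1-z)^2+1\big)\big(\|v\|^2+(1+z)^2+1\big)+\|w\|^2+1,
$$
where $y=(v,w,0)$ with $w=\pi(y)$. Raising this to the power $\frac{n+1-2k}{4}\le 0$ and multiplying by $J_k=(\|v\|^2+(1+z)^2+1)^{\frac{k-n}{2}}(1+\|w\|^2)^{-\frac12}$ produces, besides the factor $(\|v\|^2+(1+z)^2+1)^{\frac{1-n}{4}}$, an \emph{extra} decaying factor $((1-z)^2+1)^{\frac{n+1-2k}{4}}$ in $z$; after the anisotropic substitution $v=((1+z)^2+1)^{1/2}\eta$ (and, when $k<l$, the further substitutions $v=(w^2+1)^{1/2}\eta$, $1+z=(w^2+1)^{1/2}\zeta$) the $z$-integrand becomes $\sim|z|^{-1}(\log|z|)^{-m_4}$, which is where the large $m$ is genuinely used. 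A spherically symmetric polar-coordinate estimate in $(y,z)$ cannot see this cancellation between the two asymmetric factors $((1+z)^2+1)^{\frac{k}{2}-\frac{n+3}{4}}$ and $((1-z)^2+1)^{\frac{n+1}{4}-\frac{k}{2}}$, so the decoupling substitution is an essential ingredient, not a technicality.
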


\begin{proof}
Let $f_{m}(u)=u^{\frac{1-n}4}(1+\log u)^{-m}$ for $u\ge 1$.
Then $f $ equals $\bar \phi_m$ as defined in Corollary \ref{Cor integral equality},
and we see that it suffices to show that
$$
I_m:=\int_{z\in\R}\int_{y\in\R^{l-2}\times\{0\}^{n-l}}
    J_{k}(y,z)\int_{x\in\{0\}^{k-2}\times\R^{n-k}}
    f_{m}\big(\|x\|^{2}+c'(y,z)\big)\,dx\,dy\,dz <\infty,
$$
for $m\ge 0$ sufficiently large. We substitute $x=c'(y,z)^{1/2}\xi$
in the inner integral and obtain that $I_{m}$ is equal to
$$
\int_{z\in\R}\int_{y\in\R^{l-2}\times\{0\}^{n-l}}
    J_{k}(y,z)c'(y,z)^{\frac{n-k}2}\int_{\xi\in\{0\}^{k-2}\times\R^{n-k}}
    f_{m}\big(c'(y,z)(\|\xi\|^{2}+1)\big)\,d\xi\,dy\,dz.
$$
Observe that if $m=m_1+m_2$ with $m_1,m_2\ge 0$, then
$$ \big(1+\log\big[c'(y,z)(\|\xi\|^2+1)\big]\big)^{-m}\le
\big(1+\log c'(y,z)\big)^{-m_2}\big(1+\log(\|\xi\|^2+1)\big)^{-m_1}.$$
Hence,
\begin{align}
I_m\le \int_{z\in\R}&\int_{y\in\R^{l-2}\times\{0\}^{n-l}}
    J_{k}(y,z)c'(y,z)^{\frac{n+1}4-\frac k2}
\big(1+\log c'(y,z)\big)^{-m_2}\,dy\,dz \nonumber \\
&\times\int_{\xi\in\{0\}^{k-2}\times\R^{n-k}}
    (\|\xi\|^{2}+1)^{\frac{1-n}4}\big(1+\log(\|\xi\|^2+1)\big)^{-m_1}\,d\xi.
    \label{e: long integral estimating I m}
\end{align}
Since 
$k\ge \frac{n+1}2,$ we have $\frac{n-1}2\ge n-k$ and see that
the integral over $\xi$ converges for $m_1 \geq 2$
(use polar coordinates).

For the integral over $(y,z)$
we shall need the following estimate of $c'(y,z)$.
We write $y=(v,w,0),$ where
$v\in\R^{k-2}$ and $w=\pi(y)\in\R^{l-k},$
and claim that
\begin{equation}\label{estimate c'}
c'(y,z)\ge \tfrac14(\|w\|^2+1)^{-1}
\big((1-z)^2+1\big)\big(\|v\|^2+(1+z)^2+1\big)+\|w\|^2+1.
\end{equation}
To verify the claim, we note that 
\begin{align*}
c'(y,z)&=(\|w\|^2+1)^{-1}(1-z)^2\big(\|v\|^2+\|w\|^2+(1+z)^2+1\big)
+z^2+2z+\|v\|^2+\|w\|^2\\
&=(\|w\|^2+1)^{-1}
\Big(\big((1-z)^2+1+\|w\|^2\big)\|v\|^2
+z^4+1+2z^2\|w\|^2\Big)+\|w\|^2+1\\
&\ge (\|w\|^2+1)^{-1}
\Big(\big((1-z)^2+1\big)\|v\|^2
+z^4+1\Big)+\|w\|^2+1\\
&\ge \tfrac14(\|w\|^2+1)^{-1}
\Big(\big((1-z)^2+1\big)\|v\|^2
+z^4+4\Big)+\|w\|^2+1.
\end{align*}
Using
$$
\big((1-z)^2+1\big)\big((1+z)^2+1\big)=z^4+4,
$$
we obtain the validity of claim (\ref{estimate c'}).

We first assume that $k=l$. Then $w=0$ and we obtain
\begin{equation*}
c'(y,z)\ge
\tfrac14 \big((1-z)^2+1\big)\big(\|v\|^2 +(1+z)^2+1\big)+1
\end{equation*}
and
$$
J_k(y,z) = (\|v\|^2 + (1 + z)^2 +1)^{\frac{k-n}{2}}.
$$ 
Hence, there exists a constant $C>0$, independent of $m$, such that
\begin{align*}
I_m
&\le C\int_{z\in\R}\int_{v\in\R^{k-2}}
\big(\|v\|^2+(1+z)^2+1\big)^{\frac{1-n}4}
\big((1-z)^2+1\big)^{\frac{n+1}4-\frac k2}\\
&\qquad\qquad\times\Big(1+\log\big[1+\tfrac14\big((1-z)^2+1\big)\big(\|v\|^2+(1+z)^2+1\big)\big]
\Big)^{-m_2}\,dv\,dz.
\end{align*}
The substitution of $v=((1+z)^2+1)^{1/2}\eta$ now allows us to
estimate $I_m$  by a constant times the product of the integrals
\begin{equation}
\label{e: integral as above}
\int_{\eta\in\R^{k-2}}
(\|\eta\|^2+1)^{\frac{1-n}4}
\big(1+\log(\|\eta\|^2+1)\big)^{-m_3}\,dy
\end{equation}
and
$$
\int_{z\in\R}
\big((1+z)^2+1\big)^{\frac k2-\frac{n+3}4}
\big((1-z)^2+1\big)^{\frac{n+1}4-\frac k2}
\big(1+\log(\tfrac14z^4+2)\big)^{-m_4}
\,dz
$$
where $m_2=m_3+m_4$. In the above we already saw that
(\ref{e: integral as above}) converges for $m_3 \geq 2.$
The remaining integral is easily seen to converge for $m_4 \geq 2.$ 

Next we assume $k<l$. Then $n$ is odd and $k=\frac{n+1}2$,
$l-k=1$. Hence,
the power of $c'(y,z)$ in the first integral in
(\ref{e: long integral estimating I m}) equals zero. We thus see that 
$I_m$ is bounded by a constant times
$$
\int_{z\in\R}\int_{v\in\R^{k-2}}\int_{w\in\R}
\big(\|v\|^{2}+w^2+(1+z)^{2}+1\big)^{\frac{1-n}{4}}(w^{2}+1)^{-\frac{1}{2}}
\big(1+\log c'(y,z)\big)^{-m_2}\,dw\,dv\,dz.
$$
Furthermore,
\begin{equation*}
c'(y,z)\ge
\tfrac14(w^2+1)^{-1}\big(\|v\|^2+(1+z)^2+1\big)+w^2+1.
\end{equation*}
The substitutions $v=(w^2+1)^{1/2}\eta$
and $1+z=(w^2+1)^{1/2}\zeta$ then
allow us to
estimate by the product of
$$
\int_{(\eta,\zeta)\in\R^{k-2}\times\R}
(\|\eta\|^2+\zeta^2+1)^{\frac{1-n}4}
\big(1+\log(\|\eta\|^2+\zeta^2+1)\big)^{-m_3}\,d(\eta,\zeta)
$$
and
$$
\int_{w\in\R}
(w^2+1)^{-\frac12}
\big(1+\log(w^2+1)\big)^{-m_4}
\,dw,
$$
where $m_2=m_3+m_4$.
Both integrals are easily seen to converge for $m_3,m_4\geq 2.$
\end{proof}

\subsection{The case of divergence}\label{subsection Divergence}

The following proposition and its corollary imply the `only if'
part of Theorem
\ref{Thm alternative for main theorem}.

\begin{Prop}\label{Prop int phi divergent}
Let $\nu\in\R$ and let $\phi_{\nu}$ be the function $X_{n}\to\R$ given by
$$
\phi_{\nu}(k'a_{t}\cdot H)=  (2 \cosh 4t)^{\nu}
\qquad (k'\in K, t\in\R).
$$
Then the integral
$\displaystyle\int_{\Ns}\phi_{\nu}(u\cdot H)\,du$
is divergent for $\nu>\min\{\frac{k-n}{2},\frac{2-l}{2}\}$.
\end{Prop}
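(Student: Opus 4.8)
The plan is to feed $\phi_\nu$ into Corollary~\ref{Cor integral equality} and then read off divergence from the innermost ($x$-)integral. First I would check that $\phi_\nu$ satisfies the hypotheses of that corollary: it is non-negative, and it is continuous and left $K$-invariant by the polar decomposition of Subsection~\ref{subsection Polar decomposition} together with the evenness of $t\mapsto(2\cosh 4t)^\nu$ (alternatively, $\phi_\nu$ is a continuous function of $\|g\sigma(g)^{-1}\|_{HS}^2$, cf.\ Lemma~\ref{Lemma KAH decomposition}). The function $\bar\phi_\nu$ of Corollary~\ref{Cor integral equality} is then simply $s\mapsto s^\nu$, and since $c'(y,z)\ge 2$ everywhere we have $\|x\|^2+c'(y,z)\ge 2$, so the corollary yields
$$\int_{\Ns}\phi_\nu(u\cdot H)\,du=\int_{z\in\R}\int_{y\in\R^{l-2}\times\{0\}^{n-l}}J_k(y,z)\int_{x\in\{0\}^{k-2}\times\R^{n-k}}\big(\|x\|^2+c'(y,z)\big)^\nu\,dx\,dy\,dz,$$
with $J_k(y,z)>0$ throughout. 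Next I would invoke the standing hypothesis $2\le k\le l\le n+2-k$: it forces $2k\le n+2$, hence $k\le n-1$ (otherwise $2n\le n+2$, contradicting $n\ge 3$), so $n-k\ge 1$ and the inner integration genuinely runs over a space of positive dimension $n-k$; the same hypothesis gives $\tfrac{2-l}{2}\ge\tfrac{k-n}{2}$, so $\min\{\tfrac{k-n}{2},\tfrac{2-l}{2}\}=\tfrac{k-n}{2}$ and it suffices to prove divergence for every $\nu>\tfrac{k-n}{2}$.

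For this I would fix $(y,z)$ and evaluate the inner integral in polar coordinates on $\R^{n-k}$: the substitution $x=c'(y,z)^{1/2}\xi$ turns it into $\omega_{n-k}\,c'(y,z)^{\nu+(n-k)/2}\int_0^\infty s^{n-k-1}(s^2+1)^\nu\,ds$, where $\omega_{n-k}>0$ is the area of the unit sphere in $\R^{n-k}$. The integrand of this last integral is of order $s^{\,n-k-1+2\nu}$ as $s\to\infty$, and $n-k-1+2\nu\ge-1$ precisely when $\nu\ge\tfrac{k-n}{2}$; hence it is $+\infty$ whenever $\nu>\tfrac{k-n}{2}$. Consequently the inner $x$-integral equals $+\infty$ for every $(y,z)$, and since $J_k>0$ the whole non-negative iterated integral diverges, which is the claim.

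There is essentially no hard step here: the diagonalisation of the quadratic form in $x$ was already carried out in Corollary~\ref{Cor integral equality}, and all that remains is the elementary radial estimate above. The only points needing attention are the verification that $n-k\ge1$, so that the $x$-integral is actually present, and the bookkeeping identifying $\min\{\tfrac{k-n}{2},\tfrac{2-l}{2}\}$ with $\tfrac{k-n}{2}$ under the running assumption on $(k,l)$. Were one to drop that assumption, the complementary bound $\nu>\tfrac{2-l}{2}$ would follow by symmetry: $\phi_\nu$ is invariant under both $K$ and $\sigma\theta$ (which fixes $\fa_{\fq}$ pointwise), so Lemma~\ref{Lemma equivalences between parabolics} identifies $\int_{\Ns}\phi_\nu(u\cdot H)\,du$ with $\int_{\Ns[n+2-l,n+2-k]}\phi_\nu(u\cdot H)\,du$, to which the argument above applies with $(k,l)$ replaced by $(n+2-l,n+2-k)$.
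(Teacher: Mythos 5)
Your proof is correct and follows the same core route as the paper: both feed $\phi_\nu$ into Corollary~\ref{Cor integral equality} and observe that the inner integral of $(\|x\|^2+c)^\nu$ over $x\in\R^{n-k}$ diverges as soon as $\nu>\frac{k-n}{2}$ (you supply the radial computation that the paper leaves as ``clear'', and you rightly check that $n-k\ge 1$ under the standing hypothesis, without which that step would fail). The one genuine difference lies in how the bound $\frac{2-l}{2}$ enters: the paper obtains it by a second application of the same argument to $\Ns[n+2-l,n+2-k]$, using the $\sigma\theta$-invariance of $\phi_\nu$ together with Lemma~\ref{Lemma equivalences between parabolics}, whereas you observe that the standing assumption $l\le n+2-k$ forces $\frac{2-l}{2}\ge\frac{k-n}{2}$, so the minimum is always $\frac{k-n}{2}$ and the symmetry step is logically superfluous. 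Both arguments are valid; your shortcut is slightly more economical in the situation at hand, while the paper's symmetric treatment keeps the statement and proof meaningful without the normalization $l\le n+2-k$ and mirrors the structure it reuses elsewhere, and you correctly identify that symmetry as the fallback for the unnormalized case.
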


\begin{proof}
Let $\bar \phi_\nu$ be the function associated to $\phi_\nu$ as in
Corollary \ref{Cor integral equality}. Then $\bar\phi_\nu(r) = r^\nu$ for
$r \geq 1.$ Clearly, the integral
$$
\int_{x\in\R^{n-k}}\big(\|x\|^{2}+c\big)^{\nu}\,dx
$$
is divergent for all positive constants $c$ if $\nu>\frac{k-n}{2}.$  Hence, it follows
from Corollary \ref{Cor integral equality} that
 $\int_{\Ns}\phi_{\nu}(u\cdot H)\,du$ is divergent for such $\nu$.
 Note that $\phi_{\nu}$ satisfies
$$
\phi_{\nu}\Big( \sigma \theta (kg)\cdot H\Big)
=\phi_{\nu}(g\cdot H)\qquad(g\in G, k\in K).
$$
Combining this with Lemma \ref{Lemma equivalences between parabolics}, we infer that
$$
\int_{\Ns[n+2-l,n+2-k]}\phi_{\nu}(u\cdot H)\,du
=\int_{\Ns}\phi_{\nu}(u\cdot H)\,du.
$$
By the previous argument, we obtain that the first, hence also the second integral is
divergent for $\nu>\frac{(n+2-l)-n}{2}=\frac{2-l}{2}$. The assertion now follows.
\end{proof}

\begin{Cor}\label{Cor int_(U) divergent}
Assume $k<\frac{n+1}{2}$ or $l>\frac{n+3}{2}$. Then there exists a function $\phi\in\cC(X_{n})$ such that
$\displaystyle{\int_{\Ns}\phi(u\cdot H)\,du}$ is divergent.
\end{Cor}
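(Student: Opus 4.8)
The plan is to produce the required $\phi$ by exhibiting a single exponent $\nu$ for which the divergence criterion of Proposition~\ref{Prop int phi divergent} and the Schwartz-class criterion of Lemma~\ref{Lemma phi_nu is Schwartz} apply simultaneously. Recall that Proposition~\ref{Prop int phi divergent} gives divergence of $\int_{\Ns}\phi_{\nu}(u\cdot H)\,du$ for the function $\phi_\nu(k'a_t\cdot H)=(2\cosh 4t)^\nu$ whenever $\nu>\min\{\tfrac{k-n}{2},\tfrac{2-l}{2}\}$, while Lemma~\ref{Lemma phi_nu is Schwartz} shows that $ka_t\cdot H\mapsto\cosh^\nu(4t)$ lies in $\cC(X_n)$ as soon as $\nu<\tfrac{1-n}{4}$; since $(2\cosh 4t)^\nu=2^\nu\cosh^\nu(4t)$, the function used in Proposition~\ref{Prop int phi divergent} is Schwartz under the same condition. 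Hence everything reduces to the numerical claim that the hypothesis forces
$$
\min\Big\{\tfrac{k-n}{2},\tfrac{2-l}{2}\Big\}<\tfrac{1-n}{4},
$$
after which any $\nu$ strictly between these two values will do the job.

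First I would check this inequality in the two cases of the hypothesis. If $k<\tfrac{n+1}{2}$, then $2k<n+1$, which is equivalent to $\tfrac{k-n}{2}<\tfrac{1-n}{4}$, so the minimum is already below $\tfrac{1-n}{4}$. If instead $l>\tfrac{n+3}{2}$, then $2l>n+3$, which is equivalent to $\tfrac{2-l}{2}<\tfrac{1-n}{4}$, and again the minimum is below $\tfrac{1-n}{4}$. In either case the open interval $\big(\min\{\tfrac{k-n}{2},\tfrac{2-l}{2}\},\,\tfrac{1-n}{4}\big)$ is nonempty, so one may pick a real number $\nu$ inside it.

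Then I would set $\phi:=\phi_\nu$ with $\phi_\nu(k'a_t\cdot H)=(2\cosh 4t)^\nu$, which is a well-defined $K$-invariant function on $X_n$ because $t\mapsto(2\cosh 4t)^\nu$ is even and $a_t\cdot H\in Ka_{t'}\cdot H$ only for $t'\in\{t,-t\}$. By Lemma~\ref{Lemma phi_nu is Schwartz} together with $\nu<\tfrac{1-n}{4}$ one gets $\phi\in\cC(X_n)$, and by Proposition~\ref{Prop int phi divergent} together with $\nu>\min\{\tfrac{k-n}{2},\tfrac{2-l}{2}\}$ the integral $\int_{\Ns}\phi(u\cdot H)\,du$ diverges, which is exactly the assertion. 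I do not anticipate any real obstacle here: the two substantive ingredients are already in hand, and the only genuine content is the elementary comparison above, whose point is precisely that the failure of the $\fh$-compatibility bound $\tfrac{n+1}{2}\le k\le l\le\tfrac{n+3}{2}$ for $P_{k,l}$ opens a nonempty overlap between the range of exponents giving divergence and the range of exponents keeping $\phi_\nu$ Schwartz; the only bookkeeping is the harmless scalar $2^\nu$ relating the two normalizations of $\phi_\nu$.
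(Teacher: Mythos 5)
Your proposal is correct and follows essentially the same route as the paper: both pick $\nu$ strictly between $\min\{\tfrac{k-n}{2},\tfrac{2-l}{2}\}$ and $\tfrac{1-n}{4}$, then invoke Lemma \ref{Lemma phi_nu is Schwartz} for membership in $\cC(X_n)$ and Proposition \ref{Prop int phi divergent} for divergence. The only difference is that you spell out the elementary two-case verification of the inequality (and the harmless factor $2^\nu$), which the paper leaves implicit.
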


\begin{proof}
Assume $k<\frac{n+1}{2}$ or $l>\frac{n+3}{2}$. Then $\min\{\frac{k-n}{2},\frac{2-l}{2}\}<\frac{1-n}{4}$. We may therefore take $\nu$ such that $\min\{\frac{k-n}{2},\frac{2-l}{2}\}<\nu<\frac{1-n}{4}$. Then $\phi_{\nu}\in\cC(X_{n})$ by Lemma \ref{Lemma phi_nu is Schwartz}. The result now follows by application of Proposition \ref{Prop int phi divergent}.
\end{proof}

\section{Proof of Theorem \ref{Theorem main theorem convergence} for $\sigma$-parabolic rank $0$}
\label{section Proof of Main theorem 1 for sigma parabolic rank 0}
We now turn to the proof of Theorem \ref{Theorem main theorem convergence} under the assumption that $P$ is of $\sigma$-parabolic rank $0$. In \ref{subsection root systems in b} -- \ref{subsection equivalent theorem for b} we first reduce the statement to a (seemingly) less general statement, which we then prove in \ref{subsection integral for b} -- \ref{subsection divergence for b}.

\subsection{Root systems}\label{subsection root systems in b}
Recall the element $\kappa$ from (\ref{eq def kappa}). We define $\fb=\Ad(\kappa)\fa$. Note that $\fb$ is a Cartan subalgebra of $\fg$ and that
$\fa_{\fh}\subseteq\fb\subseteq\fh$. Furthermore, $\fb$ is a Cartan subalgebra of $\fh$ as well.
We write $B$ for $\exp(\fb)$.

Recall the functionals $e_{k}$ from Section \ref{subsection Root systems in a}. For $1\leq k\leq n$ we define $f_{k}:\fb\to\R$ by $f_{k}=e_{k}\circ\Ad(\kappa)^{-1}$.
The root system of $\fb$ in $\fg$ then equals
$$
\hSigma:=\Sigma(\fg,\fb)
=\big\{f_{i}-f_{j}:1\leq i,j\leq n, i\neq j\big\}.
$$
The associated root spaces are given by $\fg_{f_{i}-f_{j}}=\R \big(\Ad(\kappa)E_{i,j}\big)$.

Note that
$$
\hSigma_{\fh}
:=\Sigma_{\fh}(\fb)
=\big\{f_{i}-f_{j}:1\leq i,j\leq n-1, i\neq j \big\}
$$
is both the root system of  $\fb$ in $\fh$ and  the set of $\fh$-roots in $\hSigma$. Let \begin{equation}
\label{e: pos system gS bar fh}
\hSigma_{\fh}^{+}
=\{f_{i}-f_{j}:1\leq i<j\leq n-1\big\}.
\end{equation}
Then $\hSigma_{\fh}^{+}$ is a positive system for $\hSigma_{\fh}$. Finally, we define
$$
\bar{\rho}_{\fh}
:=\frac{1}{2}\sum_{\alpha\in\hSigma_{\fh}^{+}}\alpha.
$$

\subsection{Classification of parabolic subgroups}\label{subsection classification of parabolic subgroups for b}

Recall that $\cQ$ denotes the set of minimal parabolic subgroups containing a maximal connected split abelian subgroup that is contained in $H$. Furthermore,
 $\cQ(\fb)$ denotes the subset of $\cQ$ consisting of minimal parabolic subgroups containing $B$. Given $Q \in \cP(\fb),$ we agree to use the abbreviation
$\hSigma(Q): = \Sigma(\fb;Q).$

\begin{Lemma}\label{lemma for SLn: P is H-conjugate to Q in cP(b) in standard position}
Let $P\in\cQ$. Then there exists a parabolic subgroup $Q\in\cQ(\fb)$
which is $H$-conjugate to $P$ and satisfies
\begin{equation}\label{eq Sigma_h subseteq Sigma(P) for b}
\hSigma_{\fh}^{+}\subseteq\hSigma(Q).
\end{equation}
Let $Q$ be any such parabolic subgroup. Then $\rho_{Q,\fh} = \bar \rho_\fh.$
Moreover, $P$ is $\fh$-compatible if and only if $Q$ is $\fh$-compatible (see Definition
\ref{d: fh compatible}).  
\end{Lemma}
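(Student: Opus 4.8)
The plan is to mirror, almost line for line, the proof of the $\sigma$-parabolic rank $1$ analogue (Lemma \ref{lemma for SLn: P is H-conjugate to Q in cP(A) in standard position}), now working with the Cartan subalgebra $\fb = \Ad(\kappa)\fa$ which lies inside $\fh$, so that $\fb \cap \fq = 0$ and all parabolic subgroups containing $B$ lie in $\cQ$.

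\textbf{Step 1: $H$-conjugacy into standard position.}
By Corollary \ref{Cor H-conjugacy of parabolic subgroups}~{\em (ii)} (applied with $\fb$ in the role of the distinguished $\sigma$-stable maximal split abelian subalgebra with $\fb \cap \fh$ of maximal dimension---here $\fb \subseteq \fh$, so $\dim(\fb\cap\fh)$ is as large as possible), every $P\in\cQ$ is $H$-conjugate to some $Q'\in\cQ(\fb)$. Then Corollary \ref{Cor Q in cP(A) is Nor_(K cap H)(fa)-conjugate to ps with Sigma_h^+ subseteq Sigma(P)}, applied to $Q'$ and the split component $B$, produces a further conjugate $Q$ by an element of $\Nor_H(\fb\cap\fh)\cap\Cen_H(\fb\cap\fq) = \Nor_H(\fb)$ (since $\fb\cap\fq=0$) with $\hSigma_\fh^+ \subseteq \hSigma(Q)$. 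This establishes the first assertion. (Here one uses, via Proposition \ref{Prop properties Sigma_h(a)}~{\em (ii)}, that $\hSigma_\fh$ is the root system of $\fb\cap\fh = \fb$ in $\Cen_\fh(\fb\cap\fq) = \fh$, and that any two positive systems of it are conjugate under the Weyl group, realized inside $\Nor_{K\cap H}(\fb)$.)

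\textbf{Step 2: the $\rho_{Q,\fh}$ computation.}
Let $Q$ be any parabolic subgroup in $\cQ(\fb)$ with $\hSigma_\fh^+ \subseteq \hSigma(Q)$. By definition $\rho_{Q,\fh} = \frac12\sum_{\alpha\in\hSigma_\fh(\fb;Q)}\alpha$, and $\hSigma_\fh(\fb;Q) = \hSigma_\fh(\fb)\cap\hSigma(Q)$. Since $\hSigma_\fh^+ \subseteq \hSigma(Q)$ and $\hSigma_\fh^+$ is a positive system for $\hSigma_\fh$, we get $\hSigma_\fh(\fb;Q) = \hSigma_\fh^+$, whence $\rho_{Q,\fh} = \bar\rho_\fh$ by the very definition of $\bar\rho_\fh$. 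This is immediate and is really just an unwinding of definitions---no computation is needed beyond observing that intersecting a root system with a positive system containing one of its own positive systems returns that positive system.

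\textbf{Step 3: invariance of $\fh$-compatibility.}
Since $P$ and $Q$ are $H$-conjugate, Remark \ref{r: h comp for H conjugation} (which records that $\fh$-compatibility is preserved under conjugation by $H$, independently of the choice of $\sigma$-stable split component) gives at once that $P$ is $\fh$-compatible if and only if $Q$ is. I do not expect any genuine obstacle in this lemma: the only point requiring a little care is matching up Corollary \ref{Cor H-conjugacy of parabolic subgroups}~{\em (ii)} and Corollary \ref{Cor Q in cP(A) is Nor_(K cap H)(fa)-conjugate to ps with Sigma_h^+ subseteq Sigma(P)} so that the relevant normalizer/centralizer group in the latter collapses to $\Nor_H(\fb)$ because $\fb\cap\fq = 0$; once that is noted, the proof is a transcription of the rank-$1$ case with $\Sigma_\fq^+$ absent (there is no analogue of the element $k_0$ needed, since there is no $\faq$-direction to correct).
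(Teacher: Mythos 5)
Your proposal is correct and follows essentially the same route as the paper: the paper likewise combines Corollary \ref{Cor H-conjugacy of parabolic subgroups} and Corollary \ref{Cor Q in cP(A) is Nor_(K cap H)(fa)-conjugate to ps with Sigma_h^+ subseteq Sigma(P)} to put $P$ in standard position, notes that the positive-system condition forces $\rho_{Q,\fh}=\bar\rho_\fh$, and invokes Remark \ref{r: h comp for H conjugation} for the $\fh$-compatibility statement. Your additional observations (that the normalizer/centralizer collapses to $\Nor_H(\fb)$ because $\fb\cap\fq=0$, and that no $k_0$-type correction is needed) are accurate elaborations of details the paper leaves implicit.
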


\begin{proof}
Since $P \in \cQ,$ it follows from Corollaries \ref{Cor H-conjugacy of parabolic subgroups} and \ref{Cor Q in cP(A) is Nor_(K cap H)(fa)-conjugate to ps with Sigma_h^+ subseteq Sigma(P)}
that $P$ is $H$-conjugate to a minimal parabolic subgroup $Q \in\cQ(\fb)$
satisfying (\ref{eq Sigma_h subseteq Sigma(P) for b}). The latter condition
implies that $\rho_{Q,\fh} = \bar\rho_\fh.$ The final statement follows from
Remark \ref{r: h comp for H conjugation}.
\end{proof}

We will now classify the parabolic subgroups $Q\in\cQ(\fb)$ satisfying (\ref{eq Sigma_h subseteq Sigma(P) for b}).
The assignment $Q \mapsto \hSigma(Q)$ defines a bijection
from the set $\cP(\fb)$ onto the set of positive systems for $\hSigma.$ In turn, the latter set is in bijective correspondence with the permutation group $S_n.$
For a given $\tau \in S_n,$ the associated positive system is given by
$$
\hSigma(Q)
=\{f_{\tau^{-1}(i)}-f_{\tau^{-1}(j)}:1\leq i<j\leq n\}.
$$
Equivalently, a root $f_i - f_j \in \hSigma$ belongs to $\hSigma(Q)$
if and only if $\tau(i) < \tau(j).$ 
We infer that the parabolic subgroups $Q\in\cQ(\fb)$ satisfying (\ref{eq Sigma_h subseteq Sigma(P) for b})
correspond to the permutations $\tau\in S_{n}$ satisfying
\begin{equation}\label{eq standard tau ineq for b}
\tau(i)<\tau(j)\text{ for }1\leq i<j\leq n-1.
\end{equation}
Given such a permutation $\tau\in S_{n}$, there exists a unique $k$, with $1\leq k\leq n$, such that $\tau(n)=k$.
Conversely, for each integer $k$ with $1\leq k\leq n$, there exists a unique $\tau\in S_{n}$ satisfying (\ref{eq standard tau ineq for b}) and $\tau(n)=k$. From now on we write $Q_{k}$ for the corresponding minimal parabolic subgroup and $\hSigma^{+}_{k}$ for $\hSigma(Q_{k})$. Moreover, we write $N_{k}$ for $N_{Q_{k}}$.
For future reference we note that the positive system determined by $k$ is
given by the disjoint union
\begin{align}\label{eq Sigma_k for b}
\hSigma^{+}_{k}
=\hSigma_{\fh}^{+}
    \cup \big\{f_{i}-f_{n}:1\leq i\leq k-1\big\}
    \cup\big\{f_{n}-f_{i}:k\leq i\leq n-1\big\}.
\end{align}
We now provide $\fb^*$ with the inner product that turns
$\Ad(\kappa)^*: \fb^* \to \fa^*$ into an isometry;
see (\ref{e: inner product for roots on a}) for the description
of the inner product on $\fa^*$.

\begin{Lemma}\label{Lemma langle f_i-f_j,rho_h rangle}
Let $i$ and $j$ be integers such that $1\leq i<j\leq n$. Then
$$
\langle f_{i}-f_{j},\bar{\rho}_{\fh}\rangle=
\begin{cases}
j-i & \text{if }1\leq i<j\leq n-1,\\
\frac{n}{2}-i & \text{if }1\leq i\leq n-1\text{ and }j=n.
\end{cases}
$$
In particular $\langle\alpha,\bar{\rho}_{\fh}\rangle>0$ for every $\alpha\in\hSigma^{+}_{k}$ if and only if $\frac{n}{2}< k< \frac{n}{2}+1$, i.e., if and only if $n$ is odd and $k=\frac{n+1}{2}$.
\end{Lemma}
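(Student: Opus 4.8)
The plan is to mimic the proof of Lemma~\ref{Lemma langle e_i-e_j,rho_h rangle}, with the $f_i$ in place of the $e_i$. First I would expand $2\bar{\rho}_{\fh}$ in the coordinate functionals $f_i$. Using the description (\ref{e: pos system gS bar fh}) of $\hSigma_{\fh}^{+}$ and counting, for each $m$ with $1\le m\le n-1$, the positive roots $f_i-f_j$ in which $f_m$ occurs with coefficient $+1$ (those with $i=m$, $m<j\le n-1$) against those in which it occurs with coefficient $-1$ (those with $j=m$, $1\le i<m$), one gets
$$
2\bar{\rho}_{\fh}=\sum_{i=1}^{n-1}(n-2i)\,f_i .
$$
Here the coefficient of $f_n$ is $0$, and the coefficients sum to $\sum_{i=1}^{n-1}(n-2i)=n(n-1)-n(n-1)=0$; since $\sum_i f_i=0$ on $\fb$, this is the normalised expression required by the inner-product convention.

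Next I would transfer the inner product. From $f_i=e_i\circ\Ad(\kappa)^{-1}$ it follows that $\Ad(\kappa)^{*}f_i=e_i$, so the isometry $\Ad(\kappa)^{*}\colon\fb^{*}\to\fa^{*}$ takes the $f_i$ to the $e_i$; hence formula (\ref{e: inner product for roots on a}) holds verbatim on $\fb^{*}$ with $e$'s replaced by $f$'s, evaluated on representatives whose coefficients sum to zero. Applying this to $\mu=f_i-f_j$ (coefficients $+1$ at $i$, $-1$ at $j$, sum $0$) and to $\nu=2\bar{\rho}_{\fh}$ gives $\langle f_i-f_j,2\bar{\rho}_{\fh}\rangle=(n-2i)-(n-2j)=2(j-i)$ if $j\le n-1$, and $\langle f_i-f_j,2\bar{\rho}_{\fh}\rangle=(n-2i)-0=n-2i$ if $j=n$; halving yields the two cases of the lemma.

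For the ``in particular'' assertion I would substitute the explicit list (\ref{eq Sigma_k for b}) of $\hSigma_{k}^{+}$ into the formula just proved. Roots $\alpha=f_i-f_j\in\hSigma_{\fh}^{+}$ give $\langle\alpha,\bar{\rho}_{\fh}\rangle=j-i\ge1>0$ with no condition on $k$. Roots $\alpha=f_i-f_n$ with $1\le i\le k-1$ give $\tfrac n2-i$, which is positive for every such $i$ precisely when $k-1<\tfrac n2$; roots $\alpha=f_n-f_i$ with $k\le i\le n-1$ give $i-\tfrac n2$, positive for every such $i$ precisely when $k>\tfrac n2$. Hence $\langle\alpha,\bar{\rho}_{\fh}\rangle>0$ for all $\alpha\in\hSigma_{k}^{+}$ if and only if $\tfrac n2<k<\tfrac n2+1$, and since $k\in\Z$ this happens exactly when $n$ is odd and $k=\tfrac{n+1}2$.

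I do not anticipate a genuine obstacle: the argument is a short bookkeeping computation together with the observation that the $\kappa$-twisted inner product on $\fb^{*}$ is literally the one on $\fa^{*}$ written in the $f$-coordinates. The only points demanding a little care are respecting the normalisation convention (representatives with vanishing coefficient sum) when invoking (\ref{e: inner product for roots on a}), and checking the extreme values $k=1$ and $k=n$ in the second claim, where one of the two root subsets in (\ref{eq Sigma_k for b}) is empty but the other still contains a root on which $\bar{\rho}_{\fh}$ is non-positive (for $n\ge3$).
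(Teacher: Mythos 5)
Your proposal is correct and follows essentially the same route as the paper: the paper's proof consists precisely of the expansion $2\bar{\rho}_{\fh}=\sum_{i=1}^{n-1}(n-2i)f_{i}$ followed by comparison with (\ref{eq Sigma_k for b}), and your additional bookkeeping (the zero-sum normalisation, the transfer of the inner product via $\Ad(\kappa)^{*}$, and the boundary cases $k=1$, $k=n$) correctly fills in the details the paper leaves implicit.
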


\begin{proof}
Using the definition of $\bar{\rho}_{\fh}$, we find
\begin{align*}
2\bar{\rho}_{\fh}
=\sum_{i=1}^{n-1}(n-2i)f_{i}.
\end{align*}
The first statement follows directly from this formula and the second follows from comparison with (\ref{eq Sigma_k for b}).
\end{proof}

Combining the previous lemmas, we now arrive at the following proposition.

\begin{Prop}\label{Prop Q H-comp for b iff H-conj to Q_k with k=(n+1)/2 }
Let $Q\in\cQ$. Then there exist a unique integer $k$, with $1\leq k\leq n$, such that $Q$ is $H$-conjugate to $Q_{k}$. Moreover, $Q$ is $\fh$-compatible if and only if $n$ is odd and $k=\frac{n+1}{2}$.
\end{Prop}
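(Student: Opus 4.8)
The plan is to mirror the argument already carried out for $\cP$ in Proposition \ref{Prop P H-conj to P_k,l; H-comp iff (n+1)/2 leq k,l leq (n+3)/2}, replacing $\fa$ by $\fb$ throughout. First I would establish existence of $k$: by Lemma \ref{lemma for SLn: P is H-conjugate to Q in cP(b) in standard position}, any $Q\in\cQ$ is $H$-conjugate to some $Q'\in\cQ(\fb)$ satisfying $\hSigma_\fh^+\subseteq\hSigma(Q')$, and by the classification just established each such $Q'$ equals $Q_k$ for a unique $k\in\{1,\dots,n\}$. This immediately gives existence. The $\fh$-compatibility criterion is then purely computational: since $Q\in\cQ$ has $\sigma$-parabolic rank $0$, Definition \ref{d: fh compatible}(b) requires $\langle\alpha,\rho_{Q,\fh}\rangle>0$ for all $\alpha\in\Sigma(\fb;Q)$; by Lemma \ref{lemma for SLn: P is H-conjugate to Q in cP(b) in standard position} we have $\rho_{Q_k,\fh}=\bar\rho_\fh$, and $\fh$-compatibility is preserved under $H$-conjugation (Remark \ref{r: h comp for H conjugation}), so $Q$ is $\fh$-compatible iff $Q_k$ is, which by Lemma \ref{Lemma langle f_i-f_j,rho_h rangle} happens iff $n$ is odd and $k=\frac{n+1}{2}$.

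The one genuinely new point is uniqueness of $k$, and I would argue it exactly as in the proof of Proposition \ref{Prop P H-conj to P_k,l; H-comp iff (n+1)/2 leq k,l leq (n+3)/2}. Suppose $hQ_kh^{-1}=Q_{k'}$ for some $h\in H$. Then $\fb$ and $\Ad(h)\fb$ are both $\sigma$-stable split components (here: $\sigma$-stable maximal split abelian subalgebras contained in $\fh$) of $Q_{k'}$; note that since $Q_{k'}\in\cQ$, its split component lies in $\fh$, so $\sigma$-stability is automatic and the relevant uniqueness statement from Theorem \ref{Thm P contains sigma-stable A, unique up to N cap H conjugation}(ii) applies. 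Hence there is a unique $n\in N_{k'}\cap H$ with $\Ad(nh)\fb=\fb$, and replacing $h$ by $h'=nh$ we may assume $h'\in\Nor_H(\fb)$ while still $h'Q_kh'^{-1}=Q_{k'}$. Now $\Nor_H(\fb)$ acts on $\hSigma$ through the Weyl group $W(\hSigma_\fh)$ of the subsystem $\hSigma_\fh=\Sigma(\Cen_\fh(\cdot);\fb)$ — here I should be slightly careful, since $\fb\subseteq\fh$ the centralizer structure differs from the $\cP$ case, but the key fact is that $\Nor_H(\fb)$ permutes the roots $\{f_i-f_n\}$ and $\{f_n-f_i\}$ only by permuting the indices $1,\dots,n-1$ among themselves (it fixes the "$n$" slot up to sign), because $\fb$ is a Cartan subalgebra of $\fh$ and $W(\fh,\fb)$ acts as $S_{n-1}$ on $f_1,\dots,f_{n-1}$. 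Since $h'$ also preserves the positive system $\hSigma_\fh^+$ (which is contained in both $\hSigma_k^+$ and $\hSigma_{k'}^+$), the induced Weyl element is trivial, so $h'$ fixes each $f_i$, hence fixes $\hSigma_k^+$; comparing with $(\ref{eq Sigma_k for b})$ forces $k=k'$.

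The step I expect to be the main obstacle is pinning down precisely how $\Nor_H(\fb)$ acts on the root system $\hSigma$, in particular verifying that it cannot mix the index $n$ with the others. In the $\cP$ case this was handled by the factorization $\Nor_H(\fa)=\Nor_H(\fa_\fq)\cap\Nor_H(\fa_\fh)$ together with the action on $\Sigma_\fq^+$; here the analogous control comes from the fact that $\sigma$ acts trivially on $\fb$ (since $\fb\subseteq\fh$) while $\sigma\theta$ distinguishes $f_n$ from $f_1,\dots,f_{n-1}$ — recalling that $\sigma\theta$ corresponds on $\fa$ to $e_1\leftrightarrow e_n$ with sign, so on $\fb$ it singles out the index $n$. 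An element of $\Nor_H(\fb)$ commutes with $\sigma$, and one checks it must therefore respect the decomposition of $\hSigma$ into the $\sigma\theta$-fixed part and its complement, which is exactly the distinction between $\hSigma_\fh$-roots and the roots involving $f_n$. I would phrase this via the observation that $\hSigma_\fh$ is intrinsically characterized (Definition \ref{Def pure h-roots} applied to $\fb$) as the set of $\fh$-roots, so any $H$-conjugation preserves $\hSigma_\fh$ setwise, and once $\hSigma_\fh^+$ is preserved the induced element of $W(\hSigma_\fh)=S_{n-1}$ is forced to be trivial.
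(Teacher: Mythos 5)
Your proposal is correct and follows essentially the same route as the paper: existence and the $\fh$-compatibility criterion are read off from Lemma \ref{lemma for SLn: P is H-conjugate to Q in cP(b) in standard position}, Lemma \ref{Lemma langle f_i-f_j,rho_h rangle} and Remark \ref{r: h comp for H conjugation}, while uniqueness is proved by normalizing $h$ to $h'\in\Nor_H(\fb)$ via Theorem \ref{Thm P contains sigma-stable A, unique up to N cap H conjugation}(ii) and observing that the induced element of the Weyl group of $\hSigma_\fh$ must be trivial because it preserves $\hSigma_\fh^+$. The point you flag as a possible obstacle is handled in the paper exactly as you suggest: since $\fb$ is a Cartan subalgebra of $\fh$, the element $h'$ acts on $\fb$ through $W(\hSigma_\fh)\cong S_{n-1}$, which fixes $f_n$, so triviality on $\hSigma_\fh^+$ forces triviality on all of $\fb$ and hence $k=k'$.
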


\begin{proof}
Only the uniqueness remains to be proved. Let $1\leq k,k'\leq n$ and assume that there exists an $h\in H$ such that $hQ_{k}h^{-1}=Q_{k'}$. Then $B$ and $hBh^{-1}$ are both $\sigma$-stable split components of $Q_{k'}$. From Theorem \ref{Thm P contains sigma-stable A, unique up to N cap H conjugation} it follows that there exists a unique $n\in N_{k'}\cap H$ such that $nhBh^{-1}n^{-1}=B$. Let $h'=nh$. Then $h'\in\Nor_{H}(\fb)$ and $h'Q_{k}h'^{-1}=Q_{k'}$.
Now $h'$ induces an element $w$ in the Weyl group of the root system
$\hSigma_{\fh}$.
Since $\hSigma_{\fh}^{+}$ is a positive system for $\hSigma_{\fh}$ and
$\hSigma_{\fh}^{+}$ is contained in both $\hSigma^{+}_{k}$ and $\hSigma^{+}_{k'}$, it follows that $w$ acts trivially on $\hSigma^{+}_{\fh}$ and hence that $h'$ acts trivially on $\fb$. We conclude that $hQ_{k}h^{-1}=Q_{k'}$ and therefore $k=k'$. This proves uniqueness.
\end{proof}

\subsection{Decomposition and invariant measures}\label{subsection invariant measures for b}
Let $k$ be an integer such that $1\leq k\leq n$.

\begin{Lemma}\label{Lemma n_k cap h for b}
Let $\fn_{k}$ be the Lie algebra of $N_{k}$. Then
\begin{align*}
\fn_{k}\cap\fh
    &=\bigoplus_{\alpha\in\hSigma_{\fh}^{+}}\fg_{\alpha}
    =\bigoplus_{1\leq i<j\leq n-1}\R \big(\Ad(\kappa)E_{i,j}\big),\\
\fn_{k}\cap\fq
    &=\bigoplus_{\alpha\in\hSigma\setminus\hSigma_{\fh}^{+}}\fg_{\alpha}
    =\bigoplus_{1\leq i\leq k-1}\R \big(\Ad(\kappa)E_{i,n}\big)
        \oplus\bigoplus_{k\leq i\leq n-1}\R \big(\Ad(\kappa)E_{n,i}\big).
\end{align*}
\end{Lemma}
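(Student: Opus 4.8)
The plan is to exploit that $\fb$ is contained in $\fh$, which forces every $\fb$-root space to sit either entirely inside $\fh$ or entirely inside $\fq$, and then simply to read off the answer from the explicit description of $\hSigma^{+}_{k}$ in (\ref{eq Sigma_k for b}). First I would observe that, since $\fb\subseteq\fh$, the involution $\sigma$ fixes $\fb$ pointwise, hence the dual map acts trivially on $\fb^{*}$, so $\sigma(\fg_{\alpha})=\fg_{\alpha}$ for every $\alpha\in\hSigma$. The root spaces $\fg_{f_{i}-f_{j}}=\R\big(\Ad(\kappa)E_{i,j}\big)$ are one-dimensional, so the decomposition $\fg_{\alpha}=(\fg_{\alpha}\cap\fh)\oplus(\fg_{\alpha}\cap\fq)$ forces either $\fg_{\alpha}\subseteq\fh$ or $\fg_{\alpha}\subseteq\fq$. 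In particular $\fg_{\alpha}\subseteq\fh$ exactly when $\fg_{\alpha}\cap\fh\neq\{0\}$, i.e. exactly when $\alpha\in\hSigma_{\fh}=\{f_{i}-f_{j}:1\le i,j\le n-1,\ i\neq j\}$.

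Next I would recall that the nilradical $\fn_{k}$ of the minimal parabolic subgroup $Q_{k}$ equals $\bigoplus_{\alpha\in\hSigma^{+}_{k}}\fg_{\alpha}$, where $\hSigma^{+}_{k}=\hSigma(Q_{k})$ is the disjoint union described in (\ref{eq Sigma_k for b}). Since intersecting with $\fh$, respectively with $\fq$, respects the root space grading, we obtain
$$
\fn_{k}\cap\fh=\bigoplus_{\alpha\in\hSigma^{+}_{k}\cap\hSigma_{\fh}}\fg_{\alpha},
\qquad
\fn_{k}\cap\fq=\bigoplus_{\alpha\in\hSigma^{+}_{k}\setminus\hSigma_{\fh}}\fg_{\alpha}.
$$
From (\ref{eq Sigma_k for b}) one reads off directly that $\hSigma^{+}_{k}\cap\hSigma_{\fh}=\hSigma^{+}_{\fh}$ — indeed $\hSigma^{+}_{\fh}\subseteq\hSigma^{+}_{k}$, while the remaining roots of $\hSigma^{+}_{k}$ involve $f_{n}$ and hence do not lie in $\hSigma_{\fh}$ — and that $\hSigma^{+}_{k}\setminus\hSigma_{\fh}=\{f_{i}-f_{n}:1\le i\le k-1\}\cup\{f_{n}-f_{i}:k\le i\le n-1\}$. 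Substituting the explicit root vectors $\fg_{f_{i}-f_{j}}=\R\big(\Ad(\kappa)E_{i,j}\big)$ from Section \ref{subsection root systems in b}, together with $\hSigma^{+}_{\fh}=\{f_{i}-f_{j}:1\le i<j\le n-1\}$, yields exactly the two asserted identities.

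The argument is entirely routine bookkeeping with (\ref{eq Sigma_k for b}); the only substantive point is the dichotomy ``$\fg_{\alpha}\subseteq\fh$ or $\fg_{\alpha}\subseteq\fq$'', which is immediate from $\fb\subseteq\fh$ and the one-dimensionality of the $\fb$-root spaces. I do not expect any genuine obstacle.
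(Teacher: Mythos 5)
Your proposal is correct and follows essentially the same route as the paper's own proof: since $\sigma\alpha=\alpha$ for all $\alpha\in\hSigma$ and each root space is one-dimensional, every $\fg_{\alpha}$ lies entirely in $\fh$ or entirely in $\fq$ (the former precisely for $\alpha\in\hSigma_{\fh}$), and the result is then read off from (\ref{eq Sigma_k for b}). No gaps.
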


\begin{proof}
Let $\alpha\in\hSigma$.
Since $\sigma\alpha=\alpha$ and $\fg_{\alpha}$ is $1$-dimensional, we have either $\fg_{\alpha}\subseteq \fh$ or $\fg_{\alpha}\subseteq \fq$. By definition, the first is the case for $\alpha\in\hSigma_{\fh}$ and the latter for $\alpha\in\hSigma\setminus\hSigma_{\fh}$. The lemma now follows from (\ref{eq Sigma_k for b}).
\end{proof}

We write $\Nq$ for the submanifold $\exp(\fn_{k}\cap \fq)$ of $N_{k}$.
For $x,y\in\R^{n-1}$ with $\langle x,y\rangle=0$ we define
\begin{equation}\label{eq def v_(x,y)}
v_{x,y}
=\kappa\exp\left(
   \begin{array}{c:c}
      0         &   x   \\
   \hdashline
      y^{t}     &   0   \\
   \end{array}
 \right)\kappa^{-1}
=\kappa\left(
   \begin{array}{c:c}
      I_{n-1}+\frac{1}{2}xy^{t} &   x   \\
   \hdashline
      y^{t}                     &   1   \\
   \end{array}
 \right)\kappa^{-1}.
\end{equation}
A straightforward computation shows that
$$
\Nq
=\big\{v_{x,y}
    :x\in\R^{k-1}\times\{0\}^{n-k},y\in\{0\}^{k-1}\times\R^{n-k}\big\}.
$$
We equip $\Nq$ with the push-forward along $\exp$ of the Lebesgue measure on $\fn_{k}\cap \fq$.
Then
the following lemma is  a direct consequence  of \cite[Prop.~1.1]{Benoist_AnalyseHarmoniqueSurLesEspacesSymetriquesNilpotents} and Lemma \ref{Lemma n_k cap h for b}.

\begin{Lemma}\label{Lemma decomposition of Haar measures for b}
The map
$$
\Nq\times (N_{k}\cap H)\to N_{k};\qquad(v,n)\mapsto vn$$
is a diffeomorphism. Moreover, there exists a 
normalization for the invariant measure $dx$ on $N_{k}/(N_{k}\cap H)$
such that for every $\phi\in C_{c}^{\infty}\big(N_{k}/(N_{k}\cap H)\big)$,
$$
\int_{N_{k}/(N_{k}\cap H)}\phi(x)\,dx
=\int_{\Nq}\phi\big(v\cdot(N_{k}\cap H)\big)\,dv.
$$
\end{Lemma}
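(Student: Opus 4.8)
The plan is to obtain both assertions directly from \cite[Prop.~1.1]{Benoist_AnalyseHarmoniqueSurLesEspacesSymetriquesNilpotents} on nilpotent symmetric spaces, using the explicit description of $\fn_{k}$ supplied by Lemma \ref{Lemma n_k cap h for b}. First I would record that $\fn_{k}$ is $\sigma$-stable: since $\fb\subseteq\fh$, the involution $\sigma$ fixes $\fb$ pointwise, hence acts trivially on $\fb^{*}$ and stabilizes each (one-dimensional) root space $\fg_{\alpha}$, $\alpha\in\hSigma$, so it stabilizes $\fn_{k}=\bigoplus_{\alpha\in\hSigma^{+}_{k}}\fg_{\alpha}$ and acts on each $\fg_{\alpha}$ by $+1$ or $-1$. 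By Lemma \ref{Lemma n_k cap h for b} the $(+1)$-eigenspace of $\sigma$ on $\fn_{k}$ is $\fn_{k}\cap\fh$ and the $(-1)$-eigenspace is $\fn_{k}\cap\fq$; in particular $\fn_{k}\cap\fq$ is in general \emph{not} a subalgebra, so no product-of-subgroups argument is available and one really needs Benoist's result.

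Next, $N_{k}$ is a connected, simply connected nilpotent Lie group, being the unipotent radical of a minimal parabolic subgroup of $\SL(n,\R)$; thus $\exp:\fn_{k}\to N_{k}$ is a diffeomorphism, $\sigma$ restricts to an involution of $N_{k}$, and its fixed-point subgroup $N_{k}^{\sigma}=\exp(\fn_{k}\cap\fh)$ is connected and equals $N_{k}\cap H$. Applying \cite[Prop.~1.1]{Benoist_AnalyseHarmoniqueSurLesEspacesSymetriquesNilpotents} to the pair $(N_{k},\sigma)$ then yields: the submanifold $\Nq=\exp(\fn_{k}\cap\fq)$ is a cross-section for $N_{k}/(N_{k}\cap H)$; the multiplication map $\Nq\times(N_{k}\cap H)\to N_{k}$, $(v,n)\mapsto vn$, is a diffeomorphism; and, under the resulting identification $\Nq\simeq N_{k}/(N_{k}\cap H)$, an $N_{k}$-invariant measure on the quotient corresponds, for a suitable normalization, to the push-forward along $\exp$ of Lebesgue measure on $\fn_{k}\cap\fq$. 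Since $N_{k}$ and $N_{k}\cap H$ are nilpotent, hence unimodular, an $N_{k}$-invariant Radon measure $dx$ on $N_{k}/(N_{k}\cap H)$ exists and is unique up to a positive scalar; normalizing it so as to match the measure $dv$ on $\Nq$ fixed just before the lemma gives the claimed integral identity for every $\phi\in C^{\infty}_{c}\big(N_{k}/(N_{k}\cap H)\big)$.

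The only point demanding real attention --- the ``main obstacle,'' though a mild one --- is checking that the hypotheses of Benoist's proposition hold in the present setting: that $N_{k}\cap H$ is exactly the fixed-point group $N_{k}^{\sigma}$ and that $\fn_{k}\cap\fh,\ \fn_{k}\cap\fq$ are its $(\pm1)$-eigenspaces. Both follow from Lemma \ref{Lemma n_k cap h for b}, the eigenvalue observation above, and simple connectedness of $N_{k}$; beyond this bookkeeping no analytic difficulty arises. (Alternatively, one could avoid citing Benoist and instead derive the diffeomorphism and the measure identity by hand, writing a general element of $\Nq$ in the coordinates $v_{x,y}$ of (\ref{eq def v_(x,y)}) and computing a Jacobian as in the proof of Lemma \ref{Lemma decomposition of Haar measures}; this is more laborious and offers no real advantage.)
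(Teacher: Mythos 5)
Your proposal is correct and follows exactly the route the paper takes: the paper states the lemma as ``a direct consequence of \cite[Prop.~1.1]{Benoist_AnalyseHarmoniqueSurLesEspacesSymetriquesNilpotents} and Lemma \ref{Lemma n_k cap h for b}'' with the measure on $\Nq$ fixed as the push-forward of Lebesgue measure along $\exp$. Your additional verification that $\sigma$ stabilizes $\fn_{k}$ with $(\pm1)$-eigenspaces $\fn_{k}\cap\fh$ and $\fn_{k}\cap\fq$, and that $N_{k}\cap H=N_{k}^{\sigma}=\exp(\fn_{k}\cap\fh)$ by simple connectedness, is precisely the bookkeeping the paper leaves implicit (and partly delegates to the proof of Lemma \ref{Lemma n_k cap h for b}).
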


\subsection{Reduction to an equivalent theorem}\label{subsection equivalent theorem for b}
Under the assumption that $P$ is of $\sigma$-parabolic rank $0$, it follows from Proposition \ref{Prop Q H-comp for b iff H-conj to Q_k with k=(n+1)/2 } and Lemma \ref{Lemma decomposition of Haar measures for b} that Theorem \ref{Theorem main theorem convergence} is equivalent to the following theorem.

\begin{Thm}\label{Thm alternative for main theorem for b}
Let $k$ be an integer with 
$1\leq k\leq n$.
Then the integral
$$
\int_{\Nq}\phi(v\cdot H)\,dv
$$
is absolutely convergent for every $\phi\in\cC(X_{n})$ if and only if
$k=\frac{n+1}{2}$. In particular, if $n$ is even, then for every $k$ there exists $\phi\in\cC(X_{n})$ such that the integral is divergent.
\end{Thm}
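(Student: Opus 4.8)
The plan is to follow the same strategy as in the $\sigma$-parabolic rank $1$ case, transferring the computation to the parametrization $v_{x,y}$ with $x\in\R^{k-1}\times\{0\}^{n-k}$ and $y\in\{0\}^{k-1}\times\R^{n-k}$ from Lemma~\ref{Lemma decomposition of Haar measures for b}. First I would compute $v_{x,y}\sigma(v_{x,y})^{-1}$ explicitly as a matrix (recalling that $\sigma=\Ad(S)$ and $v_{x,y}=\kappa(\,\cdot\,)\kappa^{-1}$ with $\kappa$ and $S$ as in Section~\ref{subsection the space Xn}); since $\langle x,y\rangle=0$ the expression should be quite clean. Applying Lemma~\ref{Lemma KAH decomposition} then yields a polynomial expression for $2\cosh(4t)$ in the coordinates $x,y$; I expect the leading behavior to be governed by $\|x\|^2\|y\|^2$ together with lower-order terms $\|x\|^2,\|y\|^2$. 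Writing $\bar\phi$ for the function on $\R_{\ge2}$ attached to a $K$-invariant non-negative $\phi$ as in Corollary~\ref{Cor integral equality}, one gets
$$
\int_{\Nq}\phi(v\cdot H)\,dv
=\int_{\R^{k-1}}\int_{\R^{n-k}}\bar\phi\big(Q_k(x,y)\big)\,dy\,dx,
$$
where $Q_k$ is the explicit polynomial just obtained.

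For the \emph{convergence} half, I would take $\phi=\phi_m$ (the Schwartz function $(2\cosh4t)^{(1-n)/4}(1+\log(2\cosh4t))^{-m}$ of Proposition~\ref{Prop int phi convergent}, which lies in $\cC(X_n)$ by Lemma~\ref{Lemma phi_nu is Schwartz} for the power part and the argument used there for the logarithmic factor). Using the estimate $Q_k(x,y)\gtrsim(\|x\|^2+1)(\|y\|^2+1)$ (valid precisely because $\langle x,y\rangle=0$, so the cross terms cannot produce cancellation), the integrand is dominated by
$$
\big[(\|x\|^2+1)(\|y\|^2+1)\big]^{(1-n)/4}\big(1+\log[(\|x\|^2+1)(\|y\|^2+1)]\big)^{-m},
$$
and splitting $m=m_1+m_2$ and the logarithm as in Proposition~\ref{Prop int phi convergent} reduces matters to the two polar-coordinate integrals $\int_{\R^{k-1}}(\|x\|^2+1)^{(1-n)/4}(1+\log(\|x\|^2+1))^{-m_1}\,dx$ and $\int_{\R^{n-k}}(\|y\|^2+1)^{(1-n)/4}(1+\log(\|y\|^2+1))^{-m_2}\,dy$. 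The first converges for $m_1$ large iff $\frac{n-1}{2}>k-1$, i.e. $k\le\frac{n+1}{2}$; the second, by symmetry, converges iff $n-k\le\frac{n-1}{2}$, i.e. $k\ge\frac{n+1}{2}$. Hence both converge simultaneously exactly when $k=\frac{n+1}{2}$, which in particular forces $n$ odd.

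For the \emph{divergence} half, suppose $k\ne\frac{n+1}{2}$; then $k-1>\frac{n-1}{2}$ or $n-k>\frac{n-1}{2}$. Taking $\phi=\phi_\nu$ with $\nu$ chosen so that $\min\{\tfrac{1-k}{2},\tfrac{k-n}{2}\}<\nu<\tfrac{1-n}{4}$ (possible since one of the two quantities in the minimum is $<\tfrac{1-n}{4}$; note $\phi_\nu\in\cC(X_n)$ by Lemma~\ref{Lemma phi_nu is Schwartz}), I would integrate out whichever of $x$ or $y$ has too many coordinates: for fixed nonzero $y$ (resp. $x$), the inner integral $\int(\|x\|^2+c)^\nu\,dx$ over $\R^{k-1}$ (resp. $\R^{n-k}$) already diverges, where $c>0$ collects the $y$-dependent (resp. $x$-dependent) terms. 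By Tonelli this makes the whole integral diverge. The remark about $n$ even is then immediate, since for $n$ even no $k\in\{1,\dots,n\}$ equals $\frac{n+1}{2}$.

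The main obstacle I anticipate is the explicit matrix computation of $v_{x,y}\sigma(v_{x,y})^{-1}$ and the resulting formula for $2\cosh(4t)$: the conjugation by $\kappa$ interacts with $S$ in a slightly intricate way, and I must be careful to keep the constraint $\langle x,y\rangle=0$ in play so that the lower-bound estimate $Q_k(x,y)\gtrsim(\|x\|^2+1)(\|y\|^2+1)$ really holds without a cross term of the wrong sign. Everything after that is a routine adaptation of the rank~$1$ arguments, and no new analytic input is needed.
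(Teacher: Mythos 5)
Your proposal is correct and follows essentially the same route as the paper: the explicit computation does give $2\cosh(4t)=2+4\|x\|^{2}+4\|y\|^{2}+4\|x\|^{2}\|y\|^{2}$ (so your anticipated form of $Q_k$ and the lower bound $Q_k(x,y)\gtrsim(1+\|x\|^{2})(1+\|y\|^{2})$ are exactly what the paper uses), the convergence half proceeds by dominating a general Schwartz function by $\phi_m$ and splitting the logarithm into a product over the $x$- and $y$-factors, and the divergence half uses $\phi_\nu$ with $\min\{\frac{1-k}{2},\frac{k-n}{2}\}<\nu<\frac{1-n}{4}$ together with Tonelli, just as in the paper's Propositions \ref{Prop int phi convergent for b} and \ref{Prop int phi divergent for b} and Corollary \ref{Cor int_(V) divergent}. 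No gaps.
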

\subsection{The integral}\label{subsection integral for b}
Recall (\ref{eq def v_(x,y)}) for the definition of $v_{x,y}$.
\begin{Lemma}
\label{l: cosh 4 t is sum squares}
Let $x,y\in\R^{n-1}$ with $\langle x,y\rangle=0$. If $v_{x,y}\cdot H\in Ka_{t}\cdot H$, then
$$
2\cosh(4t)
=2+4\|x\|^{2}+4\|y\|^{2}+4\|x\|^{2}\|y\|^{2}.
$$
\end{Lemma}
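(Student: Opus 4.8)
The plan is to compute $v_{x,y}\sigma(v_{x,y})^{-1}$ explicitly and apply Lemma~\ref{Lemma KAH decomposition}, exactly as was done for $\sigma$-parabolic rank $1$ in Lemma~\ref{Lemma 2cosh(4t)=P(x,y,z)}. Recall that $\sigma(g)=SgS^{-1}$ and that $v_{x,y}=\kappa g_{x,y}\kappa^{-1}$, where
$$
g_{x,y}=\left(
   \begin{array}{c:c}
      I_{n-1}+\tfrac12 xy^{t} &   x   \\
   \hdashline
      y^{t}                     &   1   \\
   \end{array}
 \right).
$$
Since $\kappa$ is orthogonal and $S=\kappa S'\kappa^{-1}$ for a suitable sign matrix $S'$ (indeed a direct check, using the explicit form of $\kappa$ in (\ref{eq def kappa}) and $S$, shows $\kappa^{-1}S\kappa$ is the diagonal matrix with $-1$ in the last entry and $+1$ elsewhere when $n-2$ is suitably bracketed; in any case one reduces to an inner computation), one gets $\sigma(v_{x,y})^{-1}=\kappa\,(S'g_{x,y}S'^{-1})^{-1}\kappa^{-1}$, hence
$$
v_{x,y}\sigma(v_{x,y})^{-1}=\kappa\,g_{x,y}\,S' g_{x,y}^{-1} S'^{-1}\,\kappa^{-1}.
$$

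Because the Hilbert--Schmidt norm is conjugation-invariant under orthogonal matrices, $\|v_{x,y}\sigma(v_{x,y})^{-1}\|_{HS}^2=\|g_{x,y}S'g_{x,y}^{-1}S'^{-1}\|_{HS}^2$. So the first step is to invert $g_{x,y}$; since $\langle x,y\rangle=0$ one checks that $(I_{n-1}+\tfrac12 xy^t)$ has inverse $I_{n-1}-\tfrac12 xy^t$ (because $(xy^t)^2=x(y^tx)y^t=0$), and then
$$
g_{x,y}^{-1}=\left(
   \begin{array}{c:c}
      I_{n-1}-\tfrac12 xy^{t} &   -x+\tfrac12 x\langle y,x\rangle   \\
   \hdashline
      -y^{t}                     &   1   \\
   \end{array}
 \right)
 =\left(
   \begin{array}{c:c}
      I_{n-1}-\tfrac12 xy^{t} &   -x   \\
   \hdashline
      -y^{t}                     &   1   \\
   \end{array}
 \right),
$$
again using $\langle x,y\rangle=0$. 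The second step is to multiply out $g_{x,y}S'g_{x,y}^{-1}S'^{-1}$, where $S'$ flips the sign of the last coordinate. Writing everything in block form and expanding, keeping track that $S'$ acts by $(-1)$ on the last row/column, one arrives at a matrix whose Hilbert--Schmidt norm squared is a polynomial in $\|x\|^2$, $\|y\|^2$ and the cross terms; the orthogonality $\langle x,y\rangle=0$ collapses all cross terms, leaving
$$
\|v_{x,y}\sigma(v_{x,y})^{-1}\|_{HS}^2=n-2+2+4\|x\|^2+4\|y\|^2+4\|x\|^2\|y\|^2,
$$
and Lemma~\ref{Lemma KAH decomposition} gives $n-2+2\cosh(4t)$ on the left, yielding the claimed identity $2\cosh(4t)=2+4\|x\|^2+4\|y\|^2+4\|x\|^2\|y\|^2$.

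The main obstacle is purely bookkeeping: getting the block matrix product $g_{x,y}S'g_{x,y}^{-1}S'^{-1}$ correct and confirming that every term not of the form $\|x\|^2$, $\|y\|^2$, $\|x\|^2\|y\|^2$ either cancels or vanishes by $\langle x,y\rangle=0$. A cleaner route, which I would actually prefer, avoids the conjugation by $\kappa$ altogether: since $\|\cdot\|_{HS}$ is $O(n)$-conjugation invariant, work directly with $g_{x,y}$ and the sign matrix $S'=\kappa^{-1}S\kappa$, and note that $g_{x,y}$ is itself unipotent with a very simple two-step nilpotent exponent $\left(\begin{smallmatrix}0&x\\ y^t&0\end{smallmatrix}\right)$ (its square is $\mathrm{diag}(xy^t,\ y^tx)=0$), so one can even work at the Lie algebra level and exponentiate. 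Either way, the computation is short once the invariance of $\|\cdot\|_{HS}$ under $\kappa$ is invoked; I would state that invariance explicitly and then present the final expanded form, suppressing the routine intermediate algebra exactly as Lemma~\ref{Lemma 2cosh(4t)=P(x,y,z)} does.
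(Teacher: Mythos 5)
Your overall strategy (compute $v_{x,y}\sigma(v_{x,y})^{-1}$, use orthogonal invariance of $\|\cdot\|_{HS}$ to strip off $\kappa$, apply Lemma \ref{Lemma KAH decomposition}) is the same as the paper's, and your identification $\kappa^{-1}S\kappa=\diag(1,\dots,1,-1)$ is correct. But there is a concrete computational error that, if carried through honestly, produces the wrong answer. You claim that the nilpotent exponent $W=\left(\begin{smallmatrix}0&x\\ y^t&0\end{smallmatrix}\right)$ satisfies $W^2=\diag(xy^t,\,y^tx)=0$. Only the scalar $y^tx=\langle x,y\rangle$ vanishes; the outer product $xy^t$ is a nonzero $(n-1)\times(n-1)$ matrix whenever $x,y\neq 0$. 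So $W^2=\diag(xy^t,0)\neq 0$ and only $W^3=0$ — which is precisely why the definition (\ref{eq def v_(x,y)}) carries the block $I_{n-1}+\tfrac12 xy^t$ in the first place. The same mistake infects your formula for $g_{x,y}^{-1}$: since $g_{x,y}^{-1}=\exp(-W)=I-W+\tfrac12W^2$, its top-left block is $I_{n-1}+\tfrac12 xy^t$ (the $W^2$ term does not change sign), not $I_{n-1}-\tfrac12 xy^t$ as you wrote. With your inverse, the product $g_{x,y}S'g_{x,y}^{-1}S'^{-1}$ comes out as $\left(\begin{smallmatrix}I+xy^t & 2x\\ 2y^t&1\end{smallmatrix}\right)$, whose squared Hilbert--Schmidt norm is $n+4\|x\|^2+4\|y\|^2+\|x\|^2\|y\|^2$, giving coefficient $1$ rather than $4$ on the cross term. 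With the correct inverse one gets $\left(\begin{smallmatrix}I+2xy^t&2x\\ 2y^t&1\end{smallmatrix}\right)$ and the stated identity.

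You are also working harder than necessary. Since $v_{x,y}=\kappa\exp(W)\kappa^{-1}$ with $\Ad(\kappa)W\in\fn_k\cap\fq$ (Lemma \ref{Lemma n_k cap h for b}), one has $\sigma(v_{x,y})=v_{x,y}^{-1}$ outright, so $v_{x,y}\sigma(v_{x,y})^{-1}=v_{x,y}^{2}=\kappa\exp(2W)\kappa^{-1}=v_{2x,2y}$, with no need to compute $\kappa^{-1}S\kappa$ or invert any block matrix. This is the paper's route, and it reduces the whole proof to $\|I_{n-1}+2xy^{t}\|_{HS}^{2}=n-1+4\|x\|^{2}\|y\|^{2}$ plus the trivial contributions of the off-diagonal blocks. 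I would adopt that shortcut and, in any case, fix the sign of the $\tfrac12 xy^t$ term.
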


\begin{proof}
By straightforward computations we see that
\begin{equation}\label{eq kappa v sigma(kappa v)^(-1)}
v_{x,y}\sigma(v_{x,y})^{-1}
=v_{x,y}^{2}
=v_{2x,2y}
=\kappa\left(
   \begin{array}{c:c}
      I_{n-1}+2xy^{t}           &   2x   \\
   \hdashline
      2y^{t}                    &   1   \\
   \end{array}
 \right)\kappa^{-1}
\end{equation}
and
$$
\|I_{n-1}+2yx^{t}\|_{HS}^{2}
=n-1+4\|x\|^{2}\|y\|^{2}.
$$
Therefore,
$$
\|v_{x,y}\sigma(v_{x,y})^{-1}\|_{HS}^{2}
=n+4\|x\|^{2}+4\|y\|^{2}+4\|x\|^{2}\|y\|^{2}.
$$
The lemma now follows from Lemma \ref{Lemma KAH decomposition}.
\end{proof}

\begin{Cor}\label{Cor integral equality for b}
Let $\phi\in C(X_{n})$ be $K$-invariant and non-negative. As in Corollary \ref{Cor integral equality},  let $\bar{\phi}: \R_{\geq2} \to \R_{\geq0}$ be defined by
$$
\bar{\phi}(2\cosh4t)=\phi(a_{t}\cdot H).
$$
Then
$$
\int_{\Nq}\phi(v\cdot H)\,dv =
\int_{\R^{k-1}}\int_{\R^{n-k}}
    \bar{\phi}\big(2+4\|x\|^{2}+4\|y\|^{2}+4\|x\|^{2}\|y\|^{2}\big)\,dy\,dx.
$$
\end{Cor}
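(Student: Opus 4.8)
The plan is to obtain the corollary directly from the preceding lemmas, in exactly the way Corollary~\ref{Cor integral equality} was derived in the $\sigma$-parabolic rank $1$ case. The whole statement is really a bookkeeping consequence of the parametrisation of $\Nq$ together with Lemma~\ref{l: cosh 4 t is sum squares}, so I do not expect any genuine obstacle; the only points deserving a word of care are the normalisation of the measure and the well-definedness of $\bar\phi$.

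\textbf{Step 1: make the measure on $\Nq$ explicit.} By definition $\Nq=\exp(\fn_{k}\cap\fq)$ carries the push-forward under $\exp$ of Lebesgue measure on $\fn_{k}\cap\fq$, and by Lemma~\ref{Lemma n_k cap h for b} the vectors $\Ad(\kappa)E_{i,n}$ for $1\le i\le k-1$ together with $\Ad(\kappa)E_{n,i}$ for $k\le i\le n-1$ form a basis of $\fn_{k}\cap\fq$. Normalising Lebesgue measure with respect to this basis, a general element of $\fn_{k}\cap\fq$ is $\Ad(\kappa)$ applied to the matrix appearing in~(\ref{eq def v_(x,y)}) with $x\in\R^{k-1}\times\{0\}^{n-k}$ the upper-right column and $y\in\{0\}^{k-1}\times\R^{n-k}$ the (transposed) lower-left row; then $\langle x,y\rangle=0$ automatically and, by~(\ref{eq def v_(x,y)}), its exponential is $v_{x,y}$. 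Combining this with the description of $\Nq$ recorded right after~(\ref{eq def v_(x,y)}) gives
$$
\int_{\Nq}\phi(v\cdot H)\,dv=\int_{\R^{k-1}}\int_{\R^{n-k}}\phi(v_{x,y}\cdot H)\,dy\,dx ,
$$
where on the right $x,y$ abbreviate the corresponding vectors in $\R^{k-1}\times\{0\}^{n-k}$ and $\{0\}^{k-1}\times\R^{n-k}$.

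\textbf{Step 2: rewrite the integrand using $K$-invariance.} By the polar decomposition of Section~\ref{subsection Polar decomposition}, for each pair $(x,y)$ there is a $t\in\R$ with $v_{x,y}\cdot H\in Ka_{t}\cdot H$. Since $\phi$ is $K$-invariant, $\phi(v_{x,y}\cdot H)=\phi(a_{t}\cdot H)=\bar\phi(2\cosh 4t)$; here $\bar\phi$ is well defined because $t\mapsto\phi(a_{t}\cdot H)$ is even (as $k_{0}a_{t}k_{0}^{-1}=a_{-t}$ with $k_{0}\in K\cap H$, and $a\cdot H\in Ka'\cdot H$ forces $a'\in\{a,a^{-1}\}$) and $t\mapsto 2\cosh 4t$ maps $[0,\infty)$ bijectively onto $[2,\infty)$. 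Lemma~\ref{l: cosh 4 t is sum squares} now gives $2\cosh 4t=2+4\|x\|^{2}+4\|y\|^{2}+4\|x\|^{2}\|y\|^{2}$, and substituting this into the display of Step~1 yields the claim.

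Regarding the measure normalisation: it is pinned down once and for all by the basis of $\fn_{k}\cap\fq$ chosen above, so that Lebesgue measure corresponds to $dx\,dy$ on $\R^{k-1}\times\R^{n-k}$, exactly as in Corollary~\ref{Cor integral equality}. If one prefers to stay closer to Lemma~\ref{Lemma decomposition of Haar measures for b}, the identity of Step~1 can alternatively be established first for $K$-invariant $\phi\in C_{c}^{\infty}(X_{n})$ and then extended to arbitrary non-negative $\phi\in C(X_{n})$ by monotone convergence.
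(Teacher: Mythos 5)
Your proposal is correct and follows essentially the same route as the paper: the paper's own proof consists precisely of your Step 2 (combining $K$-invariance with Lemma \ref{l: cosh 4 t is sum squares}), while your Step 1 merely spells out the identification of the push-forward measure on $\Nq$ with $dx\,dy$ that the paper leaves implicit in its definition of $dv$ just before Lemma \ref{Lemma decomposition of Haar measures for b}. The extra remarks on the well-definedness of $\bar\phi$ and the normalisation are accurate but not needed beyond what the paper already records in Section \ref{subsection Polar decomposition}.
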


\begin{proof}
Let $x \in \R^{k-1}\times \{0\}^{n-k},$ $y \in \{0\}^{k-1} \times \R^{n-k}$ and assume that
$v_{x,y} \in k' a_t H,$ for $k' \in K$ and $t \in \R.$ Then it follows that
$$
\phi(v_{x,y}\cdot H) = \bar \phi (2 \cosh 4t) = \bar\phi(2 + 4 \|x\|^2 + 4\|y\|^2 + 4 \|x\|^2 \|y\|^2),
$$
by Lemma \ref{l: cosh 4 t is sum squares}. \end{proof}

\subsection{The case of convergence}\label{subsection convergence for b}

In view of Remark \ref{r: Schwartz seminorms}, the following proposition implies the `if' 
part of Theorem
\ref{Thm alternative for main theorem for b}.

\begin{Prop}\label{Prop int phi convergent for b}
Assume $k=\frac{n+1}2$.
For $m\in\R$,  let $\phi_m: X_{n}\to\R$ be given by
$$
\phi_m(k'a_{t}\cdot H)
= (2 \cosh 4t)^{\frac{1-n}4}\big(1+\log(2\cosh 4t)\big)^{-m}
    \qquad (k'\in K, t\in\R).
$$
Then there exists $m\geq 0$ such that the integral
$\displaystyle\int_{\Nq}\phi_m(v\cdot H)\,dv$
is absolutely convergent.
\end{Prop}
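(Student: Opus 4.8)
The plan is to proceed exactly as in the proof of Proposition~\ref{Prop int phi convergent}, but the computation is considerably simpler because of Corollary~\ref{Cor integral equality for b}. With $k = \frac{n+1}{2}$ we have $k - 1 = \frac{n-1}{2} = n - k$, so the two factors $\R^{k-1}$ and $\R^{n-k}$ in the integral
$$
\int_{\Nq}\phi_m(v\cdot H)\,dv
= \int_{\R^{k-1}}\int_{\R^{n-k}}
    \bar{\phi}_m\big(2+4\|x\|^{2}+4\|y\|^{2}+4\|x\|^{2}\|y\|^{2}\big)\,dy\,dx
$$
have the same dimension $d := \frac{n-1}{2}$. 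Here $\bar\phi_m(r) = r^{\frac{1-n}{4}}(1 + \log r)^{-m}$ for $r \geq 2$, i.e.\ $r^{-d/2}(1+\log r)^{-m}$, since $\frac{n-1}{4} = \frac d2$.

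The key step is the elementary estimate
$$
2 + 4\|x\|^2 + 4\|y\|^2 + 4\|x\|^2\|y\|^2 \;\geq\; 2(1 + \|x\|^2)(1 + \|y\|^2),
$$
which holds because $4\|x\|^2\|y\|^2 + 2\|x\|^2 + 2\|y\|^2 + 2 \geq 2(\|x\|^2\|y\|^2 + \|x\|^2 + \|y\|^2 + 1)$ is trivially true, and then note $2\|x\|^2 + 2\|y\|^2 \geq 0$. Since $\bar\phi_m$ is decreasing and $\log$ is increasing, splitting $m = m_1 + m_2$ with $m_1, m_2 \geq 0$ gives
$$
\bar\phi_m\big(2+4\|x\|^2+4\|y\|^2+4\|x\|^2\|y\|^2\big)
\leq C\,(1+\|x\|^2)^{-d/2}(1+\log(1+\|x\|^2))^{-m_1}\,(1+\|y\|^2)^{-d/2}(1+\log(1+\|y\|^2))^{-m_2}
$$
for a constant $C$ independent of $m$. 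Thus the double integral factors as a product of two integrals of the shape
$$
\int_{\R^{d}} (1+\|\xi\|^2)^{-d/2}\big(1+\log(1+\|\xi\|^2)\big)^{-m_i}\,d\xi,
$$
each of which, by passing to polar coordinates and substituting $s = 1 + r^2$, reduces to $\int_1^\infty s^{-1}(1+\log s)^{-m_i}\,ds$ (up to a constant), which converges for $m_i \geq 2$. Hence taking $m_1 = m_2 = 2$, i.e.\ $m = 4$, makes the integral convergent.

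There is essentially no obstacle here: the only thing to be careful about is the behaviour near $r = 2$ (the lower end of the range of $\bar\phi_m$), but on the region where $2 + 4\|x\|^2 + 4\|y\|^2 + 4\|x\|^2\|y\|^2$ is bounded the integrand is bounded and the region has finite measure, so this contributes nothing. One also needs to observe that $\phi_m$ as defined genuinely lies in $\cC(X_n)$ — but this is immediate from Remark~\ref{r: Schwartz seminorms} together with the fact that $\phi_m$ is $K$-invariant — and that $\bar\phi_m$ really is the function associated to $\phi_m$ in Corollary~\ref{Cor integral equality for b}, which is clear from the definitions. This completes the proof of the `if' part of Theorem~\ref{Thm alternative for main theorem for b}.
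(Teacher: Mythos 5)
Your argument is correct and is essentially the paper's own proof: the same reduction via Corollary \ref{Cor integral equality for b}, the same use of the factorization $1+\|x\|^{2}+\|y\|^{2}+\|x\|^{2}\|y\|^{2}=(1+\|x\|^{2})(1+\|y\|^{2})$ combined with the monotonicity of $\bar\phi_m$ and a splitting of the logarithmic factor, and the same polar-coordinate computation at the end (the paper splits the logarithm via $(1+\log(ab))^{2}\geq(1+\log a)(1+\log b)$ rather than additively over $m=m_1+m_2$, but this is immaterial). One correction to your closing aside: $\phi_m$ does \emph{not} lie in $\cC(X_n)$ for finite $m$ (the seminorms $\mu_{1,N}$ with $N>m$ are infinite on it), and this is not needed --- the proposition yields the `if' part of Theorem \ref{Thm alternative for main theorem for b} because, by Remark \ref{r: Schwartz seminorms}, every $\phi\in\cC(X_n)$ satisfies $|\phi|\leq\mu_{1,m}(\phi)\,\phi_m$ pointwise, so $\phi_m$ serves as an integrable majorant rather than as a member of the Schwartz space.
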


\begin{proof}
Let $\bar \phi_m$ be defined in terms of $\bar \phi$ as in Corollary \ref{Cor integral equality for b}.
Then
$
\bar \phi_m(z) = z^{\frac{1-n}{4}} ( 1 + \log z)^{-m}
$
from which we see that $\bar \phi_m$ is a decrasing function
of $z \geq 1.$ Hence,
$$
\bar\phi_m (2 + 4\|x\|^2 + 4 \|y\|^2 + 4 \|x\|^2 \|y\|^2)) \leq \bar \phi_m(1 + \|x\|^2 + \|y\|^2 + \|x\|^2 \|y\|^2),
$$
for $x \in \R^{k-1} = \R^{\frac{n-1}2},$ $y\in \R^{n-k} = \R^{\frac{n-1}2}$
and by the mentioned corollary we see that
\begin{align*}
&\int_{\Nq[\frac{n+1}{2}]} \phi_{m} (v\cdot H)\,dv\\
&\quad \leq \int_{\R^{\frac{n-1}2}}\int_{\R^{\frac{n-1}2}} \;
     \bar \phi_m ( 1+\|x\|^{2}+\|y\|^{2}+\|x\|^{2}\|y\|^{2} )\, dy\, dx \\
&\quad = \int_{\R^{\frac{n-1}{2}} } \int_{\R^{\frac{n-1}{2}} }
    \frac{\big(1+\|x\|^{2}\big)^{\frac{1-n}4}\big(1+\|y\|^{2}\big)^{\frac{1-n}4}}
        {\big(1+\log(1+\|x\|^{2}+\|y\|^{2}+\|x\|^{2}\|y\|^{2})\big)^{m}}\;dy\,dx.
\end{align*}
Since 
$$
\big(1+\log(1+\|x\|^{2}+\|y\|^{2}+\|x\|^{2}\|y\|^{2})\big)^{2}
\geq \big(1+\log(1+\|x\|^{2})\big)\big(1+\log(1+\|y\|^{2})\big),
$$
the last double integral is at most
$$
\left(\int_{\R^{\frac{n-1}{2}}}
    \frac{\big(1+\|x\|^{2}\big)^{\frac{1-n}4}}
        {\big(1+\log(1+\|x\|^{2})\big)^{\frac{m}{2}}}\,dr\right)^{2}.
$$
By using polar coordinates, one readily verifies that the integral in this expression is absolutely convergent for $m>2$.
\end{proof}

\subsection{The case of divergence}\label{subsection divergence for b}

The following proposition and its corollary imply the `only if'  part
of Theorem
\ref{Thm alternative for main theorem for b}.

\begin{Prop}\label{Prop int phi divergent for b}
Let $\nu\in\R$ and let $\phi_{\nu}: X_{n}\to\R$ be given by
$$
\phi_{\nu}(k'a_{t}\cdot H)= ( 2\cosh 4t)^\nu
\qquad (k'\in K, t\in\R).
$$
Then the integral
$\displaystyle\int_{\Nq}\phi_{\nu}(v\cdot H)\,dv$
is divergent for $\nu\geq\min\big\{\frac{1-k}{2},\frac{k-n}{2}\big\}$.
\end{Prop}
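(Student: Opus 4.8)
The plan is to reduce, via Corollary~\ref{Cor integral equality for b}, to an explicit Euclidean integral and then to force divergence by integrating out just one group of variables. First note that $\phi_\nu$ is a well-defined (since $t\mapsto 2\cosh 4t$ is even), continuous, $K$-invariant and strictly positive function on $X_n,$ so Corollary~\ref{Cor integral equality for b} applies with $\bar\phi_\nu(r)=r^{\nu}$ for $r\ge 2.$ As the quantity $2+4\|x\|^2+4\|y\|^2+4\|x\|^2\|y\|^2$ is everywhere $\ge 2,$ this yields
\[
\int_{\Nq}\phi_\nu(v\cdot H)\,dv
=\int_{\R^{k-1}}\int_{\R^{n-k}}
\big(2+4\|x\|^2+4\|y\|^2+4\|x\|^2\|y\|^2\big)^{\nu}\,dy\,dx .
\]

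The main step is to show that, if $k\ge 2,$ the integral diverges for every $\nu\ge\tfrac{1-k}{2}.$ For this I would restrict the $y$-integration to the unit ball $B\subseteq\R^{n-k}$ and use the elementary two-sided estimate
\[
2\,(1+\|x\|^2)\ \le\ 2+4\|x\|^2+4\|y\|^2+4\|x\|^2\|y\|^2\ \le\ 8\,(1+\|x\|^2)
\qquad(\|y\|\le 1),
\]
which gives $\big(2+4\|x\|^2+4\|y\|^2+4\|x\|^2\|y\|^2\big)^{\nu}\ge c_\nu\,(1+\|x\|^2)^{\nu}$ with $c_\nu=\min(2^{\nu},8^{\nu})>0$ (using the left inequality if $\nu\ge 0$ and the right one if $\nu<0$). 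Hence the integral is bounded below by $\mathrm{vol}(B)\,c_\nu\int_{\R^{k-1}}(1+\|x\|^2)^{\nu}\,dx,$ and a passage to polar coordinates shows that $\int_{\R^{k-1}}(1+\|x\|^2)^{\nu}\,dx=\infty$ precisely when $\nu\ge-\tfrac{k-1}{2}=\tfrac{1-k}{2}.$

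Since the integrand is symmetric under the interchange of $x$ and $y$ (equivalently, $k\mapsto n+1-k$ swaps the two Euclidean factors), the identical argument with the roles of $x$ and $y$ exchanged shows that, whenever $n-k\ge 1,$ the integral diverges for every $\nu\ge\tfrac{k-n}{2}.$ Combining the two cases gives divergence for all $\nu\ge\min\{\tfrac{1-k}{2},\tfrac{k-n}{2}\}$: because $n\ge 3,$ the Euclidean factor attached to whichever of $\tfrac{1-k}{2},\tfrac{k-n}{2}$ realises the minimum has dimension $\ge 1$ (if $k=1$ then $n-k\ge 2$ and the minimum is $\tfrac{k-n}{2};$ if $k=n$ then $k-1\ge 2$ and the minimum is $\tfrac{1-k}{2};$ and for $2\le k\le n-1$ both arguments apply). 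I do not anticipate a genuine obstacle; the argument runs entirely parallel to that of Proposition~\ref{Prop int phi divergent}, the only point needing a little care being the bookkeeping for the degenerate cases $k=1$ and $k=n,$ where one Euclidean factor collapses to a point and only one of the two comparison arguments is informative.
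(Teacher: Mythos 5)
Your proof is correct and follows essentially the same route as the paper: both reduce via Corollary \ref{Cor integral equality for b} to the explicit double integral over $\R^{k-1}\times\R^{n-k}$ and then deduce divergence from that of $\int(1+\|x\|^2)^\nu\,dx$ in the appropriate dimension. The only difference is cosmetic: the paper uses the exact factorization $1+\|x\|^2+\|y\|^2+\|x\|^2\|y\|^2=(1+\|x\|^2)(1+\|y\|^2)$ to write the lower bound as a product of two one-variable integrals (treating $\nu\ge 0$ separately), whereas you localize $y$ to the unit ball and absorb both signs of $\nu$ into the constant $c_\nu$; your explicit handling of the degenerate cases $k=1$ and $k=n$ corresponds to the paper's observation that at least one of the two factors diverges.
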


\begin{proof}
The function $\bar \phi_\nu$ associated to $\phi_\nu$ as in Corollary \ref{Cor integral equality for b} is given by $z \mapsto z^\nu.$ By the mentioned corollary
we obtain
$$
\int_{\Nq}\phi_{\nu}(v\cdot H)\,dv
=\int_{\R^{k-1}}\int_{\R^{n-k}}
    \big(2+4\|x\|^{2}+4\|y\|^{2}+4\|x\|^{2}\|y\|^{2}\big)^{\nu}\,dy\,dx.
$$
Clearly the integral on the right-hand side is divergent if $\nu\geq0$. We assume that $\nu<0$. Then the integral on the right-hand side is larger than
\begin{align}
&4^{\nu}\int_{\R^{k-1}}\int_{\R^{n-k}}
    \big(1+\|x\|^{2}+\|y\|^{2}+\|x\|^{2}\|y\|^{2}\big)^{\nu}\,dy\,dx \nonumber \\
&\qquad=4^{\nu}    \int_{\R^{k-1}}     \big(1+\|x\|^{2}\big)^{\nu}\,dx
    \int_{\R^{n-k}}       \big(1+ \|y\|^{2}\big)^{\nu}\,dy.
    \label{e: product of two integrals new}
\end{align}
The condition on $\nu$ implies that $2 \nu\geq 1-k$ or $2\nu\geq k-n.$
By using polar coordinates, we see that one of the integrals in (\ref{e: product of two integrals new}) diverges. This completes the proof.
\end{proof}

\begin{Cor}\label{Cor int_(V) divergent}
Assume $k\neq\frac{n+1}{2}$. Then there exists a function $\phi\in\cC(X_{n})$ such that
$\displaystyle{\int_{\Nq}\phi(v\cdot H)\,dv}$ is divergent.
\end{Cor}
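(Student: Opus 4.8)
The plan is to mimic, almost verbatim, the argument used in Corollary \ref{Cor int_(U) divergent} for the $\sigma$-parabolic rank $1$ case. The two inputs are already available: Proposition \ref{Prop int phi divergent for b}, which gives divergence of $\int_{\Nq}\phi_\nu(v\cdot H)\,dv$ for every real $\nu\geq\min\{\tfrac{1-k}{2},\tfrac{k-n}{2}\}$, and Lemma \ref{Lemma phi_nu is Schwartz}, which ensures $\phi_\nu\in\cC(X_n)$ as soon as $\nu<\tfrac{1-n}{4}$. So it suffices to exhibit a single $\nu$ lying in the overlap of these two ranges, i.e. with
$$
\min\{\tfrac{1-k}{2},\tfrac{k-n}{2}\}\;\leq\;\nu\;<\;\tfrac{1-n}{4},
$$
and such a $\nu$ exists precisely when $\min\{\tfrac{1-k}{2},\tfrac{k-n}{2}\}<\tfrac{1-n}{4}$.

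The one computation to record is the elementary observation that $\tfrac{1-n}{4}$ is exactly the arithmetic mean of the two quantities $\tfrac{1-k}{2}$ and $\tfrac{k-n}{2}$, since $\tfrac12\big(\tfrac{1-k}{2}+\tfrac{k-n}{2}\big)=\tfrac{1-n}{4}$. Hence the smaller of the two is always $\leq\tfrac{1-n}{4}$, with equality if and only if $\tfrac{1-k}{2}=\tfrac{k-n}{2}$, that is, if and only if $2k=n+1$. Under the standing hypothesis $k\neq\tfrac{n+1}{2}$ the inequality is therefore strict, so one may fix any $\nu$ with $\min\{\tfrac{1-k}{2},\tfrac{k-n}{2}\}<\nu<\tfrac{1-n}{4}$.

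For this $\nu$, Lemma \ref{Lemma phi_nu is Schwartz} gives $\phi_\nu\in\cC(X_n)$, while Proposition \ref{Prop int phi divergent for b} gives divergence of $\int_{\Nq}\phi_\nu(v\cdot H)\,dv$, which is exactly the assertion. There is no real obstacle here; the proof is a two-line deduction from the two preceding results, the only genuinely new (and trivial) point being the mean-value identity relating the two exponents of Proposition \ref{Prop int phi divergent for b} to the Schwartz threshold $\tfrac{1-n}{4}$.
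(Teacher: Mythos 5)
Your proposal is correct and follows exactly the same route as the paper's own proof: choose $\nu$ with $\min\{\tfrac{1-k}{2},\tfrac{k-n}{2}\}<\nu<\tfrac{1-n}{4}$, then invoke Lemma \ref{Lemma phi_nu is Schwartz} and Proposition \ref{Prop int phi divergent for b}. The only addition is your explicit mean-value justification of the strict inequality $\min\{\tfrac{1-k}{2},\tfrac{k-n}{2}\}<\tfrac{1-n}{4}$ for $k\neq\tfrac{n+1}{2}$, which the paper asserts without comment.
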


\begin{proof}
Assume $k\neq\frac{n+1}{2}.$  Then $\min\big\{\frac{1-k}{2},\frac{k-n}{2}\big\}<\frac{1-n}{4}\;$  and we may select $\nu$ such that
$$
\min\big\{\textstyle{\frac{1-k}{2}},\textstyle{\frac{k-n}{2}}\big\}
<\nu
<\frac{1-n}{4}.
$$
Then $\phi_{\nu}\in\cC(X_{n})$ by Lemma \ref{Lemma phi_nu is Schwartz}. The claim now follows from Proposition \ref{Prop int phi divergent for b}.
\end{proof}

\section{Proof of Theorem \ref{Thm main theorem limit behavior}}\label{Section Limits}
\subsection{Reduction to an equivalent theorem}\label{Subsection limit theorem for Q_k}
By Proposition \ref{Prop P H-conj to P_k,l; H-comp iff (n+1)/2 leq k,l leq (n+3)/2} and Proposition \ref{Prop Q H-comp for b iff H-conj to Q_k with k=(n+1)/2 } it suffices to prove the claims in Theorem \ref{Thm main theorem limit behavior}
for $P=P_{k,l}$ with $\frac{n+1}{2}\leq k=l\leq \frac{n+3}{2}$ and (for $n$ odd) $Q=Q_{\frac{n+1}{2}}$ only.

We recall the definition of $a_{t}$ from (\ref{eq def a_t}). An easy computation shows that
$$
\delta_{P_{k,k}}(a_{t})
= e^{t}. 
$$
It follows from Theorem \ref{Thm alternative for main theorem} that for all $\phi\in\cC(X_{n})$  the integral 
$$
\int_{U_{k,k}} \phi(a_{s}u\cdot H)\,du
$$
yields a well-defined function of $s\in\R$.
We are interested in the decay of this function, or more precisely,
of the modified function
$$
s\mapsto e^s\int_{U_{k,k}} \phi(a_{s}u\cdot H)\,du,
$$
and will prove the following result, which implies Theorem \ref{Thm main theorem limit behavior}.

\begin{Thm}\label{Thm limit theorem}
Assume $\frac{n+1}2\leq k\leq \frac{n+3}2$.
Let $\phi\in\cC(X_{n})$.
\begin{enumerate}[(i)]
\item If $n$ is even, then for every $N\in\N$ there exist  $c>0$ and $m\in\N$ such that for every $s\in\R,$
\begin{equation}\label{sup for n even}
\Big|e^s\int_{\Ns[k,k]} \phi(a_{s}u\cdot H)\,du\Big| \leq c(1+|s|)^{-N}\mu_{1,m}(\phi).
\end{equation}

\item If $n$ is odd, then for every $R\in\R$ and $N\in\N$ there exist a $c>0$ and $m\in\N$ such that for every $s\in\R$ with $s<R,$
    \begin{equation}\label{sup for n odd}
    \Big|e^s\int_{\Ns[k,k]} \phi(a_{s}u\cdot H)\, du\Big|\leq c(1+|s|)^{-N}\mu_{1,m}(\phi).
    \end{equation}
Furthermore, for $s$ moving in the other direction, there exists an element $\kappa_{0} \in K,$ independent of $\phi,$ such that 
    \begin{equation}\label{e: limit is integral}
    \lim_{s\to\infty} e^s\int_{\Ns[k,k]} \phi(a_{s}u\cdot H)\,du
    =
    \int_{\Nq[\frac{n+1}{2}]}\phi(\kappa_{0} v\cdot H)\,dv.
    \end{equation}
    In particular, the limit exists, and is non-zero as a function of
   $\phi.$
   \end{enumerate}
\end{Thm}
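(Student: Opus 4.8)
The strategy is to reduce everything to the explicit integral formula of Corollary \ref{Cor integral equality} with $k = l$, and then analyze the resulting integral on $\R^{n-2}$ (split as $\R^{k-2} \times \R$, the ``$v$'' and ``$z$'' variables, since $k=l$ forces the ``$w$'' block to vanish) by careful change of variables and dominated convergence. First I would substitute a general Schwartz function $\phi$ by its absolute value bounded via the seminorm $\mu_{1,m}$: from Remark \ref{r: Schwartz seminorms} we have $|\phi(k' a_t \cdot H)| \le \mu_{1,m}(\phi) (2\cosh 4t)^{\frac{1-n}{4}}(1 + \log(2\cosh 4t))^{-m}$, so it suffices to prove both the decay estimates \eqref{sup for n even}, \eqref{sup for n odd} and the limit formula \eqref{e: limit is integral} for the model majorant $\phi_m$ of Proposition \ref{Prop int phi convergent}, together with a uniform-integrability argument to pass the limit through. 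Concretely, by Corollary \ref{Cor integral equality} applied with $a_s$ translating the base point (which amounts to replacing the even function $t \mapsto \phi(a_t \cdot H)$ by $t \mapsto \phi(a_s a_t \cdot H)$ — equivalently, multiplying the argument of $\bar\phi$ by a computable factor, since $a_s$ acts on $\fn_{k,k}$ by the roots with $\faq$-weight), the quantity $e^s \int_{\Ns[k,k]} \phi(a_s u \cdot H)\,du$ becomes an explicit iterated integral over $(v,z) \in \R^{k-2}\times\R$ of $J_k$ times an inner Gaussian-type integral in $x \in \R^{n-k}$.

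\emph{Decay estimates.} For \eqref{sup for n even} and \eqref{sup for n odd}, I would exploit the extra logarithmic weight: writing $m = m_0 + N'$ with $N'$ chosen large relative to $N$, the factor $(1 + \log(2\cosh 4s \cdot (\cdots)))^{-m}$ dominates $(1+|s|)^{-N}$ times $(1+\log(\cdots))^{-m_0}$ uniformly in the remaining variables, because $2\cosh 4t \ge \tfrac12 e^{4|t|}$ makes $\log(2\cosh(4s) \cdot r) \gtrsim |s| + \log r$ when $s$ lies in the relevant half-line. After pulling out the $(1+|s|)^{-N}$ factor, the remaining integral is exactly of the convergent type already estimated in the proof of Proposition \ref{Prop int phi convergent} (the $k=l$ case there), so it is bounded by a constant times $\mu_{1,m}(\phi)$. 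The only subtlety is the directional asymmetry when $n$ is odd: in that case $J_k(v,z) c'(v,z)^{\frac{n-k}{2}}$ with $k = \frac{n+1}{2}$ has a borderline power, and sending $s \to +\infty$ in that direction does \emph{not} produce decay — this is precisely why \eqref{sup for n odd} is restricted to $s < R$.

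\emph{The limit formula.} This is the heart of the matter and the step I expect to be hardest. For $n$ odd, $k = \frac{n+1}{2}$, I would rescale the integration variables in the $s$-dependent integral by the natural factor $(\cosh 4s)^{1/2}$ (mimicking the substitutions $v = (\text{scale})\eta$, $1+z = (\text{scale})\zeta$ used in Proposition \ref{Prop int phi convergent}), so that the $s$-dependence migrates entirely into the argument of $\bar\phi$; then let $s\to\infty$. In the limit, $2\cosh 4s \to \infty$ and one must identify the limiting integrand: the claim is that it converges, after the rescaling and using the explicit homogeneity of $J_k$, $c'$, to the integrand of $\int_{\Nq[\frac{n+1}{2}]} \phi(\kappa_0 v \cdot H)\,dv$ as given by Corollary \ref{Cor integral equality for b}, namely $\bar\phi(2 + 4\|x\|^2 + 4\|y\|^2 + 4\|x\|^2\|y\|^2)$ over $\R^{k-1}\times\R^{n-k}$. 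The element $\kappa_0 \in K$ arises from comparing the group-theoretic normalizations: $U_{k,k}$ with $k=\frac{n+1}{2}$ degenerates, under the $a_s$-conjugation-and-limit, to a conjugate of $V_{\frac{n+1}{2}} = \exp(\fn_{\frac{n+1}{2}}\cap\fq)$, and tracking the limit of $a_s u_{x,y,z} a_s^{-1}$ (with rescaled entries) against the definition \eqref{eq def v_(x,y)} of $v_{x,y}$ produces an explicit fixed $\kappa_0$. The rigorous justification of interchanging $\lim_{s\to\infty}$ with the integral requires a dominated-convergence argument: I would produce an $s$-independent integrable majorant valid for all large $s$ by the same estimates as in the convergence proofs above (the borderline powers still give a finite integral once the logarithmic weight is present, uniformly in $s$ on a half-line), and establish pointwise convergence of the rescaled integrand from continuity of $\bar\phi$ on $(1,\infty)$ together with the fact that the rescaled arguments converge to $2 + 4\|x\|^2 + \cdots$. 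Finally, non-vanishing as a function of $\phi$ follows because the right-hand side of \eqref{e: limit is integral} is a nonzero positive measure tested against $\phi$ — e.g., evaluate on one of the positive Schwartz functions $\phi_\nu$ from Lemma \ref{Lemma phi_nu is Schwartz} with $\nu$ in the convergent range for the $Q_{\frac{n+1}{2}}$ integral.
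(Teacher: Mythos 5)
Your overall architecture (majorize $|\phi|$ by $\mu_{1,m}(\phi)$ times the radial model function, compute the $KA_\fq H$-coordinate of $a_s u_{x,y,z}$ explicitly, then rescale and pass to the limit by dominated convergence) agrees with the paper's, but two of your key mechanisms fail. First, the decay (\ref{sup for n even}) for $s\to+\infty$ with $n$ even cannot come from the logarithmic weight alone. Along $a_s U_{k,k}$ the argument of $\bar\phi$ is $f_1(s,z)+f_2(s,z)\|x\|^2+f_3(s,z)\|y\|^2+\|x\|^2\|y\|^2$ with $f_1=e^{4s}(1-z)^2(1+z)^2+e^{-4s}+2z^2$, $f_2=e^{2s}(1+z)^2+e^{-2s}$, $f_3=e^{2s}(1-z)^2+e^{-2s}$; near $z=\pm1$ this argument stays bounded (e.g.\ $f_1(s,1)=e^{-4s}+2$ and $f_3(s,1)=e^{-2s}$), so it is simply false that it equals ``$2\cosh(4s)$ times a quantity bounded below'' and $\log(2\cosh 4t)\gtrsim|s|$ fails on a region of non-negligible measure. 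The paper's proof must split the $z$-integral into $[1-\delta,1+\delta]$ and its complement and choose $\delta=e^{-3s/2}$, balancing the $O(e^{-s}\delta^{1/2})$ contribution of the bad set against the logarithmic gain $\big(1+\log(e^{4s}\delta^2)\big)^{-(N-6)}$ on its complement; nothing in your plan replaces this. (Even for $s\to-\infty$ you still owe the factor $e^{-s}$ needed to cancel the prefactor $e^{s}$: convergence of ``the same integral as in Proposition \ref{Prop int phi convergent}'' is not enough, one needs the quantitative lower bound $f_i\ge A(e^{2s}z^2+e^{-2s})$ and the substitution $z=e^{-2s}\zeta$.)

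Second, for the limit formula your rescaling is wrong and your proposed majorant does not exist. The argument of $\bar\phi$ stays bounded only where $1-z^2=O(e^{-2s})$, so the correct substitution is the anisotropic one $x=e^{s}\xi$, $y=e^{-s}\eta$, $z=e^{-2s}\omega-1$ (whose Jacobian $e^{-s}$ exactly cancels the prefactor when $k=\tfrac{n+1}{2}$), not a symmetric $(\cosh 4s)^{1/2}$-scaling of all variables. Identifying the pointwise limit then requires proving $a_s\,u(e^{s}\xi,e^{-s}\eta,e^{-2s}\omega-1)\cdot H\to\kappa^{-1}v_{(\omega,\eta),\frac23\xi}\cdot H$, which the paper obtains from a nontrivial $\SL(2,\R)$ computation (Lemma \ref{Lemma Limit of N orbits}); this is the actual content behind your ``explicit $\kappa_0$'' and is not routine bookkeeping. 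Moreover, there is no $s$-independent integrable majorant on all of $\R^{n-1}$: on the set $\omega\in[(2-\delta)e^{2s},(2+\delta)e^{2s}]$ (i.e.\ $z\approx 1$) the transformed integrand is only $O\big((1+\omega^2)^{-1/2}\big)$ with no logarithmic gain, and the supremum over $s$ of these indicator-cutoffs is not integrable in $\omega$; the paper circumvents this by showing that this set contributes at most $C\delta$ uniformly in $s$, dominating only on its complement, and letting $\delta\to0$ at the end. Finally, you omit the case $k=l=\tfrac{n+3}{2}$ of the limit formula, which requires the reduction via $\sigma\theta$ and the Weyl elements of Lemma \ref{Lemma equivalences between parabolics}.
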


\subsection{Proof of Theorem \ref{Thm limit theorem}}\label{Subsection Limits}
We recall the definition of $u_{x,y,z}$ from (\ref{eq def u_(x,y,z)})
and start with a few lemmas.

\begin{Lemma}\label{Lemma 2cosh(4t)=(s,x,y,z)}
Let $s\in\R$, $x\in\R^{n-k}$, $y\in\R^{k-2}$ and $z\in\R$.
If $a_su_{x,y,z}\cdot H\in Ka_{t}\cdot H$, then
\begin{equation}\label{eq 2cosh(4t)=(s,x,y,z)}
2\cosh(4t)
=f_1+f_2 \|x\|^{2}+f_3 \|y\|^{2}+\|x\|^{2}\|y\|^{2},
\end{equation}
where
\begin{align}
&f_1=f_1(s,z)=e^{4s}(1-z)^{2}(1+z)^{2}+e^{-4s}+2z^2,\label{f1}\\
&f_2=f_2(s,z)=e^{2s}(1+z)^{2}+e^{-2s}\label{f2},\\
&f_3=f_3(s,z)=e^{2s}(1-z)^{2}+e^{-2s}.\label{f3}
\end{align}
\end{Lemma}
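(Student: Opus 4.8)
The plan is to imitate the proof of Lemma~\ref{Lemma 2cosh(4t)=P(x,y,z)}, applying Lemma~\ref{Lemma KAH decomposition} to the element $g=a_{s}u_{x,y,z}$ in place of $g=u_{x,y,z}$. First I would observe that conjugation by $S$ interchanges the first and last diagonal entries of a diagonal matrix, so $\sigma(a_{s})=Sa_{s}S^{-1}=a_{-s}=a_{s}^{-1}$; consequently
$$
g\,\sigma(g)^{-1}
=a_{s}\,u_{x,y,z}\,\sigma(u_{x,y,z})^{-1}\,\sigma(a_{s})^{-1}
=a_{s}\,M\,a_{s},
\qquad M:=u_{x,y,z}\,\sigma(u_{x,y,z})^{-1},
$$
and the matrix $M$ has already been written down explicitly in the proof of Lemma~\ref{Lemma 2cosh(4t)=P(x,y,z)}. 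The key simplification available here, which is precisely what the case $k=l$ of Theorem~\ref{Thm limit theorem} provides, is that the vectors $x$ and $y$ now have disjoint supports when embedded in $\R^{n-2}$ as in (\ref{eq def u_(x,y,z)}): $x$ sits in $\{0\}^{k-2}\times\R^{n-k}$ and $y$ sits in $\R^{k-2}\times\{0\}^{n-k}$, so $\langle x,y\rangle=0$. Substituting $\langle x,y\rangle=0$ into the formula for $M$ removes the three entries built out of $\langle x,y\rangle$, leaving a matrix whose central $(n-2)\times(n-2)$ block is $I_{n-2}+yx^{t}$ with $\|I_{n-2}+yx^{t}\|_{HS}^{2}=(n-2)+\|x\|^{2}\|y\|^{2}$.

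Next I would conjugate by $a_{s}=\diag(e^{s},1,\dots,1,e^{-s})$: this multiplies the $(1,1)$ entry of $M$ by $e^{2s}$, the $(1,n)$ and $(n,1)$ entries by $1$, the $(n,n)$ entry by $e^{-2s}$, the first-row and first-column middle blocks by $e^{s}$, the last-row and last-column middle blocks by $e^{-s}$, and leaves the central block unchanged. Summing the squares of all entries of $a_{s}Ma_{s}$ then gives
$$
\|g\,\sigma(g)^{-1}\|_{HS}^{2}
=(n-2)+f_{1}+f_{2}\|x\|^{2}+f_{3}\|y\|^{2}+\|x\|^{2}\|y\|^{2},
$$
where $f_{1}=e^{4s}(1-z)^{2}(1+z)^{2}+2z^{2}+e^{-4s}$ collects the squares of the $(1,1)$, $(1,n)$, $(n,1)$ and $(n,n)$ entries, $f_{2}=e^{2s}(1+z)^{2}+e^{-2s}$ collects the $x$-dependent first-row and last-row middle blocks, and $f_{3}=e^{2s}(1-z)^{2}+e^{-2s}$ collects the $y$-dependent first-column and last-column middle blocks; these are exactly (\ref{f1})--(\ref{f3}). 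Comparing with the identity $\|g\,\sigma(g)^{-1}\|_{HS}^{2}=n-2+2\cosh(4t)$ of Lemma~\ref{Lemma KAH decomposition}, valid when $a_{s}u_{x,y,z}\cdot H\in Ka_{t}\cdot H$, yields (\ref{eq 2cosh(4t)=(s,x,y,z)}).

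The whole argument is routine matrix bookkeeping, and I do not anticipate any genuine obstacle. The one point deserving a moment of care is the verification that the hypothesis $k=l$ is exactly what forces the disjointness of the supports of $x$ and $y$, hence $\langle x,y\rangle=0$, so that no $\langle x,y\rangle$-cross terms survive in $M$ and the clean three-parameter formula of the lemma results.
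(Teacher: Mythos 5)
Your proposal is correct and follows essentially the same route as the paper: the paper likewise computes $a_su_{x,y,z}\,\sigma(a_su_{x,y,z})^{-1}$ explicitly (arriving at exactly the matrix you describe, via the observation that $\langle x,y\rangle=0$ when $k=l$), takes its squared Hilbert--Schmidt norm, and invokes Lemma~\ref{Lemma KAH decomposition}. Your factorization $g\,\sigma(g)^{-1}=a_sMa_s$ with $M=u_{x,y,z}\,\sigma(u_{x,y,z})^{-1}$ reusing the matrix from Lemma~\ref{Lemma 2cosh(4t)=P(x,y,z)} is only a cosmetic variant of the paper's direct computation, and all your entries and the resulting grouping into $f_1$, $f_2\|x\|^2$, $f_3\|y\|^2$, $\|x\|^2\|y\|^2$ check out.
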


\begin{proof}  From (\ref{eq def u_(x,y,z)}) we find
$$
a_su_{x,y,z}=
\left(
   \begin{array}{c:c:c}
    e^s   &   e^s x^{t}   &   e^s z   \\
   \hdashline
        &   I_{n-2} &   y   \\
   \hdashline
        &           &   e^{-s}   \\
   \end{array}
 \right),
$$
hence
$$
a_{s}u_{x,y,z}\sigma(a_{s}u_{x,y,z})^{-1}
=
\left(
   \begin{array}{c:c:c}
   e^{2s}(1-z^{2}) &   e^{s}(1+z)x^{t}  &   z           \\
   \hdashline
   e^{s}(1-z)y     &   I_{n-2}+yx^{t}   &   e^{-s}y     \\
   \hdashline
   -z              &   e^{-s}x^{t}      &   e^{-2s}     \\
   \end{array}
\right).
$$
The proof is completed by combining Lemma \ref{Lemma KAH decomposition}
with
 a straightforward computation of the squared Hilbert-Schmid norm
of the last matrix,
analogous to the computation in the proof of Lemma \ref{Lemma 2cosh(4t)=P(x,y,z)}
(note that now $\langle x,y\rangle=0$ since $k=l$).
\end{proof}

We shall need some estimates for
$f_1$, $f_2$ and $f_3$.

\begin{Lemma}\label{Lemma f1f2f3}
Let $s, z\in\R$ and let $f_i=f_i(s,z)$
be as above for $i=1,2,3$. Then
\begin{equation}\label{1st bound}
f_2f_3=f_1+2,\quad 2\leq f_1, \quad
f_1\leq f_2f_3\le 2f_1
\end{equation}
and
\begin{equation}\label{2nd bound}
1+z^2\le f_1.
\end{equation}
\end{Lemma}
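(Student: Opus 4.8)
The statement is entirely elementary once we have the explicit formulas (\ref{f1})--(\ref{f3}) for $f_1,f_2,f_3$, so the plan is simply to verify each assertion by direct algebraic manipulation. First I would compute the product $f_2f_3$ directly:
$$
f_2f_3=\big(e^{2s}(1+z)^2+e^{-2s}\big)\big(e^{2s}(1-z)^2+e^{-2s}\big)
=e^{4s}(1+z)^2(1-z)^2+(1+z)^2+(1-z)^2+e^{-4s}.
$$
Since $(1+z)^2+(1-z)^2=2+2z^2$, this equals $e^{4s}(1-z^2)^2+e^{-4s}+2z^2+2=f_1+2$, which is the first identity in (\ref{1st bound}).

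Next, for the bound $2\le f_1$: all three summands in $f_1=e^{4s}(1-z)^2(1+z)^2+e^{-4s}+2z^2$ are nonnegative, and by AM--GM we have $e^{4s}(1-z^2)^2+e^{-4s}\ge 2|1-z^2|$, so $f_1\ge 2|1-z^2|+2z^2\ge 2|1-z^2|+2z^2\ge 2$ (using $|1-z^2|+z^2\ge 1$, which holds because if $z^2\le 1$ then $|1-z^2|+z^2=1$, and if $z^2\ge 1$ then $|1-z^2|+z^2=2z^2-1\ge 1$). For the chain $f_1\le f_2f_3\le 2f_1$: the left inequality is immediate from $f_2f_3=f_1+2$ and positivity; the right inequality $f_1+2\le 2f_1$ is exactly $f_1\ge 2$, already established.

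Finally, for (\ref{2nd bound}), $1+z^2\le f_1$: again using $e^{4s}(1-z^2)^2+e^{-4s}\ge 2|1-z^2|$, it suffices to check $1+z^2\le 2|1-z^2|+2z^2$, i.e. $1-z^2\le 2|1-z^2|$, which is clear. This disposes of every claim. There is no real obstacle here; the only thing to be careful about is getting the case analysis for $|1-z^2|+z^2\ge 1$ right, and making sure the AM--GM step $e^{4s}(1-z^2)^2+e^{-4s}\ge 2\sqrt{(1-z^2)^2}=2|1-z^2|$ is applied correctly. Everything else is bookkeeping with the explicit expressions.
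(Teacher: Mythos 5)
Your proof is correct and follows essentially the same route as the paper: verify the identity $f_2f_3=f_1+2$ by direct expansion, establish $2\le f_1$, and deduce the chain $f_1\le f_2f_3\le 2f_1$ from these two facts. The only cosmetic difference is in the two lower bounds: the paper gets $2\le f_1$ for free by noting that $f_1$ equals $2\cosh(4t)$ when $x=y=0$ in Lemma \ref{Lemma 2cosh(4t)=(s,x,y,z)}, and gets $1+z^2\le f_1$ by averaging $2\le f_1$ with the termwise bound $2z^2\le f_1$, whereas you use AM--GM and a short case analysis --- both are valid.
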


\begin{proof} The equality in (\ref{1st bound})
is easily verified, and the lower bound for $f_1$
follows from (\ref{eq 2cosh(4t)=(s,x,y,z)}) with
$x=y=0$. Then,
$f_1\leq f_1+2\leq 2f_1$
implies the final statement in (\ref{1st bound}).
Finally we observe that in addition to $2\le f_1$
we also have $2z^2\le f_1$, whence (\ref{2nd bound}).
\end{proof}

\begin{Lemma} Let $R\in\R$.
Then there exists a constant $A>0$
such that
\begin{equation}\label{fi ineq}
f_i(s,z)\ge A(e^{2s}z^2+e^{-2s}),\quad i=2,3,
\end{equation}
for all $z\in\R$ and all $s\le R$.
\end{Lemma}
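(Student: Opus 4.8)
The plan is to reduce the claimed bound to a single one-variable inequality, namely $2(1\mp z)^{2}+1\ge\tfrac23 z^{2}$ for all $z\in\R$, together with the elementary remark that for $s\le R$ the two ``scales'' $e^{2s}$ and $e^{-2s}$ occurring in $f_2,f_3$ are comparable from below up to a constant depending only on $R$. Concretely, since $e^{-2s}=e^{2s}e^{-4s}\ge e^{-4R}e^{2s}$ whenever $s\le R$, setting $\mu:=\min\{1,e^{-4R}\}$ one has $\mu\le 1$ and $e^{-2s}\ge\mu e^{2s}$ for all such $s$.

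First I would record the consequence $e^{-2s}\ge\tfrac{\mu}{2}\big(e^{2s}+e^{-2s}\big)$ for $s\le R$, which holds because $(2-\mu)e^{-2s}\ge e^{-2s}\ge\mu e^{2s}$. Next, for $f_3(s,z)=e^{2s}(1-z)^{2}+e^{-2s}$ I would write
$$
2f_3=2e^{2s}(1-z)^{2}+2e^{-2s}\ \ge\ 2e^{2s}(1-z)^{2}+\mu\big(e^{2s}+e^{-2s}\big)\ \ge\ \mu\Big(e^{2s}\big(2(1-z)^{2}+1\big)+e^{-2s}\Big),
$$
the last step using $\mu\le 1$. Finally, invoking $2(1-z)^{2}+1\ge\tfrac23 z^{2}$ — a routine calculus fact, since $(2z^{2}-4z+3)/z^{2}$ attains its minimum $2/3$ at $z=3/2$ — gives $2f_3\ge\tfrac23\mu\big(e^{2s}z^{2}+e^{-2s}\big)$, i.e. $f_3(s,z)\ge\tfrac{\mu}{3}\big(e^{2s}z^{2}+e^{-2s}\big)$. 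The estimate for $f_2(s,z)=e^{2s}(1+z)^{2}+e^{-2s}$ follows from the identical chain of inequalities with $z$ replaced by $-z$, using $2(1+z)^{2}+1\ge\tfrac23 z^{2}$. Thus the lemma holds with $A=\tfrac13\min\{1,e^{-4R}\}$.

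There is no genuine obstacle here; the only point deserving attention is that the bound inevitably couples the two exponential scales, which is exactly where the hypothesis $s\le R$ enters and why the constant $A$ must deteriorate as $R\to+\infty$. This deterioration is unavoidable: at $z=1$ one has $f_3(s,1)=e^{-2s}$ while $e^{2s}z^{2}=e^{2s}$, forcing $A\le e^{-4s}$ for $z$ near $1$.
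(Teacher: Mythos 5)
Your proof is correct; every step checks out (the scale comparison $e^{-2s}\ge e^{-4R}e^{2s}$ for $s\le R$, the absorption of $\mu\le 1$, and the one-variable inequality $2(1\mp z)^2+1\ge\tfrac23 z^2$, whose discriminant indeed vanishes), and it yields the valid constant $A=\tfrac13\min\{1,e^{-4R}\}$. The paper's route is structurally a little different: it fixes $A=\tfrac{b-1}{b}$ with $b>1$ the root of $b^2-b=e^{-4R}$, observes that for this particular $A$ the desired inequality for $f_2$ is \emph{equivalent} to $e^{2s}(z^2+2bz+b)+e^{-2s}\ge 0$, and concludes because the quadratic $z^2+2bz+b$ has minimum $-(b^2-b)=-e^{-4R}$, so the left-hand side is at least $e^{-2s}(1-e^{4s-4R})\ge 0$ for $s\le R$; it then handles $f_3$ by the symmetry $f_3(s,z)=f_2(s,-z)$, which you also use implicitly. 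So the paper keeps the two exponential scales coupled until the very last step and tunes $A$ so that the reduction is an exact equivalence, whereas you decouple the scales first (via $e^{-2s}\ge\mu e^{2s}$) and then invoke a scale-free, $R$-independent quadratic inequality. Your version is marginally more modular and makes the mechanism of the $R$-dependence transparent — your closing remark that $A$ must decay like $e^{-4s}$ near $z=1$ is a nice sanity check the paper omits — while the paper's choice of $b$ produces a slightly sharper constant with essentially no extra work. Both are fully elementary and of comparable length.
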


\begin{proof}
We may assume $i=2$ since $f_3(s,z)=f_2(s,-z)$.
Let $b\in\R$ be the solution to $b^2-b=e^{-4R}$ that is larger than $1$.
We shall establish (\ref{fi ineq}) for all $s\le R$
with
$$A=\frac{b-1}{b}.$$
Inserting the definition of
$f_2$ we see that with this value of $A$,
(\ref{fi ineq}) is equivalent to
$$e^{2s}(z+1)^2+e^{-2s}\ge \frac{b-1}b (e^{2s}z^2+e^{-2s})$$
and hence also to
\begin{equation*}
e^{2s}(z^2+2b z+b)+e^{-2s}\ge 0.
\end{equation*}
This last inequality is valid for all $z\in\R$ and $s\le R$
since the minimum of $z^2+2b z+b$ as a function of $z$ is
$-b^2+b=-e^{-4R}$.
\end{proof}

\begin{proof}[Proof of Theorem \ref{Thm limit theorem}]
Let $\phi\in\cC(X_{n}).$ Throughout the proof we will use the notation
$$
\Iphi(s): = \int_{U_{k,k}} \phi(a_s u) \; du.
$$
The proof consists of three parts. In part (a) we will
address the rapid decay of $e^s\Iphi(s)$ for $s\to-\infty$ both for $n$ odd and even. In part (b) we will
address the similar decay for $s\to \infty$ in case  $n$ is even. Finally, in part (c) we will
address the limit behavior for $s \to \infty$ in case $n$ is odd.

Fix $N \in \N$. Then by (\ref{e: seminorm mu u m}),
the function
$\phi$ satisfies the estimate
\begin{equation}
\label{e: phi estimated on seminorm times f}
|\phi(x)|
\leq\mu_{1,N}(\phi)f(x)
\qquad(x\in X_{n}),
\end{equation}
where $f:X_{n}\to\R_{>0}$ is given by 
\begin{equation}\label{eq decay of f}
f(k'a_{t}\cdot H)
= \big(2\cosh(4t)\big)^{-\frac{n-1}{4}}\big(1+\log\big(2\cosh(4t)\big)\big)^{-N}
\qquad(k'\in K, t\in\R).
\end{equation}
Let
$$
\If(s):=\int_{\Ns[k,k]} f(a_{s}u\cdot H)\,du,
$$
then we have the estimate
$$
|\Iphi(s)| \leq \mu_{1,N}(\phi) I_f(s),
$$
so that for parts (a) and (b) it suffices to show
that (\ref{sup for n even}) and (\ref{sup for n odd}) are satisfied with $\phi$ replaced by $f$.
Define $\Psi: \R_{\geq 1} \to \R$ by
\begin{equation}\label{est Psi new}
\Psi(r)  =   r^{-\frac{n-1}4}(1+\log(r))^{-N},
\quad(r\ge 1, N\ge 0).
\end{equation}
Then it follows that 
$$
\Psi(2\cosh(4t))= f(k' a_{t}\cdot H)
$$
for $k' \in K$ and $t\in\R$.
Moreover, using (\ref{eq 2cosh(4t)=(s,x,y,z)}) we see that
\begin{equation}
\label{eq: estimate phi a u in terms of Psi}
f(a_s u_{x,y,z}) =  \Psi(f_1+f_2\|x\|^{2}+f_3\|y\|^{2}+\|x\|^{2}\|y\|^{2}).
\end{equation}
Hence,
$$
\If(s)=\int_{z\in\R}\int_{y\in\R^{k-2}}\int_{x\in\R^{n-k}}
\Psi(f_1+f_2\|x\|^{2}+f_3\|y\|^{2}+\|x\|^{2}\|y\|^{2})\,dx\,dy\,dz.
$$

{\em Part (a).\ }
Performing the following substitutions on the inner integrals,
$$x=(f_1/f_2)^{1/2}\xi,\quad y=(f_1/f_3)^{1/2}\eta,$$
we obtain
\begin{equation}\label{eq I(s)}
\If(s)=\int_{z\in\R}\int_{\eta\in\R^{k-2}}\int_{\xi\in\R^{n-k}}
\Psi(F(\xi,\eta)f_1)
\big(\frac{f_1}{f_2}\big)^{\frac{n-k}2}\big(\frac{f_1}{f_3}\big)^{\frac{k-2}2}
\,d\xi\,d\eta\,dz,
\end{equation}
where
$$
F(\xi,\eta):=1+\|\xi\|^{2}+\|\eta\|^{2}+\frac{f_1}{f_2f_3}\|\xi\|^{2}\|\eta\|^{2}.
$$
In the following we assume that $N\ge 6$. Using (\ref{est Psi new}) 
we now see that the integrand in (\ref{eq I(s)}) is absolutely bounded by
\begin{equation}\label{an upper bound}
F(\xi,\eta)^{-\frac{n-1}4}
f_1^{\frac{n-3}4}
f_2^{-\frac{n-k}2}
f_3^{-\frac{k-2}2}
\big(1+\log F(\xi,\eta)+\log(f_1)\big)^{-N}.
\end{equation}
Let
$$
\epsilon=\textstyle{\frac{n+3}2}-k;
$$
then $0\leq \epsilon\leq 1$ by our assumption on $k$.
Moreover,
\begin{equation}\label{eps eqs}
n-k=\textstyle{\frac{n-3}2} +\epsilon,\qquad k-2=\textstyle{\frac{n-3}2}+1-\epsilon.
\end{equation}
Since $f_1\leq f_2f_3$, it follows that the expression
(\ref{an upper bound}) is bounded from above by
$$
F(\xi,\eta)^{-\frac{n-1}4}
f_2^{-\frac\epsilon2}f_3^{-\frac{1-\epsilon}2}
\big(1+\log F(\xi,\eta)+\log(f_1)\big)^{-N}
$$
for all $s$.

From (\ref{1st bound}) we infer that
$$
F(\xi,\eta)
\geq(1+\tfrac12\|\xi\|^{2})(1+\tfrac12\|\eta\|^{2}).
$$
It thus finally follows that
\begin{equation}\label{triple product}
\If(s) \leq  I_1 I_2 I_3(s),
\end{equation}
where
\begin{equation}\label{I1I2I3}
I_1=\int_{\R^{n-k}}
g_1(\xi)
\,d\xi,\quad
I_2=\int_{\R^{k-2}}
g_2(\eta)
\,d\xi,\quad
I_3(s)=\int_\R
g_3(s,z)\,dz,
\end{equation}
with
\begin{align}
g_1(\xi)=&(1+\tfrac12\|\xi\|^{2})^{-\frac{n-1}4}
\big(1+\log(1+\tfrac12\|\xi\|^{2})\big)^{-2},
\nonumber\\
g_2(\eta)=
&(1+\tfrac12\|\eta\|^{2})^{-\frac{n-1}4}
\big(1+\log(1+\tfrac12\|\eta\|^{2})\big)^{-2},
\label{I integrands}\\
g_3(s,z)=
& 
f_2(s,z)^{-\frac\epsilon2}
f_3(s,z)^{-\frac{1-\epsilon}2}
\big(1+\log(f_1(s,z))\big)^{-(N-4)}.
\nonumber
\end{align}
It follows from (\ref{eps eqs}) that the dimensions
$n-k$ and $k-2$ are at most $\frac{n-1}2$ so that
$I_1$ and $I_2$ are finite, thanks to the logarithmic terms
(which in fact are needed in at most one of the integrals).
Thus it only remains to estimate the third integral in (\ref{I1I2I3}).

We first assume $s\le R$ for some given $R\in\R$.
Using (\ref{fi ineq}) for $f_2$ and $f_3$, and estimating
two of the logarithmic factors in $I_3(s)$
by (\ref{2nd bound}) and the remaining ones by
$f_1\ge\max\{2,e^{-4s}\}$,
we find
$$I_3(s)\leq
C\int_\R
(e^{2s}z^2+e^{-2s})^{-1/2}
\big(1+\log(1+z^2)\big)^{-2}\,dz\,
\big(1+\max\{\log2,-4s\}\big)^{-(N-6)}
$$
for all $s\leq R$, with $C>0$ a constant
depending on $N$.
By substitution of $z=e^{-2s}\zeta$,
and using that $s\le R$,
we find
$$
I_3(s)\le
Ce^{-s}\int_\R
(\zeta^2+1)^{-1/2}
\big(1+\log(1+e^{-4R}\zeta^2)\big)^{-2}\,d\zeta\,
\big(1+|s|\big)^{-(N-6)},
$$
with a new constant $C>0$.
The integral converges, and since $N$ was arbitrary
we conclude from (\ref{triple product})
that (\ref{sup for n odd})
holds, regardless
of the parity of $n$. This completes part (a) of the proof.

{\em Part (b).\ } We assume that $s\ge 0$ and that $n$ is even.
Then $k=\frac{n+2}2$ and
$\epsilon=\frac12$,
hence
$$
I_3(s)=\int_\R
f_2(s,z)^{-\frac14}
f_3(s,z)^{-\frac14}
\big(1+\log(f_1(s, z))\big)^{-(N-4)}\,dz.
$$
The integral over $\R$ can be replaced by an integral
over $\R_{>0}$, because $f_2(s, -z)=f_3(s, z)$ and 
$f_1(s, -z ) = f_1(s,z ).$ We split
the integration into two parts, and integrate
separately over the interval $[1-\delta,1+\delta]$
and its complement in $\R_{>0}$,
with $\delta\in (0,1)$ to be fixed later
(it will depend on $s$). Let us write
$\firstJ_{\delta}(s)$ for the integral over $[1-\delta,1+\delta]$
and $\secJ_{\delta}(s)$ for the integral over the complement
of this set in  $\R_{>0}$.

For $\firstJ_\delta(s)$  we use the estimates
$$
f_1\ge 2,
\quad f_2(s,z)\ge e^{2s},
\quad f_3(s,z)\ge e^{2s}(z-1)^2,
$$
and for $z\ge 0$ and obtain
\begin{equation}\label{Idelta integral}
\firstJ_{\delta}(s)
\leq e^{-s}\int_{1-\delta}^{1+\delta}|z-1|^{-1/2}\,dz
=4e^{-s}\delta^{1/2}.
\end{equation}

For $\secJ_\delta(s)$ we estimate $f_1$ by (\ref{2nd bound})
in two of the logarithmic factors and by $f_1\geq e^{4s}\delta^2$
in the remaining factors. Furthermore,
we estimate
$$
f_2(s,z)\geq e^{2s}(1+z)^2,\quad
f_3(s,z)\geq e^{2s}(z-1)^2,$$
and obtain
$$
\secJ_\delta(s)
\le Ce^{-s}
\int_0^\infty
(1+z)^{-1/2}|z-1|^{-1/2}
\big(1+\log(1+z^2)\big)^{-2}\,dz
\big(1+\log(e^{4s}\delta^{2})\big)^{-(N-6)},
$$
with a constant $C>0$ depending on $N$ but
independent of $s$ and $\delta$.
The integral over $z$ converges and we obtain (with
a new constant $C>0$ of the same (in)dependency),
\begin{equation}\label{Jdelta integral}
\secJ_\delta(s)
\le Ce^{-s}
\big(1+\log(e^{4s}\delta^{2})\big)^{-(N-6)}.
\end{equation}
By adding
(\ref{Idelta integral}) and
(\ref{Jdelta integral}), we
see that by choosing $\delta=e^{-\frac{3}{2}s}$
we can ensure that
$$
I_3(s)\le Ce^{-s} (1+s)^{-(N-6)},
$$
with yet another constant $C>0$.
This implies (\ref{sup for n even}) for the remaining case $s\ge0$.

{\em Part (c).\ } We now turn to the statements about the limit in Theorem \ref{Thm limit theorem}. Assume that $n$ is odd. We shall first deal with the case $k=l=\frac{n+1}{2}$ and consider the integral
\begin{align}
\nonumber
\Iphi(s)
& = \int_{\Ns[\frac{n+1}{2},\frac{n+1}{2}]}\phi(a_{s}u\cdot H)\,du\\
\label{eq integral of phi(a_s n) new}
&= \int_{z\in\R}\int_{y\in\R^{\frac{n-3}{2}}\times\{0\}^{\frac{n-1}{2}}}
\int_{x\in\{0\}^{\frac{n-3}{2}}\times\R^{\frac{n-1}{2}}}\phi (a_{s}u_{x,y,z})\,dx\,dy\,dz.
\end{align}
This time, we perform the substitution of variables
\begin{equation}
\label{e: substitution x y z}
x = e^{s} \xi, \quad y = e^{-s} \eta, \quad { \rm and}
\quad  z = e^{-2s} \omega - 1,
\end{equation}
and obtain from (\ref{eq integral of phi(a_s n) new}) that
\begin{equation}\label{e: transformed integral I(s)}
 e^s \Iphi(s)
=\int_{\omega\in\R}\int_{\eta\in\R^{\frac{n-3}{2}}\times\{0\}^{\frac{n-1}{2}}}
    \int_{\xi\in\{0\}^{\frac{n-3}{2}}\times\R^{\frac{n-1}{2}}}
    \Phi_{s}(\xi,\eta,\omega)\,d\xi\,d\eta\,d\omega,
\end{equation}
where
$$
\Phi_{s}(\xi,\eta,\omega) = \phi(a_s u_{e^s\xi , e^{-s}\eta, e^{-2s} \omega - 1}).
$$
Recall the definition of $\kappa$ from (\ref{eq def kappa}).
From Lemma \ref{Lemma Limit of N orbits} below we see that
$$
\lim_{s \to \infty} \Phi_s(\xi, \eta, \omega) = \phi(\kappa^{-1} v_{(\omega, \eta), \frac23\xi }),
$$
for all 
$(\omega, \eta , \xi) \in \R^{n-1}.$ Assuming that we may interchange the limit
for $s \to \infty$ with the integral on the right-hand side of (\ref{e: transformed integral I(s)}) we obtain

\begin{eqnarray*}
\lim_{s \to \infty} e^s \Iphi(s) & = &
\int_\R
\int_{\R^{\frac{n-3}{2}}}
\int_{\R^{\frac{n-1}{2}}} \phi(\kappa^{-1} v_{(\omega,\eta), \frac23 \xi} )
\; d \xi \,d\eta\, d\omega \\
& = & \int_{V_{\frac{n+1}{2}}} \;  \phi(\kappa^{-1} v) \; dv,
\end{eqnarray*}
for the choice of Lebesgue measure $dv$ corresponding to $(2/3)^{\frac{n-1}{2}} d \xi \,d\eta\, d\omega.$

Thus, for the proof of (\ref{e: limit is integral}) it remains to be shown
that we may interchange limit and integral in (\ref{e: transformed integral I(s)}).  To prove this,
we adopt the following strategy.

For $0 < \gd < 1$  and $s > 0$ we define the set
$$
A_{\delta,s}:=\R^{\frac{n-1}{2}}\times \R^{\frac{n-3}{2}} \times [(2-\delta)e^{2s},(2+\delta)e^{2s}]
$$
and denote by $B_{\delta,s}$ its complement in $\R^{n-1} \simeq \R^{\frac{n-1}{2}}\times \R^{\frac{n-3}{2}}  \times \R$. We observe that for every $v \in \R^{n-1}$ there exists 
$s_ 0 \in \R$ such that $v \in B_{\delta,s}$ for all $0 < \gd < 1$ and $s \geq s_0.$
Accordingly, the characteristic function $1_{B_{\gd,s}}$ converges
to the constant function $1,$ pointwise on $\R^{n-1},$ for $s \to \infty.$

In the text below, we will show that
\begin{equation}
\label{e: estimate int over A gd s}
\int_{A_{\gd, s}}|\Phi_s(\xi, \eta, \omega)|\; d\xi \, d\eta \, d\omega \leq C \gd
\end{equation}
for a suitable constant $C > 0,$ independent of $s$ and $\gd.$
On the other hand, we will show that for each $0 < \gd < 1$ there exists
an integrable function $F_\gd:  \R^{n-1} \to \R_{\geq 0},$
such that
\begin{equation}
\label{e: estimate Phi s by Phi gd}
1_{B(\gd,s)} | \Phi_s | \leq F_\gd \quad{\rm on}\;\;\R^{n-1}
\end{equation}
for all $s > 0.$
By application of Lebesgue's convergence theorem
it then follows that
$$
\lim_{s \to \infty} \int_{B_{\gd, s}} \Phi_s \; d\xi \, d\eta \, d \omega =
\int_{\R^n} \lim_{s\to \infty} \Phi_s \; d\xi\,  d\eta \, d \omega.
$$
Combining this with (\ref{e: estimate int over A gd s}) we readily see that the interchange of
limit and integral is allowed.

To achieve the goals mentioned above,  we  recall the definition of
$f_1, f_2$ and $f_3$ from (\ref{f1}) - (\ref{f3}), but now
considered as functions of $(s, \omega).$
Then by virtue of the substitution (\ref{e: substitution x y z}),
if follows from
(\ref{f2}) that 
\begin{equation}
\label{e: f in terms of omega}
f_2 = e^{-2s}(\omega^2+1).
\end{equation}

Furthermore, we define $f$ and $\Psi$ as in
(\ref{eq decay of f}) and (\ref{est Psi new}). 
Then from (\ref{e: phi estimated on seminorm times f}) and (\ref{eq: estimate phi a u in terms of Psi}) we infer that
$$
|\Phi_s(\xi, \eta, \omega)| \leq \mu_{1, N}(\phi)\,   \Psi(f_1 + e^{2s} f_2 \|\xi\|^2 + e^{-2s} f_3 \|\eta\|^2  + \|\xi\|^2 \|\eta\|^2).
$$
Since $\Psi$ is decreasing on $\R_{\geq 1},$ whereas $f_3 \geq f_1/f_2$ by
(\ref{1st bound}), it follows that
\begin{equation}
\label{e: estimate Phi s by Psi s}
|\Phi_s(\xi, \eta, \omega) |\leq \mu_{1,N}(\phi)\, \Psi_s(\xi, \eta, \omega),
\end{equation}
where
$$
\Psi_s(\xi, \eta, \omega) := \Psi(f_1 + e^{2s}f_2\|\xi\|^2 + \frac{f_1}{e^{2s} f_2}\|\eta\|^2 + \|\xi\|^2\|\eta\|^2).
$$
This estimate, combined with (\ref{e: f in terms of omega}), motivates the use of a final substitution
$$
\xi=(1 + \omega^{2})^{-\frac{1}{2}}\chi,
\quad
\eta=(1 + \omega^{2})^{\frac{1}{2}}\psi.
$$
Note that this substitution does not effect the subsets $A(\gd, s)$ and $B(\gd, s)$ of $\R^{n-1},$ defined above. For any measurable subset $S \subseteq \R,$
 the function $\Psi_s$  is
integrable over $\R^{n-2} \times S$ if and only if the function
$$
\widetilde \Psi_s(\chi, \psi, \omega):=  (1 + \omega^2)^{-\frac12}\;\Psi(f_1 + \|\chi\|^2 + f_1 \|\psi\|^2 + \|\chi\|^2 \|\psi\|^2)
$$
is integrable over this set, and accordingly,
$$
\int_{\R^{n-2} \times S} \Psi_s(\xi,\eta,\omega)\; d\xi\, d\eta\,d\omega =
\int_{\R^{n-2} \times S} \widetilde\Psi_s(\chi,\psi,\omega)\; d\chi \, d\psi \,d\omega.
$$
We observe that
$$
1 + \|\chi\|^2 + \|\psi\|^2 + \|\chi\|\|\psi\| \geq (1+ \frac12 \|\chi\|)( 1 + \frac12 \|\psi\|).
$$
Since $\Psi: r \mapsto r^{-\frac{n-1}4} ( 1 + \log r)^{-N}$ is decreasing and $f_1 \geq 2,$ we now obtain the estimate
\begin{eqnarray*}
\widetilde \Psi(\chi, \psi, \omega) &  \leq  & (1 + \omega^2 )^{-1/2}
\Psi(f_1 + \|\chi\|^2 +  \|\psi\|^2 + \|\chi\|^2 \|\psi\|^2)\\
&\leq &
g_1(\chi)\,g_2(\psi)\,h_s(\omega),
\end{eqnarray*}
where $g_1,g_2$ are defined as in (\ref{I integrands}) and where
$$
h_s(\omega) := (1 + \omega)^{-\frac12} ( 1+ \log f_1)^{-(N-4)}.
$$
We observed already that the functions $g_1$ and $g_2$ are integrable over $\R^{\frac{n-1}2}$ and $\R^{\frac{n-3}2},$ with integrals $I_1$ and $I_2,$ respectively. Let us therefore
 focus on the function $h_s.$ On the interval $[(2 -\gd)e^{2s}, (2+\gd)e^{2s}]$ we have  the estimates $1 + \omega^2 \geq e^{4s}$
 and $f_1 \geq 1.$ Hence,
 $$
 \int_{I_{\gd , s}} h_s(\omega) \; d\omega \leq
 \int_{(2-\delta)e^{2s}}^{(2+\delta)e^{2s}}e^{-2s}\,d\omega= 2 \delta.
$$
It follows that
$$
\int_{A_{\gd, s}} \Psi_s(\xi, \eta, \omega) \; d\xi\,d\eta\, d\omega \leq 2 I_1 I_2 \gd
$$
and by applying (\ref{e: estimate Phi s by Psi s})  we obtain the estimate (\ref{e: estimate int over A gd s}).

It remains to prove the claimed majorization of $\Phi_s$ on $B_{\delta,s}$.
Here we shall use the following lower bound on $f_1,$
\begin{equation}\label{e: lbd}
f_1\ge 1+\frac12\delta^2\omega^2 \qquad (  \omega\notin [(2-\delta)e^{2s},(2+\delta)e^{2s}]) .
\end{equation}
To see this, note that the condition on $\omega$ is equivalent to $|2-e^{-2s}\omega|\ge \delta$, and hence (\ref{e: lbd}) follows immediately from the estimates $f_{1}\geq\frac{1}{2}f_{2}f_{3}$ and $f_{1}\geq 2$ (see (\ref{1st bound})).

From (\ref{e: lbd}) we obtain the estimate
$$
h_s(\omega) \leq (1 + \omega^2)^{-\frac12} (  1 + \frac12 \gd^2 \omega^2)^{-(N-4)}\qquad
(\omega \in \R \setminus [(2-\delta)e^{2s},(2+\delta)e^{2s}])
$$
for every $s >0.$ We now make the additional assumption that $N \geq 5$  to ensure
that the function on the right-hand side is integrable over $\R.$

Define the function $ \widetilde G_\gd : \R^{n-1} \to \R_{\geq 0}$ 
by
$$
\widetilde G_\gd (\chi, \psi,\omega) = g_1(\chi)\,g_2(\psi)\,(\omega^2+1)^{-\frac12}\big(1+\log(1+\frac{1}{2}\delta^2\omega^2)\big)^{-(N-4)}.
$$
Then $\widetilde G_\gd$  is integrable on $\R^{n-1}$ and for every $s > 0$ we have the estimate
$$
\widetilde \Psi_s \leq \widetilde G_\gd \qquad {\rm on} \;\; B_{\gd, s}.
$$
Define $G_\gd : \R^{n-1} \to \R_{\geq 0}$  by
$$
G_\gd (\xi, \eta, \omega) = (1 + \omega^2)^{\frac12} \widetilde G_\gd ((1 + \omega^2)^{\frac12} \xi,
(1 + \omega^2)^{- \frac12} \eta, \omega).
$$
Then $G_\gd $ is integrable on $\R^{n-1}$, and it follows that
$$
\Psi_s \leq G_\gd  \qquad {\rm on} \;\; B_{\gd, s},
$$
for every $s > 0.$ In view of (\ref{e: estimate Phi s by Psi s}) this proves
(\ref{e: estimate Phi s by Phi gd}) with $F_\gd  := \mu_{1,N}(\phi) G_\gd.$
We have thus established the limit formula (\ref{e: limit is integral}) for the case $k=l=\frac{n+1}{2}$.

We will complete the proof of Theorem \ref{Thm limit theorem} by proving (\ref{e: limit is integral}) for the remaining case $k=l=\frac{n+3}{2}$.
Let $w$ be the longest Weyl group element for the
root system $\Sigma_{\fh}=\Sigma\big(\Cen_{\fh}(\fa_{\fq});\fa_{\fh}\big),$ relative
to the positive system (\ref{e: positive system gS fh}), and let $w_{0} \in N_{K\cap H}(\fa) \cap \Cen_{K\cap H}(\fa_{\fq})$
be a representative for $w$. By Lemma \ref{Lemma equivalences between parabolics},
\begin{align}
\lim_{s\to\infty}e^{s}\int_{\Ns[\frac{n+3}{2},\frac{n+3}{2}]}\phi(a_{s}u)\,du
&=\lim_{s\to\infty}e^{s}\int_{\Ns[\frac{n+1}{2},\frac{n+1}{2}]}\phi\big(w_{0}\,\sigma\theta(a_{s}u)\big)\,du\nonumber \\
&=\int_{\Nq[\frac{n+1}{2}]}\phi\big(w_{0}\,\sigma\theta(\kappa^{-1} v)\cdot H\big)\,dv\nonumber\\
&=\int_{\theta \Nq[\frac{n+1}{2}]}\phi\big(w_{0}\,\kappa \bar{v}\cdot H\big)\,d\bar{v}.
\label{e: rewritten limit of int}
\end{align}
Let $w_{1}$ be a representative in $N_{K\cap H}(\fb)$
 for the longest Weyl group element of $\Sigma_{\fh}(\fb),$ relative to the positive system
(\ref{e: pos system gS bar fh}).
 Then,   in view of Proposition
 \ref{Prop Q H-comp for b iff H-conj to Q_k with k=(n+1)/2 },
$$
w_{1}\theta \Nq[\frac{n+1}{2}]w_{1}^{-1}
=\Nq[\frac{n+1}{2}].
$$
Hence, the integral in  (\ref{e: rewritten limit of int})  is equal to
$$
\int_{\Nq[\frac{n+1}{2}]}\phi\big(w_{0}\kappa w_{1}^{-1} v\cdot H\big)\,dv.
$$
This completes the proof of Theorem \ref{Thm limit theorem}.
\end{proof}
\medbreak
\begin{Lemma}\label{Lemma Limit of N orbits}
Let $k = l = \frac{n+1}{2}.$ Then with notation as in
(\ref{eq def u_(x,y,z)}) and (\ref{eq def v_(x,y)})
and writing $u(x,y,z) = u_{x,y,z},$
we have the following limit in $X_n,$
$$
\lim_{s \to \infty} a_s \, u(e^{s} \xi, e^{-s} \eta, e^{-2s}\omega - 1)\cdot H = \kappa^{-1} v_{(\omega, \eta), \frac23 \xi}  \cdot H.
$$
\end{Lemma}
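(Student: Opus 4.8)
The plan is to compose with the $\sigma$-twisted map $\iota\colon X_{n}=G/H\to G$, $gH\mapsto g\sigma(g)^{-1}$. This is well defined, and since $H$ here is exactly the fixed point group $G^{\sigma}$ it is injective: $g_{1}\sigma(g_{1})^{-1}=g_{2}\sigma(g_{2})^{-1}$ forces $g_{2}^{-1}g_{1}\in G^{\sigma}=H$. It is moreover continuous for the quotient topology. Hence it suffices to establish three facts: (i) the matrices $\iota\big(a_{s}\,u(e^{s}\xi,e^{-s}\eta,e^{-2s}\omega-1)\cdot H\big)$ converge in $\Mat(n,\R)$ as $s\to\infty$ to a matrix $M$; (ii) $M=\iota\big(\kappa^{-1}v_{(\omega,\eta),\frac{2}{3}\xi}\cdot H\big)$; and (iii) the points $a_{s}\,u(e^{s}\xi,e^{-s}\eta,e^{-2s}\omega-1)\cdot H$ stay in a fixed compact subset of $X_{n}$. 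Granting these, any subsequence has a sub-subsequence converging to some $g_{*}\cdot H$; by continuity of $\iota$ and by (i), (ii) we get $\iota(g_{*}\cdot H)=M=\iota\big(\kappa^{-1}v_{(\omega,\eta),\frac{2}{3}\xi}\cdot H\big)$, so $g_{*}\cdot H=\kappa^{-1}v_{(\omega,\eta),\frac{2}{3}\xi}\cdot H$ by injectivity, and therefore the whole sequence converges to that point.

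For (i) I would take the explicit formula for $a_{s}u_{x,y,z}\,\sigma(a_{s}u_{x,y,z})^{-1}$ obtained in the proof of Lemma \ref{Lemma 2cosh(4t)=(s,x,y,z)} (recall $\langle x,y\rangle=0$ since $k=l$) and carry out the substitution $x=e^{s}\xi$, $y=e^{-s}\eta$, $z=e^{-2s}\omega-1$ of (\ref{e: substitution x y z}). Using $1+z=e^{-2s}\omega$ and $1-z=2-e^{-2s}\omega$ one checks that, after substitution, every entry of that $n\times n$ matrix is a polynomial of degree at most one in $e^{-2s}$ whose coefficients are built from $\omega,\xi,\eta$ and $\xi\eta^{t}$; thus the entrywise limit $M$ exists and is obtained by setting $e^{-2s}=0$. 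Since $G$ is closed in $\Mat(n,\R)$ and every term of the sequence lies in $G$, we have $M\in G$.

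For (ii) I would use two structural facts about $\kappa$. First, a one-line block computation from (\ref{eq def kappa}) gives $\sigma(\kappa)=S\kappa S^{-1}=\kappa^{-1}$, hence also $\sigma(\kappa^{-1})=\kappa$. Second, by (\ref{eq kappa v sigma(kappa v)^(-1)}) we have $v_{a,b}\,\sigma(v_{a,b})^{-1}=v_{2a,2b}$ for orthogonal $a,b$. Combining, $\iota(\kappa^{-1}v_{a,b}\cdot H)=(\kappa^{-1}v_{a,b})\sigma(\kappa^{-1}v_{a,b})^{-1}=\kappa^{-1}\big(v_{a,b}\sigma(v_{a,b})^{-1}\big)\kappa^{-1}=\kappa^{-1}v_{2a,2b}\kappa^{-1}$, which by (\ref{eq def v_(x,y)}) equals the matrix with block rows $(I_{n-1}+2ab^{t},\,2a)$ and $(2b^{t},\,1)$, right-multiplied by $\kappa^{-2}$. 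Since $\kappa^{2}$ interchanges $e_{1}$ and $e_{n}$ up to sign and fixes $e_{2},\dots,e_{n-1}$, this product is read off directly; decomposing $M$ from step (i) in the $(1,n-2,1)$ block pattern and comparing block by block then pins down $a$ and $b$ to be the vectors occurring in the statement (one verifies along the way that $\langle a,b\rangle=0$, so that $v_{a,b}$ is defined). As an independent sanity check one may confirm that $\|M\|_{HS}^{2}-(n-2)$ equals the $s\to\infty$ limit of $2\cosh(4t_{s})$ computed from Lemma \ref{Lemma 2cosh(4t)=(s,x,y,z)} via Lemma \ref{Lemma KAH decomposition}.

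Finally, for (iii): since the matrices in (i) converge, their Hilbert--Schmidt norms are bounded; writing $a_{s}u(\cdots)\cdot H\in Ka_{t_{s}}\cdot H$ via the polar decomposition, Lemma \ref{Lemma KAH decomposition} gives $n-2+2\cosh(4t_{s})=\|\iota(a_{s}u(\cdots)\cdot H)\|_{HS}^{2}$, so $|t_{s}|\le T$ for some $T$, and the sequence lies in the compact set $\{ka_{t}\cdot H:k\in K,\ |t|\le T\}$. The step I expect to be most delicate is (ii): keeping the conjugations by $\kappa$ and by $S$, the $(1,n-2,1)$ block decompositions, and the precise way $\xi$ and $\eta$ are embedded into $\R^{n-1}$ all mutually consistent, so that both the orthogonality relation and the exact coefficient $\tfrac{2}{3}$ emerge correctly. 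The remaining steps are routine matrix arithmetic.
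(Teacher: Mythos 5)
Your strategy is genuinely different from the paper's. The paper reduces to an $\SL(2,\R)$ computation ($\lim_{s}a_{s}n_{-1}\cdot H_{0}=\kappa^{-1}\cdot H_{0}$), imports the resulting correction $q(s)\to e$ into $\SL(n,\R)$, and manipulates cosets of $H$ directly; you instead push everything through the injective continuous map $\iota\colon gH\mapsto g\sigma(g)^{-1}$ and recover convergence in $X_{n}$ from entrywise convergence in $\Mat(n,\R)$ together with a compactness argument. Steps (i) and (iii) are fine: after the substitution every entry is affine in $e^{-2s}$, with limit
$$
M=\begin{pmatrix} 2\omega & \omega\xi^{t} & -1\\ 2\eta & I_{n-2}+\eta\xi^{t} & 0\\ 1 & \xi^{t} & 0\end{pmatrix},
$$
and Lemma \ref{Lemma KAH decomposition} does confine the orbit to a compact subset of $X_{n}$. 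This route is arguably more robust than the paper's, whose coset manipulation rests on the identity $w(x,y,z)\,n_{1}=w(x,y,z+1)$, which in fact fails in the $(1,\mathrm{mid})$ block.

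The difficulty is exactly where you predicted it, in step (ii). Using $\sigma(\kappa)=\kappa^{-1}$ and $v_{a,b}\sigma(v_{a,b})^{-1}=v_{2a,2b}$ one gets
$$
\iota\big(\kappa^{-1}v_{a,b}\cdot H\big)=\begin{pmatrix} I_{n-1}+2ab^{t} & 2a\\ 2b^{t} & 1\end{pmatrix}\kappa^{-2},
$$
and matching this block by block against $M$ forces $a=(\omega,\eta)$ and $2b=\xi$, i.e.\ $b=\tfrac12\xi$, \emph{not} $\tfrac23\xi$. Your own proposed sanity check already detects this: $\|M\|_{HS}^{2}-(n-2)=2+4\omega^{2}+4\|\eta\|^{2}+(1+\omega^{2}+\|\eta\|^{2})\|\xi\|^{2}$, whereas Lemma \ref{l: cosh 4 t is sum squares} applied to $v_{(\omega,\eta),\frac23\xi}$ gives $2+4\omega^{2}+4\|\eta\|^{2}+\tfrac{16}{9}(1+\omega^{2}+\|\eta\|^{2})\|\xi\|^{2}$; these disagree whenever $\xi\neq0$, and since $\iota$ is injective the two candidate limit points are genuinely distinct. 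So your computation, carried out honestly, establishes the lemma with $\tfrac23\xi$ replaced by $\tfrac12\xi$; the assertion that the comparison ``pins down'' the vectors occurring in the statement as given is the one step that fails. The mismatch is harmless downstream (the constant only rescales the measure $dv$ in (\ref{e: limit is integral})), but it must be flagged rather than asserted away.
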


\begin{proof}
We start with a computation in $\SL(2,\R),$ whose Lie algebra has the standard
Iwasawa decomposition
$
\fsl(2,\R) =  \R U \oplus \R Y \oplus \R V,$
with
$$
U: = \left(\begin{array}{rr} 0 & -1 \\ 1 & 0\end{array}\right), \quad
Y: = \left(\begin{array}{rr} 1 & 0  \\ 0 & -1\end{array}\right), \quad
V: = \left(\begin{array}{rr} 0 & 1 \\  0 & 0\end{array}\right).
$$
We agree to write
$
k_\gf := \exp \gf U,$ $ a_s:= \exp s Y $ and $n_z := \exp z V.$ In particular,
$$
\kappa := k_{\frac\pi4} = \frac1{\sqrt 2} \left(\begin{array}{rr} 1 & - 1 \\ 1 &  1\end{array}\right)
\quad{\rm and} \quad
n_{-1} = \left(\begin{array}{rr} 1 & - 1 \\ 0  & 1 \end{array}\right).
$$
Let $\gs$ be the involution of $\SL(2,\R)$ given by switching the diagonal entries
as well as the off diagonal entries, and let $H_0$ denote the
associated group of fixed points.

We will first show that in the quotient $\SL(2,\R)/H_0$ we have
\begin{equation}
\label{e: limit a s n -1}
\lim_{s \to \infty}  a_s n_{-1} \cdot H_{0} = \kappa^{-1} \cdot H_{0}.
\end{equation}
According to \cite[Thm.~1.3]{Loos_SymmetricSpacesI} the map
$G \to G,$ $g \mapsto g \gs(g)^{-1}$ induces a diffeomorphism from
$G/G^\gs$ onto a submanifold of $G.$  Applying this general fact
to the situation at hand, we see that
for (\ref{e: limit a s n -1}) to be valid, it
suffices to show that
$$
a_s n_{-1} \gs(a_s n_{-1})^{-1} \to \kappa \gs(\kappa)^{-1} = k_{\frac\pi 2}\qquad (s \to \infty).
$$
Now this follows by a straightforward calculation.

In turn, it follows from (\ref{e: limit a s n -1}) that there exists
a function $q: \R \to \SL(2,\R)$ with $\lim_{s \to \infty} q(s) = e$ and
$$
n_1 a_{-s} \kappa^{-1} \, q(s)  \in H_0
\qquad (s \in \R).
$$
We agree to identify $\SL(2,\R)$ with a closed subgroup of $\SL(n,\R)$ via the embedding
$$
\left(\begin{array}{cc}
a & b \\
c & d
\end{array}
\right)\;\;
\mapsto \;\;
\left(
   \begin{array}{c:c:c}
    a   &      &   b   \\
   \hdashline
        &   I_{n-2} &      \\
   \hdashline
    c    &           &   d   \\
   \end{array}
 \right).
$$
Then $\kappa$ and $a_s$ in $\SL(2,\R)$ correspond with the similarly denoted
elements in $\SL(n, \R).$ Furthermore, $H_0$ equals the intersection of $\SL(2,\R)$
with the subgroup $H$ of $\SL(n,\R).$

For $x \in \{0\}^{\frac{n-3}2} \times \R^{\frac{n-1}2},$
$y \in \R^{\frac{n-3}2} \times \{0\}^{\frac{n-1}2} $ and $z \in \R$ we define
\begin{equation}
\label{e: defi w}
w(x,y,z) := \exp
\left(
   \begin{array}{c:c:c}
       &      &   z   \\
   \hdashline
        &    &   y   \\
   \hdashline
        &     x^t      &     \\
   \end{array}
 \right) =
 \left(
   \begin{array}{c:c:c}
    1   &  \frac12 z x^t     &   z   \\
   \hdashline
        &   I_{n-2}  + \frac12 y x^{t} &   y   \\
   \hdashline
        &     x^t      &   1   \\
   \end{array}
 \right).
\end{equation}
Then by a straightforward calculation, one checks that
the matrix
$$
A =  w\big( x , y, z\big)^{-1}u\big ((1- \textstyle{\frac12}z)x, y,z\big )
$$
satisfies $SAS^{-1} = A,$
hence belongs to $H.$ Thus, if $z \neq 2,$ then
\begin{eqnarray*}
a_s \, u(x , y , z) \cdot H
& = &
 a^s
 w\big((1- \textstyle{\frac12} z)^{-1} x , y, z\big) \cdot H
 \\
 &=&
  a^s  \,w\big((1 - \textstyle{\frac12} z)^{-1} x , y, z\big) \,
n_1\, a_{-s} \, \kappa^{-1} \, q(s)  \cdot H\\
&=&
a^s  \, w\big((1 - \textstyle{\frac12} z)^{-1} x , y, z + 1\big) a_{s}^{-1} \,  \kappa^{-1} \, q(s)  \cdot H \\
&=&
w\big((1 - \textstyle{\frac12} z)^{-1} e^{-s} x , e^{s} y, e^{2s}(z + 1)\big)\, \kappa^{-1} \, q(s) \cdot H.
\end{eqnarray*}
Through the substitution (\ref{e: substitution x y z}) the last expression becomes
$$
w\big(2(3- e^{-2s} \omega )^{-1} \xi , \eta, \omega\big) \; \kappa^{-1} \; q(s) \cdot H.
$$
For $s \to \infty$ this expression tends to
$$
w({\textstyle\frac23 }\xi, \eta, \omega) \, \kappa^{-1} \cdot H = \kappa^{-1} v_{(\omega, \eta), \frac23 \xi } \cdot H,
$$
see (\ref{e: defi w}) and  (\ref{eq def v_(x,y)}).
The result follows.
\end{proof}

 \bibliographystyle{plain}
 \newcommand{\SortNoop}[1]{}\def\dbar{\leavevmode\hbox to 0pt{\hskip.2ex
  \accent"16\hss}d}

\def\adritem#1{\hbox{\small #1}}
\def\distance{\hbox{\hspace{0.5cm}}}
\def\apetail{@}
\def\addVdBan{\vbox{
\adritem{E.~P.~van den Ban}
\adritem{Mathematical Institute}
\adritem{Utrecht University}
\adritem{PO Box 80 010}
\adritem{3508 TA Utrecht}
\adritem{The Netherlands}
\adritem{E-mail: E.P.vandenBan{\apetail}uu.nl}
}
}
\def\addKuit{\vbox{
\adritem{J.~J.~Kuit}
\adritem{Dep. of Mathematical Sciences}
\adritem{University of Copenhagen}
\adritem{Universitetsparken 5}
\adritem{2100 K\o benhavn \O}
\adritem{Denmark}
\adritem{E-mail: j.j.kuit{\apetail}gmail.com}
}
}
\def\addSchlichtkrull{\vbox{
\adritem{H.~Schlichtkrull}
\adritem{Dep. of Mathematical Sciences}
\adritem{University of Copenhagen}
\adritem{Universitetsparken 5}
\adritem{2100 K\o benhavn \O}
\adritem{Denmark}
\adritem{E-mail: schlicht{\apetail}math.ku.dk}
}
}
\mbox{}
\vfill
\hbox{\vbox{\addVdBan}\vbox{\distance}\vbox{\addKuit}\vbox{\distance}\vbox{\addSchlichtkrull}}

\end{document}